\crefname{thm}{Theorem}{Theorems}
\crefname{pro}{Proposition}{Propositions}
\crefname{lem}{Lemma}{Lemmas}
\crefname{rmk}{Remark}{Remarks}
\crefname{cor}{Corollary}{Corollaries}
\crefname{dfn}{Definition}{Definitions}
\crefname{ex}{Example}{Examples}
\crefname{section}{Section}{Sections}
\crefname{subsection}{Subsection}{Subsections}
\newcommand{\To}{\rightarrow}
\newcommand{\as}{{\rm d}\mathbb{P}\times{\rm d} t-a.e.}
\newcommand{\ps}{\mathbb{P}-a.s.}
\newcommand{\F}{\mathcal{F}}
\newcommand{\E}{\mathbb{E}}
\newcommand{\s}{\mathcal{S}}
\newcommand{\M}{{\rm M}}
\newcommand{\hcal}{\mathcal{H}}
\newcommand{\vcal}{\mathcal{V}}
\newcommand{\T}{[0,T]}
\newcommand{\LT}{{\mathbb L}^1(\F_T)}
\newcommand{\R}{{\mathbb R}}
\newcommand{\RE}{\forall}
\newcommand {\Lim}{\lim\limits_{n\rightarrow\infty}}
\newcommand {\Dis}{\displaystyle}
\newtheorem{thm}{Theorem}[section]
\newtheorem{lem}[thm]{Lemma}
\newtheorem{pro}[thm]{Proposition}
\newtheorem{rmk}[thm]{Remark}
\newtheorem{cor}[thm]{Corollary}
\newtheorem{dfn}[thm]{Definition}
\newtheorem{ex}[thm]{Example}
\journal{ArXiv}
\begin{document}
\begin{frontmatter}

\title{{\boldmath\bf
$L^1$ solutions of non-reflected BSDEs and reflected BSDEs with one and two continuous barriers under general assumptions}\tnoteref{found}}
\tnotetext[found]{Supported by the Fundamental
Research Funds for the Central Universities (No.\,2017XKQY98).\vspace{0.2cm}}


\author{Shengjun FAN}%
\ead{f\_s\_j@126.com}

\address{School of Mathematics, China University of Mining and Technology, Xuzhou, Jiangsu, 221116, PR China\vspace{-0.8cm}}

\begin{abstract}
We establish several existence, uniqueness and comparison results for $L^1$ solutions of non-reflected BSDEs and reflected BSDEs with one and two continuous barriers under the assumption that the generator $g$ satisfies a one-sided Osgood condition together with a very general growth condition in $y$, a uniform continuity condition and/or a sub-linear growth condition in $z$, and a generalized Mokobodzki condition which relates the growth of $g$ and that of the barriers. This generalized Mokobodzki condition is proved to be necessary for existence of $L^1$ solutions of the reflected BSDEs. We also prove that the $L^1$ solutions of reflected BSDEs can be approximated by the penalization method and by some sequences of $L^1$ solutions of reflected BSDEs. These results strengthen some existing work on the $L^1$ solutions of non-reflected BSDEs and reflected BSDEs.\vspace{0.2cm}
\end{abstract}

\begin{keyword}
Reflected backward stochastic differential equation \sep Existence and uniqueness\sep
Comparison theorem\sep Stability theorem\sep
$L^1$ solution\vspace{0.2cm}

\MSC[2010] 60H10, 60H30
\end{keyword}

\end{frontmatter}

\section{Introduction}
\label{sec:1-Introduction}
\setcounter{equation}{0}

In 1990, \citet{PardouxPeng1990SCL} first introduced the notion of nonlinear backward stochastic differential equations (BSDEs for short) and established the well known existence and uniqueness result of an $L^2$ solution for a BSDE with square-integrability data under the assumption that the generator $g$ is uniformly Lipschitz continuous in $(y,z)$. Under the square-integrability assumption on data and the uniformly Lipschitz continuity assumption on generator, \citet{ElKarouiKapoudjianPardouxPengQuenez1997AoP} and \citet{CvitanicKaratzas1996AoP} respectively introduced the notion of nonlinear reflected BSDEs (RBSDEs) with one and two continuous barriers, established the existence and uniqueness of the $L^2$ solution, and explored that these equations have natural connections with the obstacle problem for PDEs, the optimal
stopping problem, mixed control problem and Dynkin games. Since then, the theory of BSDEs and reflected BSDEs
has rapidly grown and been applied in many areas such as mathematical finance, nonlinear expectation theory, stochastic control and game theory, optimality problems and others (see, e.g. \cite{BayraktarYao2015SPA},
\cite{ElKarouiPardouxQuenez1997NMIF},
\cite{ElKarouiPengQuenez1997MF},
\cite{HamadeneLepeltier2000SPA}, \cite{HamadeneLepeltierWu1999PMS},
\cite{HamadeneZhang2010SPA},
\cite{HuTang2010PTRF},
\cite{Jia2010SPA}, \cite{MaZhang2005SPA},
\cite{Pardoux1999NADEC}, \cite{Peng1997BSDEP}, \cite{Peng1999PTRF}, \cite{Peng2004LNM},
\cite{PengXu2005AIHPPS},
\cite{PengXu2010Bernoulli} and \cite{RosazzaEmanuela2006IME} etc.).

During more than two decades, for the theoretical interests of investigation and interesting applications a lot of works have been devoted to studying the existence and uniqueness of a solution for a non-reflected BSDE and a RBSDE by relaxing the square-integrability assumption on data and the uniformly Lipschitz continuity assumption on generator used in the pioneer papers \cite{PardouxPeng1990SCL}, \cite{ElKarouiKapoudjianPardouxPengQuenez1997AoP} and \cite{CvitanicKaratzas1996AoP}. For instance, the uniformly Lipschitz condition of $g$ in $(y,z)$ has been weakened to the  monotonicity and general growth condition in $y$ (see assumption \ref{A:(H1)} with $\rho(x)=k|x|$ for some constant $k\geq 0$ in \cref{sec:2-Assumptions2.3} of this paper) and the uniform continuity condition in $z$ (see assumption \ref{A:(H2)}(i) in \cref{sec:2-Assumptions2.3}), in the existence and uniqueness results for $L^2$ solutions or $L^p\ (p>1)$ solutions established respectively in, e.g., \citet{Pardoux1999NADEC}, \citet{BriandDelyonHu2003SPA}, \citet{BriandLepetierSanMrtin2007Bernoulli}, \citet{Jia2008CRA}, \citet{Jia2010SPA}, \citet{Chen2010SAA}, \citet{FanJiang2010CRA}, \citet{FanJiangDavison2010CRA} and
\citet{Fan2016SPA} for non-reflected BSDEs, and \citet{LepeltierMatoussiXu2005AdvinAP}, \citet{Klimsiak2012EJP}, \citet{RozkoszSlominski2012SPA}, \citet{Klimsiak2013BSM}, \citet{Fan2017AMS} and \citet{Fan2017arXivLpSolutionofDRBSDEs} for RBSDEs.

In the existence and uniqueness results for $L^2$ solutions or $L^p\ (p>1)$ solutions established respectively in \citet{FanJiang2013AMSE}, \citet{Fan2015JMAA},
\citet{Fan2017AMS}, \citet{Fan2016SPA} and \citet{Fan2017arXivLpSolutionofDRBSDEs}, the monotonicity condition of $g$ in $y$ was further weakened to the one-sided Osgood condition (see assumption \ref{A:(H1)}(i) in \cref{sec:2-Assumptions2.3}) and the weak monotonicity condition, which both unify the monotonicity condition, the Mao's non-Lipschitz condition (see \citet{Mao1995SPA}) and the usual Osgood condition (see \citet{FanJiangDavison2013FMC}).

In the case of concerning only the wellposedness or existence of the $L^2$ solution or $L^p\ (p>1)$ solution, the assumptions required by the generator $g$ have been further relaxed. For example, in \citet{BriandLepetierSanMrtin2007Bernoulli},
\citet{Xu2008SPA}, \citet{Fan2017AMS} and \citet{Fan2017arXivLpSolutionofDRBSDEs}, besides the (weak) monotonic condition in $y$ and the continuity condition in $(y,z)$, a general growth in $y$ and a linear growth in $z$ (see assumption \ref{A:(HH)} with $\alpha=1$ in \cref{sec:2-Assumptions2.3}) is the only requirement for the generator $g$, and in \citet{LepeltierSanMartin1997SPL}, \citet{Matoussi1997SPL} and \citet{HamadeneLepeltierMatoussi1997BSDEs} and \citet{JiaXu2014arXiv}, the generator $g$ needs only to be continuous and have a linear growth in $(y,z)$ (see assumption \ref{A:(AA)} with $\tilde\alpha=1$ in \cref{sec:2-Assumptions2.3}).

During the evolution of BSDE theory, many papers have also been interested in the existence and uniqueness of the $L^1$ solutions for non-reflected BSDEs and RBSDEs with only integrability data, see, for example,
\citet{ElKarouiPengQuenez1997MF}, \citet{BriandDelyonHu2003SPA}, \citet{BriandHu2006PTRF}, \citet{FanLiu2010SPL}, \citet{Fan2017arXivL1SolutionofBSDEsBulletin}, \citet{Klimsiak2012EJP}, \citet{RozkoszSlominski2012SPA}, \citet{Fan2017arXivL1SolutionofMultiDimensionalBSDEs}, \citet{BayraktarYao2015SPA}, and \citet{HuTang2017ArXiv} for details, where generally speaking (except \cite{HuTang2017ArXiv}), an additional sub-linear growth condition in $z$ (see assumption \ref{A:(H2)}(ii) or \ref{A:(H2')}(ii) in \cref{sec:2-Assumptions2.3}) needs to be satisfied by the generator $g$.

In order to ensure existence of a solution for RBSDEs with two barriers, a Mokobodzki condition (i.e., there exists a quasi-martingale between two barriers) or a certain regularity condition on one of the barriers usually needs to be satisfied as in \citet{CvitanicKaratzas1996AoP}, \citet{BahlaliHamadeneMezerdi2005SPA} and \citet{PengXu2005AIHPPS}. By virtue of the notion of local solutions, these two conditions were replaced with the completely separated condition of the two barriers, which can be more easily verified or checked, in \citet{HamadeneHassani2005PTRF}, \citet{HamadeneHassaniOuknine2010BSM}, \citet{ElAsriHamadeneWang2011SAA}, \citet{BayraktarYao2015SPA} and so on.

Recently, several generalized Mokobodzki conditions, see (ii) of assumptions \ref{A:(H3)}, \ref{A:(H3L)} and \ref{A:(H3U)} in \cref{sec:2-Assumptions2.3} for the case of $L^1$ solution, were put forward and proved to be sufficient and necessary to ensure existence of an $L^p\ (p>1)$ or $L^1$ solution for a RBSDE with one or two barriers when the generator $g$ has a general growth in $y$, see \citet{Klimsiak2012EJP}, \citet{Klimsiak2013BSM}, \citet{Fan2017AMS} and \citet{Fan2017arXivLpSolutionofDRBSDEs} for more details. Many efforts in this direct can also be found in \citet{LepeltierMatoussiXu2005AdvinAP}, \citet{Xu2007JoTP}, \citet{Xu2008SPA}, \citet{RozkoszSlominski2012SPA}, \citet{LiShi2016SPL} and references therein.

Enlightened by these works aforementioned, especially by \citet{PengXu2005AIHPPS}, \citet{Klimsiak2012EJP}, \citet{BayraktarYao2015SPA} and \citet{Fan2017AMS}, we dedicate this paper to the $L^1$ solution of non-reflected BSDEs and RBSDEs with one and two continuous barriers under general assumptions on the generator and the data, i.e., \ref{A:(H1)}, \ref{A:(H2)}, \ref{A:(H2')}, \ref{A:(H3)}, \ref{A:(H3L)}, \ref{A:(H3U)}, \ref{A:(HH)} and \ref{A:(AA)} mentioned above, see \cref{sec:2-Assumptions2.3} again. Our results strengthens some corresponding known works on the $L^1$ solutions of on-reflected BSDEs and RBSDEs (see \cref{rmk:7-7.1} in \cref{sec:7-ExamplesandRemarks} for more details). Our approach is based on a combination between existing methods, their refinement and perfection, but also on some novel ideas and techniques.

The rest of this paper is organized as follows. \cref{sec:2-NotationsAssumptions} contains some notations, definitions, assumptions and lemmas which will be used later. \cref{sec:3-PenalizationApproximationComparisonTheorem}
consists of four subsections, which establish three convergence results respectively on the penalization scheme and the approximation scheme for the $L^1$ solutions of RBSDEs with one and two barriers under general assumptions, and a general comparison theorem for the $L^1$ solutions of RBSDEs under assumptions \ref{A:(H1)}(i) and \ref{A:(H2)}. These elementary results will play important roles in the proof of our main results in the subsequent sections.

\cref{sec:4-ExistenceBSDEs} is devoted to the $L^1$ solution of non-reflected BSDEs. In this section, we prove an existence and uniqueness result for an $L^1$ solution of a BSDE under assumptions \ref{A:(H1)} and \ref{A:(H2)} (see \cref{thm:4-ExistenceanduniquenssofBSDEunderH2}), and an existence result for a minimal and a maximal $L^1$ solution of a BSDE with generator $g:=g^1+g^2$, where the generator $g^1$ satisfies assumptions \ref{A:(H1)}(i) and \ref{A:(HH)} (resp. \ref{A:(H1)} and \ref{A:(H2')}), and $g^2$ satisfies assumption \ref{A:(AA)} (see \cref{thm:4-ExistenceofBSDEunderHH}).

\cref{sec:5-ExistenceRBSDEs} deals with the $L^1$ solution of RBSDEs with one continuous barrier. By \cref{thm:5-ExistenceanduniquenssofRBSDEunderH2} we prove the existence and uniqueness of an $L^1$ solution for a RBSDE with one lower (resp. upper) barrier under assumptions \ref{A:(H1)}, \ref{A:(H2)} and \ref{A:(H3L)} (resp. \ref{A:(H3U)}) by the penalization method, and show the sufficient and necessary property of  \ref{A:(H3L)}(ii) (resp. \ref{A:(H3U)}(ii)). And, in \cref{thm:5-ExistenceofRBSDEunderH2'Penalization} we study the same problem, but on existence of a minimal (resp. maximal) $L^1$ solution for a RBSDE with one lower (resp. upper) barrier and  a generator $g:=g^1+g^2$, where the generator $g^1$ satisfies assumptions \ref{A:(H1)} and \ref{A:(H2')} and the generator $g^2$ satisfies assumption \ref{A:(AA)}. Furthermore, by \cref{thm:5-ExistenceofRBSDEunderH2'Approximation} we show that under the assumptions of \cref{thm:5-ExistenceofRBSDEunderH2'Penalization},  the minimal and maximal $L^1$ solution for the RBSDE with one lower or upper barrier can be both approximated by a sequence of $L^1$ solutions for RBSDEs with generators satisfying  \ref{A:(H1)} and \ref{A:(H2)}.

\cref{sec:6-ExistenceOfDRBSDEs} investigates the $L^1$ solution of RBSDEs with two continuous barriers. By \cref{thm:6-ExistenceandUniquenessUnder(H2)} we prove the existence and uniqueness of an $L^1$ solution for a doubly RBSDE under assumptions \ref{A:(H1)}, \ref{A:(H2)} and \ref{A:(H3)} by the penalization method, and show the sufficient and necessary property of \ref{A:(H3)}(ii). And, in \cref{thm:6-ExistenceofDRBSDEunderH2'Penalization} we study the same problem, but on existence of a minimal and a maximal $L^1$ solution for a doubly RBSDE with a generator as in \cref{thm:5-ExistenceofRBSDEunderH2'Penalization}. Furthermore, by \cref{thm:6-ExistenceofDRBSDEunderH2'Approximation} we prove that under the assumptions of \cref{thm:6-ExistenceofDRBSDEunderH2'Penalization},  the minimal and maximal $L^1$ solution for the doubly RBSDE can be both approximated by a sequence of $L^1$ solutions for doubly RBSDEs with generators satisfying  \ref{A:(H1)} and \ref{A:(H2)}. Finally, in \cref{sec:7-ExamplesandRemarks} we introduce several examples and remarks to illustrate further the theoretical results obtained in this paper.

\section{Notations, definitions, assumptions and lemmas}
\label{sec:2-NotationsAssumptions}
\setcounter{equation}{0}

\subsection{Notations}
\label{sec:2-Notations2.1}

Let $T>0$ be a fixed real number and $(\Omega,\F_T,\mathbb{P};(\F_t)_{t\in\T})$ be a complete filtered probability space carrying a standard $d$-dimensional Brownian motion $(B_t)_{t\in\T}$ together with the completed $\sigma$-algebra filtration $(\F_t)_{t\in\T}$ generated by $B_\cdot$. Denote by $\mathbbm{1}_{A}$ the indicator function of a set $A$ and by $A^c$ the complement of $A$. Let $\R_+:=[0,+\infty)$, $a^+:=\max\{a,0\}$ and $a^-:=(-a)^+$ for any real number $a$, and let ${\rm sgn}(x)$ represent the sign of a real number $x$ and $|y|$ the Euclidean norm of $y\in \R^{n}$ with $n\geq 1$. Furthermore, denote by $\s$ the set of all $(\F_t)$-progressively measurable and continuous processes $(Y_t)_{t\in\T}$, and for $p>0$ we denote by $\s^p$ the set of all processes $Y_\cdot\in \s$ satisfying
$$\|Y\|_{{\s}^p}:=\left(\E[\sup_{t\in\T} |Y_t|^p]\right)^{1\wedge 1/p}<+\infty.$$
$\M$ is the set of all $(\F_t)$-progressively measurable $\R^d$-valued processes $(Z_t)_{t\in\T}$ satisfying
$$\mathbb{P}\left(\int_0^T|Z_t|^2{\rm d}t<+\infty\right)=1,$$
and for $p>0$, $\M^p$ is the set of all processes $Z_\cdot\in \M$ satisfying
$$
\|Z\|_{\M^p}:=\left\{ \E\left[\left(\int_0^T |Z_t|^2{\rm d}t\right)^{p/2}\right] \right\}^{1\wedge 1/p}<+\infty.
$$
We also use the following spaces with respect to variables and processes defined on $\Omega\times\T$:
\begin{itemize}
\item $\LT$ the set of all $\F_T$-measurable random variables $\xi$ satisfying $\E[|\xi|]<+\infty;$
\item $\hcal$  the set of all $(\F_t)$-progressively measurable processes $X_\cdot$ satisfying
    $\mathbb{P}\left(\int_0^T|X_t|{\rm d}t<+\infty\right)=1;$
\item $\hcal^1$ the set of all processes $X_\cdot\in \hcal$ satisfying
    $\|X\|_{\hcal^1}:=\E\left[\int_0^T |X_t|{\rm d}t\right]<+\infty;$
\item $\vcal$ the set of all $(\F_t)$-progressively measurable and continuous processes of finite variation;
\item $\vcal^+$ the set of all $(\F_t)$-progressively measurable, continuous and increasing processes valued $0$ at $0$;
\item $\vcal^1$ (resp. $\vcal^{+,1}$) the set of all processes $V_\cdot\in \vcal$ (resp. $\vcal^+$) satisfying $\E\left[|V|_T\right]<+\infty$.

\end{itemize}
Here and hereafter, for each $(\F_t)$-stopping time $\tau$ valued in $\T$, $|V|_\tau$ represents the random finite variation of $V_\cdot\in\vcal$ on the stochastic interval $[0,\tau]$. It is clear that $|V|_\tau=V_\tau$ when $V_\cdot\in\vcal^{+}$.

For any two processes $K^1_\cdot$ and $K^2_\cdot$ in the space $\vcal^1$, we say ${\rm d}K^1\bot {\rm d}K^2$ means that there exists an $(\F_t)$-progressively measurable set $D\subset \Omega\times\T$ such that
$$\E\left[\int_0^T \mathbbm{1}_{D}(t,\omega)\ {\rm d}K^1_t(\omega)\right]
=\E\left[\int_0^T \mathbbm{1}_{D^c}(t,\omega)\ {\rm d}K^2_t(\omega)\right]=0.$$
And, we say ${\rm d}K^1\leq {\rm d}K^2$ means that for each $(\F_t)$-progressively measurable set $D\subset \Omega\times\T$,
$$\E\left[\int_0^T \mathbbm{1}_{D}(t,\omega)\ {\rm d}K^1_t(\omega)\right]\leq \E\left[\int_0^T \mathbbm{1}_{D}(t,\omega)\ {\rm d}K^2_t(\omega)\right],$$
$i.e.,$
$
K^1_t-K^1_s\leq K^2_t-K^2_s,\ 0\leq s\leq t\leq T.
$

Finally, we recall that a process $(Y_t)_{t\in\T}$ belongs to the class (D) if the family of variables $\{|Y_\tau|:\tau$ is an $(\F_t)$-stopping time bounded by $T\}$ is uniformly integrable.

In the rest of this paper, the variable $\omega$ in random elements is often omitted and all equalities and inequalities between random variables are understood to hold $\ps$ without a special illustration.

\subsection{Definitions\vspace{0.1cm}}
\label{sec:2-Definitions2.2}

In this paper, we always assume that $\xi\in\LT$, $V_\cdot\in\vcal$, $L_\cdot\in \s$ (or $L_\cdot=-\infty$), $U_\cdot\in \s$ (or $U_\cdot=+\infty$), $L_\cdot\leq U_\cdot$, and that a random function, which is usually called a generator,
$$
g(\omega,t,y,z):\Omega\times\T\times\R\times\R^d
\longmapsto \R\vspace{-0.1cm}
$$
is $(\F_t)$-progressively measurable for each $(y,z)$, and continuous in $(y,z)$ for almost each $(\omega,t)$. \vspace{0.1cm}

We use the following definition for the $L^1$ solution of non-reflected BSDEs and reflected BSDEs with one and two continuous barriers.

\begin{dfn}
\label{dfn:2-DefinitionOfBSDERBSDEDRBSDE}
By an $L^1$ solution to BSDE $(\xi,g+{\rm d}V)$ we understand a pair $(Y_t,Z_t)_{t\in\T}\in \s^\beta\times \M^\beta$ for each $\beta\in (0,1)$ such that $(Y_t)_{t\in\T}$ belongs to the class (D) and
$$
Y_t=\xi+\int_t^Tg(s,Y_s,Z_s){\rm d}s+\int_t^T{\rm d}V_s-\int_t^TZ_s \cdot {\rm d}B_s,\ \ t\in\T.
$$
By an $L^1$ solution to $\underline{R}$BSDE $(\xi,g+{\rm d}V,L)$ we understand a triple $(Y_t,Z_t,K_t)_{t\in\T}\in \s^\beta\times \M^\beta\times\vcal^{+,1}$ for each $\beta\in (0,1)$  such that $(Y_t)_{t\in\T}$ belongs to the class (D) and
$$
\left\{
\begin{array}{l}
\Dis Y_t=\xi+\int_t^Tg(s,Y_s,Z_s){\rm d}s+\int_t^T{\rm d}V_s+\int_t^T{\rm d}K_s-\int_t^TZ_s \cdot {\rm d}B_s,\ \   t\in\T,\vspace{0.1cm}\\
\Dis L_t\leq Y_t,\ \ t\in\T\ \ {\rm and} \ \int_0^T (Y_t-L_t){\rm d}K_t=0.
\end{array}
\right.
$$
By an $L^1$ solution to $\bar{R}$BSDE $(\xi,g+{\rm d}V,U)$ we understand a triple $(Y_t,Z_t,A_t)_{t\in\T}\in \s^\beta\times \M^\beta\times\vcal^{+,1}$ for each $\beta\in (0,1)$ such that $(Y_t)_{t\in\T}$ belongs to the class (D) and
$$
\left\{
\begin{array}{l}
\Dis Y_t=\xi+\int_t^Tg(s,Y_s,Z_s){\rm d}s+\int_t^T{\rm d}V_s-\int_t^T{\rm d}A_s-\int_t^TZ_s \cdot {\rm d}B_s,\ \   t\in\T,\vspace{0.1cm}\\
\Dis Y_t\leq U_t,\ \ t\in\T\ \ {\rm and} \ \int_0^T (U_t-Y_t){\rm d}A_t=0.
\end{array}
\right.
$$
By an $L^1$ solution to DRBSDE $(\xi,g+{\rm d}V,L,U)$ we understand a quadruple $(Y_t,Z_t,K_t,A_t)_{t\in\T}\in \s^\beta\times \M^\beta\times\vcal^{+,1}\times\vcal^{+,1}$ for each $\beta\in (0,1)$ such that both $Y_\cdot$ belongs to the class (D), and
$$
\left\{
\begin{array}{l}
\Dis Y_t=\xi+\int_t^Tg(s,Y_s,Z_s){\rm d}s+\int_t^T{\rm d}V_s+\int_t^T{\rm d}K_s-\int_t^T{\rm d}A_s-\int_t^TZ_s \cdot {\rm d}B_s,\ \   t\in\T,\vspace{0.1cm}\\
\Dis L_t\leq Y_t\leq U_t,\ \ t\in\T,\ \ \int_0^T (Y_t-L_t){\rm d}K_t=\int_0^T (U_t-Y_t){\rm d}A_t=0\ \ {\rm and} \ \ {\rm d}K\bot{\rm d}A.
\end{array}
\right.
$$
Furthermore, an $L^1$ solution $(Y_t,Z_t)_{t\in\T}$ of BSDE $(\xi,g+{\rm d}V)$ is called the minimal (resp. maximal) $L^1$ solution if for any $L^1$ solution $(Y'_t,Z'_t)_{t\in\T}$ of BSDE $(\xi,g+{\rm d}V)$, we have
$$
Y_t\leq Y'_t,\ \ t\in\T\ \ \ ({\rm resp.}\ \  Y_t\geq Y'_t,\ \ t\in\T).
$$
Similarly, we can define the minimal (resp. maximal) $L^1$ solution for \underline{R}BSDE $(\xi,g+{\rm d}V,L)$, $\bar{R}$BSDE $(\xi,g+{\rm d}V,U)$ and DRBSDE $(\xi,g+{\rm d}V,L,U)$.
\end{dfn}

\subsection{Assumptions\vspace{0.1cm}}
\label{sec:2-Assumptions2.3}

In this paper, we will use the following assumptions with respect to the generator, the terminal condition and the barriers.
\begin{enumerate}

\renewcommand{\theenumi}{(H\arabic{enumi})}
\renewcommand{\labelenumi}{\theenumi}

\item \label{A:(H1)}
\begin{itemize}
\item [(i)] $g$ satisfies the one-sided Osgood condition in $y$, i.e., there exists a nondecreasing and concave function $\rho(\cdot):\R_+\mapsto \R_+$ with $\rho(0)=0$, $\rho(u)>0$ for $u>0$ and $\int_{0^+} {{\rm d}u\over \rho(u)}=+\infty$ such that $\as$, $\RE\ y_1,y_2\in \R,z\in\R^{d}$,
$$
(g(\omega,t,y_1,z)-g(\omega,t,y_2,z)){\rm sgn}(y_1-y_2)\leq \rho(|y_1-y_2|).\vspace{-0.2cm}
$$
\item [(ii)] $g(\cdot,0,0)\in\hcal^1$;
\item [(iii)] $g$ has a general growth in $y$, i.e, $\as$, $\RE r>0$, $$\psi_\cdot(r):=\sup\limits_{|y|\leq r}|g(\cdot,y,0)-g(\cdot,0,0)|\ {\rm belongs\ to\ the\ space}\ \hcal.$$
\end{itemize}

\item \label{A:(H2)}
\begin{itemize}
\item [(i)] $g$ is uniformly continuous in $z$, i.e., there exists a nondecreasing and continuous function $\phi(\cdot):\R_+\mapsto \R_+$ with $\phi(0)=0$ such that $\as$, $\RE\ y\in\R, z_1,z_2\in\R^{d}$,
$$
|g(\omega,t,y,z_1)-g(\omega,t,y,z_2)|\leq \phi(|z_1-z_2|);
$$
\item [(ii)] $g$ has a stronger sub-linear growth in $z$, i.e., there exist two constants $\gamma\geq 0$ and $\alpha\in (0,1)$ together with a nonnegative process $f_\cdot\in \hcal^1$ such that $\as$, $\RE\ y\in\R, z\in\R^{d}$,
$$
|g(\omega,t,y,z)-g(\omega,t,y,0)|\leq  \gamma(f_t(\omega)+|y|+|z|)^\alpha.
$$
\end{itemize}

\renewcommand{\theenumi}{(H2')}
\renewcommand{\labelenumi}{\theenumi}

\item \label{A:(H2')}
\begin{itemize}
\item [(i)] $g$ is stronger continuous in $(y,z)$, i.e., $\as$, $\RE\ y\in \R,\ g(\omega,t,y,\cdot)$ is continuous, and $g(\omega,t,\cdot,z)$ is continuous uniformly with respect to $z$;
\item [(ii)] $g$ has a sub-linear growth in $z$, i.e., there exist three constants $\mu,\lambda\geq 0$ and $\alpha\in (0,1)$ together with a nonnegative process $f_\cdot\in \hcal^1$ such that $\as$, $\RE\ y\in\R, z\in\R^{d}$,
$$
|g(\omega,t,y,z)-g(\omega,t,y,0)|\leq  f_t(\omega)+\mu|y|+\lambda|z|^\alpha.
$$
\end{itemize}

\renewcommand{\theenumi}{(H\arabic{enumi})}
\renewcommand{\labelenumi}{\theenumi}
\setcounter{enumi}{2}

\item \label{A:(H3)}
\begin{itemize}
\item [(i)]$L_\cdot\in \s$ (or $L_\cdot=-\infty$), $U_\cdot\in \s$ (or $U_\cdot=+\infty$), $L_\cdot\leq U_\cdot$, $\xi\in \LT$ and $L_T\leq \xi\leq U_T$;

\item [(ii)] There exists two processes $(C_\cdot,H_\cdot)\in \vcal^1\times \M^\beta$ for each $\beta\in (0,1)$ such that
    $$X_t:=X_0+\int_0^t {\rm d}C_s+\int_0^t H_s\cdot {\rm d}B_s,\ \ \ t\in [0,T]$$ belongs to the class (D), $g(\cdot,X_\cdot,0)\in \hcal^1$ and $L_t\leq X_t\leq U_t$ for each $t\in \T$.
\end{itemize}

\renewcommand{\theenumi}{(H3L)}
\renewcommand{\labelenumi}{\theenumi}

\item \label{A:(H3L)}
\begin{itemize}
\item [(i)]$L_\cdot\in \s$ (or $L_\cdot=-\infty$), $\xi\in \LT$ and $L_T\leq \xi$;

\item [(ii)] There exists two processes $(C_\cdot,H_\cdot)\in \vcal^1\times \M^\beta$ for each $\beta\in (0,1)$ such that
    $$X_t:=X_0+\int_0^t {\rm d}C_s+\int_0^t H_s\cdot {\rm d}B_s,\ \ \ t\in [0,T]$$ belongs to the class (D), $g(\cdot,X_\cdot,0)\in \hcal^1$ and $L_t\leq X_t$ for each $t\in \T$.
\end{itemize}

\renewcommand{\theenumi}{(H3U)}
\renewcommand{\labelenumi}{\theenumi}

\item \label{A:(H3U)}
\begin{itemize}
\item [(i)]$U_\cdot\in \s$ (or $U_\cdot=+\infty$), $\xi\in \LT$ and $\xi\leq U_T$;

\item [(ii)] There exists two processes $(C_\cdot,H_\cdot)\in \vcal^1\times \M^\beta$ for each $\beta\in (0,1)$ such that
    $$X_t:=X_0+\int_0^t {\rm d}C_s+\int_0^t H_s\cdot {\rm d}B_s,\ \ \ t\in [0,T]$$ belongs to the class (D), $g(\cdot,X_\cdot,0)\in \hcal^1$ and $ X_t\leq U_t$ for each $t\in \T$.
\end{itemize}

\renewcommand{\theenumi}{(HH)}
\renewcommand{\labelenumi}{\theenumi}

\item \label{A:(HH)}
\begin{itemize}
\item [(i)] $g$ is stronger continuous in $(y,z)$, i.e., $\as$, $\RE\ y\in \R,\ g(\omega,t,y,\cdot)$ is continuous, and $g(\omega,t,\cdot,z)$ is continuous uniformly with respect to $z$;
\item [(ii)] $g$ has a general growth in $y$ and a sub-linear growth in $z$, i.e., there exist two constants $\lambda\geq 0$ and $\alpha\in (0,1)$, a nonnegative process $f_\cdot\in\hcal^1$ and a nonnegative function $\varphi_\cdot(r)\in {\bf S}$ such that
$$
\as,\ \RE\ y\in\R\ {\rm and}\ z\in\R^{d},\ \ |g(\omega,t,y,z)|\leq f_t(\omega)+ \varphi_t(\omega,|y|)+\lambda |z|^\alpha,
$$
here and hereafter, ${\bf S}$ denotes the set of nonnegative functions $\varphi_t(\omega,r):\Omega\times \T\times \R_+\mapsto \R_+$
satisfying the following two conditions:
\begin{itemize}
\item $\as$, the function $r\mapsto \varphi_t(\omega,r)$ is increasing and $\varphi_t(\omega,0)=0$;
\item for each $r\geq 0$, $\varphi_\cdot(\cdot,r)\in \hcal$.
\end{itemize}
\end{itemize}

\renewcommand{\theenumi}{(AA)}
\renewcommand{\labelenumi}{\theenumi}

\item \label{A:(AA)}
 $g$ has a linear growth in $y$ and a sub-linear growth in $z$, i.e., there exist three constants $\tilde\mu,\tilde\lambda\geq 0$ and $\tilde\alpha\in (0,1)$ together with a nonnegative process $\tilde f_\cdot\in \hcal^1$ such that $\as$, $\RE\ y\in\R, z\in\R^{d}$,
$$
|g(\omega,t,y,z)|\leq  \tilde f_t(\omega)+\tilde\mu|y|+\tilde\lambda
|z|^{\tilde\alpha}.
$$
\end{enumerate}

\begin{rmk}
\label{rmk:2-LinearGrowthOfRhoandPhi}
Without loss of generality, we will always assume that the functions $\rho(\cdot)$ and $\phi(\cdot)$ defined respectively in \ref{A:(H1)} and \ref{A:(H2)} are of linear growth, i.e., there exists a constant $A>0$ such that
$$\RE\ x\in \R_+,\ \ \rho(x)\leq A(x+1)\ \ {\rm and}\ \ \phi(x)\leq A(x+1).$$
It is clear that \ref{A:(H2)}(ii) can imply \ref{A:(H2')}(ii), \ref{A:(H1)}(ii)(iii) and \ref{A:(H2)}(ii) (or \ref{A:(H2')}(ii)) can imply \ref{A:(HH)}(ii), and \ref{A:(HH)}(ii) can imply \ref{A:(H1)}(ii)(iii). In addition, \ref{A:(H3)}, \ref{A:(H3L)} and \ref{A:(H3U)} are the so-called generalized Mokobodzki conditions, which relate the growth of $g$ and that of the barriers.
\end{rmk}

\subsection{Lemmas\vspace{0.1cm}}
\label{sec:2-Estimates2.4}

In this subsection, let us introduce several lemmas, which will play an important role later. Firstly, the following a priori estimate comes from Lemma 3.1 in \citet{Fan2017AMS}.

\begin{lem}
\label{lem:2-Lemma1}
Let the triple $(\bar Y_\cdot,\bar Z_\cdot,\bar V_\cdot)\in \s\times\M\times\vcal$ satisfy the following equation:
\begin{equation}
\label{eq:2-BarY=BarV}
\bar Y_t=\bar Y_T+\int_t^T {\rm d}\bar V_s-\int_t^T \bar Z_s\cdot {\rm d}B_s,\ \ t\in \T.
\end{equation}
We have
\begin{itemize}
\item [(i)] For each $p>0$, there exists a constant $C_1>0$ depending only on $p$ such that for each $t\in\T$ and each $(\F_t)$-stopping time $\tau$ valued in $\T$,
$$
\Dis\E\left[\left.\left(\int_{t\wedge\tau}
^{\tau}|\bar Z_s|^2{\rm d}s\right)^{p\over 2}\right|\F_t\right]
\leq \Dis C_1\E\left[\left.\sup\limits_{s\in [t,T]}|\bar Y_{s\wedge\tau}|^p+\sup\limits_{s\in [t,T]}\left[\left(\int_{s\wedge\tau}^{\tau} \bar Y_r{\rm d}\bar V_r\right)^+\right]^{p\over 2}\right|\F_t\right];
$$

\item [(ii)] If $\bar Y_\cdot\in \s^p$ for some $p>1$, then there exists a constant $C_2>0$ depending only on $p$ such that for each $t\in\T$ and each $(\F_t)$-stopping time $\tau$ valued in $\T$,
$$
\begin{array}{ll}
&\Dis\E\left[\left.\sup\limits_{s\in [t,T]}|\bar Y_{s\wedge\tau}|^p+
\int_{t\wedge\tau}^{\tau} |\bar Y_s|^{p-2}\mathbbm{1}_{\{|\bar Y_s|\neq 0\}}|\bar Z_s|^2{\rm d}s
\right|\F_t\right]\vspace{0.1cm}\\
\leq &\Dis C_2\E\left[\left.|\bar Y_\tau|^p+\sup\limits_{s\in [t,T]}\left(\int_{s\wedge\tau}^{\tau} |\bar Y_r|^{p-1}{\rm sgn}(\bar Y_r){\rm d}\bar V_r\right)^+\right|\F_t\right].
\end{array}\vspace{0.1cm}
$$
\end{itemize}
\end{lem}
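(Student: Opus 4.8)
The plan is to derive both estimates from It\^o's formula applied to suitable functions of $\bar Y_\cdot$, followed by the conditional Burkholder--Davis--Gundy (BDG) inequality and a Young-type absorption argument; the two parts differ only in the test function used and in the integrability available. To guarantee that the quantities being absorbed are a priori finite, I would first localize by the stopping times $\sigma_n:=\inf\{s\in\T:\int_0^s|\bar Z_r|^2\,{\rm d}r\geq n\}$ (set $\sigma_n=T$ when the set is empty), prove each inequality with $\tau$ replaced by $\tau\wedge\sigma_n$ so that all stochastic integrals are genuine martingales with vanishing $\F_t$-conditional expectation, and then let $n\to\infty$ via Fatou's lemma on the left-hand side. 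When the conditional expectation on the right-hand side is infinite both inequalities are trivial, so I may assume it finite.

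For part (i), applying It\^o's formula to $s\mapsto\bar Y_s^2$ on $[t\wedge\tau,\tau]$ through \eqref{eq:2-BarY=BarV} yields
\[
\int_{t\wedge\tau}^{\tau}|\bar Z_s|^2\,{\rm d}s
=\bar Y_\tau^2-\bar Y_{t\wedge\tau}^2
+2\int_{t\wedge\tau}^{\tau}\bar Y_s\,{\rm d}\bar V_s
-2\int_{t\wedge\tau}^{\tau}\bar Y_s\bar Z_s\cdot{\rm d}B_s .
\]
Bounding $\bar Y_\tau^2-\bar Y_{t\wedge\tau}^2$ by $\sup_{s\in[t,T]}|\bar Y_{s\wedge\tau}|^2$ and $(\int_{t\wedge\tau}^{\tau}\bar Y_s\,{\rm d}\bar V_s)^+$ by $\sup_{s\in[t,T]}(\int_{s\wedge\tau}^{\tau}\bar Y_r\,{\rm d}\bar V_r)^+$, raising to the power $p/2$ with $(a+b+c)^{p/2}\leq 3^{(p/2-1)^+}(a^{p/2}+b^{p/2}+c^{p/2})$, and taking $\E[\,\cdot\,|\F_t]$, it remains to control the martingale term. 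By the conditional BDG inequality together with $\int_{t\wedge\tau}^{\tau}|\bar Y_s|^2|\bar Z_s|^2\,{\rm d}s\leq\sup_s|\bar Y_{s\wedge\tau}|^2\int_{t\wedge\tau}^{\tau}|\bar Z_s|^2\,{\rm d}s$, that term is dominated by
\[
c\,\E\!\left[\sup_s|\bar Y_{s\wedge\tau}|^{p/2}\Big(\int_{t\wedge\tau}^{\tau}|\bar Z_s|^2\,{\rm d}s\Big)^{p/4}\,\Big|\,\F_t\right]
\leq c\,\E\!\left[\tfrac{1}{2\delta}\sup_s|\bar Y_{s\wedge\tau}|^{p}+\tfrac{\delta}{2}\Big(\int_{t\wedge\tau}^{\tau}|\bar Z_s|^2\,{\rm d}s\Big)^{p/2}\,\Big|\,\F_t\right],
\]
and choosing $\delta$ small enough absorbs the $\bar Z$-term into the left-hand side, giving the claim with $C_1$ depending only on $p$. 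Here $\sup_s[((\int_{s\wedge\tau}^{\tau}\bar Y_r\,{\rm d}\bar V_r)^+)^{p/2}]=(\sup_s(\int_{s\wedge\tau}^{\tau}\bar Y_r\,{\rm d}\bar V_r)^+)^{p/2}$ by monotonicity of $x\mapsto x^{p/2}$, matching the stated form.

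For part (ii), since $|y|^p$ need not be $C^2$ at the origin when $p<2$, I would instead apply It\^o's formula to the regularization $u_\eps(y):=(y^2+\eps^2)^{p/2}$, for which $u_\eps''(y)=p(y^2+\eps^2)^{p/2-2}[(p-1)y^2+\eps^2]\geq 0$ when $p>1$. On $[s\wedge\tau,\tau]$ this gives
\[
u_\eps(\bar Y_{s\wedge\tau})+\tfrac12\int_{s\wedge\tau}^{\tau}u_\eps''(\bar Y_r)|\bar Z_r|^2\,{\rm d}r
=u_\eps(\bar Y_\tau)+\int_{s\wedge\tau}^{\tau}u_\eps'(\bar Y_r)\,{\rm d}\bar V_r
-\int_{s\wedge\tau}^{\tau}u_\eps'(\bar Y_r)\bar Z_r\cdot{\rm d}B_r .
\]
Letting $\eps\to0$, with $u_\eps\to|\cdot|^p$, $u_\eps'\to p|\cdot|^{p-1}{\rm sgn}(\cdot)$, and $u_\eps''(\bar Y_r)|\bar Z_r|^2\to p(p-1)|\bar Y_r|^{p-2}\mathbbm{1}_{\{|\bar Y_r|\neq0\}}|\bar Z_r|^2$ (Fatou on the nonnegative It\^o-correction term, dominated convergence on the rest, justified by $\bar Y\in\s^p$), then taking $\sup_{s\in[t,T]}$ and $\E[\,\cdot\,|\F_t]$, I would again invoke the conditional BDG inequality on $\int|\bar Y_r|^{p-1}{\rm sgn}(\bar Y_r)\bar Z_r\cdot{\rm d}B_r$, whose bracket obeys $\int_{t\wedge\tau}^{\tau}|\bar Y_r|^{2(p-1)}|\bar Z_r|^2\,{\rm d}r\leq\sup_s|\bar Y_{s\wedge\tau}|^{p}\int_{t\wedge\tau}^{\tau}|\bar Y_r|^{p-2}\mathbbm{1}_{\{|\bar Y_r|\neq0\}}|\bar Z_r|^2\,{\rm d}r$, and absorb both the supremum term and the correction integral by Young's inequality, producing $C_2$ depending only on $p$.

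The main technical obstacle is the coordination between localization, the limit $\eps\to0$, and the absorption step: absorption is legitimate only once the terms moved to the left-hand side are known to be finite, which is exactly what the stopping times $\sigma_n$—and the hypothesis $\bar Y\in\s^p$ in part (ii)—secure, while Fatou's lemma is needed both to recover the non-localized inequality and to carry the nonnegative It\^o-correction term through the limit. Arranging these steps in the correct order—localize, apply It\^o, let $\eps\to0$, condition on $\F_t$, apply conditional BDG, absorb, and finally remove the localization—is the crux; everything else reduces to the routine elementary and BDG estimates indicated above.
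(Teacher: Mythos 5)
Your overall architecture is the standard one and, as far as one can tell, coincides with the proof of the result the paper actually invokes (the lemma is stated here without proof, being imported from Lemma 3.1 of Fan (2017, Acta Math.\ Sin.)): It\^{o}'s formula for $\bar Y^2$, respectively for the regularization $u_\eps(y)=(y^2+\eps^2)^{p/2}$, followed by the conditional BDG inequality and absorption via Young's inequality. Your handling of part (ii) --- the formula for $u_\eps''$, Fatou on the nonnegative correction term restricted to $\{\bar Y_r\neq 0\}$, and the bracket bound $|\bar Y_r|^{2(p-1)}\leq \sup_s|\bar Y_{s\wedge\tau}|^{p}\,|\bar Y_r|^{p-2}\mathbbm{1}_{\{|\bar Y_r|\neq 0\}}$ --- is exactly right.

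There is, however, one step that fails as written: recovering the un-localized estimate from the one for $\tau\wedge\sigma_n$ ``via Fatou's lemma on the left-hand side.'' Fatou does handle the left-hand side, but the right-hand side also depends on $n$, and in part (i) the term $\sup_{s\in[t,T]}\bigl(\int_{s\wedge\tau\wedge\sigma_n}^{\tau\wedge\sigma_n}\bar Y_r\,{\rm d}\bar V_r\bigr)^{+}$ is neither monotone in $n$ nor dominated by $\sup_{s\in[t,T]}\bigl(\int_{s\wedge\tau}^{\tau}\bar Y_r\,{\rm d}\bar V_r\bigr)^{+}$: writing $N_u:=\int_{t\wedge\tau}^{u}\bar Y_r\,{\rm d}\bar V_r$, the localized term equals $N_{\tau\wedge\sigma_n}-\min_{u}N_u$ over $[t\wedge\tau,\tau\wedge\sigma_n]$ while the target equals $N_\tau-\min_{u}N_u$ over $[t\wedge\tau,\tau]$, and if $N$ rises and then returns to $0$ the first can be large while the second vanishes; no integrable dominating function is available from the hypotheses, so neither Fatou nor dominated convergence closes the limit on the right. (A smaller instance of the same defect: with $\sigma_n$ started at time $0$, $\sup_{s\in[t,T]}|\bar Y_{s\wedge\tau\wedge\sigma_n}|$ may sample $\bar Y$ strictly before $t\wedge\tau$.) The repair is to localize only where finiteness is genuinely needed: the It\^{o} identity on $[t\wedge\tau,\tau]$ holds pathwise without stopping, the conditional BDG inequality is valid for continuous \emph{local} martingales, and after the pathwise Young step the only point at which finiteness enters is the final subtraction $X\leq A+\tfrac12X\Rightarrow X\leq 2A$ with $X=\E[(\int_{t\wedge\tau}^{\tau}|\bar Z_s|^2{\rm d}s)^{p/2}|\F_t]$; one must then justify $X<+\infty$ separately (e.g.\ by first proving the statement for $\bar Z\in\M^{p}$, or by a cruder preliminary bound --- in every application in this paper this finiteness is automatic since either $\bar Z\in\M^{\beta}$ or $\tau$ already bounds $\int|\bar Z|^2$). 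The ${\rm d}\bar V$ term in part (ii) requires the same care.
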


Secondly, the following observation will be used several times later.\vspace{-0.1cm}

\begin{lem}\label{lem:2-SublinearOfg}
Let the generator $g$ satisfy \ref{A:(H1)}(i) and \ref{A:(H2')}(ii) (resp. \ref{A:(H2)}(ii)), and $(\underline X_\cdot, Y_\cdot, \bar X_\cdot,Z_\cdot)\in \s\times\s\times\s\times\M$ satisfy $\underline X_\cdot\leq Y_\cdot\leq \bar X_\cdot$. Then, $\as$,
\begin{equation}
\label{eq:2-SubLinearGrowthofg}
|g(\cdot,Y_\cdot,Z_\cdot)|\leq |g(\cdot,\underline X_\cdot,0)|+|g(\cdot,\bar X_\cdot,0)|+(\mu+A)(|\underline X_\cdot|+|\bar X_\cdot|)+ f_\cdot+A+\lambda|Z_\cdot|^\alpha
\end{equation}
$$
({\rm resp.}\ \ \ |g(\cdot,Y_\cdot,Z_\cdot)|\leq |g(\cdot,\underline X_\cdot,0)|+|g(\cdot,\bar X_\cdot,0)|+(\gamma+A)(|\underline X_\cdot|+|\bar X_\cdot|)+ \gamma (1+f_\cdot)+A+\gamma |Z_\cdot|^\alpha\ ).
$$
\end{lem}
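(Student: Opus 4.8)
The plan is to bound $|g(\cdot,Y_\cdot,Z_\cdot)|$ by first splitting off the dependence on $z$ and then treating the ``$y$-part'' $|g(\cdot,Y_\cdot,0)|$ separately. Writing
$$|g(t,Y_t,Z_t)|\leq |g(t,Y_t,0)|+|g(t,Y_t,Z_t)-g(t,Y_t,0)|,$$
the second term is immediately controlled by \ref{A:(H2')}(ii) (resp. \ref{A:(H2)}(ii)), so the crux is to estimate $|g(t,Y_t,0)|$ using only the one-sided Osgood condition \ref{A:(H1)}(i) together with the linear growth of $\rho$ recorded in \cref{rmk:2-LinearGrowthOfRhoandPhi}. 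All of this is carried out pointwise, $\as$, at each fixed $(\omega,t)$.

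First I would estimate $|g(t,Y_t,0)|$ by exploiting the sandwiching $\underline X_t\leq Y_t\leq \bar X_t$ in two directions. Since the Osgood inequality in \ref{A:(H1)}(i) only controls the variation of $g$ in $y$ in the direction of the larger argument, one cannot compare $Y_t$ against a single reference process; instead I would compare $Y_t$ upward against $\underline X_t$ and downward against $\bar X_t$. Concretely, $Y_t\geq \underline X_t$ gives $g(t,Y_t,0)-g(t,\underline X_t,0)\leq \rho(Y_t-\underline X_t)$, hence the upper bound $g(t,Y_t,0)\leq |g(t,\underline X_t,0)|+\rho(Y_t-\underline X_t)$; symmetrically, $Y_t\leq \bar X_t$ gives $g(t,\bar X_t,0)-g(t,Y_t,0)\leq \rho(\bar X_t-Y_t)$, hence the lower bound $g(t,Y_t,0)\geq -|g(t,\bar X_t,0)|-\rho(\bar X_t-Y_t)$. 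Bounding each increment by $\bar X_t-\underline X_t\leq |\underline X_t|+|\bar X_t|$ and applying $\rho(x)\leq A(x+1)$ then yields
$$|g(t,Y_t,0)|\leq |g(t,\underline X_t,0)|+|g(t,\bar X_t,0)|+A(|\underline X_t|+|\bar X_t|+1).$$

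For the $z$-increment I would note first that $\underline X_t\leq Y_t\leq \bar X_t$ forces $|Y_t|\leq |\underline X_t|+|\bar X_t|$. In the first case, \ref{A:(H2')}(ii) gives directly $f_t+\mu|Y_t|+\lambda|Z_t|^\alpha$, and adding this to the previous display produces the stated bound, with $f_t$ and $\lambda|Z_t|^\alpha$ carried through unchanged, the coefficient $(\mu+A)$ appearing on $|\underline X_t|+|\bar X_t|$ and the constant $A$ emerging. In the second case, \ref{A:(H2)}(ii) gives $\gamma(f_t+|Y_t|+|Z_t|)^\alpha$, which I would handle by subadditivity of $x\mapsto x^\alpha$ on $\R_+$ for $\alpha\in(0,1)$, namely $(f_t+|Y_t|+|Z_t|)^\alpha\leq (f_t+|Y_t|)^\alpha+|Z_t|^\alpha$, followed by the elementary inequality $x^\alpha\leq x+1$ to obtain $(f_t+|Y_t|)^\alpha\leq f_t+|Y_t|+1$; combining with $|Y_t|\leq |\underline X_t|+|\bar X_t|$ again delivers the coefficient $(\gamma+A)$, the $z$-term $\gamma|Z_t|^\alpha$, and the constant $\gamma(1+f_t)+A$.

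The only genuinely delicate point is the estimate of $|g(t,Y_t,0)|$: because \ref{A:(H1)}(i) is strictly one-sided, a naive single comparison controls $g(t,Y_t,0)$ from only one side, and the asymmetric choice of reference processes ($\underline X_t$ for the upper bound, $\bar X_t$ for the lower bound) is precisely what makes both signs of $|g(t,Y_t,0)|$ come out with the right constants. Everything else is routine pointwise arithmetic, so I expect the whole argument to go through without further subtlety.
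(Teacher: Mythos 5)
Your proof is correct and follows essentially the same strategy as the paper's: sandwich $Y$ between $\underline X$ (for the upper bound on $g$) and $\bar X$ (for the lower bound), exploit the one-sidedness of \ref{A:(H1)}(i) in both directions, and invoke the linear growth of $\rho$ from \cref{rmk:2-LinearGrowthOfRhoandPhi}. The only difference is the order of the triangle-inequality split --- you peel off the $z$-increment at the point $Y_t$ and apply the Osgood condition at $z=0$, whereas the paper applies the Osgood condition at $z=Z_t$ and the $z$-increment at the reference processes $\underline X_t$, $\bar X_t$ --- and both orderings yield the stated constants.
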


\begin{proof}
We only prove the case of \ref{A:(H2')}. Another case is similar. Indeed, by \ref{A:(H1)}(i) and \ref{A:(H2')}(ii) together with $\underline X_\cdot\leq Y_\cdot\leq \bar X_\cdot$ and \cref{rmk:2-LinearGrowthOfRhoandPhi} we know that $\as$,
$$
\begin{array}{lll}
\Dis g(\cdot,Y_\cdot,Z_\cdot) &\leq &\Dis g(\cdot,Y_\cdot,Z_\cdot)-g(\cdot,\underline X_\cdot,Z_\cdot)+|g(\cdot,\underline X_\cdot,Z_\cdot)-g(\cdot,\underline X_\cdot,0)|+|g(\cdot,\underline X_\cdot,0)|\\
&\leq &\Dis \rho(|Y_\cdot-\underline X_\cdot|)+f_\cdot+\mu |\underline X_\cdot|+\lambda|Z_\cdot|^\alpha+|g(\cdot,\underline X_\cdot,0)|\\
&\leq &\Dis A(|\bar X_\cdot-\underline X_\cdot|)+A+f_\cdot+\mu |\underline X_\cdot|+\lambda|Z_\cdot|^\alpha+|g(\cdot,\underline X_\cdot,0)|.
\end{array}
$$
and
$$
\begin{array}{lll}
\Dis -g(\cdot,Y_\cdot,Z_\cdot) &\leq &\Dis g(\cdot,\bar X_\cdot,Z_\cdot)
-g(\cdot,Y_\cdot,Z_\cdot)+|g(\cdot,\bar X_\cdot,Z_\cdot)-g(\cdot,\bar X_\cdot,0)|+|g(\cdot,\bar X_\cdot,0)|\\
&\leq &\Dis \rho(|\bar X_\cdot -Y_\cdot|)+f_\cdot+\mu |\bar X_\cdot|+\lambda|Z_\cdot|^\alpha+|g(\cdot,\bar X_\cdot,0)|\\
&\leq &\Dis A(|\bar X_\cdot-\underline X_\cdot|)+A+f_\cdot+\mu |\bar X_\cdot|+\lambda|Z_\cdot|^\alpha+|g(\cdot,\bar X_\cdot,0)|.
\end{array}
$$
Then, the desired conclusion \eqref{eq:2-SubLinearGrowthofg} follows immediately.
\end{proof}

Thirdly, the following lemma has a close connection with the generalized Mokobodzki condition, which will be shown in subsequent sections.\vspace{-0.1cm}

\begin{lem}\label{lem:2-gBelongstoH1}
Assume that $\xi\in \LT$, $\bar V\in\vcal^1$, $g$ is a generator and $(Y_\cdot,Z_\cdot)$ is an $L^1$ solution of BSDE $(\xi,g+{\rm d}\bar V)$. If the generator $g$ satisfies \ref{A:(H1)}(i)(ii) and \ref{A:(H2')}(ii) (resp. \ref{A:(H2)}(ii)), then
\begin{equation}\label{eq:2-gBelongstoH1}
g(\cdot,Y_\cdot,Z_\cdot)\in \hcal^1\ \ {\rm and}\ \  g(\cdot,Y_\cdot,0)\in \hcal^1.
\end{equation}
\end{lem}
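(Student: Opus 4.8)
The plan is to prove the stronger conclusion $g(\cdot,Y_\cdot,Z_\cdot)\in\hcal^1$ first, and then deduce $g(\cdot,Y_\cdot,0)\in\hcal^1$ almost for free. Indeed, once $g(\cdot,Y_\cdot,Z_\cdot)\in\hcal^1$ is known, the sub-linear growth in $z$ gives $|g(\cdot,Y_\cdot,0)|\leq|g(\cdot,Y_\cdot,Z_\cdot)|+f_\cdot+\mu|Y_\cdot|+\lambda|Z_\cdot|^\alpha$ in the \ref{A:(H2')} case (and an analogous bound in the \ref{A:(H2)} case), and every term on the right lies in $\hcal^1$: $f_\cdot\in\hcal^1$ by assumption; $\E\int_0^T|Y_s|\,{\rm d}s<+\infty$ since $Y_\cdot$ belongs to the class (D), whence $\sup_{t\in\T}\E[|Y_t|]<+\infty$; and $\E\int_0^T|Z_s|^\alpha\,{\rm d}s<+\infty$ by Hölder's inequality, $\int_0^T|Z_s|^\alpha\,{\rm d}s\leq T^{1-\alpha/2}(\int_0^T|Z_s|^2\,{\rm d}s)^{\alpha/2}$, together with $Z_\cdot\in\M^\alpha$. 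These three integrability facts will be reused throughout.

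The core estimate is therefore $\E\int_0^T|g(s,Y_s,Z_s)|\,{\rm d}s<+\infty$. The one-sided Osgood condition \ref{A:(H1)}(i), applied with $y_2=0$, only controls the generator through its sign relative to $Y$: setting $u_s:={\rm sgn}(Y_s)\,g(s,Y_s,Z_s)$, I would obtain
$$u_s\leq|g(s,0,Z_s)|+\rho(|Y_s|)\leq|g(s,0,0)|+f_s+\lambda|Z_s|^\alpha+\rho(|Y_s|)=:\Phi_s,$$
where the second inequality uses \ref{A:(H2')}(ii) at $y=0$. By \ref{A:(H1)}(ii), the integrability of $f_\cdot$ and of $|Z|^\alpha$ just recorded, and the linear growth $\rho(x)\leq A(x+1)$ from \cref{rmk:2-LinearGrowthOfRhoandPhi} combined with $Y_\cdot\in$ class (D), one has $\Phi_\cdot\in\hcal^1$; in particular $\E\int_0^T u_s^+\,{\rm d}s<+\infty$. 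Since moreover $|g(s,Y_s,Z_s)|\leq|u_s|+|g(s,0,Z_s)|$ and $g(\cdot,0,Z_\cdot)\in\hcal^1$, the claim reduces to showing $\E\int_0^T u_s^-\,{\rm d}s<+\infty$.

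To recover the missing lower control, I would feed this one-sided bound back into the equation through Tanaka's formula. Since $(Y_\cdot,Z_\cdot)$ solves BSDE $(\xi,g+{\rm d}\bar V)$, $Y_\cdot$ is a continuous semimartingale and applying Tanaka's formula to $|Y_\cdot|$ gives, for each $t$,
$$|Y_t|=|Y_0|-\int_0^t u_s\,{\rm d}s-\int_0^t{\rm sgn}(Y_s)\,{\rm d}\bar V_s+\int_0^t{\rm sgn}(Y_s)Z_s\cdot{\rm d}B_s+L_t^0,$$
with $L^0_\cdot$ the nonnegative, nondecreasing local time of $Y$ at $0$. Let $\tau_n:=\inf\{t\in\T:\int_0^t|Z_s|^2\,{\rm d}s\geq n\}\wedge T$, so that $\tau_n\uparrow T$ and the stopped stochastic integral is a true martingale. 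Writing $u_s^-=u_s^+-u_s$, evaluating the identity at $\tau_n$, taking expectations, and dropping the two nonnegative quantities $\E[|Y_0|]$ and $\E[L^0_{\tau_n}]$ that enter with a minus sign, I would arrive at
$$\E\left[\int_0^{\tau_n}u_s^-\,{\rm d}s\right]\leq\E\left[\int_0^T\Phi_s\,{\rm d}s\right]+\sup_{\tau}\E[|Y_\tau|]+\E[|\bar V|_T],$$
a bound uniform in $n$ because $Y_\cdot\in$ class (D) and $\bar V_\cdot\in\vcal^1$. Letting $n\to\infty$ by monotone convergence yields $\E\int_0^T u_s^-\,{\rm d}s<+\infty$, which finishes the proof; the \ref{A:(H2)} variant is identical once $f_s+\lambda|Z_s|^\alpha$ is replaced by $\gamma(1+f_s)+\gamma|Z_s|^\alpha$ via subadditivity of $x\mapsto x^\alpha$.

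The main obstacle — and the reason \emph{the equation itself must enter} — is precisely that \ref{A:(H1)}(i) bounds only ${\rm sgn}(Y_s)\,g(s,Y_s,Z_s)$ from above and says nothing about $|g|$ when the sign of $g$ opposes that of $Y$; the Tanaka identity is what converts the $L^1$-boundedness of $Y$ (class (D)) and the finite variation of $\bar V$ into the missing $\int_0^T u_s^-\,{\rm d}s$ estimate. The only care needed is in the routine localization making the stochastic integral a genuine martingale and in tracking the sign of the local-time term.
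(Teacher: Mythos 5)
Your proof is correct and follows essentially the same route as the paper: the same localization via $\tau_n=\inf\{t:\int_0^t|Z_s|^2{\rm d}s\geq n\}\wedge T$, the same It\^{o}--Tanaka identity for $|Y|$, the same use of the one-sided Osgood condition plus \ref{A:(H2')}(ii) at $y=0$ to get an $\hcal^1$ upper bound on ${\rm sgn}(Y_\cdot)g(\cdot,Y_\cdot,Z_\cdot)$, and the same monotone-convergence passage followed by triangle inequalities to recover $|g(\cdot,Y_\cdot,Z_\cdot)|$ and $|g(\cdot,Y_\cdot,0)|$. The only difference is bookkeeping: you isolate the negative part $u_s^-$ of ${\rm sgn}(Y_s)g(s,Y_s,Z_s)$, while the paper works with the equivalent nonnegative integrand $\rho(|Y_s|)-{\rm sgn}(Y_s)(g(s,Y_s,Z_s)-g(s,0,Z_s))$; the substance is identical.
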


\begin{proof}
In view of \cref{rmk:2-LinearGrowthOfRhoandPhi} we only need to prove the case of \ref{A:(H2')}. Indeed, for each positive integer $k\geq 1$, define the following $(\F_t)$-stopping time:
$$
\tau_k:=\inf\{t\in\T:\ \ \int_0^t |Z_s|^2{\rm d}s\geq k\}\wedge T.
$$
Note that $\tau_k\To T$ as $k\To +\infty$ due to the fact that $Z_\cdot\in \M$. By It\^{o}-Tanaka's formula we deduce that
$$
-\int_0^{\tau_k}{\rm sgn}(Y_s)g(s,Y_s,Z_s){\rm d}s\leq |Y_{\tau_k}|-|Y_0|+\int_0^{\tau_k} {\rm sgn}(Y_s){\rm d}\bar V_s-\int_0^{\tau_k} {\rm sgn}(Y_s)Z_s\cdot {\rm d}B_s.
$$
Then,
$$
\begin{array}{ll}
&\Dis \int_0^{\tau_k}[\rho(|Y_s|)-{\rm sgn}(Y_s)(g(s,Y_s,Z_s)-g(s,0,Z_s))]{\rm d}s\vspace{0.1cm}\\
\leq &\Dis |Y_{\tau_k}|+|\bar V|_{\tau_k}+
\int_0^{\tau_k} (\rho(|Y_s|)+|g(s,0,Z_s)|){\rm d}s
-\int_0^{\tau_k} {\rm sgn}(Y_s)Z_s\cdot {\rm d}B_s.
\end{array}
$$
By taking mathematical expectation and letting $k\To\infty$ in the previous inequality, in view of \ref{A:(H1)}(i) , Levi's lemma and the fact that $Y_\cdot$ belongs to the class (D), we can obtain
\begin{equation}
\label{eq:2-star}
\E\left[\int_0^T|\rho(|Y_s|)-{\rm sgn}(Y_s)(g(s,Y_s,Z_s)-g(s,0,Z_s))|{\rm d}s\right]
\leq \E\left[|\xi|+|\bar V|_T+
\int_0^T (\rho(|Y_s|)+|g(s,0,Z_s)|){\rm d}s\right].\vspace{-0.2cm}
\end{equation}
Furthermore, noticing that
$$
\begin{array}{ll}
&\Dis\E\left[\int_0^T|g(s,Y_s,Z_s)-g(s,0,Z_s)|{\rm d}s\right]=\E\left[\int_0^T|{\rm sgn}(Y_s)(g(s,Y_s,Z_s)-g(s,0,Z_s))|{\rm d}s\right]\vspace{0.1cm}\\
\leq &\Dis
\E\left[\int_0^T\left[|{\rm sgn}(Y_s)(g(s,Y_s,Z_s)-g(s,0,Z_s))-
\rho(|Y_s|)|+\rho(|Y_s|)
\right]{\rm d}s\right],
\end{array}
$$
we get that, in view of \eqref{eq:2-star}, \ref{A:(H2')}(ii), \ref{A:(H1)}(ii) and \cref{rmk:2-LinearGrowthOfRhoandPhi},
$$
\begin{array}{lll}
\Dis \E\left[\int_0^T|g(s,Y_s,Z_s)|{\rm d}s\right]&\leq & \Dis \E\left[\int_0^T\left(|g(s,Y_s,Z_s)-g(s,0,Z_s)|
+|g(s,0,Z_s)|\right){\rm d}s\right]\vspace{0.1cm}\\
&\leq & \Dis \E\left[|\xi|+|\bar V|_T+
2\int_0^T (\rho(|Y_s|)+|g(s,0,Z_s)|){\rm d}s\right]
\vspace{0.1cm}\\
&\leq & \Dis \E\left[|\xi|+|\bar V|_T+
2\int_0^T (A|Y_s|+A+|g(s,0,0)|+f_s+\lambda |Z_s|^\alpha){\rm d}s\right]
\end{array}
$$
and
$$
\begin{array}{lll}
\Dis \E\left[\int_0^T|g(s,Y_s,0)|{\rm d}s\right]&\leq & \Dis \E\left[\int_0^T\left(|g(s,Y_s,0)-g(s,Y_s,Z_s)|
+|g(s,Y_s,Z_s)|\right){\rm d}s\right]
\vspace{0.1cm}\\
&\leq & \Dis \E\left[\int_0^T|g(s,Y_s,Z_s)|{\rm d}s\right]+\E\left[\int_0^T (f_s+\mu |Y_s|+\lambda |Z_s|^\alpha){\rm d}s\right].
\end{array}
$$
Finally, in view of the conditions of \cref{lem:2-gBelongstoH1} together with H\"{o}lder's inequality, we get \eqref{eq:2-gBelongstoH1}.
\end{proof}

Finally, a similar argument as in Lemma 3.4 of \citet{Fan2017AMS} yields the following two estimates.\vspace{-0.1cm}

\begin{lem}\label{lem:2-EstimateOfZandg}
Let $g$ be a generator and $(Y_\cdot,Z_\cdot,V_\cdot)\in \s\times\M\times\vcal$ satisfy the following equation:
$$
Y_t=Y_T+\int_t^T g(s,Y_s,Z_s){\rm d}s+\int_t^T {\rm d}V_s-\int_t^T Z_s\cdot {\rm d}B_s,\ \ t\in \T.
$$
Assume that there exist two constants $\bar \mu, \bar\lambda>0$ and a nonnegative process $\bar f_\cdot\in \hcal$ such that
\begin{equation}
\label{eq:2-SemiLinearGrowthCondition}
\as,\ \ {\rm sgn}(Y_\cdot)g(\cdot,Y_\cdot,Z_\cdot)\leq \bar f_\cdot+\bar\mu|Y_\cdot|+\bar\lambda|Z_\cdot|.
\end{equation}
Then for each $p>0$, there exists a nonnegative constant $\bar C$ depending only on $p,\bar\mu,\bar\lambda,T$ such that for each $t\in\T$ and each $(\F_t)$-stopping time $\tau$ valued in $\T$, we have
$$
\Dis \E\left[\left.\left(\int_{t\wedge\tau}^\tau|Z_s|^2{\rm d}s\right)^{p\over 2}+\left(\int_{t\wedge\tau}^\tau|g(s,Y_s,Z_s)|{\rm d}s\right)^p\right|\F_t\right]
\leq \Dis \bar C\E\left[\left.\sup\limits_{s\in [t,T]}|Y_{s\wedge\tau}|^p+|V|^p_\tau
+\left(\int_{t\wedge\tau}^\tau \bar f_s\ {\rm d}s\right)^p\right|\F_t\right].\vspace{0.3cm}
$$
\end{lem}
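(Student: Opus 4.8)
The plan is to reduce everything to the linear a priori estimate in \cref{lem:2-Lemma1} by regarding the generator as part of the finite-variation driver. First I would set $\bar V_s:=V_s+\int_0^s g(r,Y_r,Z_r)\,{\rm d}r$, so that the triple $(Y_\cdot,Z_\cdot,\bar V_\cdot)$ solves $Y_t=Y_T+\int_t^T{\rm d}\bar V_s-\int_t^T Z_s\cdot{\rm d}B_s$ and \cref{lem:2-Lemma1}(i) applies. The whole content of this step is to estimate the resulting term $\int_{s\wedge\tau}^\tau Y_r\,{\rm d}\bar V_r=\int_{s\wedge\tau}^\tau Y_r\,{\rm d}V_r+\int_{s\wedge\tau}^\tau Y_r g(r,Y_r,Z_r)\,{\rm d}r$. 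The first piece is bounded by $\sup_{s}|Y_s|\,|V|_\tau$; for the second I would use $Y_rg(r,Y_r,Z_r)=|Y_r|\,{\rm sgn}(Y_r)g(r,Y_r,Z_r)$ together with the one-sided bound \eqref{eq:2-SemiLinearGrowthCondition} to get $\big(\int_{s\wedge\tau}^\tau Y_r g\,{\rm d}r\big)^+\le \sup_s|Y_s|\int_{t\wedge\tau}^\tau \bar f_r\,{\rm d}r+\bar\mu T\sup_s|Y_s|^2+\bar\lambda\sqrt T\,\sup_s|Y_s|\big(\int_{t\wedge\tau}^\tau|Z_r|^2{\rm d}r\big)^{1/2}$ via Cauchy--Schwarz. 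Raising to the power $p/2$ and applying Young's inequality to the last, cross, term produces a summand $\tfrac{\eps}{2}(\int_{t\wedge\tau}^\tau|Z_r|^2{\rm d}r)^{p/2}$; choosing $\eps$ small relative to the constant $C_1$ of \cref{lem:2-Lemma1}(i) lets me absorb it into the left-hand side and yields the announced bound for $\E[(\int_{t\wedge\tau}^\tau|Z_s|^2{\rm d}s)^{p/2}|\F_t]$.

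The second step turns the one-sided control of $g$ into control of $|g|$. The identity I would use is that on $\{Y_\cdot\ne0\}$ one has $|g|=2({\rm sgn}(Y)g)^+-{\rm sgn}(Y)g$; the first term is dominated by $2(\bar f+\bar\mu|Y|+\bar\lambda|Z|)$ thanks to \eqref{eq:2-SemiLinearGrowthCondition}, and the signed integral $\int_{t\wedge\tau}^\tau{\rm sgn}(Y_s)g\,{\rm d}s$ is computed by It\^o--Tanaka's formula applied to $|Y_\cdot|$. Because the local-time term enters with a favourable sign, dropping it gives a lower bound $\int_{t\wedge\tau}^\tau{\rm sgn}(Y_s)g\,{\rm d}s\ge -\sup_s|Y_s|-|V|_\tau+\int_{t\wedge\tau}^\tau{\rm sgn}(Y_s)Z_s\cdot{\rm d}B_s$, hence an upper bound for $-\int{\rm sgn}(Y)g$. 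Collecting these, $\int_{t\wedge\tau}^\tau|g|\,{\rm d}s$ is bounded by a sum of $\int\bar f$, $\sup_s|Y_s|$, $|V|_\tau$, the martingale $\int{\rm sgn}(Y)Z\cdot{\rm d}B$ and $\bar\lambda\sqrt T(\int|Z|^2)^{1/2}$. Taking $(\cdot)^p$, conditioning on $\F_t$, controlling the martingale increment by the Burkholder--Davis--Gundy inequality (again in terms of $\int|Z|^2$), and inserting the $Z$-estimate of the first step closes the argument.

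The hard part will be the bookkeeping around two points rather than any single deep estimate. First, the absorption in the first step is only legitimate once $\E[(\int|Z|^2)^{p/2}|\F_t]$ is known to be finite; I would secure this by a preliminary localization along the stopping times $\sigma_k=\inf\{t:\int_0^t|Z|^2\,{\rm d}s\ge k\}\wedge T$, proving the inequality for the stopped data and then letting $k\to\infty$ by Fatou's lemma, exactly as in \cref{lem:2-gBelongstoH1}. Second, the decomposition of $|g|$ leaves the contribution $\int_{\{Y_s=0\}}|g(s,Y_s,Z_s)|\,{\rm d}s$: here the occupation-times formula forces $Z_s=0$ for a.e.\ $s$ with $Y_s=0$, so $g(s,Y_s,Z_s)=g(s,0,0)$ there, and on this set the drift of the martingale-free dynamics is balanced by ${\rm d}V$, so the contribution is again dominated by $|V|_\tau$. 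Uniformity of all the constants in $p$ over both regimes $p\in(0,1]$ and $p>1$, which is needed because the statement is claimed for every $p>0$, is the remaining routine but delicate verification.
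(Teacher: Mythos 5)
Your overall architecture is the intended one (the paper itself does not write the argument out but defers to Lemma~3.4 of Fan (2017)): absorb $g\,{\rm d}s+{\rm d}V$ into the finite-variation driver so that \cref{lem:2-Lemma1}(i) applies, estimate $\bigl(\int Y_r\,{\rm d}\bar V_r\bigr)^+$ by the one-sided condition \eqref{eq:2-SemiLinearGrowthCondition} plus Cauchy--Schwarz, absorb the cross term by Young's inequality after localizing along $\sigma_k$ so that the quantity being absorbed is finite, and then convert the one-sided control into two-sided control of $\int|g|$ via $|g|=2({\rm sgn}(Y)g)^+-{\rm sgn}(Y)g$ on $\{Y\neq0\}$, Tanaka's formula (where the local time indeed enters with the favourable sign) and BDG. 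All of that is sound, including your remark that uniformity over $p\in(0,1]$ and $p>1$ is only bookkeeping.

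The genuine gap is the sentence disposing of $\int_{t\wedge\tau}^{\tau}\mathbbm{1}_{\{Y_s=0\}}|g(s,Y_s,Z_s)|\,{\rm d}s$. That $Z_s=0$ for a.e.\ $s$ with $Y_s=0$ is correct, but the claim that on this set the drift is ``balanced by ${\rm d}V$'', so that the contribution is dominated by $|V|_\tau$, is not a valid step: the relevant identity (Revuz--Yor, Thm.~VI.1.7) is $\mathbbm{1}_{\{Y_s=0\}}\bigl(g(s,Y_s,Z_s)\,{\rm d}s+{\rm d}V_s\bigr)=-\tfrac12\,({\rm d}L^{0}_s-{\rm d}L^{0-}_s)$, where $L^{0}$ and $L^{0-}$ are the right and left local times of $Y$ at $0$, and this asymmetry term has no reason to vanish --- sticky-Brownian-motion--type dynamics ${\rm d}Y_s=\mathbbm{1}_{\{Y_s>0\}}{\rm d}B_s+\theta\mathbbm{1}_{\{Y_s=0\}}\,{\rm d}s$ show that the level set $\{Y_s=0\}$ can carry a genuine drift of positive total mass even when $V\equiv0$. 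The lemma survives because, as measures, $\mathbbm{1}_{\{Y_s=0\}}|g|\,{\rm d}s\le|{\rm d}V|+\tfrac12({\rm d}L^{0}+{\rm d}L^{0-})=|{\rm d}V|+{\rm d}L^{\rm sym}$, and the \emph{symmetric} Tanaka formula --- whose ${\rm sgn}$ vanishes at $0$, so that only $\mathbbm{1}_{\{Y\neq0\}}{\rm sgn}(Y)g$ appears and \eqref{eq:2-SemiLinearGrowthCondition} is applicable --- gives $L^{\rm sym}_{\tau}-L^{\rm sym}_{t\wedge\tau}\le 2\sup_{s}|Y_{s\wedge\tau}|+\int_{t\wedge\tau}^{\tau}(\bar f_s+\bar\mu|Y_s|+\bar\lambda|Z_s|)\,{\rm d}s+|V|_\tau+\bigl|\int_{t\wedge\tau}^{\tau}{\rm sgn}(Y_s)Z_s\cdot{\rm d}B_s\bigr|$, i.e.\ exactly the quantities you already control in Step~2. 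Replace the ``balanced by ${\rm d}V$'' claim by this local-time estimate and the proof closes.
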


\begin{lem}
\label{lem:2-EstimateOfZKAg}
Let $g$ be a generator and $(Y_\cdot,Z_\cdot,V_\cdot,K_\cdot)\in \s\times\M\times\vcal\times\vcal^+$ satisfy the following equation:
$$
Y_t=Y_T+\int_t^T g(s,Y_s,Z_s){\rm d}s+\int_t^T {\rm d}V_s+\int_t^T {\rm d}K_s-\int_t^T Z_s\cdot {\rm d}B_s,\ \ t\in \T
$$
or
$$
Y_t=Y_T+\int_t^T g(s,Y_s,Z_s){\rm d}s+\int_t^T {\rm d}V_s-\int_t^T {\rm d}K_s-\int_t^T Z_s\cdot {\rm d}B_s,\ \ t\in \T.\vspace{0.2cm}
$$
Assume that there exist two constants $\bar \mu, \bar\lambda>0$ and a nonnegative process $\bar f_\cdot\in \hcal$ such that
\begin{equation}
\label{eq:2-LinearGrowthConditionOfg}
\as,\ \ |g(\cdot,Y_\cdot,Z_\cdot)|\leq \bar f_\cdot+\bar\mu|Y_\cdot|+\bar\lambda|Z_\cdot|.
\end{equation}
Then for each $p>0$, there exists a nonnegative constant $\bar C$ depending only on $p,\bar\mu, \bar\lambda,T$ such that for each $t\in\T$ and each $(\F_t)$-stopping time $\tau$ valued in $\T$, we have
$$
\begin{array}{ll}
&\Dis \E\left[\left.\left(\int_{t\wedge\tau}^\tau|Z_s|^2{\rm d}s\right)^{p\over 2}+|K_\tau-K_{t\wedge\tau}|^p+\left(\int_{t\wedge\tau}^\tau|g(s,Y_s,Z_s)|{\rm d}s\right)^p\right|\F_t\right]\vspace{0.1cm}\\
\leq &\Dis \bar C\E\left[\left.\sup\limits_{s\in [t,T]}|Y_{s\wedge\tau}|^p+|V|^p_\tau
+\left(\int_{t\wedge\tau}^\tau \bar f_s\ {\rm d}s\right)^p\right|\F_t\right].\vspace{0.2cm}
\end{array}
$$
\end{lem}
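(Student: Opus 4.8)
The plan is to reduce the two representations to a single one, and then to peel off the three quantities $\int_{t\wedge\tau}^\tau|Z_s|^2\,{\rm d}s$, $K_\tau-K_{t\wedge\tau}$ and $\int_{t\wedge\tau}^\tau|g(s,Y_s,Z_s)|\,{\rm d}s$ one after another, closing the chain of inequalities by a single $\eps$-Young absorption. First I would dispose of the two cases simultaneously: the ``$-{\rm d}K$'' equation is carried into the ``$+{\rm d}K$'' equation by the substitution $(Y,Z,V,g)\mapsto(-Y,-Z,-V,\tilde g)$ with $\tilde g(\omega,t,y,z):=-g(\omega,t,-y,-z)$, which leaves \eqref{eq:2-LinearGrowthConditionOfg}, the increasing process $K$, the class-(D) property and every quantity appearing in the conclusion unchanged. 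So from now on I assume the first (``$+{\rm d}K$'') representation, and I abbreviate $Y^\ast:=\sup_{s\in[t,T]}|Y_{s\wedge\tau}|$ and $R:=\E[(Y^\ast)^p+|V|^p_\tau+(\int_{t\wedge\tau}^\tau\bar f_s\,{\rm d}s)^p\,|\,\F_t]$ for the target right-hand side.

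Writing $\bar V_\cdot:=\int_0^\cdot g(s,Y_s,Z_s)\,{\rm d}s+V_\cdot+K_\cdot$, the triple $(Y_\cdot,Z_\cdot,\bar V_\cdot)$ solves \eqref{eq:2-BarY=BarV}, so \cref{lem:2-Lemma1}(i) bounds $\E[(\int_{t\wedge\tau}^\tau|Z_s|^2\,{\rm d}s)^{p/2}\,|\,\F_t]$ by $C_1\E[(Y^\ast)^p+\sup_{s}[(\int_{s\wedge\tau}^\tau Y_r\,{\rm d}\bar V_r)^+]^{p/2}\,|\,\F_t]$. Since ${\rm d}\bar V$ is assembled from the ${\rm d}s$-, the ${\rm d}V$- and the increasing ${\rm d}K$-pieces, and since $s\wedge\tau\ge t\wedge\tau$, I estimate $(\int_{s\wedge\tau}^\tau Y_r\,{\rm d}\bar V_r)^+\le Y^\ast\,[\int_{t\wedge\tau}^\tau|g(r,Y_r,Z_r)|\,{\rm d}r+|V|_\tau+(K_\tau-K_{t\wedge\tau})]$.

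Two further sub-estimates feed into this. From \eqref{eq:2-LinearGrowthConditionOfg} and Cauchy--Schwarz, $\int_{t\wedge\tau}^\tau|g(r,Y_r,Z_r)|\,{\rm d}r\le\int_{t\wedge\tau}^\tau\bar f_r\,{\rm d}r+\bar\mu T\,Y^\ast+\bar\lambda\sqrt T\,(\int_{t\wedge\tau}^\tau|Z_r|^2\,{\rm d}r)^{1/2}$; and rearranging the equation gives $0\le K_\tau-K_{t\wedge\tau}=(Y_{t\wedge\tau}-Y_\tau)-\int_{t\wedge\tau}^\tau g(s,Y_s,Z_s)\,{\rm d}s-(V_\tau-V_{t\wedge\tau})+\int_{t\wedge\tau}^\tau Z_s\cdot{\rm d}B_s$, whence $K_\tau-K_{t\wedge\tau}\le 2Y^\ast+\int_{t\wedge\tau}^\tau|g(s,Y_s,Z_s)|\,{\rm d}s+|V|_\tau+|\int_{t\wedge\tau}^\tau Z_s\cdot{\rm d}B_s|$. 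Substituting both into the display of the previous paragraph and keeping every $Z$-contribution at its natural half power, namely $(\int|Z|^2)^{p/4}$ coming from $\int|g|$ and $|\int Z\cdot{\rm d}B|^{p/2}$ coming from the $K$-term, I apply Young's inequality with a small parameter $\eps$, pairing these half powers against $(\int_{t\wedge\tau}^\tau|Z_s|^2\,{\rm d}s)^{p/2}$ directly and against $|\int_{t\wedge\tau}^\tau Z_s\cdot{\rm d}B_s|^p$, the latter then controlled by $(\int|Z|^2)^{p/2}$ through Burkholder--Davis--Gundy. All stochastic integrals are rendered genuine martingales by the localization $\tau_k:=\inf\{t\in\T:\int_0^t|Z_s|^2\,{\rm d}s\ge k\}\wedge T$ already used in \cref{lem:2-gBelongstoH1}, followed by monotone passage $k\To\infty$.

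Choosing $\eps$ small enough that the total coefficient of $\E[(\int_{t\wedge\tau}^\tau|Z_s|^2\,{\rm d}s)^{p/2}\,|\,\F_t]$ generated on the right is at most $1/2$, I absorb it into the left and obtain the bound $\E[(\int_{t\wedge\tau}^\tau|Z_s|^2\,{\rm d}s)^{p/2}\,|\,\F_t]\le\bar C R$; feeding this back into the two sub-estimates yields the bounds for $|K_\tau-K_{t\wedge\tau}|^p$ (via BDG) and for $(\int_{t\wedge\tau}^\tau|g(s,Y_s,Z_s)|\,{\rm d}s)^p$, completing the proof. The step I expect to be the real obstacle is precisely this absorption: a priori $\int|Z|^2$ reappears on the right both through $\int|g|$ and, more dangerously, through the martingale part of $K_\tau-K_{t\wedge\tau}$ with the non-small BDG constant, so a crude splitting into full powers deadlocks. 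It is essential that the growth in $z$ be exactly linear, so that Cauchy--Schwarz together with the $(\cdot)^{p/2}$ already present in \cref{lem:2-Lemma1}(i) leaves every $Z$-term as a half power on which an $\eps$-Young step can be spent; this is what breaks the apparent circularity between the $Z$- and the $K$-estimates and lets the single absorption close everything at once for all $p>0$.
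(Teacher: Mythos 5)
Your argument is correct and is essentially the route the paper intends: the paper gives no written proof but defers to Lemma 3.4 of \citet{Fan2017AMS}, whose argument is exactly this combination of \cref{lem:2-Lemma1}(i) applied to $\bar V_\cdot:=\int_0^\cdot g\,{\rm d}s+V_\cdot+K_\cdot$, the bound on $K_\tau-K_{t\wedge\tau}$ read off from the equation, and a Young--BDG absorption of the half-power $Z$-contributions after localization by $\tau_k$. Your reduction of the ``$-{\rm d}K$'' case by the sign substitution and your observation about where the $\eps$ must be spent (before the BDG constant enters) are both sound.
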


\section{Penalization, approximation and comparison theorem}
\label{sec:3-PenalizationApproximationComparisonTheorem}
\setcounter{equation}{0}

\subsection{Penalization for RBSDEs\vspace{0.1cm}}

In this subsection, we prove the following convergence result on the sequence of $L^1$ solutions of penalized RBSDEs with one continuous barrier.

\begin{pro} [Penalization for RBSDEs] \label{pro:3-PenalizationOfRBSDE}
Assume that $V_\cdot\in \vcal^1$, \ref{A:(H3)}(i) holds true for $L_\cdot, U_\cdot$ and $\xi$, and $g$ is a generator. We have
\begin{itemize}
\item [(i)] For each $n\geq 1$, let $( Y^n_\cdot, Z^n_\cdot, A^n_\cdot)$ be an $L^1$ solution of $\bar{R}$BSDE $(\xi,\bar g_n+{\rm d}V,U)$ with $\bar g_n(t,y,z):=g(t,y,z)+n(y-L_t)^-$, i.e.,
\begin{equation}
\label{eq:3-PenalizationForRBSDEwithSuperBarrier}
\left\{
\begin{array}{l}
\Dis{Y}^n_t=\xi+\int_t^T\bar g_n(s,{Y}^n_s,
{Z}^n_s)
{\rm d}s+\int_t^T{\rm d}V_s-\int_t^T{\rm d} A^n_s-\int_t^T{Z}^n_s \cdot {\rm d}B_s,\ \   t\in\T,\\
\Dis {Y}^n_t\leq U_t,\ t\in\T\ \ {\rm and} \ \int_0^T (U_t- Y^n_t){\rm d} A^n_t=0,\\
\Dis  K^n_t:=n\int_0^t(
{Y}^n_s-L_s)^-\ {\rm d}s,\ \ t\in\T.
\end{array}
\right.
\end{equation}
If for each $n\geq 1$, $Y^n_\cdot\leq Y^{n+1}_\cdot\leq \bar Y_\cdot$ with a process $\bar Y_\cdot\in \cap_{\beta\in (0,1)}\s^\beta$ of the class (D), ${\rm d} A^n\leq {\rm d} A^{n+1}$, $K^n_\cdot\leq \bar K^n_\cdot\in \vcal^{+,1}$ with $\sup_{n\geq 1}\E[|\bar K^n_T|^\beta]<+\infty$ for each $\beta\in (0,1)$, $\lim\limits_{j\To\infty} \bar K^{n_j}_T=\bar K_T\in \LT$ for a subsequence $\{n_j\}$ of $\{n\}$ and $\sup_{n\geq 1}\E[|\bar K^n_\tau|^2]\leq \E[|\tilde Y_{\tau}|^2]$ for a process $\tilde Y_\cdot \in\s$ and each $(\F_t)$-stopping time $\tau$ valued in $\T$, and there exist two constants $\bar\lambda>0$, $\alpha\in (0,1)$ and a nonnegative process $\bar f_\cdot\in \hcal^1$ such that for each $n\geq 1$,
\begin{equation}
\label{eq:3-SubLinearGrowthofgYnZn}
\as,\ \ |g(\cdot,Y^n_\cdot,Z^n_\cdot)|\leq \bar f_\cdot+\bar\lambda |Z^n_\cdot|^{\alpha},
\end{equation}
then there exists an $L^1$ solution $(Y_\cdot, Z_\cdot, K_\cdot, A_\cdot)$ of DRBSDE $(\xi,g+{\rm d}V,L,U)$ such that
$$
\lim\limits_{n\To \infty}\left(\| Y_\cdot^n- Y_\cdot\|_{\s^\beta}+ \| Z_\cdot^n- Z_\cdot\|_{\M^\beta}+\| A_\cdot^n- A_\cdot \|_{\s^1}\right)=0\vspace{0.1cm}
$$
holds true for each $\beta\in (0,1)$, and there exists a subsequence $\{ K_\cdot^{n_j}\}$ of $\{ K_\cdot^n\}$ such that
$$
\lim\limits_{j\To\infty}\sup\limits_{t\in\T}
| K_t^{n_j}- K_t|=0.
$$

\item [(ii)] For each $n\geq 1$, let $( Y^n_\cdot, Z^n_\cdot, K^n_\cdot)$ be an $L^1$ solution of $\underline{R}$BSDE $(\xi,\underline g_n+{\rm d}V,L)$ with $\underline g_n(t,y,z):=g(t,y,z)-n(y-U_t)^+$, i.e.,
\begin{equation}
\label{eq:3-PenalizationForRBSDEwithLowBarrier}
\left\{
\begin{array}{l}
\Dis{Y}^n_t=\xi+\int_t^T\underline g_n(s,{Y}^n_s, {Z}^n_s)
{\rm d}s+\int_t^T{\rm d}V_s+\int_t^T{\rm d} K^n_t -\int_t^T{Z}^n_s \cdot {\rm d}B_s,\ \   t\in\T,\\
\Dis L_t\leq {Y}^n_t,\ t\in\T\ \ {\rm and} \ \int_0^T ( Y^n_t-L_t){\rm d} K^n_t=0,\\
\Dis  A^n_t:=n\int_0^t(
{Y}^n_s-U_s)^+\ {\rm d}s,\ \ t\in\T.
\end{array}
\right.
\end{equation}
If for each $n\geq 1$, $Y^n_\cdot\geq Y^{n+1}_\cdot\geq \underline Y_\cdot$ with a process $\underline Y_\cdot\in \cap_{\beta\in (0,1)}\s^\beta$ of the class (D), ${\rm d} K^n\leq {\rm d} K^{n+1}$, $A^n_\cdot\leq \bar A^n_\cdot\in \vcal^{+,1}$ with $\sup_{n\geq 1}\E[|\bar A^n_T|^\beta]<+\infty$ for each $\beta\in (0,1)$, $\lim\limits_{j\To\infty} \bar A^{n_j}_T=\bar A_T\in \LT$ for a subsequence $\{n_j\}$ of $\{n\}$ and $\sup_{n\geq 1}\E[|\bar A^n_\tau|^2]\leq \E[|\tilde Y_{\tau}|^2]$ for a process $\tilde Y_\cdot \in\s$ and each $(\F_t)$-stopping time $\tau$ valued in $\T$, and there exist two constants $\bar\lambda>0$, $\alpha\in (0,1)$ and a nonnegative process $\bar f_\cdot\in \hcal^1$ such that \eqref{eq:3-SubLinearGrowthofgYnZn} holds for each $n\geq 1$, then there exists an $L^1$ solution $( Y_\cdot, Z_\cdot, K_\cdot, A_\cdot)$ of DRBSDE $(\xi,g+{\rm d}V,L,U)$ such that
$$\lim\limits_{n\To \infty}\left(\| Y_\cdot^n- Y_\cdot\|_{\s^\beta}+
\| Z_\cdot^n- Z_\cdot\|_{\M^\beta}+\| K_\cdot^n- K_\cdot \|_{\s^1}\right)=0$$
holds true for each $\beta\in (0,1)$, and there exists a subsequence $\{ A_\cdot^{n_j}\}$ of $\{ A_\cdot^n\}$ such that
$$\lim\limits_{j\To\infty}\sup\limits_{t\in\T}
| A_t^{n_j}- A_t|=0.$$
\end{itemize}
\end{pro}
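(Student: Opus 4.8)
The plan is to establish part (i) in detail; part (ii) then follows by the symmetric argument, exchanging the roles of the lower/upper barriers and of the penalization/reflection processes and reversing the monotonicity in $n$. Since $Y^n_\cdot\leq Y^{n+1}_\cdot\leq\bar Y_\cdot$ and ${\rm d}A^n\leq{\rm d}A^{n+1}$, I would first introduce the pointwise increasing limits $Y_t:=\Lim Y^n_t$ and $A_t:=\Lim A^n_t$, and rewrite \eqref{eq:3-PenalizationForRBSDEwithSuperBarrier} in the equivalent form
$$
Y^n_t=\xi+\jf g(s,Y^n_s,Z^n_s)\,{\rm d}s+\jf{\rm d}V_s+\jf{\rm d}K^n_s-\jf{\rm d}A^n_s-\jf Z^n_s\cdot{\rm d}B_s,\quad t\in\T,
$$
in which only $g$ and the two increasing processes $K^n_\cdot,A^n_\cdot$ appear. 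Setting $\bar V^n_\cdot:=\int_0^\cdot g(s,Y^n_s,Z^n_s)\,{\rm d}s+V_\cdot+K^n_\cdot-A^n_\cdot\in\vcal$ puts $(Y^n_\cdot,Z^n_\cdot,\bar V^n_\cdot)$ into the form required by \cref{lem:2-Lemma1}. Moreover, since $\alpha\in(0,1)$ gives $|z|^\alpha\leq 1+|z|$, the bound \eqref{eq:3-SubLinearGrowthofgYnZn} upgrades to the genuine linear growth $|g(\cdot,Y^n_\cdot,Z^n_\cdot)|\leq(\bar f_\cdot+\bar\lambda)+\bar\lambda|Z^n_\cdot|$, $\as$, which is exactly what is needed to run the $a$ $priori$ estimates.

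Next I would derive $n$-uniform bounds. Applying \cref{lem:2-Lemma1}(i) to $(Y^n_\cdot,Z^n_\cdot,\bar V^n_\cdot)$ and controlling the cross term $\int_s^T Y^n_r\,{\rm d}\bar V^n_r$ via the Skorokhod conditions (namely $\int_0^T Y^n_r\,{\rm d}A^n_r=\int_0^T U_r\,{\rm d}A^n_r$ and $\int_0^T(Y^n_r-L_r)\,{\rm d}K^n_r\leq 0$, together with $L_\cdot,U_\cdot\in\s$), the resulting $|Z^n|^\alpha$ contribution is absorbed into the left-hand side by Young's inequality because $\alpha<1$; this yields $\sup_n\E[(\int_0^T|Z^n_s|^2\,{\rm d}s)^{\beta/2}]<+\infty$ for each $\beta\in(0,1)$. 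The given bound $K^n_\cdot\leq\bar K^n_\cdot$ with $\sup_n\E[|\bar K^n_T|^\beta]<+\infty$ controls $K^n_T$, and the equation together with $Y^1_\cdot\leq Y^n_\cdot\leq\bar Y_\cdot$, the linear growth and the $Z^n$-bound controls $\E[A^n_T]$ uniformly, so that $A_\cdot\in\vcal^{+,1}$ by monotone convergence.

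The crux, and the main obstacle, is the convergence of $Z^n_\cdot$, for which no monotonicity/Osgood condition on $g$ is available here, so it must be extracted purely from the monotone convergence of $Y^n_\cdot$. I would apply \cref{lem:2-Lemma1}(i) to the difference $(Y^n_\cdot-Y^m_\cdot,Z^n_\cdot-Z^m_\cdot,\bar V^n_\cdot-\bar V^m_\cdot)$, obtaining
$$
\E\left[\left(\int_0^T|Z^n_s-Z^m_s|^2\,{\rm d}s\right)^{\beta/2}\right]\leq C\,\E\left[\sup_{t\in\T}|Y^n_t-Y^m_t|^\beta+\sup_{t\in\T}\left(\left(\int_t^T(Y^n_r-Y^m_r)\,{\rm d}(\bar V^n-\bar V^m)_r\right)^{\!+}\right)^{\!\beta/2}\right].
$$
The first term tends to $0$ as $n,m\To\infty$ because $Y^n_\cdot\To Y_\cdot$ monotonically with $Y^1_\cdot\leq Y^n_\cdot\leq\bar Y_\cdot$, so $\sup_t|Y^n_t-Y^m_t|\To 0$ $\ps$ and is dominated, giving convergence in $\s^\beta$. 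For the cross term I would bound $|\int_t^T(Y^n_r-Y^m_r)\,{\rm d}(\bar V^n-\bar V^m)_r|\leq\sup_r|Y^n_r-Y^m_r|\,(|\bar V^n|_T+|\bar V^m|_T)$ and split by H\"older's inequality, placing the total-variation factor (uniformly bounded in $L^1$, indeed in higher moments on stochastic intervals thanks precisely to $\sup_n\E[|\bar K^n_\tau|^2]\leq\E[|\tilde Y_\tau|^2]$ and the uniform $Z^n$-estimate) in a fixed integrable power and the $Y$-difference factor in the conjugate power, where it vanishes by dominated convergence. This is exactly where the seemingly technical hypotheses of the proposition are consumed, and it is the delicate step of the whole argument; its conclusion is $Z^n_\cdot\To Z_\cdot$ in $\M^\beta$ for each $\beta\in(0,1)$.

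With $Y^n_\cdot\To Y_\cdot$ and $Z^n_\cdot\To Z_\cdot$ in hand, continuity of $g$ in $(y,z)$ together with the linear growth and a Vitali/uniform-integrability argument give $g(\cdot,Y^n_\cdot,Z^n_\cdot)\To g(\cdot,Y_\cdot,Z_\cdot)$ in $\hcal^1$. The identity $K^n_\cdot-A^n_\cdot=Y^n_0-Y^n_\cdot-\int_0^\cdot g(s,Y^n_s,Z^n_s)\,{\rm d}s-(V_\cdot-V_0)+\int_0^\cdot Z^n_s\cdot{\rm d}B_s$ then shows, via the Burkholder--Davis--Gundy inequality, that $K^n_\cdot-A^n_\cdot$ is Cauchy in $\s^1$ and converges uniformly to a continuous process; combined with $A^n_\cdot\uparrow A_\cdot$ this identifies $A_\cdot,K_\cdot\in\vcal^{+,1}$ as continuous increasing processes (Dini's theorem for the monotone sequence once the limit is seen continuous) with $\|A^n_\cdot-A_\cdot\|_{\s^1}\To 0$ and $\sup_{t\in\T}|K^{n_j}_t-K_t|\To 0$ along a subsequence. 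Passing to the limit then yields the quadruple $(Y_\cdot,Z_\cdot,K_\cdot,A_\cdot)$ and the equation of DRBSDE $(\xi,g+{\rm d}V,L,U)$, and it remains to verify the constraints: $Y_\cdot\leq U_\cdot$ passes from $Y^n_\cdot\leq U_\cdot$; the relation $\int_0^T(Y^n_s-L_s)^-\,{\rm d}s=K^n_T/n\To 0$ in $L^1$ (along a subsequence) forces $\int_0^T(Y_s-L_s)^-\,{\rm d}s=0$, hence $Y_\cdot\geq L_\cdot$ by continuity; $\int_0^T(U_s-Y^n_s)\,{\rm d}A^n_s=0$ passes to $\int_0^T(U_s-Y_s)\,{\rm d}A_s=0$ by the uniform convergences, while $\int_0^T(Y^n_s-L_s)\,{\rm d}K^n_s\leq 0$ together with $Y_\cdot\geq L_\cdot$ forces $\int_0^T(Y_s-L_s)\,{\rm d}K_s=0$. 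These two Skorokhod conditions localize the supports of ${\rm d}K$ and ${\rm d}A$ on $\{Y_\cdot=L_\cdot\}$ and $\{Y_\cdot=U_\cdot\}$, from which ${\rm d}K\bot{\rm d}A$ follows, and the class (D) property of $Y_\cdot$ is inherited from $Y^1_\cdot,\bar Y_\cdot$, completing the argument.
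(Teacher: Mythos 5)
Your overall architecture (monotone limits, a priori bounds from \cref{lem:2-Lemma1}, Cauchy estimate for $Z^n$, identification of $K-A$, verification of the Skorokhod conditions) matches the paper's, but there is a genuine gap at the single most delicate point. You assert that $\sup_{t\in\T}|Y^n_t-Y^m_t|\To 0$ $\ps$ ``because $Y^n_\cdot\To Y_\cdot$ monotonically with $Y^1_\cdot\leq Y^n_\cdot\leq\bar Y_\cdot$.'' Monotone pointwise convergence of continuous processes does not give uniform convergence: Dini's theorem requires the limit to be continuous, and the continuity (indeed even the c\`adl\`ag property) of the monotone limit $Y_\cdot$ is precisely what has to be proved. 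The paper spends its Steps 1--3 on exactly this: it first establishes that $Y_\cdot$ is c\`adl\`ag by extracting weakly convergent subsequences of $\{g(\cdot,Y^n_\cdot,Z^n_\cdot)\mathbbm{1}_{\cdot\leq\tau_k}\}$ in $\hcal^1$ and of the stochastic integrals (Lemma 4.4 of Klimsiak and Lemma A.3 of Bayraktar--Yao), then deduces $Y_\cdot\geq L_\cdot$ everywhere and, by Dini applied to $(Y^n_\cdot-L_\cdot)^-\downarrow 0$, the uniform convergence $\sup_t(Y^n_t-L_t)^-\To 0$; only then can the Cauchy estimate $\E[\sup_t|Y^n_{t\wedge\tau_k}-Y^m_{t\wedge\tau_k}|^2]\To 0$ be run, in which the cross term $\int(Y^n_s-Y^m_s)({\rm d}K^n_s-{\rm d}K^m_s)$ is bounded by $\sup_s(Y^m_s-L_s)^-\,K^n_T+\sup_s(Y^n_s-L_s)^-\,K^m_T$ and then killed by Cauchy--Schwarz against the $L^2$ bound $\E[|K^n_{\tau_k}|^2]\leq k^2$. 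Your alternative bound of the cross term by $\sup_r|Y^n_r-Y^m_r|\,(|\bar V^n|_T+|\bar V^m|_T)$ cannot replace this, both because the uniform smallness of $|Y^n-Y^m|$ is not yet available at that stage and because $|\bar V^n|_T$ (which contains $K^n_T$) is only uniformly controlled in $L^\beta$ for $\beta<1$ globally, and in $L^2$ only up to the stopping times $\tau_k$; the sign structure of the penalization terms is what makes the argument close. Without breaking this circularity via the weak-compactness step, the proof does not go through.

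A secondary issue: your derivation of ${\rm d}K\bot{\rm d}A$ from ``the supports of ${\rm d}K$ and ${\rm d}A$ are contained in $\{Y=L\}$ and $\{Y=U\}$'' is insufficient, since the proposition only assumes $L_\cdot\leq U_\cdot$ and the two contact sets may intersect. The paper instead works with the sets $D_n:=\{Y^n_t\geq L_t\}$, for which $\E[\int_0^T\mathbbm{1}_{D_n}{\rm d}K^n_t]=0$ by the definition of $K^n$ and $\E[\int_0^T\mathbbm{1}_{D_n^c}{\rm d}A^n_t]=0$ because $Y^n<L\leq U$ on $D_n^c$ forces $U-Y^n>0$ there, and then passes to the limit using $D_n\subset D_{n+1}$. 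You would need this (or an equivalent) argument to conclude.
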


\begin{proof} We only prove the claim (i). The claim (ii) can be proved in the same way. Now we assume that all the assumptions in (i) are satisfied. Since $Y_\cdot^n$ increases in $n$, we know that there exists an $(\F_t)$-progressively measurable process $Y_\cdot$ such that $Y_t^n\uparrow Y_t$ for each $t\in\T$. In view of \eqref{eq:3-PenalizationForRBSDEwithSuperBarrier} and \eqref{eq:3-SubLinearGrowthofgYnZn} with the fact that for each $n\geq 1$, $Y_\cdot^1\leq Y^n_\cdot\leq \bar Y_\cdot$ and $K^n_T\leq \bar K^n_T$ with $\sup_{n\geq 1}\E[|\bar K^n_T|^\beta]<+\infty$ for each $\beta\in (0,1)$, by \cref{lem:2-EstimateOfZKAg} we deduce that for each $\beta\in (0,1)$, there exists a $C_\beta>0$ depending only on $\beta,\bar\lambda,T$ such that
\begin{equation}\label{eq:3-BoundofZntAntandg}
\begin{array}{ll}
&\Dis \sup\limits_{n\geq 1}\E\left[\left(\int_0^T|Z^n_s|^2{\rm d}s\right)^{\beta\over 2}+|A^n_T|^\beta+\left(\int_0^T|g(s,Y^n_s,Z^n_s)|{\rm d}s\right)^\beta\right]\\
\leq &\Dis C_\beta\left\{\E\left[\sup\limits_{s\in [0,T]}(|Y^1_{s}|+|\bar Y_{s}|)^\beta+|V|^\beta_T+\left(\int_0^T \bar f_s\ {\rm d}s\right)^\beta\right]+\sup\limits_{n\geq 1} \E\left[|K^n_T|^\beta\right]\right\}<+\infty.
\vspace{0.1cm}
\end{array}
\end{equation}
For each positive integer $k\geq 1$, we define the following $(\F_t)$-stopping time:
$$
\tau_k:=\Dis \inf\left\{t\geq 0:\  |Y^1_{t}|+|\bar Y_{t}|+|V|_t+\int_0^t \bar f_s\ {\rm d}s+|\tilde Y_t|+L^+_t\geq k\right\}\wedge T.
$$
Then
\begin{equation}\label{eq:3-StabilityOfTauk}
\mathbb{P}\left(\left\{\omega:\ \exists k_0(\omega)\geq 1, \ \RE k\geq k_0(\omega),\ \tau_k(\omega)=T\right\}\right)=1.\vspace{0.1cm}
\end{equation}
Note the fact that $\sup_{n\geq 1}\E[|K^n_{\tau_k}|^2]\leq \E[|\tilde Y_{\tau_k}|^2]\leq k^2$ for each $k\geq 1$. Again by \cref{lem:2-EstimateOfZKAg} we deduce that there exists a nonnegative constant $\bar C$ depending only on $\bar\lambda,T$ such that for each $k\geq 1$,
\begin{equation}
\label{eq:3-BoundofZntAntandgTillTauk}
\begin{array}{ll}
&\Dis \sup\limits_{n\geq 1}\E\left[\int_0^{\tau_k}|Z^n_s|^2{\rm d}s+|A^n_{\tau_k}|^2+\left(\int_0^{\tau_k}
|g(s,Y^n_s,Z^n_s)|{\rm d}s\right)^2\right]\vspace{0.1cm}\\
\leq &\Dis \bar C\E\left[\sup\limits_{s\in [0,T]}(|Y^1_{s\wedge \tau_k}|+|\bar Y_{s\wedge \tau_k}|)^2+|V|^2_{\tau_k}+
\left(\int_0^{\tau_k} \bar f_s\ {\rm d}s\right)^2+|\tilde Y_{\tau_k}|^2\right]
\leq 4\bar Ck^2.
\end{array}
\end{equation}

Furthermore, since ${\rm d}A^n\leq {\rm d}A^{n+1}$, there exists an $(\F_t)$-progressively measurable and increasing process $(A_t)_{t\in\T}$ with $A_0=0$ such that $A_t^n\uparrow A_t$ for each $t\in\T$, and for each $j\geq n\geq 1$,
$$0\leq A^j_t-A^n_t\leq A^j_T-A^n_T,\ \ t\in\T.$$
Letting first $j\To\infty$, and then taking superume with respect to $t$ in $\T$, finally letting $n\To\infty$ in the previous inequality yields that
\begin{equation}
\label{eq:3-UniformConvergenceOfAn}
\Lim\sup\limits_{t\in\T}|A^n_t-A_t|=0,
\vspace{0.1cm}
\end{equation}
which means that $A_\cdot\in\vcal^+$. On the other hand, note by \eqref{eq:3-BoundofZntAntandgTillTauk} that $\sup_{n\geq 1}\E[|A^n_{\tau_k}|^2]<+\infty$ for each $k\geq 1$. It follows that for each $(\F_t)$-stopping time $\tau$ valued in $\T$ and each $k\geq 1$,
\begin{equation}\label{eq:3-ConvergenceOfAnInL1}
\Lim \E[|A^n_{\tau\wedge\tau_k}-A_{\tau\wedge\tau_k}|]=0.
\end{equation}

The rest proof of this proposition is divided into 7 steps, some ideas among them are lent from the proof of Proposition 4.3 in \citet{Fan2017arXivLpSolutionofDRBSDEs}.\vspace{0.2cm}

{\bf Step 1.}\ We show that $Y_\cdot$ is a c\`{a}dl\`{a}g process. Let us first fix a positive integer $k\geq 1$ arbitrarily. Note that $\bar f_\cdot\in \hcal^1$ and $\sup_{n\geq 1}\|Z^n_\cdot\mathbbm{1}_{\cdot\leq \tau_k}\|_{\M^2}<+\infty$ by \eqref{eq:3-BoundofZntAntandgTillTauk}. It follows from \eqref{eq:3-SubLinearGrowthofgYnZn} that there exists a subsequence $\{g(\cdot,Y^{n_j}_\cdot,Z^{n_j}_\cdot)
\mathbbm{1}_{\cdot\leq \tau_k}\}_{j=1}^{\infty}$ of the sequence $\{g(\cdot,Y^{n}_\cdot,Z^{n}_\cdot)
\mathbbm{1}_{\cdot\leq \tau_k}\}_{n=1}^{\infty}$ which converges weakly to a process ${}^kh_\cdot$ in $\hcal^1$. Then, for every $(\F_t)$-stopping time $\tau$ valued in $\T$, as $j\To \infty$, we have
\begin{equation}
\label{eq:3-WeaklyConvergenceOfGYnZn}
\int_0^\tau \mathbbm{1}_{s\leq \tau_k} g(s,Y_s^{n_j},Z_s^{n_j})
{\rm d}s\ \To\  \int_0^\tau {}^kh_s {\rm d}s\ \ {\rm weakly\ in}\ \LT.
\end{equation}
Furthermore, since
$$
\sup_{n\geq 1}\E\left[\int_0^T |Z^n_t\mathbbm{1}_{t\leq \tau_k}|^2{\rm d}t\right]<+\infty,\vspace{0.1cm}
$$
it follows from Lemma 4.4 of \citet{Klimsiak2012EJP} that there exists a process ${}^kZ_\cdot\in \M^2$ and a subsequence of the sequence $\{n_j\}_{j=1}^{\infty}$, still denoted by itself, such that for every $(\F_t)$-stopping time $\tau$ valued in $\T$,
\begin{equation}
\label{eq:3-WeaklyConvergenceOfZn}
\int_0^{\tau} \mathbbm{1}_{s\leq \tau_k}Z_s^{n_j}\cdot {\rm d}B_s\To \int_0^{\tau} {}^kZ_s\cdot {\rm d}B_s\ \ {\rm weakly\ in}\ \mathbb{L}^2(\F_T)\ {\rm and \ then\ in}\ \mathbb{L}^1(\F_T),\ \
{\rm as}\ j\To \infty.
\end{equation}
In the sequel, we define
$$
{}^kK_t:=Y_0-Y_t-\int_0^t {}^kh_s{\rm d}s-\int_0^t {\rm d}V_s-\int_0^t {\rm d}A_s+\int_0^t {}^kZ_s\cdot {\rm d}B_s, \ \ t\in\T.
$$
Then, for each $(\F_t)$-stopping time $\tau$ valued in $\T$, in view of \eqref{eq:3-ConvergenceOfAnInL1}, \eqref{eq:3-WeaklyConvergenceOfGYnZn}, \eqref{eq:3-WeaklyConvergenceOfZn} and the fact that $Y^n_{\tau}\uparrow Y_{\tau}$ in $\mathbb{L}^1(\F_T)$, we can deduce that the sequence
$$
K_{\tau\wedge \tau_k}^{n_j}=Y_0^{n_j}
-Y_{\tau\wedge \tau_k}^{n_j}-\int_0^{\tau\wedge \tau_k}g(s,Y_s^{n_j},Z_s^{n_j})
{\rm d}s-\int_0^{\tau\wedge \tau_k}{\rm d}V_s-\int_0^{\tau\wedge \tau_k} {\rm d}A_s^{n_j}+\int_0^{\tau\wedge \tau_k} Z_s^{n_j}\cdot {\rm d}B_s
$$
converges weakly to ${}^kK_{\tau\wedge \tau_k}$ in $\mathbb{L}^1(\F_T)$ as $j\To \infty$. Thus, since $K_\cdot^n\in \vcal^{+}$ for each $n\geq 1$, we know that\vspace{-0.1cm}
$$
{}^kK_{\sigma_1\wedge \tau_k}\leq {}^kK_{\sigma_2\wedge \tau_k}
\vspace{-0.2cm}
$$
for any $(\F_t)$-stopping times $\sigma_1\leq \sigma_2$ valued in $\T$. Furthermore, in view of the definition of ${}^kK_\cdot$ together with the facts that $V_\cdot\in\vcal$, $A_\cdot\in\vcal^+$, $Y^n_\cdot\uparrow Y_\cdot$ and $Y^n_\cdot\in \s$ for each $n\geq 1$, it is not hard to check that ${}^kK_{\cdot}$ is a optional process with $\ps$ upper semi-continuous paths. Thus, Lemma A.3 in \citet{BayraktarYao2015SPA} yields that ${}^kK_{\cdot\wedge \tau_k}$ is a nondecreasing process, and then it has $\ps$ right lower semi-continuous paths. Hence, ${}^kK_{\cdot\wedge \tau_k}$ is c\`{a}dl\`{a}g and so is $Y_{\cdot\wedge \tau_k}$ from the definition of ${}^kK_\cdot$. Finally, it follows from \eqref{eq:3-StabilityOfTauk} that $Y_\cdot$ is also a c\`{a}dl\`{a}g process.\vspace{0.2cm}

{\bf Step 2.}\ We show that $Y_t\geq L_t$ for each $t\in \T$ and
\begin{equation}
\label{eq:3-UniformConvergenceOfYnMinusL}
\Lim\sup\limits_{t\in\T}(Y^n_t-L_t)^-= 0.
\end{equation}
In fact, it follows from Fatou's lemma and the definition of $K^n_\cdot$ that for each $\beta\in (0,1)$,
$$0\leq\E\left[\left(\int_0^T(Y_t-L_t)^-{\rm d}t\right)^\beta\right]\leq \liminf\limits_{n\To\infty} \E\left[\left(\int_0^T(Y^n_t-L_t)^-{\rm d}t\right)^\beta\right]\leq \lim\limits_{n\To\infty}{\sup_{n\geq 1}\E[|K^n_T|^\beta]\over n^\beta}=0.$$
Since $Y_\cdot-L_\cdot$ is a c\`{a}dl\`{a}g process, it follows that $(Y_t-L_t)^-=0$ and hence
$Y_t\geq L_t$ for each $t\in [0,T)$. Moreover, $Y_T=Y^n_T=\xi\geq L_T$. Hence $$(Y^n_t-L_t)^-\downarrow 0$$
for each $t\in [0,T]$ and by Dini's theorem, \eqref{eq:3-UniformConvergenceOfYnMinusL} follows. \vspace{0.2cm}

{\bf Step 3.}\ We show the convergence of the sequence $\{Y_\cdot^n\}$. For each $n,m\geq 1$, observe that
\begin{equation}
\label{eq:3-DefinitionOfBarYBarZBarV}
\begin{array}{lll}
\Dis (\bar Y_\cdot,\bar Z_\cdot,\bar V_\cdot)&
:=&\Dis (Y_\cdot^n-Y_\cdot^m,Z_\cdot^n-Z_\cdot^m,\\
&&\Dis \ \ \int_0^\cdot \left(g(s,Y_s^n,Z_s^n)-g(s,Y_s^m,Z_s^m)\right){\rm d}s+\left(K_\cdot^n-K_\cdot^m\right)
-\left(A_\cdot^n-A_\cdot^m\right))
\end{array}
\end{equation}
satisfies equation \eqref{eq:2-BarY=BarV}. It then follows from (ii) of \cref{lem:2-Lemma1} with $p=2$, $t=0$ and $\tau=\tau_k$ that there exists a constant $C>0$ such that for each $n,m,k\geq 1$,
\begin{equation}\label{eq:3-BoundOfYnMinusYm}
\begin{array}{lll}
\Dis\E\left[\sup\limits_{t\in [0,T]} |Y_{t\wedge\tau_k}^n
-Y_{t\wedge\tau_k}^m|^2\right]
&\leq &\Dis C\E\left[|Y_{\tau_k}^n-Y_{\tau_k}^m|^2 +\sup\limits_{t\in [0,T]}\left(\int_{t\wedge \tau_k}^{\tau_k}(Y^n_s-Y_s^m)
\left({\rm d}K_s^n-{\rm d}K_s^m\right) \right)^+\right.\\
&&\hspace{1cm}\Dis +\sup\limits_{t\in [0,T]}\left(\int_{t\wedge \tau_k}^{\tau_k}(Y^n_s-Y_s^m)
\left({\rm d}A_s^m-{\rm d}A_s^n\right) \right)^+\vspace{0.1cm}\\
&&\hspace{1cm}\Dis +\left. \int_{0}^{\tau_k}|Y^n_s-Y_s^m|
\left|g(s,Y_s^n,Z_s^n)-g(s,Y_s^m,Z_s^m)\right| {\rm d}s\right].
\end{array}
\end{equation}
Furthermore, by virtue of the definition of $K_\cdot^n$ and $A_\cdot^n$ with \eqref{eq:3-PenalizationForRBSDEwithSuperBarrier} we know that for each $t\in \T$,
\begin{equation}\label{eq:3-YnYmTimesKnKmLeq}
\begin{array}{ll}
&\Dis \int_{t\wedge \tau_k}^{\tau_k}(Y^n_s-Y_s^m)
\left({\rm d}K_s^n-{\rm d}K_s^m\right)\vspace{0.1cm}\\
= &\Dis \int_{t\wedge \tau_k}^{\tau_k}\left[(Y^n_s-L_s)
-(Y_s^m-L_s)\right]
{\rm d}K_s^n-\int_{t\wedge \tau_k}^{\tau_k}\left[(Y^n_s-L_s)
-(Y_s^m-L_s)\right]
{\rm d}K_s^m\vspace{0.1cm}\\
\leq &\Dis \int_{t\wedge \tau_k}^{\tau_k}(Y_s^m-L_s)^-
{\rm d}K_s^n+\int_{t\wedge \tau_k}^{\tau_k}(Y_s^n-L_s)^-{\rm d}K_s^m\vspace{0.1cm}\\
\leq &\Dis \sup\limits_{s\in\T}(Y_{s\wedge\tau_k }^m-L_{s\wedge\tau_k})^-|K_T^n|+
\sup\limits_{s\in\T}(Y_{s\wedge\tau_k }^n-L_{s\wedge\tau_k})^-|K_T^m|
\end{array}
\end{equation}
and
\begin{equation}\label{eq:3-YnYmTimesAmAnLeq}
\begin{array}{lll}
\Dis \int_{t\wedge \tau_k}^{\tau_k}(Y^n_s-Y_s^m)
\left({\rm d}A_s^m-{\rm d}A_s^n\right)&
= & \Dis \int_{t\wedge \tau_k}^{\tau_k}\left[(U_s-Y_s^m)
-(U_s-Y_s^n)\right]\left({\rm d}A_s^m-{\rm d}A_s^n\right)\vspace{0.1cm}\\
&= & \Dis -\int_{t\wedge \tau_k}^{\tau_k}(U_s-Y_s^m)
{\rm d}A_s^n-\int_{t\wedge \tau_k}^{\tau_k}(U_s-Y_s^n)
{\rm d}A_s^m\\
&\leq & 0.
\end{array}
\end{equation}
Combining \eqref{eq:3-SubLinearGrowthofgYnZn}, \eqref{eq:3-BoundOfYnMinusYm}, \eqref{eq:3-YnYmTimesKnKmLeq} and \eqref{eq:3-YnYmTimesAmAnLeq} with H\"{o}lder's inequality yields that for each $m,n,k\geq 1$,
\begin{equation}
\label{eq:3-FurtherBoundOfYnMinusYm}
\begin{array}{lll}
\Dis\E\left[\sup\limits_{t\in [0,T]} |Y_{t\wedge\tau_k}^n
-Y_{t\wedge\tau_k}^m|^2\right]&\leq &\Dis C\E\left[|Y_{\tau_k}^n-Y_{\tau_k}^m|^2 +2\int_0^{\tau_k} |Y^n_t-Y_t^m|(\bar f_t+\bar\lambda){\rm d}t\right]\vspace{0.1cm}\\
&&\Dis +C\left(\E\left[ \sup\limits_{t\in\T}\left|(Y_{t\wedge\tau_k }^m-L_{t\wedge\tau_k})^-\right|^2\right]\right)^{1\over 2}\left(\E\left[|K_{\tau_k}^n|^2\right]\right)^{1\over 2}\vspace{0.1cm}\\
&&\Dis+C\left(\E\left[ \sup\limits_{t\in\T}\left|(Y_{t\wedge\tau_k }^n-L_{t\wedge\tau_k})^-\right|^2\right]\right)^{1\over 2}\left(\E\left[|K_{\tau_k}^m|^2\right]\right)^{1\over 2}\vspace{0.1cm}\\
&&\Dis+2C\bar\lambda\left(\E\left[\int_{0}^{
\tau_k}|Y^n_t-Y_t^m|^2{\rm d}t\right]\right)^{1\over 2} \left(\E\left[\int_{0}^{\tau_k}\left(|Z_t^n|+
|Z_t^m|\right)^2{\rm d}t\right]\right)^{1\over 2}.
\end{array}
\end{equation}
Note that $Y_\cdot^n\uparrow Y_\cdot$, $\bar f_\cdot\in \hcal^1$, $|Y^1_{\cdot\wedge\tau_k} |+|\bar Y_{\cdot\wedge\tau_k}|
+L^+_{\cdot\wedge\tau_k}\leq k$ and $\sup_{n\geq 1}(\E[|K^n_{\tau_k}|^2]+\|Z^n_\cdot\mathbbm{1}_{\cdot\leq \tau_k}\|_{\M^2})<+\infty$ for each $k\geq 1$ by \eqref{eq:3-BoundofZntAntandgTillTauk}. In view of \eqref{eq:3-UniformConvergenceOfYnMinusL}, from \eqref{eq:3-FurtherBoundOfYnMinusYm} and Lebesgue's dominated convergence theorem it follows that for each $k\geq 1$, as $n,m\To\infty$,
$$\E\left[\sup\limits_{t\in [0,T]} |Y_{t\wedge\tau_k}^n
-Y_{t\wedge\tau_k}^m|^2\right]\To 0,\vspace{0.1cm}$$
which implies that for each $k\geq 1$, as $n,m\To\infty$,
$$\sup\limits_{t\in [0,T]} |Y_{t\wedge\tau_k}^n
-Y_{t\wedge\tau_k}^m|\To 0\ {\rm in\ probability}\ \mathbb{P}.$$
And, by \eqref{eq:3-StabilityOfTauk} and the fact that $Y_\cdot^n\uparrow Y_\cdot$ we know that \begin{equation}
\label{eq:3-UniformConvergenceOfYnInProb}
\sup\limits_{t\in [0,T]} |Y_t^n
-Y_t|\To 0,\ \ {\rm as}\ n\To\infty.
\end{equation}
So, $Y_\cdot$ is a continuous process, and then belongs to the space $\s^\beta$ for each $\beta\in (0,1)$ and the class (D) due to the fact that both $Y_\cdot^1$ and $\bar Y_\cdot$ belong to them as well as $Y_\cdot^1\leq Y^n_\cdot\leq \bar Y_\cdot$. Finally, from \eqref{eq:3-UniformConvergenceOfYnInProb} and Lebesgue's dominated convergence theorem it follows that for each $\beta\in (0,1)$,
\begin{equation}
\label{eq:3-ConvergenceOfYnInSp}
\lim\limits_{n\To\infty}\|Y_\cdot^n-Y_\cdot\|_{\s^\beta}^\beta =\lim\limits_{n\To\infty}\E\left[\sup\limits_{t\in [0,T]} |Y_t^n-Y_t|^\beta\right]=0.\vspace{0.2cm}
\end{equation}

{\bf Step 4.}\ We show the convergence of the sequence $\{Z_\cdot^n\}$. Note that \eqref{eq:3-DefinitionOfBarYBarZBarV} solves \eqref{eq:2-BarY=BarV}. It follows from (i) of \cref{lem:2-Lemma1} with $t=0$ and $\tau=T$ that there exists a nonnegative constant $C'\geq 0$ such that for each $m,n\geq 1$ and $\beta\in (0,1)$, we have
$$
\begin{array}{lll}
&&\Dis\E\left[\left(\int_0^T|Z_t^n-Z_t^m|^2{\rm d}t\right)^{\beta\over 2}\right]\\
&\leq & \Dis C'\E\left[\sup\limits_{t\in [0,T]}|Y_t^n-Y_t^m|^\beta+\sup\limits_{t\in [0,T]}\left[\left(\int_t^T (Y_s^n-Y_s^m)\left({\rm d}K_s^n-{\rm d}K_s^m\right)\right)^+\right]^{\beta\over 2}\right] \\
&&\Dis +C'\E\left[\sup\limits_{t\in [0,T]}\left[\left(\int_t^T (Y_s^n-Y_s^m)\left({\rm d}A_s^m-{\rm d}A_s^n\right)\right)^+\right]^{\beta\over 2}\right]\\
&&\Dis + C'\E\left[\left(\int_{0}^T|Y^n_s-Y_s^m|
\left|g(s,Y_s^n,Z_s^n)-g(s,Y_s^m,Z_s^m)\right| {\rm d}s\right)^{\beta\over 2}\right].
\end{array}
$$
Then, it follows from H\"{o}lder's inequality together with \eqref{eq:3-YnYmTimesAmAnLeq} that
$$
\begin{array}{ll}
&\Dis\E\left[\left(\int_0^T|Z_t^n-Z_t^m|^2{\rm d}t\right)^{\beta\over 2}\right]\\
\leq & \Dis C'\E\left[\sup\limits_{t\in [0,T]}|Y_t^n-Y_t^m|^\beta\right]+\left.C'\left(\E\left[\sup\limits_{t\in [0,T]}|Y_t^n-Y_t^m|^\beta\right]\right)^{1\over 2}\right\{\left(\E\left[|K_T^n|^\beta\right]\right)^{1\over 2}\\
&\Dis+\left.\left(\E\left[|K_T^m|^\beta\right]\right)^{1\over 2}+\left(\E\left[\left(\int_{0}^T
\left(|g(t,Y_t^n,Z_t^n)|+|g(t,Y_t^m,Z_t^m)|\right) {\rm d}t\right)^{\beta}\right]\right)^{1\over 2}\right\},
\end{array}
$$
from which together with \eqref{eq:3-BoundofZntAntandg} and \eqref{eq:3-ConvergenceOfYnInSp} yields that there exists a process $(Z_t)_{t\in\T}\in \cap_{\beta\in (0,1)}\M^\beta$ satisfying, for each $\beta\in (0,1)$,
\begin{equation}
\label{eq:3-ConvergenceOfZnInMp}
\lim\limits_{n\To\infty}\|Z_\cdot^n-Z_\cdot\|_{\M^\beta}^\beta
=\lim\limits_{n\To\infty}\E\left[\left(\int_0^T|Z_t^n
-Z_t|^2{\rm d}t\right)^{\beta\over 2}\right]=0.\vspace{0.2cm}
\end{equation}

{\bf Step 5.}\ We show the convergence of the sequence $\{K_\cdot^n\}$. Since $g$ is continuous in $(y,z)$ and satisfies \eqref{eq:3-SubLinearGrowthofgYnZn}, by \eqref{eq:3-UniformConvergenceOfYnInProb} and \eqref{eq:3-ConvergenceOfZnInMp} we can deduce that there exists a subsequence $\{n_j\}$ of $\{n\}$ such that
$$\lim\limits_{j\To \infty}\int_0^T|g(t,Y_t^{n_j},Z_t^{n_j})-
g(t,Y_t,Z_t)|{\rm d}t=0,$$
and then
\begin{equation}
\label{eq:3-UniformConvergenceOfgYnZn}
\lim\limits_{j\To \infty}\sup\limits_{t\in\T}\left|\int_0^t
g(t,Y_t^{n_j},Z_t^{n_j}){\rm d}t-\int_0^t
g(t,Y_t,Z_t){\rm d}t\right|=0.\vspace{0.2cm}
\end{equation}
Thus, combining \eqref{eq:3-UniformConvergenceOfAn}, \eqref{eq:3-UniformConvergenceOfYnInProb}, \eqref{eq:3-ConvergenceOfZnInMp} and \eqref{eq:3-UniformConvergenceOfgYnZn} yields that $\ps$, for each $t\in\T$,
$$
K_t^{n_j}=Y_0^{n_j}-Y_t^{n_j}-\int_0^t
g(s,Y_s^{n_j},Z_s^{n_j}){\rm d}s-\int_0^t{\rm d}V_s-A_t^{n_j}+\int_0^tZ_s^{n_j}\cdot {\rm d}B_s
$$
tends to
$$
K_t:=Y_0-Y_t-\int_0^t
g(s,Y_s,Z_s){\rm d}s-\int_0^t{\rm d}V_s-A_t+\int_0^tZ_s\cdot {\rm d}B_s
$$
as $j\To \infty$ and that
\begin{equation}
\label{eq:3-UniformConvergenceOfKnPas}
\lim\limits_{j\To\infty}\sup\limits_{t\in\T}
|K_t^{n_j}-K_t|=0.\vspace{0.1cm}
\end{equation}
Hence, $K_\cdot\in \vcal^+$ due to $K^n_\cdot\in \vcal^+$ for each $n\geq 1$. Furthermore, note by the assumption that $K^n_T\leq \bar K^n_T$ for each $n\geq 1$ with $\lim\limits_{k\To\infty} \bar K^{n_k}_T=\bar K_T\in \LT$ for a subsequence $\{n_k\}$ of $\{n\}$. It follows that $K_T\in \LT$ and then $K_\cdot\in \vcal^{+,1}$.\vspace{0.2cm}

{\bf Step 6.}\ We show that the convergence of the sequence $\{A_\cdot^n\}$. Indeed, for each $k\geq 1$, define the following $(\F_t)$-stopping time:
$$
\sigma_k:=\inf\{t\in\T:\ \ \int_0^t |Z_s|^2{\rm d}s\geq k\}\wedge T.
$$
It is clear that $\sigma_k\To T$ as $k\To +\infty$ due to the fact that $Z_\cdot\in \M$. For each $k\geq 1$, we have
$$
A_{\sigma_k}=Y_{\sigma_k}-Y_0 +\int_0^{\sigma_k}g(s,Y_s,Z_s){\rm d}s+\int_0^{\sigma_k}{\rm d}V_s+K_{\sigma_k}-\int_0^{\sigma_k}Z_s \cdot {\rm d}B_s,
$$
and then
$$
\E[A_{\sigma_k}]\leq |Y_0|+\E\left[|Y_{\sigma_k}|+\int_0^T |g(s,Y_s,Z_s)|{\rm d}s+
|V|_T+K_T\right].\vspace{0.1cm}
$$
Letting $k\To\infty$, in view of Fatou's lemma and the fact that $Y_\cdot$ belongs to the class (D), yields that
$$
\E[A_T]\leq |Y_0|+\E\left[|\xi|+\int_0^T |g(s,Y_s,Z_s)|{\rm d}s+|V|_T+K_T\right].
$$
Furthermore, in view of \eqref{eq:3-UniformConvergenceOfgYnZn} and \eqref{eq:3-SubLinearGrowthofgYnZn}, it follows from H\"{o}lder's inequality that
$$
\begin{array}{lll}
\Dis\E\left[\int_0^T |g(s,Y_s,Z_s)|{\rm d}s\right]&=&\Dis
\E\left[\lim\limits_{j\To \infty}\int_0^T|g(t,Y_t^{n_j},Z_t^{n_j})|{\rm d}t\right]\leq \E\left[\lim\limits_{j\To \infty}\int_0^T(\bar f_t+\bar\lambda |Z_t^{n_j}|^\alpha){\rm d}t\right]\vspace{0.1cm}\\
&=& \Dis \E\left[\int_0^T(\bar f_t+\bar\lambda|Z_t|^\alpha){\rm d}t\right]\leq \|\bar f_\cdot\|_{\hcal^1}+\bar\lambda T^{2-\alpha\over 2}\|Z_\cdot\|_{\M^\alpha}<+\infty.
\end{array}
$$
Thus, we have $\E[A_T]<\infty$ and $A_\cdot\in \vcal^{+,1}$. Finally, note that $0\leq A^n_\cdot \leq A_\cdot$ for each $n\geq 1$. From \eqref{eq:3-UniformConvergenceOfAn} and Lebesgue's dominated convergence theorem it follows that
\begin{equation}\label{eq:3-ConvergenceOfAnInSp}
\Lim\|A^n_\cdot-A_\cdot\|_{\s^1}=0.
\end{equation}

{\bf Step 7.}\ We show that $(Y_\cdot,Z_\cdot, K_\cdot, A_\cdot)$ is an $L^1$ solution of RBSDE $(\xi,g+{\rm d}V,L,U)$. In fact, it has been proved that $Y_\cdot$ belongs to the class (D), $(Y_\cdot,Z_\cdot,K_\cdot,A_\cdot)\in \s^\beta\times\M^\beta\times\vcal^{+,1}
\times\vcal^{+,1}$ for each $\beta\in (0,1)$ and it solves
$$
Y_t=\xi+\int_t^Tg(s,Y_s,Z_s){\rm d}s+\int_t^T{\rm d}V_s+\int_t^T{\rm d}K_s-\int_t^T{\rm d}A_s-\int_t^TZ_s \cdot {\rm d}B_s,\ \ t\in\T.\vspace{0.1cm}
$$
By Step 2 we know that $Y_t\geq L_t$ for each $t\in\T$, and then
$$
\int_0^T(Y_t-L_t){\rm d}K_t\geq 0.
$$
On the other hand, in view of \eqref{eq:3-UniformConvergenceOfYnInProb} and \eqref{eq:3-UniformConvergenceOfKnPas}, it follows from the definition of $K_\cdot^n$ that
$$
\int_0^T(Y_t-L_t){\rm d}K_t=\lim\limits_{j\To \infty}\int_0^T(Y_t^{n_j}-L_t){\rm d}K_t^{n_j}\leq 0.\vspace{0.1cm}
$$
Consequently, we have
$$
\int_0^T(Y_t-L_t){\rm d}K_t=0.\vspace{0.1cm}
$$
Furthermore, noticing that $Y^n_\cdot\leq U_\cdot$ and $\int_0^T (U_t-Y^n_t)\ {\rm d}A^n_t=0$ for each $n\geq 1$ , from \eqref{eq:3-ConvergenceOfYnInSp} and \eqref{eq:3-ConvergenceOfAnInSp} we can deduce that $Y_t\leq U_t$ for each $t\in\T$, and
$$
\int_0^T (U_t-Y_t)\ {\rm d}A_t=\Lim \int_0^T (U_t-Y^n_t)\ {\rm d}A^n_t=0.
$$
Finally, let us show that ${\rm d}K\bot{\rm d}A$. In fact, for each $n\geq 1$, we can define the following $(\F_t)$-progressively measurable set
$$D_n:=\{(\omega,t)\subset \Omega\times\T:\ Y^n_t(\omega)\geq L_t(\omega)\}.$$
Then, from the definition of $K^n_\cdot$ we know that for each $n\geq 1$,
$$\E\left[\int_0^T \mathbbm{1}_{D_n} {\rm d}K^n_t\right]=0,$$
and, in view of $\int_0^T (U_t-Y^n_t){\rm d}A^n_t=0$,
$$
\E\left[\int_0^T \mathbbm{1}_{D_n^c}{\rm d}A^n_t\right]=\E\left[\int_0^T \mathbbm{1}_{\{Y^n_t<L_t\leq U_t\}}
|U_t-Y^n_t|^{-1}
(U_t-Y^n_t)\ {\rm d}A^n_t\right]=0.
$$
Thus, noticing that $D_n\subset D_{n+1}$ for each $n\geq1$ due to $Y^n_\cdot\leq Y^{n+1}_\cdot$, by \eqref{eq:3-UniformConvergenceOfKnPas} and \eqref{eq:3-ConvergenceOfAnInSp} we can deduce that
$$
\E\left[\int_0^T\mathbbm{1}_{\cup D_n} {\rm d}K_t\right]=\lim\limits_{j\To\infty} \E\left[\int_0^T
\mathbbm{1}_{D_{n_j}} {\rm d}K^{n_j}_t\right]=0
$$
and
$$
\E\left[\int_0^T\mathbbm{1}_{\cap D_n^c} {\rm d}A_t\right]=\Lim\E\left[\int_0^T
\mathbbm{1}_{D_n^c}{\rm d}A^n_t\right]=0.\vspace{0.1cm}
$$
Hence, ${\rm d}K\bot{\rm d}A$. \cref{pro:3-PenalizationOfRBSDE} is then proved.
\end{proof}

\subsection{Penalization for BSDEs\vspace{0.1cm}}

In this subsection, we prove the following convergence result on the sequence of $L^1$ solutions of penalized non-reflected BSDEs.\vspace{-0.1cm}

\begin{pro} [Penalization for BSDEs] \label{pro:3-PenalizationOfBSDE}
Assume that $V_\cdot\in \vcal^1$, \ref{A:(H3)}(i) holds true for $L_\cdot, U_\cdot$ and $\xi$, and $g$ is a generator. We have
\begin{itemize}
\item [(i)] Let $( Y^n_\cdot, Z^n_\cdot)$ be an $L^1$ solution of BSDE $(\xi,\bar g_n+{\rm d}V)$ with $\bar g_n(t,y,z):=g(t,y,z)+n(y-L_t)^-$ for each $n\geq 1$, i.e.,
\begin{equation}
\label{eq:3-PenalizationForBSDE}
\left\{
\begin{array}{l}
\Dis{Y}^n_t=\xi+\int_t^T\bar g_n(s,{Y}^n_s,
{Z}^n_s)
{\rm d}s+\int_t^T{\rm d}V_s-\int_t^T{Z}^n_s \cdot {\rm d}B_s,\ \   t\in\T,\vspace{0.1cm}\\
\Dis  K^n_t:=n\int_0^t(
{Y}^n_s-L_s)^-\ {\rm d}s,\ \ t\in\T.
\end{array}
\right.
\end{equation}
If for each $n\geq 1$, $Y^n_\cdot\leq Y^{n+1}_\cdot\leq \bar Y_\cdot$ with a process $\bar Y_\cdot\in \cap_{\beta\in (0,1)}\s^\beta$ of the class (D), and there exist two constants $\bar\lambda>0$, $\alpha\in (0,1)$ and a nonnegative process $\bar f_\cdot\in \hcal^1$ such that \eqref{eq:3-SubLinearGrowthofgYnZn} holds true for each $n\geq 1$, then $\sup_{n\geq 1}\E[|K^n_T|^\beta]<+\infty$ for each $\beta\in (0,1)$, $\sup_{n\geq 1}\E[|K^n_\tau|^2]\leq \E[|\tilde Y_{\tau}|^2]$ for a process
$\tilde Y_\cdot\in\s$ and each $(\F_t)$-stopping time $\tau$ valued in $\T$, there exists an $L^1$ solution $(Y_\cdot, Z_\cdot, K_\cdot)$ of $\underline R$BSDE $(\xi,g+{\rm d}V,L)$ such that for each $\beta\in (0,1)$,
$$
\lim\limits_{n\To \infty}\left(\| Y_\cdot^n- Y_\cdot\|_{\s^\beta}+ \| Z_\cdot^n- Z_\cdot\|_{\M^\beta}\right)=0\vspace{0.1cm}
$$
and there exists a subsequence $\{ K_\cdot^{n_j}\}$ of $\{ K_\cdot^n\}$ such that
$$
\lim\limits_{j\To\infty}\sup\limits_{t\in\T}
| K_t^{n_j}- K_t|=0.
$$

\item [(ii)] Let $( Y^n_\cdot, Z^n_\cdot)$ be an $L^1$ solution of BSDE $(\xi,\underline g_n+{\rm d}V)$ with $\underline g_n(t,y,z):=g(t,y,z)-n(y-U_t)^+$ for each $n\geq 1$, i.e.,
\begin{equation}
\label{eq:3-PenalizationForBSDEwithAn}
\left\{
\begin{array}{l}
\Dis{Y}^n_t=\xi+\int_t^T\underline g_n(s,{Y}^n_s, {Z}^n_s)
{\rm d}s+\int_t^T{\rm d}V_s-\int_t^T{Z}^n_s \cdot {\rm d}B_s,\ \   t\in\T,\vspace{0.1cm}\\
\Dis  A^n_t:=n\int_0^t(
{Y}^n_s-U_s)^+\ {\rm d}s,\ \ t\in\T.
\end{array}
\right.
\end{equation}
If for each $n\geq 1$, $Y^n_\cdot\geq Y^{n+1}_\cdot\geq \underline Y_\cdot$ with a process $\underline Y_\cdot\in \cap_{\beta\in (0,1)}\s^\beta$ of the class (D), and there exist two constants $\bar\lambda>0$, $\alpha\in (0,1)$ and a nonnegative process $\bar f_\cdot\in \hcal^1$ such that \eqref{eq:3-SubLinearGrowthofgYnZn} holds true for each $n\geq 1$, then $\sup_{n\geq 1}\E[|A^n_T|^\beta]<+\infty$ for each $\beta\in (0,1)$, $\sup_{n\geq 1}\E[|A^n_\tau|^2]\leq \E[|\tilde Y_{\tau}|^2]$ for a process
$\tilde Y_\cdot\in\s$ and each $(\F_t)$-stopping time $\tau$ valued in $\T$, there exists an $L^1$ solution $( Y_\cdot, Z_\cdot,A_\cdot)$ of $\bar R$BSDE $(\xi,g+{\rm d}V,U)$ such that for each $\beta\in (0,1)$,
$$\lim\limits_{n\To \infty}\left(\| Y_\cdot^n- Y_\cdot\|_{\s^\beta}+
\| Z_\cdot^n- Z_\cdot\|_{\M^\beta}\right)=0$$
and there exists a subsequence $\{ A_\cdot^{n_j}\}$ of $\{ A_\cdot^n\}$ such that
$$\lim\limits_{j\To\infty}\sup\limits_{t\in\T}
| A_t^{n_j}- A_t|=0.$$
\end{itemize}
\end{pro}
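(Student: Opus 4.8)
\emph{Approach.} I would prove only claim (i); claim (ii) is entirely symmetric, with the upper barrier $U$, the process $A^n$ and decreasing $Y^n$ playing the roles of $L$, $K^n$ and increasing $Y^n$. The starting observation is that, absorbing the penalty into the finite-variation part, the penalized equation \eqref{eq:3-PenalizationForBSDE} is equivalent to
$$Y^n_t=\xi+\int_t^T g(s,Y^n_s,Z^n_s)\,{\rm d}s+\int_t^T{\rm d}V_s+\int_t^T{\rm d}K^n_s-\int_t^TZ^n_s\cdot{\rm d}B_s,\quad t\in\T,$$
which is precisely the first form treated in \cref{lem:2-EstimateOfZKAg}. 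The proof then splits into deriving the two a priori bounds on $K^n$ that make up the first half of the conclusion, and then running the seven-step convergence scheme of \cref{pro:3-PenalizationOfRBSDE} in the degenerate case of no upper reflection (formally $U=+\infty$, $A^n\equiv0$), so that the limit is the one-barrier $\underline R$BSDE $(\xi,g+{\rm d}V,L)$.

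\emph{A priori bounds.} Since $\alpha\in(0,1)$ gives $|z|^\alpha\leq1+|z|$, \eqref{eq:3-SubLinearGrowthofgYnZn} yields the linear-growth condition \eqref{eq:2-LinearGrowthConditionOfg} with $\bar f_\cdot+\bar\lambda$ replacing $\bar f_\cdot$, so \cref{lem:2-EstimateOfZKAg} applies. Taking $p=\beta$, $t=0$, $\tau=T$ and bounding $\sup_s|Y^n_s|$ by $\sup_s(|Y^1_s|+|\bar Y_s|)$ (which lies in every $\s^\beta$) gives $\sup_n\E[|K^n_T|^\beta]<+\infty$. Taking instead $p=2$ with a general stopping time $\tau$ bounds $\E[|K^n_\tau|^2]$ by a constant multiple of $\E[(\sup_{s\leq\tau}(|Y^1_s|+|\bar Y_s|)+|V|_\tau+\int_0^\tau\bar f_s\,{\rm d}s)^2]$; setting $\tilde Y_t:=\sqrt{\bar C}(\sup_{s\leq t}(|Y^1_s|+|\bar Y_s|)+|V|_t+\int_0^t\bar f_s\,{\rm d}s)$, which belongs to $\s$, delivers $\sup_n\E[|K^n_\tau|^2]\leq\E[|\tilde Y_\tau|^2]$ for every $\tau$ (using $a^2+b^2+c^2\leq(a+b+c)^2$).

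\emph{Convergence.} With the localizing times $\tau_k:=\inf\{t:|Y^1_t|+|\bar Y_t|+|V|_t+\int_0^t\bar f_s\,{\rm d}s+|\tilde Y_t|+L^+_t\geq k\}\wedge T$, \cref{lem:2-EstimateOfZKAg} furnishes square-integrable bounds on $Z^n$, $K^n$ and $g(\cdot,Y^n,Z^n)$ over $[0,\tau_k]$. Since $Y^n_\cdot\uparrow Y_\cdot$, the uniform $\hcal^1$-bound on $[0,\tau_k]$ and Lemma 4.4 of \citet{Klimsiak2012EJP} produce, along a subsequence, weak limits of $g(\cdot,Y^{n_j},Z^{n_j})\mathbbm{1}_{\cdot\leq\tau_k}$ and $\int Z^{n_j}\,{\rm d}B$; the associated process ${}^kK$ inherits the monotonicity of the increasing $K^{n_j}$ in the weak limit and has upper semi-continuous paths, so Lemma A.3 of \citet{BayraktarYao2015SPA} makes ${}^kK_{\cdot\wedge\tau_k}$ nondecreasing and hence $Y$ c\`{a}dl\`{a}g (Step 1). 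A Fatou estimate on $\E[(\int_0^T(Y_t-L_t)^-{\rm d}t)^\beta]$ together with the $K^n_T$-bound and Dini's theorem gives $Y\geq L$ and $\sup_t(Y^n_t-L_t)^-\To0$ (Step 2). Applying \cref{lem:2-Lemma1}(ii) and (i) to $(Y^n-Y^m,Z^n-Z^m)$, whose driver is $\int(g(\cdot,Y^n,Z^n)-g(\cdot,Y^m,Z^m))\,{\rm d}s+(K^n-K^m)$ with no $A$-cross-term, and controlling $\int(Y^n-Y^m)({\rm d}K^n-{\rm d}K^m)$ through $\sup_t(Y^n-L)^-|K^m_T|+\sup_t(Y^m-L)^-|K^n_T|$, yields $Y^n\To Y$ in $\s^\beta$ and $Z^n\To Z$ in $\M^\beta$ (Steps 3--4). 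Continuity of $g$ then gives uniform convergence of $\int_0^\cdot g(s,Y^{n_j},Z^{n_j})\,{\rm d}s$ along a further subsequence, hence $K^{n_j}\To K$ uniformly with $K\in\vcal^+$ (Step 5); the Skorokhod condition follows since $\int_0^T(Y_t-L_t){\rm d}K_t\geq0$ from $Y\geq L$ while $\int_0^T(Y^{n_j}_t-L_t){\rm d}K^{n_j}_t=-n_j\int_0^T((Y^{n_j}_t-L_t)^-)^2{\rm d}t\leq0$ passes to the limit.

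\emph{Main obstacle.} The one genuinely new point, where the argument diverges from \cref{pro:3-PenalizationOfRBSDE}, is proving $K\in\vcal^{+,1}$: here the $K^n_T$ are bounded only in $\mathbb{L}^\beta$ for $\beta<1$, and there is no dominating process in $\vcal^{+,1}$ to invoke as in \cref{pro:3-PenalizationOfRBSDE}. Since a single reflecting term is now present, I would instead solve the limit equation for $K$ on $[0,\sigma_k]$ with $\sigma_k:=\inf\{t:\int_0^t|Z_s|^2{\rm d}s\geq k\}\wedge T$, namely $K_{\sigma_k}=Y_0-Y_{\sigma_k}-\int_0^{\sigma_k}g(s,Y_s,Z_s)\,{\rm d}s-\int_0^{\sigma_k}{\rm d}V_s+\int_0^{\sigma_k}Z_s\cdot{\rm d}B_s$, take expectations (the stopped stochastic integral is a true martingale), and let $k\To\infty$ by Fatou together with $Y$ being of class (D). This reduces the integrability of $K_T$ to $g(\cdot,Y,Z)\in\hcal^1$, which I would obtain by passing \eqref{eq:3-SubLinearGrowthofgYnZn} to the limit to get $|g(\cdot,Y,Z)|\leq\bar f_\cdot+\bar\lambda|Z_\cdot|^\alpha$ and then using H\"older's inequality with $Z\in\M^\alpha$ --- exactly the Step 6 computation of \cref{pro:3-PenalizationOfRBSDE}, but carried out for $K$ in place of $A$.
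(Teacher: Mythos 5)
Your proposal is correct and follows essentially the same route as the paper: the two a priori bounds on $K^n$ come from \cref{lem:2-EstimateOfZKAg} exactly as you describe, the convergence is obtained by rerunning Steps 1--5 of \cref{pro:3-PenalizationOfRBSDE} with $A^n\equiv 0$ and $U\equiv+\infty$, and the paper resolves the integrability of $K_T$ precisely by the Step-6 computation you identify (stopped equation, Fatou, class (D), and $g(\cdot,Y_\cdot,Z_\cdot)\in\hcal^1$ via H\"older with $Z_\cdot\in\M^\alpha$), since the domination hypothesis on $K^n_T$ available in \cref{pro:3-PenalizationOfRBSDE} is absent here. You have correctly located the one genuinely new step and handled it as the paper does.
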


\begin{proof}
We only prove (i), the proof of (ii) is similar. Note first that for each $n\geq 1$, $Y^n_\cdot\leq Y^{n+1}_\cdot\leq \bar Y_\cdot\in \cap_{\beta\in (0,1)}\s^\beta$. In view of \eqref{eq:3-SubLinearGrowthofgYnZn}, by \cref{lem:2-EstimateOfZKAg} we can deduce that for each $\beta\in (0,1)$, there exists a nonnegative constant $C_\beta$ depending only on $\beta,\bar\lambda,T$ such that
\begin{equation}\label{eq:3-BoundofZntAntandgn}
\begin{array}{ll}
&\Dis \sup\limits_{n\geq 1}\E\left[\left(\int_0^T|Z^n_s|^2{\rm d}s\right)^{\beta\over 2}+|K^n_T|^\beta+
\left(\int_0^T|g(s,Y^n_s,Z^n_s)|{\rm d}s\right)^\beta\right]\\
\leq &\Dis C_\beta\E\left[\sup\limits_{s\in [0,T]}(|Y^1_{s}|+|\bar Y_{s}|)^\beta+|V|^\beta_T+\left(\int_0^T \bar f_s\ {\rm d}s\right)^\beta\right]<+\infty,
\end{array}
\end{equation}
and there also exists a nonnegative constant $\bar C$ depending only on $\bar\lambda,T$ such that for each $(\F_t)$-stopping time $\tau$ valued in $\T$, we have
\begin{equation}\label{eq:3-BoundofZntInL2}
\sup\limits_{n\geq 1}\E\left[\int_0^\tau|Z^n_s|^2{\rm d}s+|K^n_\tau|^2\right]\leq \bar C\E\left[\sup\limits_{s\in [0,T]}(|Y^1_{s\wedge \tau}|+|\bar Y_{s\wedge \tau}|)^2+|V|^2_\tau+\left(\int_0^\tau \bar f_s\ {\rm d}s\right)^2\right].\vspace{0.2cm}
\end{equation}
For each positive integer $k\geq 1$, define the following $(\F_t)$-stopping time:
$$
\tau_k:=\Dis \inf\left\{t\geq 0:\  |Y^1_{t}|+|\bar Y_{t}|+|V|_t+\int_0^t \bar f_s\ {\rm d}s+L^+_t\geq k\right\}\wedge T.
$$
Then
$$
\mathbb{P}\left(\left\{\omega:\ \exists k_0(\omega)\geq 1, \ \RE k\geq k_0(\omega),\ \tau_k(\omega)=T\right\}\right)=1.\vspace{0.1cm}
$$
Thus, by letting $A^n_\cdot\equiv 0$ and $U_\cdot\equiv +\infty$, a same argument as in the proof of the steps 1-5 of \cref{pro:3-PenalizationOfRBSDE} yields that there exists a triple $(Y_\cdot, Z_\cdot, K_\cdot)\in \s^\beta\times\M^\beta\times\vcal^+$ for each $\beta\in (0,1)$ satisfying
$$
K_t=Y_0-Y_t-\int_0^t
g(s,Y_s,Z_s){\rm d}s-\int_0^t{\rm d}V_s+\int_0^tZ_s\cdot {\rm d}B_s.
$$
Furthermore, for each $\beta\in (0,1)$,
$$
\lim\limits_{n\To \infty}\left(\| Y_\cdot^n- Y_\cdot\|_{\s^\beta}+ \| Z_\cdot^n- Z_\cdot\|_{\M^\beta}\right)=0\vspace{0.1cm}
$$
and there exists a subsequence $\{ K_\cdot^{n_j}\}$ of $\{ K_\cdot^n\}$ such that
$$
\lim\limits_{j\To\infty}\sup\limits_{t\in\T}
| K_t^{n_j}- K_t|=0.
$$
In the sequel, a similar proof to the step 6 of \cref{pro:3-PenalizationOfRBSDE} yields that
$$
\E[K_T]\leq |Y_0|+\E[|\xi|]+\E[|V|_T]+
\|\bar f_\cdot\|_{\hcal^1}+\bar\lambda T^{2-\alpha\over 2}\|Z_\cdot\|_{\M^\alpha}<+\infty,
$$
which means that $K_\cdot\in \vcal^{+,1}$. Finally, similar to the step 7 of \cref{pro:3-PenalizationOfRBSDE}, it is easy to prove that $(Y_\cdot, Z_\cdot, K_\cdot)$ is an $L^1$ solution of $\underline R$BSDE $(\xi,g+{\rm d}V,L)$. The proof is complete.
\end{proof}

\subsection{Approximation\vspace{0.1cm}}

In this subsection, we prove the following general approximation result for $L^1$ solutions of DRBSDEs and both RBSDEs and non-reflected BSDEs as its special cases.\vspace{-0.1cm}

\begin{pro} [Approximation] \label{pro:3-Approximation}
Assume that $V_\cdot\in \vcal^1$, \ref{A:(H3)}(i) holds true for $L_\cdot$, $U_\cdot$ and $\xi$, $g_n$ is a generator and $(Y_\cdot^n,Z_\cdot^n,K_\cdot^n,A_\cdot^n)$ is an $L^1$ solution of DRBSDE $(\xi,g_n+{\rm d}V,L,U)$ for each $n\geq 1$. If for each $n\geq 1$, $Y_\cdot^n\leq Y_\cdot^{n+1}\leq \bar Y_\cdot$, ${\rm d}A^n\leq {\rm d}A^{n+1}\leq {\rm d}\bar A$ and ${\rm d}K^{n+1}\leq {\rm d}K^n\leq {\rm d}K^1$ with $\bar Y_\cdot\in \cap_{\beta\in (0,1)}\s^\beta$ of the class (D) and $\bar A\in \vcal^{+,1}$ (resp. $\underline {Y}_\cdot\leq Y_\cdot^{n+1}\leq Y_\cdot^n$, ${\rm d}A^{n+1}\leq {\rm d}A^n\leq {\rm d} A^1$ and ${\rm d}K^n\leq {\rm d}K^{n+1}\leq {\rm d}\bar K$ with $\underline {Y}_\cdot\in \cap_{\beta\in (0,1)}\s^\beta$ of the class (D) and $\bar K\in \vcal^{+,1}$), $g_n$ tends locally uniformly in $(y,z)$ to a generator $g$ as $n\To\infty$, there exists a constant $\bar\lambda>0$ and a nonnegative process $\tilde f_\cdot\in \hcal^1$ such that for each $n\geq 1$,
\begin{equation}
\label{eq:3-SemiLinearGrowthCondition}
\as,\ \ {\rm sgn}(Y^n_\cdot)g_n(\cdot,Y^n_\cdot,Z^n_\cdot)\leq \tilde f_\cdot+\bar\lambda|Z^n_\cdot|.
\end{equation}
and for each $k\geq 1$, there exists a nonnegative process $\bar f^k_\cdot\in \hcal$ such that for each $n\geq 1$,
\begin{equation}
\label{eq:3-LinearGrowthofgnYnZn--k}
\as,\ \ |g_n(\cdot,Y^n_\cdot,Z^n_\cdot)
\mathbbm{1}_{|Y^n_\cdot|\leq k}|\leq \bar f^k_\cdot+\bar\lambda |Z^n_\cdot|,
\end{equation}
then there exists an $L^1$ solution $(Y_\cdot,Z_\cdot,K_\cdot,A_\cdot)$ of DRBSDE $(\xi,g+{\rm d}V,L,U)$ such that for each $\beta\in (0,1)$,
$$
\lim\limits_{n\To \infty}\left(\|Y_\cdot^n-Y_\cdot\|_{\s^\beta}+
\|Z_\cdot^n-Z_\cdot\|_{\M^\beta}
+\|K_\cdot^n-K_\cdot\|_{\s^1}
+\|A_\cdot^n-A_\cdot\|_{\s^1}\right)=0.
$$
\end{pro}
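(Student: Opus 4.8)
The plan is to follow the scheme of \cref{pro:3-PenalizationOfRBSDE} almost verbatim, the new feature being that here \emph{both} reflecting processes are monotone in $n$, so their limits can be taken directly. I shall only treat the increasing case (the decreasing one being symmetric). First I would pass to the monotone limits: since $Y^n_\cdot\uparrow$ is dominated by $\bar Y_\cdot$ there is a progressively measurable $Y_\cdot$ with $Y^n_t\uparrow Y_t$, and since ${\rm d}A^n\leq {\rm d}A^{n+1}\leq {\rm d}\bar A$ and ${\rm d}K^{n+1}\leq {\rm d}K^n\leq {\rm d}K^1$ the limits $A_\cdot:=\lim_n A^n_\cdot$ and $K_\cdot:=\lim_n K^n_\cdot$ exist with $0\leq A_\cdot\leq \bar A_\cdot$ and $0\leq K_\cdot\leq K^1_\cdot$. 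Because $A^{n'}-A^n$ and $K^n-K^{n'}$ are nondecreasing in time for $n'\geq n$, I get $\sup_{t\in\T}|A^n_t-A_t|=A_T-A^n_T$ and $\sup_{t\in\T}|K^n_t-K_t|=K^n_T-K_T$, whence $A_\cdot,K_\cdot\in\vcal^{+,1}$ and $\|A^n_\cdot-A_\cdot\|_{\s^1}+\|K^n_\cdot-K_\cdot\|_{\s^1}\To 0$. Writing the DRBSDE for $g_n$ in the form \eqref{eq:2-BarY=BarV} with finite-variation part $\tilde V^n_\cdot:=V_\cdot+K^n_\cdot-A^n_\cdot$, for which $|\tilde V^n|_\tau\leq |V|_\tau+K^1_\tau+\bar A_\tau$, the one-sided bound \eqref{eq:3-SemiLinearGrowthCondition} lets me invoke \cref{lem:2-EstimateOfZandg}: with the localizing times
$$\tau_k:=\inf\Big\{t\geq 0:\ |Y^1_t|+|\bar Y_t|+|V|_t+{\textstyle\int_0^t}\tilde f_s\,{\rm d}s+K^1_t+\bar A_t\geq k\Big\}\wedge T,\ \ \tau_k\uparrow T,$$
this yields a uniform $\M^\beta$-bound on $\{Z^n_\cdot\}$ and a uniform bound on $\E[(\int_0^T|g_n(s,Y^n_s,Z^n_s)|{\rm d}s)^\beta]$ for each $\beta\in(0,1)$, together with $\sup_n\E[\int_0^{\tau_k}|Z^n_s|^2{\rm d}s]\leq C k^2$ for each $k$.

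Next I would establish $Y^n_\cdot\To Y_\cdot$ in $\s^\beta$ by applying \cref{lem:2-Lemma1}(ii) with $p=2$, $t=0$, $\tau=\tau_k$ to the difference $(Y^n_\cdot-Y^m_\cdot,\,Z^n_\cdot-Z^m_\cdot,\,\tilde V^n_\cdot-\tilde V^m_\cdot)$. The driving term splits into a generator part and the two reflection cross terms, and the key observation is that the Skorokhod conditions $\int(Y^n_s-L_s)\,{\rm d}K^n_s=\int(U_s-Y^n_s)\,{\rm d}A^n_s=0$ (valid on every subinterval, since ${\rm d}K^n$ is carried by $\{Y^n_\cdot=L_\cdot\}$ and ${\rm d}A^n$ by $\{Y^n_\cdot=U_\cdot\}$) force
$$\int_a^b(Y^n_s-Y^m_s)\,{\rm d}(K^n_s-K^m_s)=-\int_a^b(Y^m_s-L_s)\,{\rm d}K^n_s-\int_a^b(Y^n_s-L_s)\,{\rm d}K^m_s\leq 0$$
and, symmetrically, $-\int_a^b(Y^n_s-Y^m_s)\,{\rm d}(A^n_s-A^m_s)\leq 0$; hence both cross terms drop out of the positive part on the right-hand side of \cref{lem:2-Lemma1}(ii). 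Since $Y^1_\cdot\leq Y^n_\cdot\leq\bar Y_\cdot$ gives $|Y^n_{\cdot\wedge\tau_k}|\leq k$, the local linear-growth bound \eqref{eq:3-LinearGrowthofgnYnZn--k} is available on $[0,\tau_k]$, so the surviving generator term is dominated by $\E[\int_0^{\tau_k}|Y^n_s-Y^m_s|(2\bar f^k_s+\bar\lambda(|Z^n_s|+|Z^m_s|))\,{\rm d}s]$, which tends to $0$ by the Cauchy--Schwarz inequality, the uniform local $L^2$-bound on $Z^n_\cdot$, and dominated convergence (the integrand vanishes pointwise as $Y^n_s\uparrow Y_s$ and is dominated by an integrable majorant on $[0,\tau_k]$). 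Thus $\E[\sup_{t\in\T}|Y^n_{t\wedge\tau_k}-Y^m_{t\wedge\tau_k}|^2]\To 0$, and letting $k\To\infty$ shows $\sup_{t\in\T}|Y^n_t-Y_t|\To 0$ in probability; so $Y_\cdot$ is continuous, lies in $\s^\beta$ and in the class (D), and $\|Y^n_\cdot-Y_\cdot\|_{\s^\beta}\To 0$ by dominated convergence.

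I would then deduce the convergence of $\{Z^n_\cdot\}$ from \cref{lem:2-Lemma1}(i) with $p=\beta$, $t=0$, $\tau=T$ applied to the same difference (the reflection cross terms being again nonpositive), controlling the right-hand side by $\|Y^n_\cdot-Y^m_\cdot\|_{\s^\beta}$ and the uniform bound on $\E[(\int_0^T|g_n(s,Y^n_s,Z^n_s)|{\rm d}s)^\beta]$, so that $\{Z^n_\cdot\}$ is Cauchy in $\M^\beta$ and converges to some $Z_\cdot$. Along a subsequence $Y^n\To Y$ and $Z^n\To Z$ hold $\as$, and combining this with the local uniform convergence $g_n\To g$, the continuity of $g$, the growth \eqref{eq:3-LinearGrowthofgnYnZn--k} and a localization by $\tau_k$ I would obtain $\int_0^T|g_{n_j}(s,Y^{n_j}_s,Z^{n_j}_s)-g(s,Y_s,Z_s)|\,{\rm d}s\To 0$, hence uniform convergence of the time-integrals. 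Passing to the limit in the defining system of DRBSDE $(\xi,g_n+{\rm d}V,L,U)$ and using the uniform convergences of $Y^n_\cdot,Z^n_\cdot,K^n_\cdot,A^n_\cdot$ then identifies $(Y_\cdot,Z_\cdot,K_\cdot,A_\cdot)$ as a solution of the equation driven by $g$; the constraints $L_\cdot\leq Y_\cdot\leq U_\cdot$, $\int_0^T(Y_s-L_s)\,{\rm d}K_s=\int_0^T(U_s-Y_s)\,{\rm d}A_s=0$ and ${\rm d}K\bot{\rm d}A$ are checked exactly as in Step~7 of \cref{pro:3-PenalizationOfRBSDE}, using the limiting supports of $\{K^n_\cdot\}$ and $\{A^n_\cdot\}$.

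I expect the main obstacle to be the $\s^\beta$-estimate of the second paragraph: one must exploit simultaneously the nonpositivity of \emph{both} reflection cross terms (which rests on the Skorokhod conditions of each approximating solution) and the merely local growth bound \eqref{eq:3-LinearGrowthofgnYnZn--k}, valid only on $\{|Y^n_\cdot|\leq k\}$ --- this is precisely why the localizing sequence $\tau_k$ is indispensable and why the global one-sided bound \eqref{eq:3-SemiLinearGrowthCondition} alone (used only for the a priori estimates) does not suffice. A secondary delicate point is the verification of ${\rm d}K\bot{\rm d}A$ for the limit, which must be recovered from the limiting supports of the two monotone sequences.
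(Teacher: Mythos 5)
Your overall architecture is exactly the paper's: monotone limits for $Y^n_\cdot$, $K^n_\cdot$, $A^n_\cdot$ with $\s^1$-convergence of the two reflecting processes by Dini/dominated convergence, a priori bounds from \cref{lem:2-EstimateOfZandg} via the one-sided condition \eqref{eq:3-SemiLinearGrowthCondition}, a Cauchy estimate for $Y^n_\cdot$ from \cref{lem:2-Lemma1}(ii) in which both reflection cross terms are killed by the Skorokhod conditions, then $\M^\beta$-convergence of $Z^n_\cdot$ from \cref{lem:2-Lemma1}(i), and identification of the limit. There is, however, one genuine gap in your Step on the $\s^\beta$-convergence of $Y^n_\cdot$: your localizing time
$\tau_k$ truncates $|Y^1_\cdot|+|\bar Y_\cdot|+|V|_\cdot+\int_0^\cdot\tilde f_s\,{\rm d}s+K^1_\cdot+\bar A_\cdot$ but \emph{not} $\int_0^\cdot\bar f^k_s\,{\rm d}s$. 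The process $\bar f^k_\cdot$ coming from \eqref{eq:3-LinearGrowthofgnYnZn--k} is only in $\hcal$, not in $\hcal^1$, so $\E\bigl[\int_0^{\tau_k}\bar f^k_s\,{\rm d}s\bigr]$ may be infinite and your claimed "integrable majorant on $[0,\tau_k]$" for the term $\E\bigl[\int_0^{\tau_k}|Y^n_s-Y^m_s|\,\bar f^k_s\,{\rm d}s\bigr]$ does not exist; dominated convergence in the expectation is therefore not justified. The paper repairs exactly this point with a doubly indexed localization $\tau_{k,l}:=\inf\{t:\int_0^t\bar f^k_s\,{\rm d}s\geq l\}\wedge\sigma_k$, on which the majorant $2k\,\bar f^k_s\mathbbm{1}_{s\leq\tau_{k,l}}$ has time-integral at most $2kl$, and then lets $l\To\infty$ as well. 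Your own closing remark shows you saw that the merely local bound \eqref{eq:3-LinearGrowthofgnYnZn--k} is the crux, but the single-index truncation you chose does not implement the fix.

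Two secondary remarks. First, the part of the generator term involving $\bar\lambda(|Z^n_s|+|Z^m_s|)$ is handled correctly by Cauchy--Schwarz together with the uniform local $L^2$-bound, so the gap is confined to the $\bar f^k$ contribution. Second, for ${\rm d}K\bot{\rm d}A$ the relevant template is not quite Step 7 of \cref{pro:3-PenalizationOfRBSDE} (where the sets $D_n$ come from the penalization): here one should take the sets $D_n$ furnished by the orthogonality ${\rm d}K^n\bot{\rm d}A^n$ of each approximating solution and use ${\rm d}K\leq{\rm d}K^n$ to get $\E[\int_0^T\mathbbm{1}_{\cup D_n}{\rm d}K_t]=0$ and the $\s^1$-convergence of $A^n_\cdot$ to get $\E[\int_0^T\mathbbm{1}_{\cap D_n^c}{\rm d}A_t]=0$; this is consistent with your phrase "limiting supports of the two monotone sequences" but should be made explicit.
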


\begin{proof}
We only prove the case
that for each $n\geq 1$, $Y_\cdot^n\leq Y_\cdot^{n+1}\leq \bar Y_\cdot$, ${\rm d}A^n\leq {\rm d}A^{n+1}\leq {\rm d}\bar A$ and ${\rm d}K^{n+1}\leq {\rm d}K^n\leq {\rm d} K^1$ with $\bar Y_\cdot\in \cap_{\beta\in (0,1)}\s^\beta$ of the class (D) and $\bar A\in \vcal^{+,1}$. Another case can be proved in the same way. Firstly, a same argument as that in proving \eqref{eq:3-UniformConvergenceOfAn} together with Lebesgue's dominated convergence theorem yields that there exists an $(\F_t)$-progressively measurable process $(Y_t)_{t\in\T}$ together with $K_\cdot, A_\cdot\in \vcal^{+,1}$ such that $Y_t^n\uparrow Y_t$ for each $t\in\T$, and
\begin{equation}
\label{eq:3-ConvergenceOfKnandAnInS1}
\lim\limits_{n\To \infty}\left(\|K_\cdot^n-K_\cdot\|_{\s^1}
+\|A_\cdot^n-A_\cdot\|_{\s^1}\right)=0.
\end{equation}
Furthermore, in view of \eqref{eq:3-SemiLinearGrowthCondition}, by \cref{lem:2-EstimateOfZandg} we can deduce that for each $\beta\in (0,1)$, there exists a nonnegative constant $C_\beta$ depending only on $\beta,\bar\lambda,T$ such that
\begin{equation}\label{eq:3-BoundofZntAntandgn}
\begin{array}{ll}
&\Dis \sup\limits_{n\geq 1}\E\left[\left(\int_0^T|Z^n_s|^2{\rm d}s\right)^{\beta\over 2}+\left(\int_0^T|g_n(s,Y^n_s,Z^n_s)|{\rm d}s\right)^\beta\right]\\
\leq &\Dis C_\beta\E\left[\sup\limits_{s\in [0,T]}(|Y^1_{s}|+|\bar Y_{s}|)^\beta+|V|^\beta_T+|K^1_T|^\beta+|\bar A_T|^\beta+\left(\int_0^T \tilde f_s\ {\rm d}s\right)^\beta\right]<+\infty,
\end{array}
\end{equation}
and there also exists a constant $\bar C>0$ depending only on $\bar\lambda,T$ such that for each $(\F_t)$-stopping time $\tau$ valued in $\T$, we have
\begin{equation}
\label{eq:3-BoundofZntInL2}
\sup\limits_{n\geq 1}\E\left[\int_0^\tau|Z^n_s|^2{\rm d}s\right]\leq \bar C\E\left[\sup\limits_{s\in [0,T]}(|Y^1_{s\wedge \tau}|+|\bar Y_{s\wedge \tau}|)^2+|V|^2_\tau+|K^1_\tau|^2+|\bar A_\tau|^2+\left(\int_0^\tau \tilde f_s\ {\rm d}s\right)^2\right].\vspace{0.2cm}
\end{equation}

The rest proof of this proposition is divided into 3 steps, some of ideas among them are lent from the proof of Proposition 5.1 in \citet{Fan2017arXivLpSolutionofDRBSDEs}.
\vspace{0.1cm}

{\bf Step 1.}\ We show the convergence of the sequence $\{Y_\cdot^n\}$. For each positive integer $k,l\geq 1$, we introduce the following two $(\F_t)$-stopping times:
$$
\begin{array}{lll}
\sigma_k &:=& \Dis \inf\left\{t\geq 0:\  |Y^1_{t}|+|\bar Y_{t}|+|V|_t+K^1_t+\bar A_t+\int_0^t \tilde f_s\ {\rm d}s\geq k\right\}\wedge T;\vspace{0.1cm}\\
\tau_{k,l}&:=&\Dis \inf\left\{t\geq 0:\  \int_0^t \bar f^k_s\ {\rm d}s\geq l\right\}\wedge \sigma_k.
\end{array}
$$
Then we have\vspace{-0.2cm}
\begin{equation}
\label{eq:3-StabilityOfTaukl}
\mathbb{P}\left(\left\{\omega:\ \exists k_0(\omega),l_0(\omega)\geq 1, \ \RE k\geq k_0(\omega),\ \RE l\geq l_0(\omega),\ \ \tau_{k,l}(\omega)=T\right\}\right)=1.
\vspace{-0.1cm}
\end{equation}
For each $n,m\geq 1$, observe that
\begin{equation}
\label{eq:3-DefinitionOfBarYZV}
\begin{array}{lll}
\Dis (\bar Y_\cdot,\bar Z_\cdot,\bar V_\cdot)&
:=&\Dis (Y_\cdot^n-Y_\cdot^m,Z_\cdot^n-Z_\cdot^m,\\
&&\Dis \ \ \int_0^\cdot \left(g_n(s,Y_s^n,Z_s^n)-g_m(s,Y_s^m,Z_s^m)\right){\rm d}s+\left(K_\cdot^n-K_\cdot^m\right)
+\left(A_\cdot^m-A_\cdot^n\right) )
\end{array}
\end{equation}
satisfies equation \eqref{eq:2-BarY=BarV}. It then follows from (ii) of \cref{lem:2-Lemma1} with $p=2$, $t=0$ and $\tau=\tau_{k,l}$ that there exists a constant $C>0$ such that for each $n,m,k,l\geq 1$,
\begin{equation}
\label{eq:3-FirstBoundOfYnMinusYm}
\begin{array}{lll}
\Dis\E\left[\sup\limits_{t\in [0,T]} |Y_{t\wedge\tau_{k,l}}^n
-Y_{t\wedge\tau_{k,l}}^m|^2\right]&\leq &\Dis C\E\left[|Y_{\tau_{k,l}}^n-Y_{\tau_{k,l}}^m|^2 +\sup\limits_{t\in [0,T]}\left(\int_{t\wedge \tau_{k,l}}^{\tau_{k,l}}(Y^n_s-Y_s^m)
\left({\rm d}K_s^n-{\rm d}K_s^m\right) \right)^+\right.\vspace{0.1cm}\\
&&\hspace{1cm}\Dis +\sup\limits_{t\in [0,T]}\left(\int_{t\wedge \tau_{k,l}}^{\tau_{k,l}}(Y^n_s-Y_s^m)
\left({\rm d}A_s^m-{\rm d}A_s^n\right) \right)^+\vspace{0.2cm}\\
&&\hspace{1cm}\Dis +\left. \int_{0}^{\tau_{k,l}}|Y^n_s-Y_s^m|
\left|g_n(s,Y_s^n,Z_s^n)-g_m(s,Y_s^m,Z_s^m)\right| {\rm d}s\right].
\end{array}
\end{equation}
Furthermore, note that $L_\cdot\leq Y_\cdot^n\leq U_\cdot$ and that $\int_0^T(Y_t^n-L_t){\rm d}K_t^n=\int_0^T( U_t-Y_t^n){\rm d}A_t^n=0$ for each $n\geq 1$. It follows that for each $t\in \T$ and $k,l,m,n\geq 1$,
\begin{equation}
\label{eq:3-YnYmKnKmLeq0}
\begin{array}{lll}
\Dis \int_{t\wedge \tau_{k,l}}^{\tau_{k,l}}(Y^n_s-Y_s^m)
\left({\rm d}K_s^n-{\rm d}K_s^m\right)&
= & \Dis \int_{t\wedge \tau_{k,l}}^{\tau_{k,l}}\left[(Y_s^n-L_s)
-(Y_s^m-L_s)\right]\left({\rm d}K_s^n-{\rm d}K_s^m\right)\vspace{0.1cm}\\
&= & \Dis -\int_{t\wedge \tau_{k,l}}^{\tau_{k,l}}(Y_s^n-L_s)
{\rm d}K_s^m-\int_{t\wedge \tau_{k,l}}^{\tau_{k,l}}(Y_s^m-L_s)
{\rm d}K_s^n\\
&\leq & 0
\end{array}
\end{equation}
and
\begin{equation}
\label{eq:3-YnYmAnAmLeq0}
\begin{array}{lll}
\Dis \int_{t\wedge \tau_{k,l}}^{\tau_{k,l}}(Y^n_s-Y_s^m)
\left({\rm d}A_s^m-{\rm d}A_s^n\right)&
= & \Dis \int_{t\wedge \tau_{k,l}}^{\tau_{k,l}}\left[(U_s-Y_s^m)
-(U_s-Y_s^n)\right]\left({\rm d}A_s^m-{\rm d}A_s^n\right)\vspace{0.1cm}\\
&= & \Dis -\int_{t\wedge \tau_{k,l}}^{\tau_{k,l}}(U_s-Y_s^m)
{\rm d}A_s^n-\int_{t\wedge \tau_{k,l}}^{\tau_{k,l}}(U_s-Y_s^n)
{\rm d}A_s^m\\
&\leq & 0.
\end{array}
\end{equation}
By the definition of $\tau_{k,l}$ and the fact that $Y_\cdot^1\leq Y_\cdot^n\leq \bar Y_\cdot$, we know that $\mathbbm{1}_{\cdot\leq\tau_{k,l}}\leq \mathbbm{1}_{|Y^n_\cdot|\leq k}$ holds true for each $k,l,n\geq 1$. Then, combining \eqref{eq:3-LinearGrowthofgnYnZn--k}, \eqref{eq:3-FirstBoundOfYnMinusYm},  \eqref{eq:3-YnYmKnKmLeq0}, \eqref{eq:3-YnYmAnAmLeq0} and H\"{o}lder's inequality yields that
\begin{equation}
\label{eq:3-SecondBoundOfYnMinusYm}
\begin{array}{lll}
\Dis\E\left[\sup\limits_{t\in [0,T]} |Y_{t\wedge\tau_{k,l}}^n
-Y_{t\wedge\tau_{k,l}}^m|^2\right]&\leq &\Dis C\E\left[|Y_{\tau_{k,l}}^n-Y_{\tau_{k,l}}^m|^2 +2\int_0^{\tau_{k,l}} |Y^n_t-Y_t^m|\bar f^k_t{\rm d}t\right]\vspace{0.1cm}\\
&&\Dis+2C\bar\lambda\left(\E\left[\int_{0}^{
\tau_{k,l}}|Y^n_t-Y_t^m|^2{\rm d}t\right]\right)^{1\over 2} \left(\E\left[\int_{0}^{\tau_{k,l}}\left(|Z_t^n|+
|Z_t^m|\right)^2{\rm d}t\right]\right)^{1\over 2}.
\end{array}
\end{equation}
Note that $Y_\cdot^1\leq Y_\cdot^n\uparrow Y_\cdot\leq \bar Y_\cdot$. By the definition of $\tau_{k,l}$ and \eqref{eq:3-BoundofZntInL2}, it follows from \eqref{eq:3-SecondBoundOfYnMinusYm} and Lebesgue's dominated convergence theorem that for each $k,l\geq 1$, as $n,m\To\infty$,
$$\E\left[\sup\limits_{t\in [0,T]} |Y_{t\wedge\tau_{k,l}}^n
-Y_{t\wedge\tau_{k,l}}^m|^2\right]\To 0,$$
which implies that for each $k,l\geq 1$, as $n,m\To\infty$,
$$\sup\limits_{t\in [0,T]} |Y_{t\wedge\tau_{k,l}}^n
-Y_{t\wedge\tau_{k,l}}^m|\To 0\ {\rm in\ probability}\ \mathbb{P}.$$
And, by \eqref{eq:3-StabilityOfTaukl} and the monotonicity of $Y_\cdot^n$ with respect to $n$ we know that
\begin{equation}
\label{eq:3.2-UniformConvergenceOfYnPas}
\sup\limits_{t\in [0,T]} |Y_t^n
-Y_t|\To 0,\ \ {\rm as}\ n\To\infty.
\end{equation}
So, $Y_\cdot$ is a continuous process, and then belongs to the space $\s^\beta$ for each $\beta\in (0,1)$ and the class (D) due to the fact that both $Y_\cdot^1$ and $\bar Y_\cdot$ belong to them as well as $Y_\cdot^1\leq Y^n_\cdot\leq \bar Y_\cdot$. Finally, from \eqref{eq:3.2-UniformConvergenceOfYnPas} and Lebesgue's dominated convergence theorem it follows that for each $\beta\in (0,1)$,
\begin{equation}
\label{eq:3.2-ConvergenceOfYnInSp}
\lim\limits_{n\To\infty}\|Y_\cdot^n-Y_\cdot\|_{\s^\beta}^\beta =\lim\limits_{n\To\infty}\E\left[\sup\limits_{t\in [0,T]} |Y_t^n-Y_t|^\beta\right]=0.\vspace{0.2cm}
\end{equation}

{\bf Step 2.}\ We show the convergence of the sequence $\{Z_\cdot^n\}$. Note that \eqref{eq:3-DefinitionOfBarYZV} solves \eqref{eq:2-BarY=BarV}. It follows from (i) of \cref{lem:2-Lemma1} with $t=0$ and $\tau=T$ that there exists a nonnegative constant $C'\geq 0$ such that for each $m,n\geq 1$ and $\beta\in (0,1)$, we have
$$
\begin{array}{lll}
&\Dis\E\left[\left(\int_0^T|Z_t^n-Z_t^m|^2{\rm d}t\right)^{\beta\over 2}\right]\\
\leq & \Dis C'\E\left[\sup\limits_{t\in [0,T]}|Y_t^n-Y_t^m|^\beta+\sup\limits_{t\in [0,T]}\left[\left(\int_t^T (Y_s^n-Y_s^m)\left({\rm d}K_s^n-{\rm d}K_s^m\right)\right)^+\right]^{\beta\over 2}\right]\\
&\Dis +C'\E\left[\sup\limits_{t\in [0,T]}\left[\left(\int_t^T (Y_s^n-Y_s^m)\left({\rm d}A_s^m-{\rm d}A_s^n\right)\right)^+\right]^{\beta\over 2}\right]\vspace{0.1cm}\\
&\Dis + C'\E\left[\left(\int_{0}^T|Y^n_s-Y_s^m|
\left|g_n(s,Y_s^n,Z_s^n)-g_m(s,Y_s^m,Z_s^m)\right| {\rm d}s\right)^{\beta\over 2}\right].
\end{array}
$$
Then, in view of \eqref{eq:3-YnYmKnKmLeq0} and \eqref{eq:3-YnYmAnAmLeq0}, it follows from H\"{o}lder's inequality that
$$
\begin{array}{lll}
\Dis\E\left[\left(\int_0^T|Z_t^n-Z_t^m|^2{\rm d}t\right)^{\beta\over 2}\right]
&\leq &\Dis C'\E\left[\sup\limits_{t\in [0,T]}|Y_t^n-Y_t^m|^\beta\right]+C'\left(\E\left[\sup\limits_{t\in [0,T]}|Y_t^n-Y_t^m|^\beta\right]\right)^{1\over 2}\\
&&\Dis\ \ \ \ \  \cdot\left(\E\left[\left(\int_{0}^T
\left(|g_n(t,Y_t^n,Z_t^n)|+|g_m(t,Y_t^m,Z_t^m)|\right) {\rm d}t\right)^{\beta}\right]\right)^{1\over 2},
\end{array}
$$
from which together with \eqref{eq:3.2-ConvergenceOfYnInSp} and \eqref{eq:3-BoundofZntAntandgn} yields that there exists a process $(Z_t)_{t\in\T}\in \cap_{\beta\in (0,1)}\M^\beta$ satisfying, for each $\beta\in (0,1)$,\vspace{-0.1cm}
\begin{equation}
\label{eq:3.2-ConvergenceOfZnInMp}
\lim\limits_{n\To\infty}\|Z_\cdot^n-Z_\cdot\|_{\M^\beta}^\beta
=\lim\limits_{n\To\infty}\E\left[\left(\int_0^T|Z_t^n
-Z_t|^2{\rm d}t\right)^{\beta\over 2}\right]=0.\vspace{0.2cm}
\end{equation}

{\bf Step 3.}\ We show that $(Y_\cdot,Z_\cdot,K_\cdot,A_\cdot)$ is an $L^1$ solution of DRBSDE $(\xi,g+{\rm d}V,L,U)$. Since $g_n$ tends locally uniformly in $(y,z)$ to the generator $g$ as $n\To\infty$ and satisfies \eqref{eq:3-LinearGrowthofgnYnZn--k}, by \eqref{eq:3.2-UniformConvergenceOfYnPas} and \eqref{eq:3.2-ConvergenceOfZnInMp} together with \eqref{eq:3-StabilityOfTaukl} we can deduce that there exists a subsequence $\{n_j\}$ of $\{n\}$ such that
$$\lim\limits_{j\To \infty}\int_0^T|g_{n_j}(t,Y_t^{n_j},Z_t^{n_j})-
g(t,Y_t,Z_t)|{\rm d}t=0.$$
Then,
\begin{equation}
\label{eq:3-ConvergenceOfgn}
\lim\limits_{j\To \infty}\sup\limits_{t\in\T}\left|\int_0^t
g_{n_j}(t,Y_t^{n_j},Z_t^{n_j}){\rm d}t-\int_0^t
g(t,Y_t,Z_t){\rm d}t\right|=0.\vspace{0.2cm}
\end{equation}
Combining \eqref{eq:3-ConvergenceOfKnandAnInS1}, \eqref{eq:3.2-UniformConvergenceOfYnPas}, \eqref{eq:3.2-ConvergenceOfZnInMp} and \eqref{eq:3-ConvergenceOfgn}
yields that
$$
Y_t=\xi+\int_t^Tg(s,Y_s,Z_s){\rm d}s+\int_t^T{\rm d}V_s+\int_t^T{\rm d}K_s-\int_t^T{\rm d}A_s-\int_t^TZ_s \cdot {\rm d}B_s,\ \ t\in\T.
$$
Since $L_t\leq Y_t^n\leq U_t$ and $Y_t^n\uparrow Y_t$ for each $t\in\T$, we have $L_t \leq Y_t\leq U_t$ for each $t\in\T$. Furthermore, in view of \eqref{eq:3.2-ConvergenceOfYnInSp} and \eqref{eq:3-ConvergenceOfKnandAnInS1}, it follows that
$$
\int_0^T(Y_t-L_t){\rm d}K_t=\Lim\int_0^T(Y_t^{n}-L_t){\rm d}K_t^n=0\ \ {\rm and}\ \  \int_0^T(U_t-Y_t){\rm d}K_t=\Lim\int_0^T(U_t-Y_t^{n}){\rm d}A_t^n=0.
$$
Finally, let us show that ${\rm d}K\bot {\rm d}A$. In fact, for each $n\geq 1$, since ${\rm d}K^n\bot {\rm d}A^n$, there exists an $(\F_t)$-progressively measurable set
$D_n\subset \Omega\times\T$ such that
$$
\E\left[\int_0^T \mathbbm{1}_{D_n} {\rm d}K^n_t\right]=\E\left[\int_0^T \mathbbm{1}_{D_n^c} {\rm d}A^n_t\right]=0.
$$
Then, in view of \eqref{eq:3-ConvergenceOfKnandAnInS1} and the fact that ${\rm d}K\leq {\rm d}K^n$ for each $n\geq 1$,
$$
0\leq \E\left[\int_0^T\mathbbm{1}_{\cup D_n} {\rm d}K_t\right]\leq \sum\limits_{n=1}^\infty \E\left[\int_0^T \mathbbm{1}_{D_n} {\rm d}K_t\right]\leq \sum\limits_{n=1}^\infty \E\left[\int_0^T\mathbbm{1}_{D_n} {\rm d}K^n_t\right]=0
$$
and
$$
0\leq \E\left[\int_0^T\mathbbm{1}_{\cap D_n^c} {\rm d}A_t\right]=\lim\limits_{m\To\infty}\E\left[\int_0^T
\mathbbm{1}_{\cap D_n^c}{\rm d}A^m_t\right]\leq \lim\limits_{m\To\infty}\E\left[\int_0^T
\mathbbm{1}_{D_m^c}{\rm d}A^m_t\right]=0.\vspace{0.1cm}
$$
Hence, ${\rm d}K\bot{\rm d}A$. \cref{pro:3-Approximation} is then proved.
\end{proof}

\begin{rmk}
\label{rmk:3-rmk3.4}
Observe that if there exists a constant $\bar\lambda>0$ and a nonnegative process $\bar f_\cdot\in \hcal^1$ such that for each $n\geq 1$,\vspace{-0.1cm}
\begin{equation}
\label{eq:3-LinearGrowthofgnYnZn}
\as,\ \ |g_n(\cdot,Y^n_\cdot,Z^n_\cdot)|\leq \bar f_\cdot+\bar\lambda |Z^n_\cdot|,
\end{equation}
then both \eqref{eq:3-SemiLinearGrowthCondition} and \eqref{eq:3-LinearGrowthofgnYnZn--k} are satisfied.
\end{rmk}

\subsection{Comparison theorem\vspace{0.1cm}}

We now establish a general comparison theorem for $L^1$ solutions of RBSDEs with one and two continuous barriers as well as non-reflected BSDEs.\vspace{-0.1cm}

\begin{pro} [Comparison Theorem]
\label{pro:3-ComparisonTheoremofDRBSDE}
Assume that $V_\cdot^j\in \vcal^1$, \ref{A:(H3)}(i) holds for $L_\cdot^j,U_\cdot^j$ and $\xi^j$, $g^j$ is a generator and  $(Y_\cdot^j,Z_\cdot^j,K_\cdot^j,A_\cdot^j)$ is an $L^1$ solution of DRBSDE $(\xi^j,g^j+{\rm d}V^j,L^j,U^j)$ for $j=1,2$. If $\xi^1\leq \xi^2$, ${\rm d}V^1\leq {\rm d}V^2$, $L^1_\cdot\leq L^2_\cdot$, $U^1_\cdot\leq U^2_\cdot$, and either
$$
\left\{
\begin{array}{l}
g^1\ satisfies\ \ref{A:(H1)}(i)\ and \ \ref{A:(H2)};\\
\as,\ \ \mathbbm{1}_{\{Y^1_t>Y^2_t\}}
\left(g^1(t,Y_t^2,Z_t^2)
-g^2(t,Y_t^2,Z_t^2)\right)\leq 0
\end{array}
\right.
$$
or
$$
\left\{
\begin{array}{l}
g^2\ satisfies\ \ref{A:(H1)}(i)\ and \ \ref{A:(H2)};\\
\as,\ \ \mathbbm{1}_{\{Y^1_t>Y^2_t\}}
\left(g^1(t,Y_t^1,Z_t^1)
-g^2(t,Y_t^1,Z_t^1)\right)\leq 0
\end{array}
\right.\vspace{0.2cm}
$$
is satisfied, then $Y_t^1\leq Y_t^2$ for each $t\in \T$.
\end{pro}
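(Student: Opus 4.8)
The plan is to set $\hat Y_\cdot:=Y^1_\cdot-Y^2_\cdot$ and $\hat Z_\cdot:=Z^1_\cdot-Z^2_\cdot$ and to prove $(\hat Y_t)^+=0$ for every $t\in\T$. Subtracting the two DRBSDEs, $\hat Y$ is a continuous semimartingale with terminal value $\xi^1-\xi^2\le 0$, drift $g^1(s,Y^1_s,Z^1_s)-g^2(s,Y^2_s,Z^2_s)$, finite-variation part ${\rm d}(V^1-V^2)+{\rm d}(K^1-K^2)-{\rm d}(A^1-A^2)$, and martingale part $-\hat Z\cdot{\rm d}B$. Since everything is continuous I would apply the It\^o--Tanaka formula to $(\hat Y)^+$. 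Because we work in $L^1$ and $\hat Z$ lies only in $\cup_{\beta<1}\M^\beta$, I would first localize along $\tau_k:=\inf\{t:\int_0^t|\hat Z_s|^2{\rm d}s\ge k\}\wedge T$ so that the stochastic integral on $[t\wedge\tau_k,\tau_k]$ is a genuine martingale, letting $k\To\infty$ only at the end via the class (D) property of $\hat Y$ and the fact that $g^j(\cdot,Y^j_\cdot,Z^j_\cdot)\in\hcal^1$ from \cref{lem:2-gBelongstoH1}. The local-time term carries the favourable sign and is discarded, leaving
\[
(\hat Y_t)^+\le \int_t^T\mathbbm{1}_{\{\hat Y_s>0\}}\big(g^1(s,Y^1_s,Z^1_s)-g^2(s,Y^2_s,Z^2_s)\big){\rm d}s+\Lambda_t-\int_t^T\mathbbm{1}_{\{\hat Y_s>0\}}\hat Z_s\cdot{\rm d}B_s,
\]
where $\Lambda_t$ collects the three finite-variation contributions.

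The clean part is to check $\Lambda_t\le 0$. From ${\rm d}V^1\le{\rm d}V^2$ the $V$-term is $\le 0$. For the lower obstacles, on $\{\hat Y_s>0\}$ one has $Y^1_s>Y^2_s\ge L^2_s\ge L^1_s$, so $Y^1_s>L^1_s$ and the Skorokhod condition $\int_0^T(Y^1_s-L^1_s){\rm d}K^1_s=0$ forces $\mathbbm{1}_{\{\hat Y_s>0\}}{\rm d}K^1_s=0$, while $-\mathbbm{1}_{\{\hat Y_s>0\}}{\rm d}K^2_s\le0$ since $K^2$ increases; hence the $K$-term is $\le0$. Symmetrically, on $\{\hat Y_s>0\}$ we have $Y^2_s<Y^1_s\le U^1_s\le U^2_s$, so $\int_0^T(U^2_s-Y^2_s){\rm d}A^2_s=0$ gives $\mathbbm{1}_{\{\hat Y_s>0\}}{\rm d}A^2_s=0$, while $-\mathbbm{1}_{\{\hat Y_s>0\}}{\rm d}A^1_s\le0$; hence the $A$-term is $\le0$. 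Thus $\Lambda_t\le0$ may be dropped.

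Next I would treat the generator. In the first case I split, on $\{\hat Y_s>0\}$,
\[
g^1(s,Y^1_s,Z^1_s)-g^2(s,Y^2_s,Z^2_s)=\underbrace{\big(g^1(s,Y^1_s,Z^1_s)-g^1(s,Y^2_s,Z^1_s)\big)}_{(\mathrm I)}+\underbrace{\big(g^1(s,Y^2_s,Z^1_s)-g^1(s,Y^2_s,Z^2_s)\big)}_{(\mathrm{II})}+\underbrace{\big(g^1(s,Y^2_s,Z^2_s)-g^2(s,Y^2_s,Z^2_s)\big)}_{(\mathrm{III})},
\]
where $(\mathrm I)\le\rho((\hat Y_s)^+)$ by the one-sided Osgood condition \ref{A:(H1)}(i) (the sign is $+$ on $\{\hat Y_s>0\}$), $(\mathrm{III})\le0$ by the assumed cross-sign condition, and $(\mathrm{II})\le\phi(|\hat Z_s|)$ by the uniform continuity \ref{A:(H2)}(i); the second case is handled by the mirror decomposition through $g^2$. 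The main obstacle is removing the $z$-term $\phi(|\hat Z_s|)$: under mere uniform continuity it cannot be absorbed by an exact linearization with a bounded coefficient (the naive $\beta_s=(\mathrm{II})\,|\hat Z_s|^{-2}\hat Z_s$ blows up as $\hat Z_s\To0$), and a square-function estimate via Young's inequality fails because it leaves a term of order $\int(\hat Y_s)^+{\rm d}s$ that is not Osgood-controllable. My plan is to use that, for $\phi$ nondecreasing, continuous, $\phi(0)=0$ and of linear growth (\cref{rmk:2-LinearGrowthOfRhoandPhi}), for every $\eps>0$ there is $C_\eps>0$ with $\phi(x)\le\eps+C_\eps x$ for all $x\ge0$. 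Then $(\mathrm{II})\le\eps+\langle\beta^\eps_s,\hat Z_s\rangle$ with $\beta^\eps_s:=C_\eps\hat Z_s|\hat Z_s|^{-1}$ bounded by $C_\eps$, so a Girsanov change to an equivalent measure $\Q^\eps$ turns $\langle\beta^\eps_s,\hat Z_s\rangle{\rm d}s-\hat Z_s\cdot{\rm d}B_s$ into a $\Q^\eps$-martingale increment, the bounded density preserving class (D) and the $L^1$ estimates.

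Taking $\Q^\eps$-conditional expectation, letting $k\To\infty$, and then applying Jensen's inequality with the concavity of $\rho$ to $u^\eps(t):=\E^{\Q^\eps}[(\hat Y_t)^+]$, I would reach the perturbed backward Osgood inequality $u^\eps(t)\le\int_t^T\rho(u^\eps(s)){\rm d}s+\eps T$. A Bihari-type comparison with the ODE $-r'=\rho(r)$, $r(T)=\eps T$, bounds $u^\eps$ by a deterministic profile which, because $\int_{0^+}{\rm d}u/\rho(u)=+\infty$, tends to $0$ as $\eps\To0$; since each $\Q^\eps$ is equivalent to $\mathbb{P}$ and $(\hat Y)^+\ge0$, this forces $(\hat Y_t)^+=0$, i.e.\ $Y^1_t\le Y^2_t$, for each fixed $t$ and hence for all $t$ by continuity. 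The two genuinely delicate points, which I would carry out with care, are the $L^1$ localization and class-(D) passages to the limit in $k$, and the $\eps\To0$ limit across the equivalent measures $\Q^\eps$; the interplay between the uniform-continuity-generated $z$-term and the Osgood conclusion is the heart of the argument.
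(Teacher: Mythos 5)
Your route is the one the paper itself follows. The printed part of the paper's proof is exactly your It\^o--Tanaka/localization step together with the elimination of the ${\rm d}V$, ${\rm d}K$ and ${\rm d}A$ contributions via ${\rm d}V^1\leq{\rm d}V^2$, the ordering of the barriers and the two Skorokhod conditions $\int_0^T(Y^1_s-L^1_s){\rm d}K^1_s=0$ and $\int_0^T(U^2_s-Y^2_s){\rm d}A^2_s=0$ (the paper, like you, never needs the other two Skorokhod conditions; see \cref{rmk:3-RemarkOfComparisionTheorem}); for the generator term the paper then simply invokes Theorems 2.1 and 2.4 of \citet{Fan2016SPA}, whose mechanism is precisely your decomposition $(\mathrm I)+(\mathrm{II})+(\mathrm{III})$, the inequality $\phi(x)\leq(m+2A)x+\phi(2A/(m+2A))$ playing the role of your $\phi(x)\leq\eps+C_\eps x$, a Girsanov change with bounded kernel, and an Osgood/Bihari comparison. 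Up to and including the reduction to the perturbed Osgood inequality, your sketch is a faithful reconstruction of the intended argument.

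The genuine gap is in your last inference. You obtain $u^\eps(t):=\E^{\Q^\eps}[(\hat Y_t)^+]\leq r^\eps(t)$ with $\sup_t r^\eps(t)\To0$ as $\eps\To0$, and conclude that $(\hat Y_t)^+=0$ ``since each $\Q^\eps$ is equivalent to $\mathbb{P}$ and $(\hat Y)^+\geq0$''. This does not follow: the Girsanov kernels $\beta^\eps$ have norm $C_\eps\To\infty$, so the densities ${\rm d}\Q^\eps/{\rm d}\mathbb{P}$, though each a.s.\ positive, admit no lower bound uniform in $\eps$, and a nonnegative random variable $X$ with $\mathbb{P}(X>0)>0$ can perfectly well satisfy $\E^{\Q^\eps}[X]\To0$ along such a degenerating family of equivalent measures; conversely, for each fixed $\eps$ the bound $r^\eps(t)$ is strictly positive, so no single $\eps$ yields the conclusion either. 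What is needed is an almost sure bound $(\hat Y_t)^+\leq r^\eps(t)$ for each fixed $\eps$ --- that is, the Osgood/Bihari comparison must be extracted from the conditional inequality $(\hat Y_t)^+\leq\E^{\Q^\eps}\left[\left.\int_t^T\rho((\hat Y_s)^+){\rm d}s\right|\F_t\right]+(T-t)\eps$ itself rather than from its $\Q^\eps$-expectation --- after which letting $\eps\To0$ is harmless because the majorant is deterministic. Producing such a pointwise majorant for an unbounded class (D) process under a genuinely Osgood $\rho$ (note that taking $\E^{\Q^\eps}$ and applying Jensen destroys exactly this pointwise information) is the non-trivial content of the results of \citet{Fan2016SPA} that the paper imports, and it is the step your sketch elides; you flag this passage as delicate, but flagging it is not the same as closing it.
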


\begin{proof}
For each positive integer $k\geq 1$, define the following $(\F_t)$-stopping time:
$$
\tau_k:=\inf\{t\in\T:\ \ \int_0^t (|Z_s^1|^2+|Z_s^2|^2){\rm d}s\geq k\}\wedge T.
$$
It follows from It\^{o}-Tanaka's formula that for each $t\in\T$ and $k\geq 1$,
$$
\begin{array}{lll}
\Dis  (Y^1_{t\wedge\tau_k}-Y^2_{t\wedge\tau_k})^+ &\leq &\Dis (Y^1_{\tau_k}-Y^2_{\tau_k})^+ +\int_{t\wedge\tau_k}^{\tau_k} {\rm sgn}((Y^1_s-Y^2_s)^+)({\rm d}V_s^1-{\rm d}V_s^2)\vspace{0.1cm}\\
&&\Dis +\int_{t\wedge\tau_k}^{\tau_k}{\rm sgn}((Y^1_s-Y^2_s)^+)\left(g^1(s,Y_s^1,Z_s^1)
-g^2(s,Y_s^2,Z_s^2)\right){\rm d}s\vspace{0.1cm}\\
&&\Dis+\int_{t\wedge\tau_k}^{\tau_k} {\rm sgn}((Y^1_s-Y^2_s)^+)\left({\rm d}K_s^1-{\rm d}K_s^2\right)+\int_{t\wedge\tau_k}^{\tau_k} {\rm sgn}((Y^1_s-Y^2_s)^+)\left({\rm d}A_s^2-{\rm d}A_s^1\right)\vspace{0.1cm}\\
&&\Dis +\int_{t\wedge\tau_k}^{\tau_k} {\rm sgn}((Y^1_s-Y^2_s)^+)(Z_s^1-Z_s^2)\cdot {\rm d}B_s.
\end{array}
$$
Since $L_t^1\leq L_t^2\leq Y_t^2$, $L_t^1\leq Y_t^1$, $t\in \T$ and $\int_0^T(Y^1_s-L_s^1){\rm d}K_s^1=0$, we have
$$
\begin{array}{lll}
\Dis \int_{t\wedge\tau_k}^{\tau_k} {\rm sgn}((Y^1_s-Y^2_s)^+)\left({\rm d}K_s^1-{\rm d}K_s^2\right)
&\leq & \Dis \int_{t\wedge\tau_k}^{\tau_k} {\rm sgn}((Y^1_s-Y^2_s)^+){\rm d}K_s^1\leq \int_{t\wedge\tau_k}^{\tau_k} {\rm sgn}((Y^1_s-L_s^1)^+){\rm d}K_s^1\vspace{0.1cm}\\
&=&\Dis\int_{t\wedge\tau_k}^{\tau_k} \mathbbm{1}_{\{Y^1_s>L_s^1\}}|Y^1_s-L_s^1|^{-1}(Y^1_s-L_s^1){\rm d}K_s^1=0.
\end{array}
$$
Similarly, since $Y^1_t\leq U_t^1\leq U_t^2$, $Y^2_t\leq U_t^2$, $t\in \T$ and $\int_0^T(U^2_s-Y_s^2){\rm d}A_s^2=0$, we have
$$
\begin{array}{lll}
\Dis \int_{t\wedge\tau_k}^{\tau_k} {\rm sgn}((Y^1_s-Y^2_s)^+)\left({\rm d}A_s^2-{\rm d}A_s^1\right)
&\leq & \Dis \int_{t\wedge\tau_k}^{\tau_k} {\rm sgn}((Y^1_s-Y^2_s)^+){\rm d}A_s^2\leq \int_{t\wedge\tau_k}^{\tau_k} {\rm sgn}((U^2_s-Y_s^2)^+){\rm d}A_s^2\vspace{0.1cm}\\
&=&\Dis\int_{t\wedge\tau_k}^{\tau_k} \mathbbm{1}_{\{U^2_s>Y_s^2\}}|U^2_s-Y_s^2|^{-1}
(U^2_s-Y_s^2){\rm d}A_s^2=0.
\end{array}
$$
Thus, noticing that ${\rm d}V^1\leq {\rm d}V^2$, by virtue of the previous three inequalities we get that
$$
\begin{array}{lll}
\Dis  (Y^1_{t\wedge\tau_k}-Y^2_{t\wedge\tau_k})^+ &\leq &\Dis (Y^1_{\tau_k}-Y^2_{\tau_k})^++\int_{t\wedge\tau_k}^{\tau_k}{\rm sgn}((Y^1_s-Y^2_s)^+)\left(g^1(s,Y_s^1,Z_s^1)
-g^2(s,Y_s^2,Z_s^2)\right){\rm d}s\\
&&\Dis +\int_{t\wedge\tau_k}^{\tau_k} {\rm sgn}((Y^1_s-Y^2_s)^+)(Z_s^1-Z_s^2)\cdot {\rm d}B_s,\ \ t\in \T.
\end{array}
$$
Finally, in view of the assumptions of $g^1$ and $g^2$ together with $\xi^1\leq \xi^2$, the rest proof runs as the proof of Theorem 2.4 and Theorem 2.1 in \citet{Fan2016SPA} with $u(t)=v(t)\equiv 1$ and $\lambda(t)\equiv \gamma$, which is omitted.
\end{proof}

\begin{rmk}
\label{rmk:3-RemarkOfComparisionTheorem}
Observe that in the proof of \cref{pro:3-ComparisonTheoremofDRBSDE} the following two assumptions are not utilized:
$$
\int_0^T(Y^2_s-L_s^2){\rm d}K_s^2=0\ \ {\rm and}\ \
\int_0^T(U^1_s-Y_s^1){\rm d}A_s^1=0.\vspace{0.2cm}
$$
\end{rmk}

By virtue of \cref{pro:3-ComparisonTheoremofDRBSDE}, the following corollary follows immediately.

\begin{cor}
\label{cor:3-CorollaryOfComparisonTheorem}
Assume that $V_\cdot^j\in \vcal^1$, \ref{A:(H3)}(i) holds for $L_\cdot^j,U_\cdot^j$ and $\xi^j$, $g^j$ is a generator and $(Y_\cdot^j,Z_\cdot^j,K_\cdot^j,A_\cdot^j)$ is an $L^1$ solution of DRBSDE $(\xi^j,g^j+{\rm d}V^j,L^j,U^j)$ for $j=1,2$. If $\xi^1\leq \xi^2$, ${\rm d}V^1\leq {\rm d}V^2$, $L^1_\cdot\leq L^2_\cdot$, $U^1_\cdot\leq U^2_\cdot$, $g^1$ or $g^2$ satisfies \ref{A:(H1)}(i) and \ref{A:(H2)}, and for each $(y,z)\in \R\times \R^d$,
$$\as,\ \ g^1(t,y,z)\leq g^2(t,y,z),$$
then $Y_t^1\leq Y_t^2$ for each $t\in \T$.
\end{cor}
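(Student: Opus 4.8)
The plan is to obtain this corollary directly from the general comparison theorem \cref{pro:3-ComparisonTheoremofDRBSDE}, since the two statements share all of their common hypotheses: $\xi^1\leq \xi^2$, ${\rm d}V^1\leq {\rm d}V^2$, $L^1_\cdot\leq L^2_\cdot$, $U^1_\cdot\leq U^2_\cdot$, and the standing structural requirements on the data. Consequently the only thing to check is that the pointwise domination ``$g^1(t,y,z)\leq g^2(t,y,z)$ for every $(y,z)$'' together with the assumption that $g^1$ or $g^2$ satisfies \ref{A:(H1)}(i) and \ref{A:(H2)} implies one of the two alternative hypothesis bundles appearing in \cref{pro:3-ComparisonTheoremofDRBSDE}.

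First I would split into two cases according to which generator enjoys the regularity \ref{A:(H1)}(i) and \ref{A:(H2)}. Suppose $g^1$ does; then I aim at the first alternative of \cref{pro:3-ComparisonTheoremofDRBSDE}, whose remaining requirement is the localized sign condition
$$\as,\quad \mathbbm{1}_{\{Y^1_t>Y^2_t\}}\bigl(g^1(t,Y^2_t,Z^2_t)-g^2(t,Y^2_t,Z^2_t)\bigr)\leq 0.$$
This is immediate: evaluating the global inequality $g^1(t,y,z)\leq g^2(t,y,z)$ at the admissible point $(y,z)=(Y^2_t,Z^2_t)$ gives $g^1(t,Y^2_t,Z^2_t)-g^2(t,Y^2_t,Z^2_t)\leq 0$ $\as$, and multiplication by the nonnegative indicator $\mathbbm{1}_{\{Y^1_t>Y^2_t\}}$ preserves this sign.

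Symmetrically, if instead $g^2$ satisfies \ref{A:(H1)}(i) and \ref{A:(H2)}, I would target the second alternative of \cref{pro:3-ComparisonTheoremofDRBSDE}, whose remaining requirement reads
$$\as,\quad \mathbbm{1}_{\{Y^1_t>Y^2_t\}}\bigl(g^1(t,Y^1_t,Z^1_t)-g^2(t,Y^1_t,Z^1_t)\bigr)\leq 0,$$
and this follows in exactly the same way, now evaluating the pointwise comparison at $(y,z)=(Y^1_t,Z^1_t)$. In either case the full hypotheses of \cref{pro:3-ComparisonTheoremofDRBSDE} are met, so its conclusion yields $Y^1_t\leq Y^2_t$ for each $t\in\T$, which is the assertion. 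There is essentially no analytic obstacle here, as all of the substantive work (the \ Ito--Tanaka estimate and the Osgood-type argument) is already carried out inside \cref{pro:3-ComparisonTheoremofDRBSDE}; the one point worth stating carefully is that the unconditional domination collapses to the conditional sign condition of the theorem precisely because multiplication by a nonnegative indicator is order-preserving, so no extra structure on the solution pairs $(Y^j,Z^j)$ is required.
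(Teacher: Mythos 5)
Your proposal is correct and is exactly the argument the paper intends: the paper derives this corollary "immediately" from \cref{pro:3-ComparisonTheoremofDRBSDE}, and your verification that the global domination $g^1\leq g^2$, evaluated at $(Y^2_\cdot,Z^2_\cdot)$ or $(Y^1_\cdot,Z^1_\cdot)$ according to which generator carries \ref{A:(H1)}(i) and \ref{A:(H2)}, yields the required conditional sign condition is the only step that needed to be spelled out. Nothing further is required.
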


\begin{thm}[Uniqueness]
\label{thm:3-UniquenessOfSolutions}
Let $V_\cdot\in\vcal^1$, \ref{A:(H3)}(i) hold true for $L_\cdot,U_\cdot$ and $\xi$, and the generator $g$ satisfy assumptions \ref{A:(H1)}(i) and \ref{A:(H2)}. Then DRBSDE $(\xi,g+{\rm d}V,L,U)$ admits at most one $L^1$ solution, i.e, if both $(Y_\cdot,Z_\cdot,K_\cdot,A_\cdot)$ and $(Y'_\cdot,Z'_\cdot,K'_\cdot,A'_\cdot)$ are $L^1$ solutions of DRBSDE $(\xi,g+{\rm d}V,L,U)$, then $\as$,
$$
Y_\cdot=Y'_\cdot,\ Z_\cdot=Z'_\cdot\ K_\cdot=K'_\cdot\ \  {\rm and}\ \  A_\cdot=A'_\cdot.
$$
\end{thm}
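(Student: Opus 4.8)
The plan is to lean on the comparison theorem \cref{pro:3-ComparisonTheoremofDRBSDE} to identify the $Y$-components of the two solutions, and then to peel off $Z_\cdot$, and finally $K_\cdot$ and $A_\cdot$, one at a time. Write $(Y_\cdot,Z_\cdot,K_\cdot,A_\cdot)$ and $(Y'_\cdot,Z'_\cdot,K'_\cdot,A'_\cdot)$ for the two $L^1$ solutions of DRBSDE $(\xi,g+{\rm d}V,L,U)$. First I would prove $Y_\cdot=Y'_\cdot$. The two solutions share the same data $\xi$, $V_\cdot$, $L_\cdot$, $U_\cdot$ and the same generator $g$, which by hypothesis satisfies \ref{A:(H1)}(i) and \ref{A:(H2)}. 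Applying \cref{pro:3-ComparisonTheoremofDRBSDE} with $(\xi^1,g^1)=(\xi^2,g^2)=(\xi,g)$ and identical barriers, the extra sign condition holds trivially since $g^1(t,Y^2_t,Z^2_t)-g^2(t,Y^2_t,Z^2_t)\equiv 0$; this yields $Y_t\leq Y'_t$ for every $t\in\T$. Interchanging the two solutions gives the reverse inequality, hence $Y_\cdot=Y'_\cdot$.

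Next I would identify $Z_\cdot$. Subtracting the defining equations, $Y_\cdot-Y'_\cdot$ is a continuous semimartingale whose continuous local-martingale part is $\int_0^\cdot (Z_s-Z'_s)\cdot{\rm d}B_s$, so its quadratic variation equals $\int_0^\cdot |Z_s-Z'_s|^2{\rm d}s$. Since $Y_\cdot-Y'_\cdot\equiv 0$, this quadratic variation vanishes identically, forcing $\int_0^T|Z_s-Z'_s|^2{\rm d}s=0$, i.e. $Z_\cdot=Z'_\cdot$, $\as$ This step is purely pathwise and requires nothing beyond $Z_\cdot,Z'_\cdot\in\M$.

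It remains to identify the reflecting processes, and this is where the argument is most delicate. With $Y_\cdot=Y'_\cdot$ and $Z_\cdot=Z'_\cdot$ the driver terms and the stochastic integrals agree, so subtracting the two equations leaves $\int_t^T{\rm d}(K_s-A_s)=\int_t^T{\rm d}(K'_s-A'_s)$ for every $t\in\T$; evaluating at $t=0$ and using $K_0=A_0=K'_0=A'_0=0$ shows that the continuous finite-variation process $N_\cdot:=K_\cdot-A_\cdot=K'_\cdot-A'_\cdot$ is common to both solutions. Now ${\rm d}N={\rm d}K-{\rm d}A$ exhibits the signed measure ${\rm d}N$ as a difference of two nonnegative measures that are mutually singular, which is exactly the content of ${\rm d}K\bot{\rm d}A$ in the definition of an $L^1$ solution (there is a set carrying ${\rm d}A$ off of which ${\rm d}K$ lives); likewise ${\rm d}N={\rm d}K'-{\rm d}A'$ is such a decomposition. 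By uniqueness of the Jordan decomposition of a signed measure, both must coincide with ${\rm d}N^+-{\rm d}N^-$, whence ${\rm d}K={\rm d}K'={\rm d}N^+$ and ${\rm d}A={\rm d}A'={\rm d}N^-$; integrating from $0$ gives $K_\cdot=K'_\cdot$ and $A_\cdot=A'_\cdot$, completing the proof.

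I expect the last step to be the main obstacle: the natural danger is the degenerate set $\{L_\cdot=U_\cdot\}$, on which both reflections may act simultaneously and a direct support argument splitting $\{Y_\cdot>L_\cdot\}$ and $\{Y_\cdot<U_\cdot\}$ breaks down. The Jordan-decomposition viewpoint is precisely what dissolves this difficulty, and it also reveals that the flat-off (Skorokhod) conditions are not used in this final step once the orthogonality is in hand — they enter only implicitly, through the comparison theorem invoked to obtain $Y_\cdot=Y'_\cdot$.
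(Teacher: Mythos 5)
Your proof is correct and follows essentially the same route as the paper, whose (one-line) proof invokes exactly the three ingredients you supply in detail: the comparison result of \cref{pro:3-ComparisonTheoremofDRBSDE}/\cref{cor:3-CorollaryOfComparisonTheorem} to get $Y_\cdot=Y'_\cdot$, It\^{o}'s formula (your quadratic-variation argument) to get $Z_\cdot=Z'_\cdot$, and the Hahn--Jordan decomposition of the signed measure ${\rm d}K-{\rm d}A$ together with ${\rm d}K\bot{\rm d}A$ to get $K_\cdot=K'_\cdot$ and $A_\cdot=A'_\cdot$. Your closing observation that the orthogonality condition, not the Skorokhod conditions, is what resolves the degenerate set $\{L_\cdot=U_\cdot\}$ is exactly the point of including ${\rm d}K\bot{\rm d}A$ in \cref{dfn:2-DefinitionOfBSDERBSDEDRBSDE}.
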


\begin{proof}
The conclusion follows from \cref{cor:3-CorollaryOfComparisonTheorem}, It\^{o}'s formula and the Ham-Bananch composition of sign measure.
\end{proof}

\section{Existence, uniqueness and approximation for $L^1$ solutions of BSDEs}
\label{sec:4-ExistenceBSDEs}
\setcounter{equation}{0}

In this section, we will establish some existence, uniqueness and approximation results on $L^1$ solutions of BSDEs under general assumptions.\vspace{0.1cm}

We need the following lemma, which is a direct corollary of Theorem 6.5 in \citet{Fan2016SPA}.\vspace{-0.1cm}

\begin{lem}
\label{lem:4-lemma4.1}
Let $\xi\in \LT$ and the generator $g$ satisfy assumptions \ref{A:(H1)}(i) and \ref{A:(H2)}(i). If $g$ is also bounded, then BSDE $(\xi,g)$ admits a unique $L^1$ solution.
\end{lem}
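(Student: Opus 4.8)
The plan is to obtain \cref{lem:4-lemma4.1} directly from Theorem 6.5 of \citet{Fan2016SPA}, so the only genuine task is to check that boundedness upgrades the two stated hypotheses \ref{A:(H1)}(i) and \ref{A:(H2)}(i) into the full package \ref{A:(H1)} and \ref{A:(H2)} that such an existence--uniqueness theorem requires. Write $M:=\sup|g|<+\infty$. Since $|g(\cdot,0,0)|\leq M$ gives $g(\cdot,0,0)\in\hcal^1$, and $\psi_\cdot(r)\leq 2M$ gives $\psi_\cdot(r)\in\hcal$ for every $r>0$, assumptions \ref{A:(H1)}(ii) and \ref{A:(H1)}(iii) hold automatically; together with the assumed \ref{A:(H1)}(i) this yields the whole of \ref{A:(H1)}. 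Likewise, fixing any $\gamma>0$ and $\alpha\in(0,1)$ and taking the constant process $f_t\equiv(2M/\gamma)^{1/\alpha}\in\hcal^1$, one has $|g(\cdot,y,z)-g(\cdot,y,0)|\leq 2M\leq\gamma(f_t+|y|+|z|)^\alpha$, which is exactly \ref{A:(H2)}(ii); combined with the assumed \ref{A:(H2)}(i) this gives the full \ref{A:(H2)}. Hence the hypotheses of the cited theorem are met and the conclusion follows.

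For a self-contained argument I would split the claim into uniqueness and existence, and dispose of uniqueness first using the machinery of \cref{sec:3-PenalizationApproximationComparisonTheorem}. Regard BSDE $(\xi,g)$ as the DRBSDE $(\xi,g+{\rm d}V,L,U)$ with $V\equiv0$, $L\equiv-\infty$ and $U\equiv+\infty$; the reflection constraints $\int_0^T(Y_t-L_t){\rm d}K_t=\int_0^T(U_t-Y_t){\rm d}A_t=0$ then force $K\equiv A\equiv0$, so that the two notions of solution coincide. Assumption \ref{A:(H3)}(i) holds trivially, and by the verification above $g$ satisfies \ref{A:(H1)}(i) and \ref{A:(H2)}; thus \cref{thm:3-UniquenessOfSolutions} gives at most one $L^1$ solution.

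The existence part is the content of the cited theorem and is where the main obstacle lies; I would reconstruct it as follows. Approximate $g$ by bounded generators $g_n$ that are Lipschitz in $(y,z)$ and converge to $g$ locally uniformly, keeping the approximation compatible with the one-sided structure --- an inf-convolution in $z$, $g_n(t,y,z):=\inf_v\{g(t,y,v)+n|z-v|\}$, preserves both the bound $M$ and \ref{A:(H1)}(i) with the \emph{same} modulus $\rho$, after which a monotone Lipschitz approximation in $y$ keeps the a priori bounds $n$-independent. Each BSDE $(\xi,g_n)$ has a unique $L^1$ solution $(Y^n_\cdot,Z^n_\cdot)$ by the $L^1$ theory for Lipschitz generators of \citet{BriandDelyonHu2003SPA}. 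From ${\rm sgn}(Y^n_\cdot)g_n(\cdot,Y^n_\cdot,Z^n_\cdot)\leq M$ and \cref{lem:2-EstimateOfZandg} (with $\bar f\equiv M$ and any $\bar\mu,\bar\lambda>0$) I obtain, uniformly in $n$ and for each $\beta\in(0,1)$, bounds on $\|Y^n_\cdot\|_{\s^\beta}$, on the class-(D) norm, and on $\|Z^n_\cdot\|_{\M^\beta}$; localizing along stopping times $\tau_k\uparrow T$ on which $\int_0^\cdot|Z^n_s|^2{\rm d}s$ is bounded places $Z^n_\cdot$ in $\M^2$ on each $[0,\tau_k]$, so the quadratic-variation estimate of \cref{lem:2-Lemma1} is available there. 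Writing the difference equation and applying It\^o--Tanaka, \ref{A:(H1)}(i) contributes a term $\rho(|Y^n_\cdot-Y^m_\cdot|)$ and \ref{A:(H2)}(i) a term $\phi(|Z^n_\cdot-Z^m_\cdot|)$; using the linear normalization of $\rho,\phi$ from \cref{rmk:2-LinearGrowthOfRhoandPhi}, absorbing the $z$-difference into the quadratic variation controlled by \cref{lem:2-Lemma1}, and invoking a Bihari-type comparison resting on $\int_{0^+}{\rm d}u/\rho(u)=+\infty$, I would show $(Y^n_\cdot)$ to be Cauchy in $\s^\beta$ and, via part (i) of \cref{lem:2-Lemma1}, $(Z^n_\cdot)$ Cauchy in $\M^\beta$. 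The hardest point here is precisely the interplay of keeping the $y$-modulus $n$-independent with the fact that only $L^1$ terminal data is available, which rules out working directly in $\M^2$. Finally, the uniform bound $|g_n(\cdot,Y^n_\cdot,Z^n_\cdot)|\leq M$ and continuity of $g$ let me pass to the limit in the driver along a subsequence by dominated convergence, identifying the limit $(Y_\cdot,Z_\cdot)$ as the sought $L^1$ solution of BSDE $(\xi,g)$.
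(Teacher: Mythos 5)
Your first paragraph is exactly the paper's argument: the paper offers no proof of \cref{lem:4-lemma4.1} beyond declaring it a direct corollary of Theorem 6.5 in \citet{Fan2016SPA}, and your verification that boundedness of $g$ supplies \ref{A:(H1)}(ii)(iii) and \ref{A:(H2)}(ii) (with $f_t\equiv(2M/\gamma)^{1/\alpha}$) is precisely the routine check that makes the corollary ``direct.'' The additional self-contained reconstruction of the existence part is a plausible sketch of what the cited theorem proves, but it is not needed here and the paper does not attempt it.
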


Let us start with the following existence and uniqueness result.\vspace{-0.1cm}

\begin{thm}
\label{thm:4-ExistenceanduniquenssofBSDEunderH2}
Let $\xi\in\LT$, $V_\cdot\in\vcal^1$ and the generator $g$ satisfy assumptions \ref{A:(H1)} and \ref{A:(H2)}. Then BSDE $(\xi,g+{\rm d}V)$ admits a unique $L^1$ solution.
\end{thm}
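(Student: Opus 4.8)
The \emph{uniqueness} part is immediate. Taking $L_\cdot=-\infty$ and $U_\cdot=+\infty$, every $L^1$ solution $(Y_\cdot,Z_\cdot)$ of BSDE $(\xi,g+{\rm d}V)$ is an $L^1$ solution of DRBSDE $(\xi,g+{\rm d}V,-\infty,+\infty)$ with $K_\cdot=A_\cdot\equiv 0$; assumption \ref{A:(H3)}(i) then holds trivially, and $g$ satisfies \ref{A:(H1)}(i) and \ref{A:(H2)}, so \cref{thm:3-UniquenessOfSolutions} yields at most one solution. Hence the whole task is to prove \emph{existence}, which I would obtain by a truncate--solve--estimate--pass-to-the-limit scheme built on \cref{lem:4-lemma4.1} and \cref{pro:3-Approximation}.

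First I would remove the driver ${\rm d}V$. Since $V_\cdot\in\vcal^1$, the substitution $\tilde Y_\cdot:=Y_\cdot+V_\cdot$ converts BSDE $(\xi,g+{\rm d}V)$ into the non-reflected BSDE $(\tilde\xi,\tilde g)$ with $\tilde\xi:=\xi+V_T\in\LT$ and $\tilde g(\omega,t,y,z):=g(\omega,t,y-V_t,z)$. This $y$-shift preserves \ref{A:(H1)}(i) and \ref{A:(H2)} verbatim and, when $g$ is bounded, keeps $\tilde g$ bounded; moreover $Y_\cdot=\tilde Y_\cdot-V_\cdot$ stays in $\s^\beta$ and in the class (D) because $V_\cdot$ does. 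Consequently, for a \emph{bounded} generator obeying \ref{A:(H1)}(i) and \ref{A:(H2)}(i), \cref{lem:4-lemma4.1} applied to $(\tilde\xi,\tilde g)$ furnishes a unique $L^1$ solution of BSDE $(\xi,g+{\rm d}V)$. This is the base case.

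Next I would approximate the general $g$ by bounded generators. Because $x\mapsto(x\wedge m)\vee(-n)$ is nondecreasing and $1$-Lipschitz, the truncation $g_{n,m}:=(g\wedge m)\vee(-n)$ is bounded and inherits the one-sided Osgood inequality \ref{A:(H1)}(i), the uniform continuity \ref{A:(H2)}(i) and the sub-linear $z$-growth \ref{A:(H2)}(ii). A single symmetric cut cannot be made monotone in one index for a two-sidedly unbounded $g$ (relaxing the upper cap raises $g_{n,m}$, relaxing the lower cap lowers it), so I would run two successive monotone limits: first let $m\uparrow\infty$ (so $g_{n,m}\uparrow g\vee(-n)$, whence the corresponding $Y$-processes increase by \cref{cor:3-CorollaryOfComparisonTheorem}), then let $n\uparrow\infty$ (so $g\vee(-n)\downarrow g$ and the $Y$-processes decrease). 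Each passage is handled by \cref{pro:3-Approximation} with $L_\cdot=-\infty$, $U_\cdot=+\infty$ (so $K_\cdot=A_\cdot\equiv 0$), whose monotone sub/super solutions are exactly the dominating processes described below.

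The hypotheses of \cref{pro:3-Approximation} require $n,m$-uniform estimates and monotone dominating processes. The uniform $\M^\beta$-bounds on $Z$ and $\hcal^1$-bounds on $g(\cdot,Y,Z)$ follow from \cref{lem:2-EstimateOfZandg} and \cref{lem:2-Lemma1} (the sign/linear-growth bound being supplied by \cref{lem:2-SublinearOfg} and the integrability by \cref{lem:2-gBelongstoH1}), the decisive feature being that the exponent $\alpha\in(0,1)$ in \ref{A:(H2)}(ii) is \emph{strictly} sub-linear, which keeps $\int_0^T|g(s,Y_s,Z_s)|\,{\rm d}s$ and the whole scheme controlled in $L^1$. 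For the dominating $\bar Y_\cdot,\underline Y_\cdot\in\cap_{\beta\in(0,1)}\s^\beta$ of the class (D) I would solve the BSDEs whose generators are the structural majorant and minorant of $g$, of the form $\pm(|g(\cdot,0,0)|+f_\cdot+A)+A|y|+\gamma(f_\cdot+|y|+|z|)^\alpha$ delivered by \ref{A:(H1)} and \ref{A:(H2)}(ii); because these have only \emph{linear} growth in $y$, their time-uniform a priori bounds close by a direct Gronwall estimate, so their solvability is self-contained and avoids circularly invoking the theorem. The main obstacle is precisely the assembly of this monotone bounded approximation together with the dominating processes and the verification that every bound demanded by \cref{pro:3-Approximation} holds uniformly in $n,m$ across both limits; this is where the interplay of \ref{A:(H1)}(i), \ref{A:(H1)}(iii) and \ref{A:(H2)}(ii) must be handled with care and where essentially all the technical effort resides.
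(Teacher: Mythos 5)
Your proposal is correct and follows essentially the same route as the paper: uniqueness via \cref{thm:3-UniquenessOfSolutions} with $L_\cdot=-\infty$, $U_\cdot=+\infty$; existence by absorbing ${\rm d}V$ through the shift $\tilde Y=Y+V$ so that \cref{lem:4-lemma4.1} settles the bounded case, then a two-stage monotone truncation (first $g\wedge m\uparrow g\vee(-n)$, then $g\vee(-n)\downarrow g$) passed to the limit with \cref{pro:3-Approximation}, sandwiched between the $L^1$ solutions of the BSDEs driven by the structural linear-growth majorant and minorant built from \ref{A:(H1)} and \ref{A:(H2)}(ii). The only cosmetic difference is that the paper obtains the dominating solutions by citing Klimsiak's existence theorems for Lipschitz generators with sub-linear $z$-growth rather than re-deriving them, and verifies \eqref{eq:3-LinearGrowthofgnYnZn--k} directly from \ref{A:(H1)}(iii) and \ref{A:(H2)}(ii) via $\psi_\cdot(k)$.
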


\begin{proof}
The uniqueness part follows immediately from \cref{thm:3-UniquenessOfSolutions} with $L_\cdot\equiv -\infty$ and $U_\cdot=+\infty$. In the sequel, we prove the existence part. Let $\xi\in\LT$, $V_\cdot\in\vcal^1$ and $g$ satisfy \ref{A:(H1)} and \ref{A:(H2)}.

We first assume that $g$ is bounded. Note that $V_\cdot\in\vcal^1$. It follows from \cref{lem:4-lemma4.1} that the following BSDE
$$
\bar Y_s=\xi+V_T+\int_t^T g(s,\bar Y_s-V_s,\bar Z_s){\rm d}s-\int_t^T\bar Z_s{\rm d}B_s,\ \ t\in\T\vspace{0.1cm}
$$
admits a unique $L^1$ solution $(\bar Y_\cdot,\bar Z_\cdot)$. Then the pair $(Y_\cdot,Z_\cdot):=(\bar Y_\cdot-V_\cdot,\bar Z_\cdot)$ is just the unique $L^1$ solution of BSDE $(\xi,g+{\rm d}V)$.

Now suppose that $g$ is bounded from below. Write $g_n=g\wedge n$. Then $g_n$ is bounded, nondecreasing in $n$ and tends locally uniformly to $g$ as $n\To\infty$, and it is not difficult to check that all $g_n$ satisfy \ref{A:(H1)} and \ref{A:(H2)} with the same $\rho(\cdot)$, $\psi_\cdot(r)$, $\phi(\cdot)$, $\gamma$, $f_\cdot$ and $\alpha$. Then by the first step of the proof there exists a unique $L^1$ solution $(Y_\cdot^n,Z_\cdot^n)$ of BSDE $(\xi,g_n+{\rm d}V)$. Furthermore, in view of \cref{rmk:2-LinearGrowthOfRhoandPhi},
it follows from \ref{A:(H1)}(i) and \ref{A:(H2)}(ii) of $g_n$ that $\as$, for each $n\geq 1$ and $(y,z)\in\R\times\R^d$,
\begin{equation}
\label{eq:4-gnSatisfyA}
\begin{array}{lll}
\Dis {\rm sgn}(y)g_n(\cdot,y,z)&\leq &\Dis {\rm sgn}(y)(g_n(\cdot,y,z)-g_n(\cdot,0,z))+|g_n(\cdot,0,z)-g_n(\cdot,0,0)|
+|g_n(\cdot,0,0)|\\
&\leq & \Dis \rho(|y|)+\gamma(f_\cdot+|z|)^{\alpha} +|g(\cdot,0,0)|\\
&\leq & \Dis A+ \gamma(1+f_\cdot)+|g(\cdot,0,0)|+A|y|+\gamma(1+|z|)^\alpha
=:\bar g(\cdot,y,z).\vspace{0.1cm}
\end{array}
\end{equation}
Note that $\xi\in\LT$, $V_\cdot\in \vcal^1$, $f_\cdot\in \hcal^1$, $g(\cdot,0,0)\in \hcal^1$, and the generator $\bar g$ is uniformly Lipschitz in $(y,z)$ and has a sub-linear growth in $z$. By Theorems 3.9 and 3.11 in \citet{Klimsiak2013BSM} we know that BSDE $(|\xi|,\bar g+{\rm d}|V|)$ admits a unique $L^1$ solution $(\bar Y_\cdot,\bar Z_\cdot)$ with $\bar Y_\cdot\geq 0$, and BSDE $(-|\xi|,-\bar g-{\rm d}|V|)$ admits a unique $L^1$ solution $(\underline Y_\cdot,\underline Z_\cdot)$ with $\underline Y_\cdot\leq 0$. Furthermore, note by \eqref{eq:4-gnSatisfyA} that $\as$,
$$
\mathbbm{1}_{\{Y^n_t>\bar Y_t\}}\left(g_n(t,Y^n_t,Z^n_t)-\bar g(t,Y^n_t,Z^n_t)\right)\leq 0\ \ {\rm and}\ \ \mathbbm{1}_{\{\underline Y_t> Y^n_t\}}\left(-\bar g(t,Y^n_t,Z^n_t)- g_n(t,Y^n_t,Z^n_t)\right)\leq 0.
$$
It follows from \cref{pro:3-ComparisonTheoremofDRBSDE} and \cref{cor:3-CorollaryOfComparisonTheorem} with $L_\cdot=-\infty$ and $U_\cdot=+\infty$ that $\underline Y_\cdot\leq Y^n_\cdot\leq Y^{n+1}_\cdot\leq \bar Y_\cdot$ for each $n\geq 1$. Thus, by \eqref{eq:4-gnSatisfyA} we know that \eqref{eq:3-SemiLinearGrowthCondition} holds true. In addition, in view of assumptions \ref{A:(H2)}(ii) and \ref{A:(H1)}(iii), we have for each $n,k\geq 1$,
$$
\begin{array}{lll}
&&\Dis|g_n(\cdot,Y^n_\cdot,Z^n_\cdot)
\mathbbm{1}_{|Y^n_\cdot|\leq k}|\leq  \Dis |g(\cdot,Y^n_\cdot,Z^n_\cdot)|
\mathbbm{1}_{|Y^n_\cdot|\leq k}\\
&\leq &\Dis |g(\cdot,Y^n_\cdot,Z^n_\cdot)
-g(\cdot,Y^n_\cdot,0)|\mathbbm{1}_{|Y^n_\cdot|\leq k}+
|g(\cdot,Y^n_\cdot,0)-g(\cdot,0,0)|
\mathbbm{1}_{|Y^n_\cdot|\leq k}+|g(\cdot,0,0)|\\
&\leq &\Dis \gamma \left(1+f_\cdot+
|Y^n_\cdot|\mathbbm{1}_{|Y^n_\cdot|\leq k}+|Z^n_\cdot|\right)+\psi_\cdot(k)+|g(\cdot,0,0)|\\
&\leq &\Dis |g(\cdot,0,0)|+\gamma \left(1+f_\cdot+k\right)+\psi_\cdot(k)+\gamma |Z^n_\cdot|.
\end{array}
$$
Hence, \eqref{eq:3-LinearGrowthofgnYnZn--k} holds also true since $\psi_\cdot(k)\in \hcal$ and $f_\cdot, g(\cdot,0,0)\in \hcal^1$. Now, we have checked all the conditions in \cref{pro:3-Approximation} with $L_\cdot=-\infty$, $U_\cdot=+\infty$ and $K^n_\cdot=A^n_\cdot\equiv 0$, and it follows that BSDE $(\xi,g+{\rm d}V)$ admits an $L^1$ solution.

Finally, in the general case, we can approximate $g$ by the sequence $g_n$, where $g_n:=g\vee (-n),\ n\geq 1$. By the previous step there exists a unique $L^1$ solution $(Y^n_\cdot,Z^n_\cdot)$ of BSDE $(\xi,g_n+{\rm d}V)$ for each $n\geq 1$. Repeating arguments in the proof of the previous step yields that $(Y^n_\cdot,Z^n_\cdot)$ converges in $\s^\beta\times \M^\beta$ for each $\beta\in (0,1)$ to the unique $L^1$ solution $(Y_\cdot,Z_\cdot)$ of BSDE $(\xi,g+{d}V)$.
\end{proof}

\begin{thm}
\label{thm:4-ExistenceofBSDEunderHH}
Let $\xi\in\LT$, $V_\cdot\in\vcal^1$, $g^1$ satisfy assumptions \ref{A:(H1)}(i) and \ref{A:(HH)} (resp. \ref{A:(H1)} and \ref{A:(H2')}), $g^2$ satisfy assumption \ref{A:(AA)} and the generator $g:=g^1+g^2$. Then BSDE $(\xi,g+{\rm d}V)$ admits a minimal (resp. maximal) $L^1$ solution.
\end{thm}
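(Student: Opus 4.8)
The plan is to produce the minimal $L^1$ solution (the case where $g^1$ satisfies \ref{A:(H1)}(i) and \ref{A:(HH)}) by approximating $g$ monotonically \emph{from below} by generators to which \cref{thm:4-ExistenceanduniquenssofBSDEunderH2} applies, and then passing to the limit via \cref{pro:3-Approximation}; the maximal case (under \ref{A:(H1)} and \ref{A:(H2')}) is entirely symmetric, replacing the infimal convolutions below by supremal ones and reversing all inequalities. I would build the approximants by a \emph{split} inf-convolution that respects the different roles of $g^1$ and $g^2$: regularize $g^1$ in $z$ only,
$$
g^1_n(\omega,t,y,z):=\inf_{v\in\mathbb{Q}^d}\left\{g^1(\omega,t,y,v)+n|z-v|\right\},
$$
and regularize $g^2$ in both variables,
$$
g^2_n(\omega,t,y,z):=\inf_{(u,v)\in\mathbb{Q}\times\mathbb{Q}^d}\left\{g^2(\omega,t,u,v)+n|y-u|+n|z-v|\right\},
$$
and set $g_n:=g^1_n+g^2_n$. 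Each $g_n$ is a generator and $g_n\uparrow g$ locally uniformly in $(y,z)$.

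The point of this particular construction is that $g_n$ satisfies \emph{both} \ref{A:(H1)} and \ref{A:(H2)}. Regularizing $g^1$ only in $z$ preserves \ref{A:(H1)}(i), since for $y_1>y_2$ the inequality $g^1(\cdot,y_1,v)\le g^1(\cdot,y_2,v)+\rho(|y_1-y_2|)$ passes through the infimum over $v$; the $n$-Lipschitz dependence of $g^2_n$ in $y$ then only enlarges the Osgood function to $\rho_n(u):=\rho(u)+nu$, which still obeys $\int_{0^+}{\rm d}u/\rho_n(u)=+\infty$. Both $g^1_n$ and $g^2_n$ are $n$-Lipschitz in $z$, giving \ref{A:(H2)}(i), and \ref{A:(H1)}(ii)(iii) follow from \cref{rmk:2-LinearGrowthOfRhoandPhi} and the growth of $g^1,g^2$ at the origin. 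The delicate verification is \ref{A:(H2)}(ii): a Lipschitz-in-$z$ regularization has \emph{a priori} only linear growth in $z$, which would be incompatible with $L^1$-solvability. Here I would exploit the subadditivity $|v|^{\alpha}\le|z|^{\alpha}+|v-z|^{\alpha}$ (valid for $\alpha\in(0,1)$): together with the sublinear-in-$z$ growth of $g^1,g^2$ it yields, for $n$ large, a lower bound of the form $g^i_n(\cdot,y,z)\ge g^i(\cdot,y,0)-\cdots-\lambda|z|^{\alpha}-c_n$ with $\sup_n c_n<+\infty$, so that $|g^i_n(\cdot,y,z)-g^i_n(\cdot,y,0)|$ stays dominated by a sublinear function of $|z|$ with the same exponent, uniformly in $n$; this restores \ref{A:(H2)}(ii) and keeps the \ref{A:(AA)}-growth of $g^2_n$ uniform in $n$. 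Hence \cref{thm:4-ExistenceanduniquenssofBSDEunderH2} gives a unique $L^1$ solution $(Y^n_\cdot,Z^n_\cdot)$ of BSDE $(\xi,g_n+{\rm d}V)$, and since $g_n\le g_{n+1}$, \cref{cor:3-CorollaryOfComparisonTheorem} (with $L_\cdot\equiv-\infty$, $U_\cdot\equiv+\infty$) yields $Y^n_\cdot\le Y^{n+1}_\cdot$.

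Next I would produce the envelopes required by \cref{pro:3-Approximation}. Using \ref{A:(H1)}(i) for $g^1_n$ and the \ref{A:(HH)}/\ref{A:(AA)} growth, one gets the one-sided bound ${\rm sgn}(y)g_n(\cdot,y,z)\le c_\cdot+(A+\tilde\mu)|y|+\lambda|z|^{\alpha}+\tilde\lambda|z|^{\tilde\alpha}$ with $c_\cdot:=A+|g^1(\cdot,0,0)|+\tilde f_\cdot\in\hcal^1$; thus $\bar g(\cdot,y,z):=c_\cdot+(A+\tilde\mu)|y|+\lambda|z|^{\alpha}+\tilde\lambda|z|^{\tilde\alpha}$ satisfies \ref{A:(H1)} and \ref{A:(H2)} and is nonnegative. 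By \cref{thm:4-ExistenceanduniquenssofBSDEunderH2} the BSDE $(|\xi|,\bar g+{\rm d}|V|)$ has an $L^1$ solution $\bar Y_\cdot$, which is $\ge 0$ by comparison with the null solution; the sign trick (on $\{Y^n_\cdot>\bar Y_\cdot\}\subseteq\{Y^n_\cdot>0\}$ the one-sided bound reads $g_n\le\bar g$) together with \cref{pro:3-ComparisonTheoremofDRBSDE} then gives $Y^n_\cdot\le\bar Y_\cdot$, and a symmetric construction gives $\underline Y_\cdot\le Y^n_\cdot$ with $\underline Y_\cdot\le 0$. As $\bar Y_\cdot,\underline Y_\cdot$ are of class (D) they lie in $\hcal^1$, so the one-sided bound above, after absorbing $|z|^{\alpha},|z|^{\tilde\alpha}\le 1+|z|$, furnishes \eqref{eq:3-SemiLinearGrowthCondition}, while the localized general growth \ref{A:(H1)}(iii) of $g^1$ together with the \ref{A:(AA)}-bound of $g^2$ furnishes \eqref{eq:3-LinearGrowthofgnYnZn--k} on each $\{|Y^n_\cdot|\le k\}$. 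All hypotheses of \cref{pro:3-Approximation} (with $L_\cdot\equiv-\infty$, $U_\cdot\equiv+\infty$, $K^n_\cdot=A^n_\cdot\equiv 0$) are then met, producing an $L^1$ solution $(Y_\cdot,Z_\cdot)$ of BSDE $(\xi,g+{\rm d}V)$ with $Y^n_\cdot\uparrow Y_\cdot$. Minimality follows since $g_n\le g$ everywhere: for any $L^1$ solution $(Y'_\cdot,Z'_\cdot)$, \cref{cor:3-CorollaryOfComparisonTheorem} gives $Y^n_\cdot\le Y'_\cdot$, and letting $n\to\infty$ yields $Y_\cdot\le Y'_\cdot$.

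The main obstacle I expect is the simultaneous preservation, under the regularization, of all the structural properties \ref{A:(H1)}(i), \ref{A:(H2)}(i), \ref{A:(H2)}(ii) and of the monotone convergence $g_n\uparrow g$; in particular, retaining the sublinear growth \ref{A:(H2)}(ii) in $z$ despite the Lipschitz-in-$z$ regularization is exactly what makes \cref{thm:4-ExistenceanduniquenssofBSDEunderH2} applicable to the approximants, and it rests on the subadditivity of $r\mapsto r^{\alpha}$. The remainder is a careful but routine check of the quantitative hypotheses \eqref{eq:3-SemiLinearGrowthCondition}--\eqref{eq:3-LinearGrowthofgnYnZn--k} of \cref{pro:3-Approximation}.
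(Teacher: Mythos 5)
Your overall architecture is the same as the paper's: approximate $g$ monotonically from below by inf-convolutions $g_n=g^1_n+g^2_n$ with $g_n\le g_{n+1}\le g$ converging locally uniformly, solve BSDE $(\xi,g_n+{\rm d}V)$ by \cref{thm:4-ExistenceanduniquenssofBSDEunderH2}, sandwich $\underline Y_\cdot\le Y^n_\cdot\le\bar Y_\cdot$ via a one-sided envelope $\bar g$ and \cref{pro:3-ComparisonTheoremofDRBSDE}, pass to the limit with \cref{pro:3-Approximation}, and get minimality from \cref{cor:3-CorollaryOfComparisonTheorem}. The gap is in the choice of regularizing kernel. You convolve in $z$ with the Lipschitz kernel $n|z-v|$, and your rescue of \ref{A:(H2)}(ii) via the subadditivity of $r\mapsto r^{\alpha}$ only controls the \emph{absolute} size $|g^i_n(\cdot,y,z)|$ (a two-sided growth bound carrying $\mu|y|$, or $\varphi_\cdot(|y|)$ in the \ref{A:(HH)} case). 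That is not what \ref{A:(H2)}(ii) asks for: it requires $|g^i_n(\cdot,y,z)-g^i_n(\cdot,y,0)|\le\gamma(f_t+|y|+|z|)^{\alpha}$, i.e.\ an increment bound growing at most like $|y|^{\alpha}$ in $y$, and the bound you actually obtain, $\min\bigl(n|z|,\ 2f_\cdot+2\mu|y|+\lambda|z|^{\alpha}+c_n\bigr)$, is of order $|y|\wedge|z|$ along $|y|=|z|=r\to\infty$, which is not $O(r^{\alpha})$. Concretely, $g^1(y,z)=\mu(|y|\wedge|z|)$ satisfies \ref{A:(H1)} and \ref{A:(H2')} (with $f\equiv0$, $\lambda=0$), is $\mu$-Lipschitz in $z$, hence is left unchanged by your inf-convolution for $n\ge\mu$, and fails \ref{A:(H2)}(ii); so \cref{thm:4-ExistenceanduniquenssofBSDEunderH2} simply does not apply to your approximants, and the whole chain breaks at its first link.

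The paper's fix is exactly at this point: it convolves $g^1$ in $z$ with the H\"older kernel $(n+2\lambda)|u-z|^{\alpha}$ (and $g^2$ with $(n+2\tilde\mu)|u-y|+(n+2\tilde\lambda)|v-z|^{\tilde\alpha}$). Subadditivity of $r\mapsto r^{\alpha}$ is indeed the key, but it must be used \emph{inside the kernel}: an $\epsilon$-optimal $u^*$ for $g^1_n(\cdot,y,z_2)$ gives $g^1_n(\cdot,y,z_1)\le g^1_n(\cdot,y,z_2)+\epsilon+(n+2\lambda)|z_1-z_2|^{\alpha}$, so the approximant is uniformly H\"older-$\alpha$ in $z$ and satisfies \ref{A:(H2)}(i) with $\phi(x)=(n+2\lambda)x^{\alpha}$ and \ref{A:(H2)}(ii) with $\gamma=n+2\lambda$ and $f\equiv0$ simultaneously, while the constants $2\lambda$, $2\tilde\mu$, $2\tilde\lambda$ guarantee well-posedness of the infimum and $g_n\uparrow g$. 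Everything downstream in your write-up (the envelope $\bar g$, the verification of \eqref{eq:3-SemiLinearGrowthCondition} and \eqref{eq:3-LinearGrowthofgnYnZn--k}, the application of \cref{pro:3-Approximation} with $L_\cdot\equiv-\infty$, $U_\cdot\equiv+\infty$, $K^n_\cdot=A^n_\cdot\equiv0$, and the minimality argument) is correct and matches the paper once the kernel is repaired.
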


\begin{proof}
We only prove the case of the minimal solution under the assumptions \ref{A:(H1)}(i) and \ref{A:(HH)} of $g^1$. In the same way, we can prove another case, and in view of \cref{rmk:2-LinearGrowthOfRhoandPhi}, the case under the assumptions \ref{A:(H1)} and \ref{A:(H2')} of $g^1$ holds true naturally.

Now, we assume that $\xi\in\LT$, $V_\cdot\in\vcal^1$, $g^1$ satisfies \ref{A:(H1)}(i) and \ref{A:(HH)} with $\rho(\cdot)$, $f_\cdot$, $\varphi_\cdot(r)$, $\lambda$ and $\alpha$, $g^2$ satisfies \ref{A:(AA)} with $\tilde f_\cdot$, $\tilde\mu$, $\tilde\lambda$ and $\tilde\alpha$, and the generator $g:=g^1+g^2$. In view of assumptions of $g$,
it is not very hard to prove that for each $n\geq 1$ and $(y,z)\in \R\times\R^d$, the following function
$$
g_n(\omega,t,y,z):=g^1_n(\omega,t,y,z)
+g^2_n(\omega,t,y,z)
$$
with
\begin{equation}
g^1_n(\omega,t,y,z):=\inf\limits_{u\in\R^d}
\left[g^1(\omega,t,y,u)+(n+2\lambda)|u-z|^\alpha\right]
\end{equation}
and
\begin{equation}
g^2_n(\omega,t,y,z):=\inf\limits_{(u,v)\in\R\times\R^d}
\left[g^2(\omega,t,u,v)+(n+2\tilde\mu)|u-y|
+(n+2\tilde\lambda)|v-z|^{\tilde\alpha}\right]
\vspace{0.1cm}
\end{equation}
is well defined and $(\F_t)$-progressively measurable, $\as$, $g_n$ increases in $n$, is continuous in $(y,z)$, and converges locally uniformly in $(y,z)$ to the generator $g$ as $n\To \infty$, $g^1_n$ satisfies \ref{A:(H1)}(i) with the same $\rho(\cdot)$, \ref{A:(HH)} with the same $f_\cdot$, $\varphi_\cdot(r)$, $\lambda$ and $\alpha$, \ref{A:(H1)}(ii) with $|g^1_n(\cdot,0,0)|\leq f_\cdot$,
\ref{A:(H1)}(iii) with the same $\psi_\cdot(r):=2f_\cdot+\varphi_\cdot(r)$ and \ref{A:(H2)} with $\phi(x):=(n+2\lambda)|x|^\alpha$, $\gamma:=n+2\lambda$, $f_\cdot:\equiv 0$ and $\alpha$, $g^2_n$ satisfies \ref{A:(H1)}(i) with $\rho(x):=(n+2\tilde\mu)x$, \ref{A:(HH)} with the same $\tilde f_\cdot$, $\tilde\mu r$, $\tilde\lambda$ and $\tilde\alpha$, \ref{A:(H1)}(ii) with $|g^2_n(\cdot,0,0)|\leq \tilde f_\cdot$,
\ref{A:(H1)}(iii) with the same $\psi_\cdot(r):=2\tilde f_\cdot+\tilde\mu r$ and \ref{A:(H2)} with $\phi(x):=(n+2\tilde\lambda)|x|^{\tilde\alpha}$, $\gamma:=n+2\tilde\lambda$, $f_\cdot:\equiv 0$ and $\tilde\alpha$. Hence, both \ref{A:(H1)} and \ref{A:(H2)} are satisfied by the generator $g_n$ for each $n\geq 1$. It then follows from \cref{thm:4-ExistenceanduniquenssofBSDEunderH2} that BSDE $(\xi,g_n+{\rm d}V)$ admits a unique $L^1$ solution $(Y^n_\cdot,Z^n_\cdot)$ for each $n\geq 1$. Furthermore, in view of \cref{rmk:2-LinearGrowthOfRhoandPhi},
it follows from \ref{A:(H1)}(i) and \ref{A:(HH)} of $g^1_n$ together with \ref{A:(HH)} of $g^2_n$ that $\as$, for each $n\geq 1$ and $(y,z)\in\R\times\R^d$,
\begin{equation}
\label{eq:4-gnSatisfyA-HH}
\begin{array}{lll}
\Dis {\rm sgn}(y)g_n(\cdot,y,z)&\leq &\Dis {\rm sgn}(y)(g^1_n(\cdot,y,z)-g^1_n(\cdot,0,z))
+|g^1_n(\cdot,0,z)|+|g^2_n(\cdot,y,z)|\\
&\leq & \Dis \rho(|y|)+f_\cdot+\lambda|z|^{\alpha}
+\tilde f_\cdot+\tilde\mu|y|+\tilde\lambda|z|^{\tilde\alpha} \\
&\leq & \Dis A+f_\cdot+\tilde f_\cdot +(A+\tilde\mu)|y|+\lambda(1+|z|)^\alpha
+\tilde\lambda(1+|z|)^{\tilde\alpha}
=:\bar g(\cdot,y,z).\vspace{0.1cm}
\end{array}
\end{equation}
In the sequel, in the same way as in the proof of \cref{thm:4-ExistenceanduniquenssofBSDEunderH2}, we can deduce that BSDE $(|\xi|,\bar g+{\rm d}|V|)$ admits a unique $L^1$ solution $(\bar Y_\cdot,\bar Z_\cdot)$ with $\bar Y_\cdot\geq 0$, BSDE $(-|\xi|,-\bar g-{\rm d}|V|)$ admits a unique $L^1$ solution $(\underline Y_\cdot,\underline Z_\cdot)$ with $\underline Y_\cdot\leq 0$, and in view of  \eqref{eq:4-gnSatisfyA-HH} and the fact that $\as$,
$$
\mathbbm{1}_{\{Y^n_t>\bar Y_t\}}\left(g_n(t,Y^n_t,Z^n_t)-\bar g(t,Y^n_t,Z^n_t)\right)\leq 0\ \ {\rm and}\ \ \mathbbm{1}_{\{\underline Y_t> Y^n_t\}}\left(-\bar g(t,Y^n_t,Z^n_t)- g_n(t,Y^n_t,Z^n_t)\right)\leq 0,
$$
it follows from \cref{pro:3-ComparisonTheoremofDRBSDE} and \cref{cor:3-CorollaryOfComparisonTheorem} that $\underline Y_\cdot\leq Y^n_\cdot\leq Y^{n+1}_\cdot\leq \bar Y_\cdot$ for each $n\geq 1$. Thus, by \eqref{eq:4-gnSatisfyA-HH} we deduce that \eqref{eq:3-SemiLinearGrowthCondition} holds. In addition, in view of \ref{A:(HH)} of $g^1_n$ and $g^2_n$, we have for each $n,k\geq 1$,
\begin{equation}
\label{eq:5-GeneralGrowthOfgnYnZn}
\begin{array}{lll}
\Dis|g_n(\cdot,Y^n_\cdot,Z^n_\cdot)
\mathbbm{1}_{|Y^n_\cdot|\leq k}|
&\leq &\Dis f_\cdot+\varphi_\cdot(|Y^n_\cdot|)
\mathbbm{1}_{|Y^n_\cdot|\leq k}+ \lambda|Z^n_\cdot|^\alpha+\tilde f_\cdot+\tilde\mu |Y^n_\cdot|\mathbbm{1}_{|Y^n_\cdot|\leq k}
+\tilde\lambda |Z^n_\cdot|^{\tilde\alpha}\vspace{0.1cm}\\
&\leq &\Dis f_\cdot+\tilde f_\cdot+\varphi_\cdot(k)+\tilde\mu k+\lambda+\tilde\lambda
+(\lambda+\tilde\lambda)|Z^n_\cdot|.
\end{array}
\end{equation}
Hence, \eqref{eq:3-LinearGrowthofgnYnZn--k} holds also true since $\varphi_\cdot(k)\in \hcal$ and $f_\cdot,\tilde f_\cdot\in \hcal^1$. Now, we have checked all the conditions in \cref{pro:3-Approximation} with $L_\cdot=-\infty$, $U_\cdot=+\infty$ and $K^n_\cdot=A^n_\cdot\equiv 0$, and it follows that BSDE $(\xi,g+{\rm d}V)$ admits an $L^1$ solution $(Y_\cdot,Z_\cdot)$ such that for each $\beta\in (0,1)$,
\begin{equation}
\label{eq:4-ConvergenceOfYnZnInSp}
\lim\limits_{n\To\infty}(\|Y_\cdot^n
-Y_\cdot\|_{\s^\beta}+\|Z_\cdot^n
-Z_\cdot\|_{\M^\beta})=0.
\end{equation}

Finally, we show that $(Y_\cdot,Z_\cdot)$ is just the minimal $L^1$ solution of BSDE $(\xi,g+{\rm d}V)$. In fact, if $(Y'_\cdot,Z'_\cdot)$ is also an $L^1$ solution of BSDE $(\xi,g+{\rm d}V)$, then noticing that $g_n\leq g$ and $g_n$ satisfies \ref{A:(H1)} and \ref{A:(H2)} for each $n\geq 1$, it follows from \cref{cor:3-CorollaryOfComparisonTheorem} that
that $Y_t^n\leq Y'_t$ for each $t\in\T$ and $n\geq 1$. Thus, by \eqref{eq:4-ConvergenceOfYnZnInSp} we know that for each $t\in \T$,
$$Y_t\leq Y'_t.$$
\cref{thm:4-ExistenceofBSDEunderHH} is then proved.
\end{proof}

By \cref{cor:3-CorollaryOfComparisonTheorem} together with the proof of \cref{thm:4-ExistenceofBSDEunderHH} it is easy to verify that under \ref{A:(H1)}(i) and \ref{A:(HH)} (resp. \ref{A:(H1)} and \ref{A:(H2')}) together with \ref{A:(AA)}, the comparison theorem for the maximal (resp. minimal) $L^1$ solutions of the BSDEs holds true. More precisely, we have

\begin{cor}
\label{cor:4-ComparisonOfMinSolutionUnderHH}
Assume that for $j=1,2$, $\xi^j\in\LT$, $V_\cdot^j\in \vcal^1$, $g^{j,1}$ satisfies \ref{A:(H1)}(i) and \ref{A:(HH)} (resp. \ref{A:(H1)} and \ref{A:(H2')}), $g^{j,2}$ satisfies \ref{A:(AA)}, $g^j:=g^{j,1}+g^{j,2}$ and that $(Y_\cdot^j,Z_\cdot^j)$ is the maximal (resp. minimal) $L^1$ solution of BSDE $(\xi^j,g^j+{\rm d}V^j)$ (recall \cref{thm:4-ExistenceofBSDEunderHH}). If $\xi^1\leq \xi^2$, ${\rm d}V^1\leq {\rm d}V^2$, and
$$\as,\ \RE\ (y,z)\in \R\times \R^d,\ \  g^{1,1}(t,y,z)\leq g^{2,1}(t,y,z)\ \ {\rm and}\ \ g^{1,2}(t,y,z)\leq g^{2,2}(t,y,z),$$
then $Y_t^1\leq Y_t^2$ for each $t\in \T$.
\end{cor}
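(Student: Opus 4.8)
The plan is to reduce the comparison of extremal solutions to the pointwise comparison theorem \cref{cor:3-CorollaryOfComparisonTheorem} for $L^1$ solutions under \ref{A:(H1)}(i) and \ref{A:(H2)}, by slipping between $g^1$ and $g^2$ a sequence of regularized generators of the kind built in the proof of \cref{thm:4-ExistenceofBSDEunderHH}. I will present the maximal case, under \ref{A:(H1)}(i) and \ref{A:(HH)}; the minimal case under \ref{A:(H1)} and \ref{A:(H2')} follows by the symmetric argument (replacing the sup-convolutions below by the inf-convolutions of \cref{thm:4-ExistenceofBSDEunderHH}). The key simplification is that it suffices to regularize only one of the two equations, so that no matching of the growth constants of $g^{1,\cdot}$ and $g^{2,\cdot}$ is ever needed.

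First I would apply the approximation scheme from the proof of \cref{thm:4-ExistenceofBSDEunderHH}, in its ``from above'' version, to the second equation only. That is, I build generators $g^2_n:=g^{2,1}_n+g^{2,2}_n$, where $g^{2,1}_n$ and $g^{2,2}_n$ are the sup-convolution analogues (in $z$, resp. in $(y,z)$) of the inf-convolutions used there, and I record the three properties established in that proof with the inequalities reversed: each $g^2_n$ satisfies \ref{A:(H1)} and \ref{A:(H2)}; the sequence decreases to $g^2$ with $g^2_n\geq g^2$ for every $n$; and the unique $L^1$ solution $(Y^{2,n}_\cdot,Z^{2,n}_\cdot)$ of BSDE $(\xi^2,g^2_n+{\rm d}V^2)$, which exists by \cref{thm:4-ExistenceanduniquenssofBSDEunderH2}, decreases in $\s^\beta$ for each $\beta\in(0,1)$ to the maximal $L^1$ solution $(Y^2_\cdot,Z^2_\cdot)$ (maximality being exactly the argument of \cref{thm:4-ExistenceofBSDEunderHH} with all inequalities reversed, and uniqueness of the maximal solution identifying the limit with the given $(Y^2_\cdot,Z^2_\cdot)$).

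Next, for each fixed $n$ I would compare the given $L^1$ solution $(Y^1_\cdot,Z^1_\cdot)$ of BSDE $(\xi^1,g^1+{\rm d}V^1)$ with $(Y^{2,n}_\cdot,Z^{2,n}_\cdot)$. The pointwise hypotheses $g^{1,1}\leq g^{2,1}$ and $g^{1,2}\leq g^{2,2}$ give $g^1=g^{1,1}+g^{1,2}\leq g^{2,1}+g^{2,2}=g^2$, and since a sup-convolution never lowers its argument, $g^2\leq g^2_n$; hence $\as$, $g^1(t,y,z)\leq g^2_n(t,y,z)$ for all $(y,z)\in\R\times\R^d$. As $g^2_n$ satisfies \ref{A:(H1)}(i) and \ref{A:(H2)}, and $\xi^1\leq\xi^2$, ${\rm d}V^1\leq{\rm d}V^2$ with trivial barriers $L^1_\cdot=L^2_\cdot\equiv-\infty$, $U^1_\cdot=U^2_\cdot\equiv+\infty$ (so that \ref{A:(H3)}(i) and the ordering of barriers hold vacuously), \cref{cor:3-CorollaryOfComparisonTheorem} applies and yields $Y^1_t\leq Y^{2,n}_t$ for each $t\in\T$. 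Letting $n\To\infty$ and using $Y^{2,n}_t\downarrow Y^2_t$ then gives $Y^1_t\leq Y^2_t$ for each $t\in\T$, which is the assertion.

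The only point requiring care, and the main obstacle, is the transfer to the sup-convolution of the properties verified for the inf-convolution in \cref{thm:4-ExistenceofBSDEunderHH}. One must check that the sup-convolution still obeys \ref{A:(H1)}(i): this survives because, evaluating the sup at a near-optimal shift for the larger of the two arguments, the resulting one-sided difference is dominated by $\rho$ uniformly in the shift, so the sum $g^2_n$ inherits a one-sided Osgood condition with a concave nondecreasing modulus. One must also confirm that $g^2_n$ satisfies \ref{A:(H2)}, that $g^2_n\geq g^2$ and $g^2_n\downarrow g^2$, and that the solutions decrease to the maximal one. All of these are the reversed-inequality counterparts of statements already proved in \cref{thm:4-ExistenceofBSDEunderHH}, and the remaining passage to the limit is routine.
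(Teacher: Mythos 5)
Your proof is correct and fills in, with the right details, exactly the argument the paper only sketches (\cref{cor:3-CorollaryOfComparisonTheorem} applied to the monotone regularizations built in the proof of \cref{thm:4-ExistenceofBSDEunderHH}, followed by a passage to the limit using that those regularizations satisfy \ref{A:(H1)} and \ref{A:(H2)} and converge monotonically to the extremal solution). Regularizing only the equation whose extremal solution sits on the favourable side of the inequality --- so that \cref{cor:3-CorollaryOfComparisonTheorem} is invoked with the raw generator $g^1$ against the regularized $g^2_n\geq g^2\geq g^1$, and no matching of the sup-/inf-convolution parameters of the two generators is ever required --- is a clean and legitimate way to organize it.
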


\begin{rmk}
Observe that either $g^1\equiv 0$ or $g^2\equiv 0$ is a special case of the generator $g:=g^1+g^2$ in
\cref{cor:4-ComparisonOfMinSolutionUnderHH} and
\cref{thm:4-ExistenceofBSDEunderHH}. Hence, they generalize some known results on the $L^1$ solution of BSDEs.
\end{rmk}

\section{Existence, uniqueness and approximation for $L^1$ solutions of RBSDEs}
\label{sec:5-ExistenceRBSDEs}
\setcounter{equation}{0}

In this section, we will establish some existence, uniqueness and approximation results on $L^1$ solutions of RBSDEs with one continuous barrier under general assumptions.\vspace{-0.1cm}

\begin{thm}
\label{thm:5-ExistenceanduniquenssofRBSDEunderH2}
Let $V_\cdot\in\vcal^1$ and the generator $g$ satisfy assumptions \ref{A:(H1)} and \ref{A:(H2)}.\vspace{-0.1cm}
\begin{itemize}
\item [(i)] Assume that \ref{A:(H3L)}(i) holds true for $L_\cdot$ and $\xi$. Then $\underline{R}$BSDE $(\xi,g+{\rm d}V,L)$ admits an $L^1$ solution iff \ref{A:(H3L)}(ii) is satisfied. Furthermore, if \ref{A:(H3L)}(ii) holds also true, then $\underline{R}$BSDE $(\xi,g+{\rm d}V,L)$ admits a unique $L^1$ solution $(Y_\cdot,Z_\cdot,K_\cdot)$ such that for each $\beta\in (0,1)$,
    \begin{equation}
    \label{eq:5-ConvergenceOfYnZnKn}
    \Lim \left(\|Y^n_\cdot-Y_\cdot\|_{\s^\beta}
    +\|Z^n_\cdot-Z_\cdot\|_{\M^\beta}+
    \|K^n_\cdot-K_\cdot\|_{\s^\beta}\right)=0,
    \end{equation}
    where for each $n\geq 1$, $(Y^n_\cdot,Z^n_\cdot)$ is the unique $L^1$ solution of BSDE $(\xi,\bar {g}_n+{\rm d}V)$ with $\bar {g}_n(t,y,z):=g(t,y,z)+n(y-L_t)^-$, i.e., \eqref{eq:3-PenalizationForBSDE}, (recall \cref{thm:4-ExistenceanduniquenssofBSDEunderH2}),  and
    \begin{equation}
    \label{eq:5-DefinitionOfKn}
    K^n_t:=n\int_0^t(Y^n_s-L_s)^-{\rm d}s, \ \ t\in\T;
    \end{equation}
\item [(ii)] Assume that \ref{A:(H3U)}(i) holds true for $U_\cdot$ and $\xi$. Then $\bar {R}$BSDE $(\xi,g+{\rm d}V,U)$ admits an $L^1$ solution iff \ref{A:(H3U)}(ii) is satisfied. Furthermore, if \ref{A:(H3U)}(ii) holds also true, then $\bar{R}$BSDE $(\xi,g+{\rm d}V,U)$ admits a unique $L^1$ solution $(Y_\cdot,Z_\cdot,A_\cdot)$ such that for each $\beta\in (0,1)$,
    \begin{equation}
     \label{eq:5-ConvergenceOfYnZnAn}
    \Lim \left(\|Y^n_\cdot-Y_\cdot\|_{\s^\beta}
    +\|Z^n_\cdot-Z_\cdot\|_{\M^\beta}+
    \|A^n_\cdot-A_\cdot\|_{\s^\beta}\right)=0,
    \end{equation}
    where for each $n\geq 1$, $(Y^n_\cdot,Z^n_\cdot)$ is the unique $L^1$ solution of BSDE $(\xi,\underline {g}_n+{\rm d}V)$ with $\underline {g}_n(t,y,z):=g(t,y,z)-n(y-U_t)^+$, i.e., \eqref{eq:3-PenalizationForBSDEwithAn}, (recall \cref{thm:4-ExistenceanduniquenssofBSDEunderH2}),  and
    \begin{equation}
    \label{eq:5-DefinitionOfAn}
    A^n_t:=n\int_0^t(Y^n_s-U_s)^+{\rm d}s, \ \ t\in \T.
    \end{equation}
\end{itemize}
\end{thm}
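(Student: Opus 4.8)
The plan is to prove statement (i); statement (ii) is entirely symmetric, obtained by replacing $L_\cdot$ and the penalty $n(y-L_t)^-$ with $U_\cdot$ and $-n(y-U_t)^+$, \ref{A:(H3L)} with \ref{A:(H3U)}, and invoking \cref{pro:3-PenalizationOfBSDE}(ii) and \eqref{eq:5-ConvergenceOfYnZnAn} in place of their (i)-counterparts. Uniqueness is immediate from \cref{thm:3-UniquenessOfSolutions} with $U_\cdot\equiv+\infty$. For the necessity of \ref{A:(H3L)}(ii): given any $L^1$ solution $(Y_\cdot,Z_\cdot,K_\cdot)$, I would regard $(Y_\cdot,Z_\cdot)$ as an $L^1$ solution of the non-reflected BSDE $(\xi,g+{\rm d}\bar V)$ with $\bar V:=V+K\in\vcal^1$, so that \cref{lem:2-gBelongstoH1} gives $g(\cdot,Y_\cdot,Z_\cdot),\,g(\cdot,Y_\cdot,0)\in\hcal^1$; then $X_\cdot:=Y_\cdot$ verifies \ref{A:(H3L)}(ii), since the forward form of the equation exhibits $X_\cdot$ with $C_\cdot=Y_0-Y_\cdot-\int_0^\cdot g(s,Y_s,Z_s)\,{\rm d}s-(V_\cdot-V_0)-K_\cdot\in\vcal^1$ and $H_\cdot=Z_\cdot\in\M^\beta$, while $Y_\cdot$ is of class (D), $L_\cdot\le Y_\cdot$ and $g(\cdot,Y_\cdot,0)\in\hcal^1$.

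For sufficiency, assume \ref{A:(H3L)}(ii) with associated process $X_\cdot$. I would first record that $L^+_\cdot\in\hcal^1$: indeed $L^+_\cdot\le|X_\cdot|$ and, $X_\cdot$ being of class (D), $\sup_{t\in\T}\E[|X_t|]<+\infty$, whence $\E[\int_0^TL^+_s\,{\rm d}s]<+\infty$. Consequently $\bar g_n(t,y,z)=g(t,y,z)+n(y-L_t)^-$ satisfies \ref{A:(H1)} and \ref{A:(H2)} (the penalty is nonincreasing and $1$-Lipschitz in $y$, independent of $z$, and $\bar g_n(\cdot,0,0)=g(\cdot,0,0)+nL^+_\cdot\in\hcal^1$), so by \cref{thm:4-ExistenceanduniquenssofBSDEunderH2} each penalized BSDE \eqref{eq:3-PenalizationForBSDE} has a unique $L^1$ solution $(Y^n_\cdot,Z^n_\cdot)$, and I define $K^n_\cdot$ by \eqref{eq:5-DefinitionOfKn}. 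Since $\bar g_n\le\bar g_{n+1}$, \cref{cor:3-CorollaryOfComparisonTheorem} yields $Y^n_\cdot\le Y^{n+1}_\cdot$; and since $\bar g_n\ge g$ with $g$ itself satisfying \ref{A:(H1)} and \ref{A:(H2)}, comparison with the unique $L^1$ solution $(Y^0_\cdot,Z^0_\cdot)$ of $(\xi,g+{\rm d}V)$ (which exists by \cref{thm:4-ExistenceanduniquenssofBSDEunderH2}) gives the uniform lower bound $Y^0_\cdot\le Y^n_\cdot$, with $g(\cdot,Y^0_\cdot,0)\in\hcal^1$ by \cref{lem:2-gBelongstoH1}.

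The heart of the argument, and the step I expect to be the main obstacle, is the construction of a uniform upper bound $\bar Y_\cdot$ for the increasing family $\{Y^n_\cdot\}$; this is precisely where the generalized Mokobodzki condition \ref{A:(H3L)}(ii) enters. The idea is to exhibit a process lying above $L_\cdot$ that is a supersolution of the plain equation $(\xi,g+{\rm d}V)$, because on the set where a process stays above $L_\cdot$ the $n$-dependent penalty vanishes. Concretely, I would let $(P_\cdot,Q_\cdot)$ be the $L^1$ solution (again via \cref{thm:4-ExistenceanduniquenssofBSDEunderH2}) of a BSDE with terminal value $(\xi-X_T)^+$, a nonnegative finite-variation forcing dominating ${\rm d}V+{\rm d}C$, and a generator $G$ that is Lipschitz in $y$ and of growth $\gamma|z|^\alpha$ in $z$, chosen from \ref{A:(H1)}(i) (via the linear growth of $\rho$ in \cref{rmk:2-LinearGrowthOfRhoandPhi}) and \ref{A:(H2)}(ii) so that $G(t,P_t,Q_t)\ge g(t,X_t+P_t,H_t+Q_t)$ whenever $P_t\ge0$; here the data of $G$ (namely $|g(\cdot,X_\cdot,0)|,\ f_\cdot,\ |X_\cdot|$ and $|H_\cdot|^\alpha$) all lie in $\hcal^1$ because $X_\cdot$ is of class (D) and $H_\cdot\in\M^\alpha$. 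A subtle point worth care is that one must bound $P^\alpha\le1+P$ so that $G$ stays \emph{Lipschitz} (not merely H\"older) in $y$, for otherwise the Osgood requirement $\int_{0^+}{\rm d}u/\rho(u)=+\infty$ in \ref{A:(H1)}(i) would fail. A comparison against the zero solution gives $P_\cdot\ge0$, so $\bar Y_\cdot:=X_\cdot+P_\cdot\ge X_\cdot\ge L_\cdot$, $\bar Y_T\ge\xi$, and $\bar Y_\cdot\in\cap_{\beta\in(0,1)}\s^\beta$ is of class (D). Writing $\bar Y_\cdot$ as an $L^1$ solution of a non-reflected BSDE and noting that $\bar g_n(t,\bar Y_t,\bar Z_t)=g(t,\bar Y_t,\bar Z_t)\le G(t,P_t,Q_t)$ (the first equality because $\bar Y_t\ge L_t$), I may invoke \cref{pro:3-ComparisonTheoremofDRBSDE} — whose hypothesis tests the generators only at $\bar Y_\cdot$, not globally, which is essential since no single generator dominates all $\bar g_n$ — to conclude $Y^n_\cdot\le\bar Y_\cdot$ for every $n$.

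With $Y^0_\cdot\le Y^n_\cdot\le\bar Y_\cdot$ in hand and $g(\cdot,Y^0_\cdot,0),\,g(\cdot,\bar Y_\cdot,0)\in\hcal^1$, \cref{lem:2-SublinearOfg} furnishes the sub-linear bound \eqref{eq:3-SubLinearGrowthofgYnZn} for each $n$. All hypotheses of \cref{pro:3-PenalizationOfBSDE}(i) are then satisfied, which simultaneously delivers the existence of an $L^1$ solution $(Y_\cdot,Z_\cdot,K_\cdot)$ of $\underline{R}$BSDE $(\xi,g+{\rm d}V,L)$ and the convergence of $(Y^n_\cdot,Z^n_\cdot)$ in $\s^\beta\times\M^\beta$. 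Finally, the $\s^\beta$-convergence of $K^n_\cdot$ asserted in \eqref{eq:5-ConvergenceOfYnZnKn} follows by reading $K^n_\cdot$ off \eqref{eq:3-PenalizationForBSDE} as the difference of the remaining terms, each of which converges (the generator term using \eqref{eq:3-SubLinearGrowthofgYnZn} and a dominated-convergence argument along the localizing stopping times, the stochastic integral by the Burkholder--Davis--Gundy inequality).
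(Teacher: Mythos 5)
Your overall strategy coincides with the paper's: penalize, order the $Y^n$ by comparison, trap them between the solution of the unpenalized BSDE and a supersolution built from the Mokobodzki process $X_\cdot$, verify \eqref{eq:3-SubLinearGrowthofgYnZn} via \cref{lem:2-SublinearOfg}, and invoke \cref{pro:3-PenalizationOfBSDE}. The necessity direction and the observation $L^+_\cdot\in\hcal^1$ are fine (though your displayed $C_\cdot$ contains a slip: as written it collapses to $-\int_0^\cdot Z_s\cdot{\rm d}B_s$, which is not of finite variation; the intended choice is $C_\cdot=-\int_0^\cdot g(s,Y_s,Z_s){\rm d}s-(V_\cdot-V_0)-K_\cdot$).

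The genuine gap is in the construction of the upper envelope. You set $\bar Y_\cdot:=X_\cdot+P_\cdot$ where $(P_\cdot,Q_\cdot)$ solves a BSDE driven by an auxiliary Lipschitz generator $G$ dominating $g(t,X_t+\cdot,H_t+\cdot)$ from above. This does give $Y^n_\cdot\leq\bar Y_\cdot$, but to obtain \eqref{eq:3-SubLinearGrowthofgYnZn} from \cref{lem:2-SublinearOfg} with the sandwich $Y^0_\cdot\leq Y^n_\cdot\leq\bar Y_\cdot$ you also need $g(\cdot,\bar Y_\cdot,0)\in\hcal^1$, which you assert without justification and which is false in general for your $\bar Y_\cdot$. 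The one-sided Osgood condition \ref{A:(H1)}(i) only yields $g(t,y_2,z)\leq g(t,y_1,z)+\rho(y_2-y_1)$ for $y_2\geq y_1$; hence from $g(\cdot,X_\cdot,0)\in\hcal^1$ and $\bar Y_\cdot\geq X_\cdot$ you can control $g(\cdot,\bar Y_\cdot,0)^+$ but not $g(\cdot,\bar Y_\cdot,0)^-$, and it is exactly the negative part along the \emph{upper} envelope that \cref{lem:2-SublinearOfg} consumes (to bound $-g(\cdot,Y^n_\cdot,Z^n_\cdot)$). For instance, $g(y)=-e^{e^{y}}$ satisfies \ref{A:(H1)} and \ref{A:(H2)}, yet $|g(\bar Y_\cdot)|=e^{e^{X_\cdot+P_\cdot}}$ need not lie in $\hcal^1$, since $P_\cdot$ is produced by a generator $G$ that never sees $g$ below $X_\cdot+P_\cdot$. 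The paper's construction avoids this precisely by letting the dominating process $\bar X_\cdot$ solve a BSDE with the \emph{original} generator $g$ (terminal value $X_T\vee\xi$ and augmented positive forcing $g^-(s,X_s,H_s){\rm d}s+{\rm d}C^-_s+{\rm d}V^-_s$), so that \cref{lem:2-gBelongstoH1} applies to $\bar X_\cdot$ and delivers $g(\cdot,\bar X_\cdot,0)\in\hcal^1$; since $\bar X_\cdot\geq X_\cdot\geq L_\cdot$, the penalty vanishes along $\bar X_\cdot$ and \cref{cor:3-CorollaryOfComparisonTheorem} still gives $Y^n_\cdot\leq\bar X_\cdot$. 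You should replace your $\bar Y_\cdot$ by this $\bar X_\cdot$; the remainder of your argument (including the final $\s^\beta$-convergence of $K^n_\cdot$, which can be run either by your subsequence argument or by the paper's linearization $\phi(x)\leq(m+2A)x+\phi(2A/(m+2A))$) then goes through.
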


\begin{proof}
We only prove the case of (i), the proof of (ii) is similar. We assume that $V_\cdot\in\vcal^1$, the generator $g$ satisfies \ref{A:(H1)} and \ref{A:(H2)}, and \ref{A:(H3L)}(i) holds true for $L_\cdot$ and $\xi$. If $\underline{R}$BSDE $(\xi,g+{\rm d}V,L)$ admits an $L^1$ solution $(Y_\cdot,Z_\cdot,K_\cdot)$, then from \cref{lem:2-gBelongstoH1} we know that $g(\cdot,Y_\cdot,Z_\cdot)\in \hcal^1$ and $g(\cdot,Y_\cdot,0)\in \hcal^1$. Thus, \ref{A:(H3L)}(ii) is satisfied with \vspace{-0.2cm}
$$
(C_\cdot,H_\cdot):=(-\int_0^\cdot g(s,Y_s,Z_s){\rm d}s-V_\cdot-K_\cdot,\  Z_\cdot)\vspace{-0.1cm}
$$
and $X_\cdot:=Y_\cdot$. The necessity is proved.

We further assume that \ref{A:(H3L)}(ii) holds. The uniqueness of the $L^1$ solution of $\underline{R}$BSDE $(\xi,g+{\rm d}V,L)$ follows from \cref{pro:3-ComparisonTheoremofDRBSDE}. In the sequel, we prove \eqref{eq:5-ConvergenceOfYnZnKn}. For each $n\geq 1$, let $(Y^n_\cdot,Z^n_\cdot)$ be the unique $L^1$ solution of BSDE $(\xi,\bar {g}_n+{\rm d}V)$ with $\bar {g}_n(t,y,z):=g(t,y,z)+n(y-L_t)^-$ and \eqref{eq:5-DefinitionOfKn}. We first show that there exists a process $\bar X_\cdot\in \cap_{\beta\in (0,1)}\s^\beta$ of the class (D) such that for each $n\geq 1$,
\begin{equation}
\label{eq:5-Y1leqYnleqBarX}
Y^1_\cdot\leq Y^n_\cdot\leq Y^{n+1}_\cdot\leq\bar X_\cdot.\vspace{-0.2cm}
\end{equation}
In fact, it follows from \ref{A:(H3L)}(ii) that there exists two processes $(C_\cdot,H_\cdot)\in \vcal^1\times \M^\beta$ for each $\beta\in (0,1)$ such that
\begin{equation}
\label{eq:5-RepresentationOfXt}
X_t=X_T-\int_t^T{\rm d}C_s-\int_t^TH_s\cdot {\rm d}B_s,\ \ t\in\T\vspace{0.1cm}
\end{equation}
belongs to the class (D), $g(\cdot,X_\cdot,0)\in \hcal^1$ and $L_t\leq X_t$ for each $t\in \T$. And, by \ref{A:(H2)}(ii) together with H\"{o}lder's inequality we know that $\as$,\vspace{-0.1cm}
$$
|g(\cdot,X_\cdot,H_\cdot)|\leq |g(\cdot,X_\cdot,0)|+\gamma
(f_\cdot+|X_\cdot|+H_\cdot)^\alpha\ \in \hcal^1.\vspace{-0.1cm}
$$
Then, the equation \eqref{eq:5-RepresentationOfXt} can be rewritten in the form
$$
\begin{array}{lll}
X_t&=&\Dis X_T+\int_t^Tg(s,X_s,H_s){\rm ds}+\int_t^T{\rm d}V_s-\int_t^T\left(g^+(s,X_s,H_s){\rm d}s+{\rm d}C^+_s+{\rm d}V^+_s\right)\vspace{0.1cm}\\
&&\Dis +\int_t^T\left(g^-(s,X_s,H_s){\rm d}s+{\rm d}C^-_s+{\rm d}V^-_s\right)-\int_t^TH_s\cdot {\rm d}B_s,\ \ t\in\T,
\end{array}
$$
where $g^+:=g\vee 0$, $g^-:=(-g)\vee 0$, $V_\cdot-V_0=V^+_\cdot-V^-_\cdot$ and
$C_\cdot-C_0=C^+_\cdot-C^-_\cdot$ with $V^+_\cdot,V^-_\cdot,C^+_\cdot,C^-_\cdot\in \vcal^{+,1}$. On the other hand, in view of \ref{A:(H1)} and \ref{A:(H2)}, by \cref{thm:4-ExistenceanduniquenssofBSDEunderH2} we know that there exists a unique $L^1$ solution $(\bar X_\cdot,\bar Z_\cdot)$ of the BSDE
$$
\begin{array}{lll}
\bar X_t&=&\Dis X_T\vee \xi+\int_t^Tg(s,\bar X_s,\bar Z_s){\rm ds}+\int_t^T{\rm d}V_s\\
&&\Dis +\int_t^T\left(g^-(s,X_s,H_s){\rm d}s+{\rm d}C^-_s+{\rm d}V^-_s\right)-\int_t^T\bar Z_s\cdot {\rm d}B_s,\ \ t\in\T.\vspace{0.1cm}
\end{array}
$$
And, it follows from \cref{pro:3-ComparisonTheoremofDRBSDE} that $L_t\leq X_t\leq \bar X_t$ for each $t\in \T$. Therefore, for each $n\geq 1$,
$$
\begin{array}{lll}
\bar X_t&=&\Dis X_T\vee \xi+\int_t^Tg(s,\bar X_s,\bar Z_s){\rm ds}+\int_t^T{\rm d}V_s+n\int_t^T\left(\bar X_s-L_s\right)^-{\rm d}s\vspace{0.1cm}\\
&&\Dis +\int_t^T\left(g^-(s,X_s,H_s){\rm d}s+{\rm d}C^-_s+{\rm d}V^-_s\right)-\int_t^T\bar Z_s\cdot {\rm d}B_s,\ \ t\in\T.\vspace{0.1cm}
\end{array}
$$
Thus, by \cref{cor:3-CorollaryOfComparisonTheorem} we know that \eqref{eq:5-Y1leqYnleqBarX} holds true.

In the sequel, in view of assumptions \ref{A:(H1)} and \ref{A:(H2)}, it follows from \cref{lem:2-gBelongstoH1} that $g(\cdot,Y^1_\cdot,0)\in \hcal^1$ and $g(\cdot,\bar X_\cdot,0)\in \hcal^1$, then from \cref{lem:2-SublinearOfg} together with \eqref{eq:5-Y1leqYnleqBarX} that \eqref{eq:3-SubLinearGrowthofgYnZn} holds true for each $n\geq 1$, with
$$\bar f_\cdot:=|g(\cdot,Y^1_\cdot,0)|+|g(\cdot,\bar X_\cdot,0)|+(\gamma+A)(|Y^1_\cdot|+|\bar X_\cdot|)+ \gamma (1+f_\cdot)+A\ \in\hcal^1,
$$
$\bar \lambda :=\gamma$ and $\alpha$. Thus, we have verified that all conditions in \cref{pro:3-PenalizationOfBSDE} (i) are satisfied, and then it follows that there exists an $L^1$ solution $(Y_\cdot, Z_\cdot, K_\cdot)$ of $\underline R$BSDE $(\xi,g+{\rm d}V,L)$ such that for each $\beta\in (0,1)$,
\begin{equation}
\label{eq:5-ConvergenceOfYnZn}
\lim\limits_{n\To \infty}\left(\| Y_\cdot^n- Y_\cdot\|_{\s^\beta}+ \| Z_\cdot^n- Z_\cdot\|_{\M^\beta}\right)=0\vspace{0.1cm}
\end{equation}
and there exists a subsequence $\{ K_\cdot^{n_j}\}$ of $\{ K_\cdot^n\}$ such that
$$
\lim\limits_{j\To\infty}\sup\limits_{t\in\T}
| K_t^{n_j}- K_t|=0.
$$

Finally, in view of \eqref{eq:5-ConvergenceOfYnZn}, in order to prove \eqref{eq:5-ConvergenceOfYnZnKn} we need only to show that for each $\beta\in (0,1)$
\begin{equation}
\label{eq:5-5.8}
\lim\limits_{n\To\infty}\left\| \int_0^\cdot g(s,Y_s^n,Z_s^n){\rm d}s-\int_0^\cdot g(s,Y_s,Z_s){\rm d}s\right\|_{\s^\beta}=0.\vspace{0.1cm}
\end{equation}
The proof is similar to that of Theorem 5.8 in \citet{Fan2017AMS}, but for readers' convenience we list it as follows. In fact, it follows from \ref{A:(H2)} (i) that $\as$, for each $n\geq 1$,
$$
\begin{array}{lll}
\Dis |g(\cdot,Y_\cdot^n,Z_\cdot^n)-g(\cdot,Y_\cdot,Z_\cdot)|
&\leq &\Dis  |g(\cdot,Y_\cdot^n,Z_\cdot^n)-g(\cdot,Y_\cdot^n,Z_\cdot)| +|g(\cdot,Y_\cdot^n,Z_\cdot)-g(\cdot,Y_\cdot,Z_\cdot)|\\
&\leq &\Dis |g(\cdot,Y_\cdot^n,Z_\cdot)-g(\cdot,Y_\cdot,Z_\cdot)|
+\phi(|Z_\cdot^n-Z_\cdot|).
\end{array}
$$
Thus, making use of the following basic inequality (see \citet{FanJiang2010CRA} for details)
$$
\phi(x)\leq (m+2A)x+\phi\left({2A\over m+2A}\right),\ \ \RE\ x\geq 0,\ \ \RE m\geq 1
$$
together with H\"{o}lder's inequality, we get that for each $n,m\geq 1$ and $\beta\in (0,1)$,
\begin{equation}
\label{eq:5-5.9}
\begin{array}{lll}
&&\Dis \left\| \int_0^\cdot g(s,Y_s^n,Z_s^n){\rm d}s-\int_0^\cdot g(s,Y_s,Z_s){\rm d}s\right\|_{\s^\beta}\leq \Dis \E\left[\left(
\int_0^T |g(t,Y_t^n,Z_t^n)
-g(t,Y_t,Z_t)| {\rm d}t\right)^\beta\right]\vspace{0.1cm}\\
&\leq &\Dis  \E\left[\left(
\int_0^T |g(t,Y_t^n,Z_t)
-g(t,Y_t,Z_t)| {\rm d}t\right)^\beta\right]
+(m+2A)^\beta T^{\beta\over 2}\|Z_\cdot^n-Z_\cdot\|_{\M^\beta}+\phi^\beta
({2A\over m+2A}) T^\beta.
\end{array}
\end{equation}
Furthermore, in view of assumptions \ref{A:(H1)} and \ref{A:(H2)} together with \eqref{eq:5-Y1leqYnleqBarX}, it follows from \cref{lem:2-SublinearOfg} and \cref{lem:2-gBelongstoH1} that for each $n\geq 1$,\vspace{-0.1cm}
$$
|g(\cdot,Y^n_\cdot,Z_\cdot)-
g(\cdot,Y_\cdot,Z_\cdot)|\leq |g(\cdot,Y^n_\cdot,Z_\cdot)|+
|g(\cdot,Y_\cdot,Z_\cdot)|\leq \bar f_\cdot+\gamma |Z_\cdot|^\alpha + |g(\cdot,Y_\cdot,Z_\cdot)| \in \hcal^1. \vspace{-0.1cm}
$$
Then, Lebesgue's dominated convergence theorem yields that for each $\beta\in (0,1)$,
\begin{equation}
\label{eq:5-5.10}
\lim\limits_{n\To\infty}\E\left[\left(
\int_0^T |g(t,Y_t^n,Z_t)
-g(t,Y_t,Z_t)| {\rm d}t\right)^\beta\right]=0.
\end{equation}
Thus, letting first $n\To \infty$, and then $m\To\infty$ in \eqref{eq:5-5.9}, in view of \eqref{eq:5-5.10}, \eqref{eq:5-ConvergenceOfYnZn} and the fact that $\phi(\cdot)$ is continuous and $\phi(0)=0$, we get \eqref{eq:5-5.8}. The proof of \cref{thm:5-ExistenceanduniquenssofRBSDEunderH2} is then completed.
\end{proof}

\begin{cor}
\label{cor:5-ComparisonForKofRBSDE}
Assume that $\xi^1,\xi^2\in\LT$ with $\xi^1\leq \xi^2$, $V_\cdot^1, V_\cdot^2\in\vcal^{1}$ with ${\rm d}V^1\leq {\rm d}V^2$, and both generators $g^1$ and $g^2$ satisfy \ref{A:(H1)} and \ref{A:(H2)} with\vspace{-0.1cm}
$$
\as ,\ \RE\ (y,z)\in \R\times \R^d,\ \ g^1(t,y,z)\leq g^2(t,y,z).\vspace{-0.3cm}
$$
We have
\begin{itemize}
\item [(i)] For $i=1,2$, let \ref{A:(H3L)} hold for $\xi^i$, $L_\cdot^i$ and $X_\cdot^i$ associated with $g^i$, and $(Y_\cdot^i,Z_\cdot^i,K_\cdot^i)$ be the unique $L^1$ solution of $\underline {R}$BSDE $(\xi^i,g^i+{\rm d}V^i,L^i)$ (recall \cref{thm:5-ExistenceanduniquenssofRBSDEunderH2}). If $L^1_\cdot=L^2_\cdot$, then ${d}K^1\geq {\rm d}K^2$.
\item [(ii)] For $i=1,2$, let \ref{A:(H3U)} hold for $\xi^i$, $U_\cdot^i$ and $X_\cdot^i$ associated with $g^i$, and $(Y_\cdot^i,Z_\cdot^i,A_\cdot^i)$ be the unique $L^1$ solution of $\bar{R}$BSDE $(\xi^i,g^i+{\rm d}V^i,U^i)$ (recall \cref{thm:5-ExistenceanduniquenssofRBSDEunderH2}). If $U^1_\cdot=U^2_\cdot$, then ${d}A^1\leq {\rm d}A^2$.
\end{itemize}
\end{cor}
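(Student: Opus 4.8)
The plan is to read both solutions off the penalization scheme of \cref{thm:5-ExistenceanduniquenssofRBSDEunderH2} and then compare the two penalized sequences term by term. For $i=1,2$ and each $n\ge1$, let $(Y^{i,n}_\cdot,Z^{i,n}_\cdot)$ be the unique $L^1$ solution of the penalized BSDE $(\xi^i,\bar g^i_n+{\rm d}V^i)$ with $\bar g^i_n(t,y,z):=g^i(t,y,z)+n(y-L^i_t)^-$, and set $K^{i,n}_t:=n\int_0^t(Y^{i,n}_s-L^i_s)^-{\rm d}s$ as in \eqref{eq:5-DefinitionOfKn}. By \cref{thm:5-ExistenceanduniquenssofRBSDEunderH2}(i) and \eqref{eq:5-ConvergenceOfYnZnKn}, for each $\beta\in(0,1)$ we have $\|Y^{i,n}_\cdot-Y^i_\cdot\|_{\s^\beta}+\|K^{i,n}_\cdot-K^i_\cdot\|_{\s^\beta}\To 0$ as $n\To\infty$, for $i=1,2$.

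First I would compare the penalized solutions. Write $L_\cdot:=L^1_\cdot=L^2_\cdot$. Since $g^1\le g^2$ and the penalty $n(y-L_t)^-$ is common to both, we get $\bar g^1_n\le \bar g^2_n$ pointwise. Moreover the penalty is $n$-Lipschitz in $y$, independent of $z$, and nonincreasing in $y$; consequently it leaves \ref{A:(H2)} untouched and only lowers the left-hand side of the one-sided Osgood inequality in \ref{A:(H1)}(i), so each $\bar g^i_n$ still satisfies \ref{A:(H1)}(i) and \ref{A:(H2)} (inherited from $g^i$). As $\xi^1\le\xi^2$ and ${\rm d}V^1\le{\rm d}V^2$, \cref{cor:3-CorollaryOfComparisonTheorem} applied with $L_\cdot=-\infty$ and $U_\cdot=+\infty$ (hence with vanishing reflecting processes) yields $Y^{1,n}_t\le Y^{2,n}_t$ for all $t\in\T$ and $n\ge1$.

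Next I would transfer this to the reflecting processes and pass to the limit. Because $x\mapsto x^-$ is nonincreasing, $Y^{1,n}_\cdot\le Y^{2,n}_\cdot$ forces $(Y^{1,n}_s-L_s)^-\ge(Y^{2,n}_s-L_s)^-$ for every $s$, whence for all $0\le s\le t\le T$,
$$
K^{1,n}_t-K^{1,n}_s=n\int_s^t(Y^{1,n}_r-L_r)^-{\rm d}r\ge n\int_s^t(Y^{2,n}_r-L_r)^-{\rm d}r=K^{2,n}_t-K^{2,n}_s,
$$
that is ${\rm d}K^{1,n}\ge{\rm d}K^{2,n}$ for each $n$. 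Fixing a rational pair $s\le t$ and letting $n\To\infty$, the $\s^\beta$-convergence $K^{i,n}_\cdot\To K^i_\cdot$ forces convergence of the increments in probability, so the inequality is preserved in the limit; continuity of $K^1_\cdot$ and $K^2_\cdot$ then extends it to all $0\le s\le t\le T$, i.e. ${\rm d}K^1\ge{\rm d}K^2$, proving (i). Part (ii) is entirely symmetric: with $U^1_\cdot=U^2_\cdot=:U_\cdot$ the penalties $-n(y-U_t)^+$ coincide and $\underline g^1_n\le\underline g^2_n$, so again $Y^{1,n}_\cdot\le Y^{2,n}_\cdot$; since $x\mapsto x^+$ is nondecreasing, $(Y^{1,n}_s-U_s)^+\le(Y^{2,n}_s-U_s)^+$ gives ${\rm d}A^{1,n}\le{\rm d}A^{2,n}$, and the same limit passage yields ${\rm d}A^1\le{\rm d}A^2$.

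The only delicate points, neither of them serious, are verifying that the penalized generators retain \ref{A:(H1)}(i) and \ref{A:(H2)} so that \cref{cor:3-CorollaryOfComparisonTheorem} applies (immediate, since the penalty is Lipschitz in $y$, independent of $z$, and monotone in the favorable direction) and the limit passage from the increments of $K^{i,n}_\cdot$ to those of $K^i_\cdot$ (painless, since $\s^\beta$-convergence controls the increments uniformly). I expect no real obstruction; the hypothesis $L^1_\cdot=L^2_\cdot$ (resp. $U^1_\cdot=U^2_\cdot$) is exactly what makes the penalty terms identical and thus the generator comparison $\bar g^1_n\le\bar g^2_n$ (resp. $\underline g^1_n\le\underline g^2_n$) available.
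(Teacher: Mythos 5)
Your proposal is correct and follows essentially the same route as the paper: compare the penalized BSDE solutions via \cref{cor:3-CorollaryOfComparisonTheorem} (using that the penalty terms coincide when $L^1_\cdot=L^2_\cdot$, resp. $U^1_\cdot=U^2_\cdot$), deduce the increment inequality for $K^{i,n}_\cdot$ (resp. $A^{i,n}_\cdot$) from the monotonicity of $x\mapsto x^-$ (resp. $x\mapsto x^+$), and pass to the limit using the $\s^\beta$-convergence from \cref{thm:5-ExistenceanduniquenssofRBSDEunderH2}. The extra care you take in checking that the penalized generators retain \ref{A:(H1)}(i) and \ref{A:(H2)} is implicit in the paper but welcome.
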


\begin{proof}
We only prove (i). The proof is classical, and we list it for readers' convenience. For $n\geq 1$ and $i=1,2$, by \cref{thm:4-ExistenceanduniquenssofBSDEunderH2} we let $(Y_\cdot^{i,n},Z_\cdot^{i,n})$ be the unique $L^1$ solution of the following penalization BSDE:
$$
Y_t^{i,n}=\xi^i+\int_t^T g^i(s,Y_s^{i,n},Z_s^{i,n}){\rm d}s+\int_t^T {\rm d}V_s^i+\int_t^T {\rm d}K_s^{i,n}-\int_t^T Z_s^{i,n}\cdot {\rm d}B_s,\ \ t\in \T
$$
with
$$
K_t^{i,n}:=n\int_0^t\left(Y_s^{i,n}-L_s^i\right)^-{\rm d}s,\ \ t\in\T.\vspace{0.1cm}
$$
In view of the assumptions of \cref{cor:5-ComparisonForKofRBSDE}, it follows from \cref{cor:3-CorollaryOfComparisonTheorem} that for each $n\geq 1$, $Y^{1,n}_\cdot\leq Y^{2,n}_\cdot$, and then
$$
K_{t_2}^{1,n}-K_{t_1}^{1,n}=n\int_{t_1}^{t_2}
\left(Y^{1,n}_s-L^1_s\right)^-{\rm d}s\geq n\int_{t_1}^{t_2}
\left(Y^{2,n}_s-L^2_s\right)^-{\rm d}s=K_{t_2}^{2,n}-K_{t_1}^{2,n}\vspace{0.1cm}
$$
for every $n\geq 1$ and $0\leq t_1\leq t_2\leq T$. Since for each $\beta\in (0,1)$, both
$\|K^{1,n}_\cdot-K^1_\cdot\|_{\s^\beta}\ \ {\rm and}\ \ \|K^{2,n}_\cdot-K^2_\cdot\|_{\s^\beta}$ converge to zero as $n\To\infty$ by \cref{thm:5-ExistenceanduniquenssofRBSDEunderH2}, it follows that $K_{t_2}^1-K_{t_1}^1\geq K_{t_2}^2-K_{t_1}^2$ for every $0\leq t_1\leq t_2\leq T$, which proves the desired result.\vspace{0.1cm}
\end{proof}

\begin{thm}
\label{thm:5-ExistenceofRBSDEunderH2'Penalization}
Let $V_\cdot\in\vcal^1$, $g^1$ satisfy assumptions \ref{A:(H1)} and \ref{A:(H2')}, $g^2$ satisfy assumption \ref{A:(AA)} and the generator $g:=g^1+g^2$.\vspace{-0.1cm}
\begin{itemize}
\item [(i)] Assume that \ref{A:(H3L)}(i) holds true for $L_\cdot$ and $\xi$. Then $\underline{R}$BSDE $(\xi,g+{\rm d}V,L)$ admits an $L^1$ solution iff \ref{A:(H3L)}(ii) is satisfied for $X_\cdot$, $L_\cdot$ and $g$ (or $g^1$). Furthermore, if \ref{A:(H3L)}(ii) holds also true for $X_\cdot$, $L_\cdot$ and $g$ (or $g^1$), then $\underline{R}$BSDE $(\xi,g+{\rm d}V,L)$ admits a minimal $L^1$ solution (resp. an $L^1$ solution) $(Y_\cdot,Z_\cdot,K_\cdot)$ such that for each $\beta\in (0,1)$,\vspace{-0.1cm}
    \begin{equation}
    \label{eq:5-ConvergenceOfYnZnH2'}
    \lim\limits_{n\To \infty}\left(\| Y_\cdot^n- Y_\cdot\|_{\s^\beta}+ \| Z_\cdot^n- Z_\cdot\|_{\M^\beta}\right)=0
    \end{equation}
    and there exists a subsequence $\{ K_\cdot^{n_j}\}$ of $\{ K_\cdot^n\}$ such that
    $$
    \lim\limits_{j\To\infty}\sup\limits_{t\in\T}
    |K_t^{n_j}- K_t|=0,
    $$
    where for each $n\geq 1$, $(Y^n_\cdot,Z^n_\cdot)$ is the minimal (resp. maximal) $L^1$ solution of BSDE $(\xi,\bar {g}_n+{\rm d}V)$ with $\bar {g}_n(t,y,z):=g(t,y,z)+n(y-L_t)^-$, i.e., \eqref{eq:3-PenalizationForBSDE}, (recall \cref{thm:4-ExistenceofBSDEunderHH}),  and
    \begin{equation}
    \label{eq:5-DefinitionOfKnH2'}
    K^n_t:=n\int_0^t(Y^n_s-L_s)^-{\rm d}s, \ \ t\in\T;
    \end{equation}
\item [(ii)] Assume that \ref{A:(H3U)}(i) holds true for $U_\cdot$ and $\xi$. Then $\bar {R}$BSDE $(\xi,g+{\rm d}V,U)$ admits an $L^1$ solution iff \ref{A:(H3U)}(ii) is satisfied for $X_\cdot$, $U_\cdot$ and $g$ (or $g^1$). Furthermore, if \ref{A:(H3U)}(ii) holds also true for $X_\cdot$, $U_\cdot$ and $g$ (or $g^1$), then $\bar{R}$BSDE $(\xi,g+{\rm d}V,L)$ admits a maximal
    $L^1$ solution (resp. an $L^1$ solution) $(Y_\cdot,Z_\cdot,A_\cdot)$ such that for each $\beta\in (0,1)$,\vspace{-0.1cm}
    \begin{equation}
    \label{eq:5-ConvergenceOfYnZnH2'}
    \lim\limits_{n\To \infty}\left(\| Y_\cdot^n- Y_\cdot\|_{\s^\beta}+ \| Z_\cdot^n- Z_\cdot\|_{\M^\beta}\right)=0
    \end{equation}
    and there exists a subsequence $\{ A_\cdot^{n_j}\}$ of $\{A_\cdot^n\}$ such that
    $$
    \lim\limits_{j\To\infty}\sup\limits_{t\in\T}
    |A_t^{n_j}-A_t|=0,
    $$
    where for each $n\geq 1$, $(Y^n_\cdot,Z^n_\cdot)$ is the maximal (resp. minimal) $L^1$ solution of BSDE $(\xi,\underline {g}_n+{\rm d}V)$ with $\underline {g}_n(t,y,z):=g(t,y,z)-n(y-U_t)^+$, i.e., \eqref{eq:3-PenalizationForBSDEwithAn}, (recall \cref{thm:4-ExistenceofBSDEunderHH}),  and
    \begin{equation}
    \label{eq:5-DefinitionOfAnH2'}
    A^n_t:=n\int_0^t(Y^n_s-U_s)^+{\rm d}s, \ \ t\in \T.
    \end{equation}
\end{itemize}
\end{thm}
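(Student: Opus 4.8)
The plan is to establish only part (i); part (ii) is entirely symmetric (reflecting an upper rather than a lower barrier), and within (i) the minimal-solution statement and the respective existence statement differ only in whether one feeds the minimal or the maximal penalized solutions from \cref{thm:4-ExistenceofBSDEunderHH} into the scheme. For necessity, suppose $(Y_\cdot,Z_\cdot,K_\cdot)$ is an $L^1$ solution of $\underline R$BSDE $(\xi,g+{\rm d}V,L)$. Then $(Y_\cdot,Z_\cdot)$ solves the non-reflected BSDE $(\xi,g+{\rm d}(V+K))$ with $V+K\in\vcal^1$. Because $g^2$ obeys \ref{A:(AA)} while $Y_\cdot$ is of class (D) and $Z_\cdot\in\M^\beta$ for every $\beta\in(0,1)$, one checks that $g^2(\cdot,Y_\cdot,Z_\cdot)\in\hcal^1$: the term $\tilde\mu|Y_\cdot|$ is integrable since class (D) bounds $\int_0^T\E|Y_t|\,{\rm d}t$, and the term $\tilde\lambda|Z_\cdot|^{\tilde\alpha}$ by H\"older since $\tilde\alpha<1$. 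Absorbing $\int_0^\cdot g^2(s,Y_s,Z_s)\,{\rm d}s$ into the finite-variation part exhibits $(Y_\cdot,Z_\cdot)$ as an $L^1$ solution of a BSDE driven by $g^1$ alone, to which \cref{lem:2-gBelongstoH1} applies (as $g^1$ satisfies \ref{A:(H1)}(i)(ii) and \ref{A:(H2')}(ii)); hence $g^1(\cdot,Y_\cdot,0)\in\hcal^1$ and therefore $g(\cdot,Y_\cdot,0)\in\hcal^1$. Taking $X_\cdot:=Y_\cdot$ then verifies \ref{A:(H3L)}(ii), so that condition is necessary.

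For sufficiency I would mirror the architecture of \cref{thm:5-ExistenceanduniquenssofRBSDEunderH2}, with \cref{thm:4-ExistenceofBSDEunderHH} replacing the existence-uniqueness result used there. Write $\bar g_n=\bar g_n^1+g^2$ with $\bar g_n^1:=g^1+n(y-L_\cdot)^-$; since the penalty is independent of $z$ and nonincreasing in $y$, $\bar g_n^1$ inherits the structural assumptions of $g^1$ (its value at the origin being controlled by $|g^1(\cdot,0,0)|+nL^+_\cdot\in\hcal^1$, using $L^+_\cdot\leq|X_\cdot|$), so \cref{thm:4-ExistenceofBSDEunderHH} supplies the $L^1$ solution $(Y^n_\cdot,Z^n_\cdot)$ of BSDE $(\xi,\bar g_n+{\rm d}V)$ and we define $K^n_\cdot$ by \eqref{eq:5-DefinitionOfKnH2'}. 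Since $\bar g_n^1\leq\bar g_{n+1}^1$ while $g^2$ is unchanged, \cref{cor:4-ComparisonOfMinSolutionUnderHH} gives the monotonicity $Y^n_\cdot\leq Y^{n+1}_\cdot$. The uniform upper bound is built exactly as in \cref{thm:5-ExistenceanduniquenssofRBSDEunderH2}: from the Mokobodzki process $X_\cdot$ of \ref{A:(H3L)}(ii) one produces, via \cref{thm:4-ExistenceofBSDEunderHH} applied to a dominating BSDE with terminal value $X_T\vee\xi$ and increasing drift $g^-(\cdot,X_\cdot,H_\cdot)\,{\rm d}s+{\rm d}C^-+{\rm d}V^-$, a process $\bar X_\cdot\in\cap_{\beta\in(0,1)}\s^\beta$ of class (D) with $L_\cdot\leq X_\cdot\leq\bar X_\cdot$; as $\bar X_\cdot\geq L_\cdot$ annihilates the penalty, $\bar X_\cdot$ solves the $n$-th penalized equation with an extra nonnegative drift, whence \cref{cor:4-ComparisonOfMinSolutionUnderHH} forces $Y^n_\cdot\leq\bar X_\cdot$ for all $n$.

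Next I would verify the growth estimate \eqref{eq:3-SubLinearGrowthofgYnZn}. Applying \cref{lem:2-SublinearOfg} to $g^1$ with $\underline X_\cdot=Y^1_\cdot$ and $\bar X_\cdot$ bounds $|g^1(\cdot,Y^n_\cdot,Z^n_\cdot)|$ by $|g^1(\cdot,Y^1_\cdot,0)|+|g^1(\cdot,\bar X_\cdot,0)|$ plus terms affine in $(|Y^1_\cdot|,|\bar X_\cdot|)$ and $\lambda|Z^n_\cdot|^\alpha$, while \ref{A:(AA)} bounds $|g^2(\cdot,Y^n_\cdot,Z^n_\cdot)|$ by $\tilde f_\cdot+\tilde\mu(|Y^1_\cdot|+|\bar X_\cdot|)+\tilde\lambda|Z^n_\cdot|^{\tilde\alpha}$; since \cref{lem:2-gBelongstoH1} puts $g^1(\cdot,Y^1_\cdot,0)$ and $g^1(\cdot,\bar X_\cdot,0)$ in $\hcal^1$, collecting terms and using $|z|^{\tilde\alpha}\leq 1+|z|^{\alpha\vee\tilde\alpha}$ yields \eqref{eq:3-SubLinearGrowthofgYnZn} with some $\bar f_\cdot\in\hcal^1$, $\bar\lambda>0$ and exponent $\alpha\vee\tilde\alpha\in(0,1)$. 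Together with $Y^1_\cdot\leq Y^n_\cdot\leq\bar X_\cdot$, this is precisely the hypothesis set of \cref{pro:3-PenalizationOfBSDE}(i), which then delivers an $L^1$ solution $(Y_\cdot,Z_\cdot,K_\cdot)$ of $\underline R$BSDE $(\xi,g+{\rm d}V,L)$ together with \eqref{eq:5-ConvergenceOfYnZnH2'} and the uniform convergence of a subsequence of $K^n_\cdot$.

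Finally, in the branch producing the minimal solution, let $(Y'_\cdot,Z'_\cdot,K'_\cdot)$ be any $L^1$ solution; then $Y'_\cdot\geq L_\cdot$ kills the penalty, so $(Y'_\cdot,Z'_\cdot)$ solves BSDE $(\xi,\bar g_n+{\rm d}(V+K'))$ and thus dominates its minimal solution, while \cref{cor:4-ComparisonOfMinSolutionUnderHH} (using ${\rm d}V\leq{\rm d}(V+K')$) dominates that minimal solution from below by $Y^n_\cdot$; hence $Y^n_\cdot\leq Y'_\cdot$, and letting $n\To\infty$ gives $Y_\cdot\leq Y'_\cdot$. I expect the main obstacle to be the uniform upper-bound step, for it is exactly where the generalized Mokobodzki condition \ref{A:(H3L)}(ii) is consumed: one must re-express $X_\cdot$ as a BSDE with the correct integrable increasing drift so that \cref{thm:4-ExistenceofBSDEunderHH} applies to the dominating equation, and then check that the penalty vanishes along $\bar X_\cdot$ so that a single $\bar X_\cdot$ dominates the entire penalized family $\{Y^n_\cdot\}$ simultaneously.
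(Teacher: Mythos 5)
Your architecture coincides with the paper's: it proves this theorem by rerunning the proof of \cref{thm:5-ExistenceanduniquenssofRBSDEunderH2} with \cref{thm:4-ExistenceofBSDEunderHH} supplying the penalized solutions, \cref{cor:4-ComparisonOfMinSolutionUnderHH} supplying the orderings, \cref{lem:2-SublinearOfg} and \cref{lem:2-gBelongstoH1} supplying \eqref{eq:3-SubLinearGrowthofgYnZn}, and \cref{pro:3-PenalizationOfBSDE} producing the limit; your minimality argument (viewing $(Y'_\cdot,Z'_\cdot)$ as a solution of BSDE $(\xi,\bar g_n+{\rm d}(V+K'))$ and comparing) is verbatim the paper's. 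Your necessity argument is in fact more careful than the paper's bare citation of \cref{lem:2-gBelongstoH1}: since $g=g^1+g^2$ need not satisfy \ref{A:(H1)}(i) (only $g^1$ does, cf.\ the paper's own \cref{rmk:7-7.1}), one must, exactly as you do, first check $g^2(\cdot,Y_\cdot,Z_\cdot)\in\hcal^1$ and absorb it into the finite-variation part before applying the lemma to $g^1$; the same manoeuvre is silently needed again when you invoke \cref{lem:2-gBelongstoH1} to put $g^1(\cdot,Y^1_\cdot,0)$ and $g^1(\cdot,\bar X_\cdot,0)$ into $\hcal^1$.

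The one step you should tighten is the pair of comparisons $X_\cdot\leq\bar X_\cdot$ and $Y^n_\cdot\leq\bar X_\cdot$. In \cref{thm:5-ExistenceanduniquenssofRBSDEunderH2} these follow from \cref{pro:3-ComparisonTheoremofDRBSDE}, which compares \emph{arbitrary} solutions but requires \ref{A:(H2)}; here the only available tool is \cref{cor:4-ComparisonOfMinSolutionUnderHH}, which compares a minimal solution with a minimal solution (or a maximal with a maximal), whereas $X_\cdot$ is merely \emph{some} solution of its own semimartingale equation and your $\bar X_\cdot$ is not known to be the extremal solution of the $n$-th penalized equation with the enlarged drift. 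As written, the citation of \cref{cor:4-ComparisonOfMinSolutionUnderHH} does not apply. The repair is to take $\bar X_\cdot$ to be the \emph{maximal} $L^1$ solution of the dominating BSDE and to interpolate through extremal solutions of a single equation: $X_\cdot$ is dominated by the maximal solution of its own equation by definition, which \cref{cor:4-ComparisonOfMinSolutionUnderHH} places below $\bar X_\cdot$, giving $L_\cdot\leq X_\cdot\leq\bar X_\cdot$ and hence that the penalty vanishes along $\bar X_\cdot$; then $Y^n_\cdot$, being minimal for BSDE $(\xi,\bar g_n+{\rm d}V)$, lies below the minimal solution of BSDE $(X_T\vee\xi,\bar g_n+{\rm d}\tilde V)$ by \cref{cor:4-ComparisonOfMinSolutionUnderHH}, which in turn lies below its particular solution $\bar X_\cdot$ by definition of minimality. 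With this bookkeeping the rest of your argument goes through as written.
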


\begin{proof}
We only prove (i), and (ii) can be proved in the same way. In view of \cref{thm:4-ExistenceofBSDEunderHH}, \cref{cor:4-ComparisonOfMinSolutionUnderHH},
\cref{lem:2-SublinearOfg}, \cref{lem:2-gBelongstoH1} and \cref{pro:3-PenalizationOfBSDE}, by a similar argument to that in the proof of \cref{thm:5-ExistenceanduniquenssofRBSDEunderH2} we can prove that all conclusions in (i) of \cref{thm:5-ExistenceofRBSDEunderH2'Penalization} hold true except for the minimal property of the $L^1$ solution $(Y_\cdot,Z_\cdot,K_\cdot)$ of $\underline{R}$BSDE $(\xi,g+{\rm d}V,L)$ when $(Y^n_\cdot,Z^n_\cdot)$ is the minimal $L^1$ solution of penalized BSDE $(\xi,\bar {g}_n+{\rm d}V)$ for each $n\geq 1$. Now, we will show this property.

Indeed, for any $L^1$ solution $(Y'_\cdot,Z'_\cdot,K'_\cdot)$  of
$\underline{R}$BSDE $(\xi,g+{\rm d}V,L)$, it is not hard to check that $(Y'_\cdot,Z'_\cdot)$ is an $L^1$ solution of BSDE $(\xi,\bar {g}_n+{\rm d}\bar V)$ with $\bar V_\cdot:=V_\cdot+K'_\cdot$  for each $n\geq 1$. Thus, in view of the assumption that $(Y^n_\cdot,Z^n_\cdot)$ is the minimal $L^1$ solution of penalized BSDE $(\xi,\bar {g}_n+{\rm d}V)$ for each $n\geq 1$, \cref{cor:4-ComparisonOfMinSolutionUnderHH} yields that for each $n\geq 1$,
$$Y^n_t\leq Y'_t,\ \ t\in \T.$$
Furthermore, since $\Lim \|Y^n_\cdot-Y_\cdot\|_{\s^\beta}=0$ for each $\beta\in (0,1)$, we know that
$$Y_t\leq Y'_t,\ \ t\in \T,$$
which is the desired result.
\end{proof}

\begin{thm}
\label{thm:5-ExistenceofRBSDEunderH2'Approximation}
Let $V_\cdot\in\vcal^1$, $g^1$ satisfy assumptions \ref{A:(H1)} and \ref{A:(H2')}, $g^2$ satisfy assumption \ref{A:(AA)} and the generator $g:=g^1+g^2$.
\begin{itemize}
\item [(i)] Assume that \ref{A:(H3L)} holds true for $L_\cdot$, $\xi$, $X_\cdot$ and $g$ (or $g^1$). Then $\underline{R}$BSDE $(\xi,g+{\rm d}V,L)$ admits a maximal (resp. minimal) $L^1$ solution $(Y_\cdot,Z_\cdot,K_\cdot)$ such that for each $\beta\in (0,1)$,
$$
\lim\limits_{n\To \infty}\left(\|Y_\cdot^n-Y_\cdot\|_{\s^\beta}+
\|Z_\cdot^n-Z_\cdot\|_{\M^\beta}
+\|K_\cdot^n-K_\cdot\|_{\s^1}
\right)=0,
$$
where, for each $n\geq 1$, $(Y^n_\cdot,Z^n_\cdot,K^n_\cdot)$ is the unique $L^1$ solution of
$\underline R$BSDE $(\xi,g_n+{\rm d}V,L)$ with a generator $g_n$ satisfying \ref{A:(H1)}, \ref{A:(H2)} and \ref{A:(H3L)} (recall \cref{thm:5-ExistenceanduniquenssofRBSDEunderH2}(i));

\item [(ii)] Assume that \ref{A:(H3U)} holds true for $U_\cdot$, $\xi$, $X_\cdot$ and $g$ (or $g^1$). Then $\bar {R}$BSDE $(\xi,g+{\rm d}V,U)$ admits a maximal (resp. minimal) $L^1$ solution  $(Y_\cdot,Z_\cdot,A_\cdot)$ such that for each $\beta\in (0,1)$,
$$
\lim\limits_{n\To \infty}\left(\|Y_\cdot^n-Y_\cdot\|_{\s^\beta}+
\|Z_\cdot^n-Z_\cdot\|_{\M^\beta}
+\|A_\cdot^n-A_\cdot\|_{\s^1}
\right)=0,
$$
where, for each $n\geq 1$, $(Y^n_\cdot,Z^n_\cdot,A^n_\cdot)$ is the unique $L^1$ solution of
$\bar R$BSDE $(\xi,g_n+{\rm d}V,U)$ with a generator $g_n$ satisfying \ref{A:(H1)}, \ref{A:(H2)} and \ref{A:(H3U)} (recall \cref{thm:5-ExistenceanduniquenssofRBSDEunderH2}(ii)).
\end{itemize}
\end{thm}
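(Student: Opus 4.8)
The plan is to obtain the claimed extremal solution as a monotone limit of the unique $L^1$ solutions $(Y^n_\cdot,Z^n_\cdot,K^n_\cdot)$ of $\underline{R}$BSDEs driven by regularized generators $g_n$, and to pass to the limit through \cref{pro:3-Approximation} specialized to one lower barrier (so $U_\cdot\equiv+\infty$ and $A^n_\cdot\equiv 0$). I would construct $g_n$ exactly as in the proof of \cref{thm:4-ExistenceofBSDEunderHH}: for the minimal solution the inf-convolutions $g_n=g^1_n+g^2_n$ increase to $g$ locally uniformly, each satisfying \ref{A:(H1)} and \ref{A:(H2)}, and for the maximal solution the analogous sup-convolutions decrease to $g$ with the same properties. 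In either case \ref{A:(H3L)} transfers from $g$ to every $g_n$ with the same process $X_\cdot$: one has $g(\cdot,X_\cdot,0)\in\hcal^1$ (when \ref{A:(H3L)} is assumed only for $g^1$, add $g^2(\cdot,X_\cdot,0)$, which lies in $\hcal^1$ by \ref{A:(AA)} and the class (D) property of $X_\cdot$), and since the convolution perturbs only the $z$-variable, $g_n(\cdot,X_\cdot,0)$ differs from $g(\cdot,X_\cdot,0)$ by a quantity controlled through \ref{A:(H2')}(ii), whence $g_n(\cdot,X_\cdot,0)\in\hcal^1$. Thus \cref{thm:5-ExistenceanduniquenssofRBSDEunderH2}(i) furnishes a unique $(Y^n_\cdot,Z^n_\cdot,K^n_\cdot)$ for each $n$.

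I would first carry out the minimal case ($g_n\uparrow g$). Monotonicity of $Y^n_\cdot$ comes from \cref{cor:3-CorollaryOfComparisonTheorem}, and that of ${\rm d}K^n$ from \cref{cor:5-ComparisonForKofRBSDE}(i); here $K^n_\cdot$ decreases in $n$, so the requirement ${\rm d}K^{n+1}\leq{\rm d}K^n\leq{\rm d}K^1$ of \cref{pro:3-Approximation} holds automatically. As the upper envelope $\bar Y_\cdot$ I would take the minimal $L^1$ solution of $\underline{R}$BSDE $(\xi,g+{\rm d}V,L)$ supplied by \cref{thm:5-ExistenceofRBSDEunderH2'Penalization}(i); since $g_n\leq g$, \cref{cor:3-CorollaryOfComparisonTheorem} gives $Y^1_\cdot\leq Y^n_\cdot\leq\bar Y_\cdot$. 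For the growth hypotheses \eqref{eq:3-SemiLinearGrowthCondition} and \eqref{eq:3-LinearGrowthofgnYnZn--k}, \cref{rmk:3-rmk3.4} reduces everything to an $n$-uniform bound $|g_n(\cdot,Y^n_\cdot,Z^n_\cdot)|\leq\bar f_\cdot+\bar\lambda|Z^n_\cdot|$; I would obtain it by squeezing $g_n$ between $g_1$ and $g$ along the solution and estimating $|g_1(\cdot,Y^n_\cdot,Z^n_\cdot)|$ and $|g(\cdot,Y^n_\cdot,Z^n_\cdot)|$ through \cref{lem:2-SublinearOfg} and \cref{lem:2-gBelongstoH1} on the strength of the sandwich $Y^1_\cdot\leq Y^n_\cdot\leq\bar Y_\cdot$, the point being that the constants of $g_1$ and $g$ are $n$-independent while those of $g_n$ itself blow up.

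With these inputs \cref{pro:3-Approximation} delivers an $L^1$ solution $(Y_\cdot,Z_\cdot,K_\cdot)$ of $\underline{R}$BSDE $(\xi,g+{\rm d}V,L)$ together with the asserted convergence in $\s^\beta\times\M^\beta\times\s^1$. Minimality is then immediate: for any $L^1$ solution $(Y'_\cdot,Z'_\cdot,K'_\cdot)$, the inequality $g_n\leq g$ and \cref{cor:3-CorollaryOfComparisonTheorem} give $Y^n_\cdot\leq Y'_\cdot$ for all $n$, hence $Y_\cdot\leq Y'_\cdot$. The maximal case runs symmetrically with $g_n\downarrow g$ and $Y^n_\cdot$ decreasing toward the envelope $\underline Y_\cdot$, again taken to be the minimal $L^1$ solution from \cref{thm:5-ExistenceofRBSDEunderH2'Penalization}(i) (which now satisfies $\underline Y_\cdot\leq Y^n_\cdot$ because $g_n\geq g$), with maximality established in the same manner.

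The main obstacle I anticipate is the maximal case, namely producing the dominating process $\bar K_\cdot\in\vcal^{+,1}$ with ${\rm d}K^n\leq{\rm d}\bar K$ demanded by \cref{pro:3-Approximation}: now $K^n_\cdot$ increases in $n$, so this bound is not free as it was in the minimal case, and it cannot be read off \cref{cor:5-ComparisonForKofRBSDE} directly since $g$ fails \ref{A:(H2)}. I plan to resolve it by adapting the comparison argument of \cref{cor:5-ComparisonForKofRBSDE}(i) to the penalized level: writing $(Y^{n,m}_\cdot,Z^{n,m}_\cdot)$ for the penalized solutions attached to $g_n$ and $(Y^{*,m}_\cdot,Z^{*,m}_\cdot)$ for the minimal penalized solutions attached to $g$, the relation $g_n\geq g$ and \cref{cor:3-CorollaryOfComparisonTheorem} yield $Y^{n,m}_\cdot\geq Y^{*,m}_\cdot$, hence ${\rm d}K^{n,m}\leq{\rm d}K^{*,m}$ for the corresponding penalized $K$-processes; letting the penalization index $m\to\infty$ gives ${\rm d}K^n\leq{\rm d}K^*$, so the $K$-component $K^*_\cdot$ of the minimal $L^1$ solution of $\underline{R}$BSDE $(\xi,g+{\rm d}V,L)$ furnishes the required $\bar K_\cdot$.
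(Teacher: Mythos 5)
Your overall architecture matches the paper's: regularize $g$ by sup/inf-convolutions as in \cref{thm:4-ExistenceofBSDEunderHH}, solve $\underline R$BSDE $(\xi,g_n+{\rm d}V,L)$ by \cref{thm:5-ExistenceanduniquenssofRBSDEunderH2}, and pass to the limit through \cref{pro:3-Approximation}. But there is a genuine gap in how you verify the uniform growth bound $|g_n(\cdot,Y^n_\cdot,Z^n_\cdot)|\leq \bar f_\cdot+\bar\lambda|Z^n_\cdot|$ with $\bar f_\cdot\in\hcal^1$. After applying \cref{lem:2-SublinearOfg} to $g^1$ (note: not to $g$ itself, which need not satisfy \ref{A:(H1)}(i) --- see item 4) of \cref{rmk:7-7.1}), the process $\bar f_\cdot$ necessarily contains the term $|g^1(\cdot,\bar Y_\cdot,0)|$ (resp.\ $|g^1(\cdot,\underline Y_\cdot,0)|$) for your chosen envelope, and you must show this lies in $\hcal^1$. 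You take the envelope to be the extremal $L^1$ solution of the \emph{limiting} equation $\underline R$BSDE $(\xi,g+{\rm d}V,L)$ and invoke \cref{lem:2-gBelongstoH1}; but that lemma requires the generator of the equation the process solves to satisfy \ref{A:(H1)}(i)(ii) and \ref{A:(H2')}(ii), and $g=g^1+g^2$ satisfies neither the one-sided Osgood condition nor, a fortiori, the hypotheses of \cref{lem:2-SublinearOfg} in general. So the citation does not go through as written, and the integrability of $\bar f_\cdot$ is left unestablished. The paper avoids this precisely by introducing the auxiliary generators $\underline g$ and $\bar g$ of \eqref{eq:5-DefintionOfUnderlineG}--\eqref{eq:5-DefintionOfBarG}, which sandwich every $g_n$, \emph{do} satisfy \ref{A:(H1)} and \ref{A:(H2)}, and inherit \ref{A:(H3L)}; their RBSDE solutions $\underline Y_\cdot,\bar Y_\cdot$ serve as the envelopes, and \cref{lem:2-gBelongstoH1} applied to $\underline g$ and $\bar g$ yields $g^1(\cdot,\underline Y_\cdot,0),\,g^1(\cdot,\bar Y_\cdot,0)\in\hcal^1$ because $g^1(\cdot,y,0)$ differs from $\underline g(\cdot,y,0)$ and $\bar g(\cdot,y,0)$ only by integrable and linear-in-$|y|$ terms. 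Your argument could be repaired either by adopting these envelopes or by reproving a version of \cref{lem:2-gBelongstoH1} for generators of the form $g^1+g^2$; as it stands, a key step fails.

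On the one point where you anticipated trouble --- producing $\bar K_\cdot\in\vcal^{+,1}$ with ${\rm d}K^n\leq{\rm d}\bar K$ in the case where $K^n_\cdot$ increases --- your penalized-level comparison (compare $Y^{n,m}_\cdot$ with the minimal penalized solution $Y^{*,m}_\cdot$ for $g$ via \cref{cor:3-CorollaryOfComparisonTheorem}, deduce ${\rm d}K^{n,m}\leq{\rm d}K^{*,m}$ from the explicit penalization formula, and pass to the limit in $m$ along the subsequences furnished by \cref{thm:5-ExistenceanduniquenssofRBSDEunderH2} and \cref{thm:5-ExistenceofRBSDEunderH2'Penalization}) is correct and is essentially a re-derivation of \cref{cor:5-ComparisonForKofRBSDE} adapted to a generator failing \ref{A:(H2)}. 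The paper gets the same conclusion in one line from \cref{cor:5-ComparisonForKofRBSDE} applied to $\underline g\leq g_n$, which is another payoff of the auxiliary-generator device; once you adopt $\underline g,\bar g$, both the envelope integrability and the $\bar K$ bound come for free.
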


\begin{proof}
We only prove (i) and consider the case of the maximal $L^1$ solution. Now, we assume that $V_\cdot\in\vcal^1$, $g^1$ satisfies \ref{A:(H1)} and \ref{A:(H2')} with $\rho(\cdot)$, $\psi_\cdot(r)$, $f_\cdot$, $\mu$, $\lambda$ and $\alpha$, $g^2$ satisfies \ref{A:(AA)} with $\tilde f_\cdot$, $\tilde\mu$, $\tilde\lambda$ and $\tilde\alpha$, the generator $g:=g^1+g^2$, and \ref{A:(H3L)} holds true for $L_\cdot$, $\xi$, $X_\cdot$ and $g$ (or $g^1$). In view of assumptions of $g$,
it is not very hard to prove that for each $n\geq 1$ and $(y,z)\in \R\times\R^d$, the following function
$$
g_n(\omega,t,y,z):=g^1_n(\omega,t,y,z)
+g^2_n(\omega,t,y,z)
$$
with
\begin{equation}
g^1_n(\omega,t,y,z):=\sup\limits_{u\in\R^d}
\left[g^1(\omega,t,y,u)-(n+2\lambda)|u-z|^\alpha\right]
\end{equation}
and
\begin{equation}
g^2_n(\omega,t,y,z):=\sup\limits_{(u,v)\in\R\times\R^d}
\left[g^2(\omega,t,u,v)-(n+2\tilde\mu)|u-y|
-(n+2\tilde\lambda)|v-z|^{\tilde\alpha}\right]
\vspace{0.1cm}
\end{equation}
is well defined and $(\F_t)$-progressively measurable, $\as$, $g_n$ decreases in $n$, is continuous in $(y,z)$, and converges locally uniformly in $(y,z)$ to the generator $g$ as $n\To \infty$, $g_n$ satisfies \ref{A:(H1)} and \ref{A:(H2)}, and $\as$, for each $n\geq 1$ and $(y,z)\in \R\times\R^d$,
\begin{equation}
\label{eq:5-SublinearOfg1n}
|g^1_n(\cdot,y,z)-g^1(\cdot,y,0)|\leq  f_\cdot+\mu|y|+\lambda|z|^\alpha,
\end{equation}
and
\begin{equation}
\label{eq:5-SublinearOfg2n}
|g^2_n(\cdot,y,z)|\leq  \tilde f_\cdot+\tilde\mu|y|+\tilde\lambda
|z|^{\tilde\alpha}.
\end{equation}
Then, in view of \eqref{eq:5-SublinearOfg1n} and \eqref{eq:5-SublinearOfg2n}, we know that $\as$, $\RE\ n\geq 1$, $\RE\ (y,z)\in\R\times\R^{d}$,
\begin{equation}
\label{eq:5-gnyzLeq}
\Dis|g_n(\cdot,y,z)|\leq |g^1_n(\cdot,y,z)|+|g^2_n(\cdot,y,z)|\leq  |g^1(\cdot,y,0)|+f_\cdot+\mu|y|+\lambda |z|^\alpha+\tilde f_\cdot+\tilde\mu |y|+\tilde\lambda |z|^{\tilde\alpha}.
\end{equation}
Hence, $g_n(\cdot,X_\cdot,0)\in\hcal^1$ and \ref{A:(H3L)} holds true for $L_\cdot$, $\xi$, $X_\cdot$ and $g_n$. It then follows from \cref{thm:5-ExistenceanduniquenssofRBSDEunderH2}(i) that there exists a unique $L^1$ solution $(Y_\cdot^n,Z_\cdot^n, K_\cdot^n)$ of $\underline{R}$BSDE $(\xi,g_n+{\rm d}V,L)$ for each $n\geq 1$.

In the sequel, let
\begin{equation}
\label{eq:5-DefintionOfUnderlineG}
\underline{g}(\cdot,y,z):=g^1(\cdot,y,0)-(f_\cdot+\tilde f_\cdot)-(\mu+\tilde\mu)|y|-\lambda |z|^\alpha-\tilde\lambda |z|^{\tilde\alpha}
\end{equation}
and
\begin{equation}
\label{eq:5-DefintionOfBarG}
\bar g(\cdot,y,z):=g^1(\cdot,y,0)+(f_\cdot+\tilde f_\cdot)+(\mu+\tilde\mu)|y|+
\lambda |z|^\alpha+\tilde\lambda |z|^{\tilde\alpha}.
\vspace{0.2cm}
\end{equation}
Then by \eqref{eq:5-SublinearOfg1n} and \eqref{eq:5-SublinearOfg2n}, $\underline{g}\leq g_n\leq \bar g$ for each $n\geq 1$, and
both $\underline{g}$ and $\bar g$ satisfy \ref{A:(H1)} and \ref{A:(H2)} with
$$
\underline{g}(\cdot, X_\cdot,0)=g^1(\cdot, X_\cdot,0)-(f_\cdot+\tilde f_\cdot)-(\mu+\tilde\mu)|X_\cdot|\in \hcal^1,
$$
$$
\bar g(\cdot, X_\cdot,0)=g^1(\cdot, X_\cdot,0)+(f_\cdot+\tilde f_\cdot)+(\mu+\tilde\mu)|X_\cdot|\in \hcal^1.\vspace{0.1cm}
$$
Thus, \ref{A:(H3L)} holds also true for $L_\cdot$, $\xi$, $X_\cdot$, $\underline {g}$ and $\bar g$. It then follows from \cref{thm:5-ExistenceanduniquenssofRBSDEunderH2} that $\underline{R}$BSDE $(\xi,\underline{g}+{\rm d}V,L)$ and $\underline{R}$BSDE $(\xi,\bar g+{\rm d}V,L)$ admit respectively a unique $L^1$ solution
$(\underline{Y}_\cdot,\underline{Z}_\cdot, \underline{K}_\cdot)$ and $(\bar Y_\cdot,\bar Z_\cdot, \bar K_\cdot)$, and by \cref{cor:3-CorollaryOfComparisonTheorem} and \cref{cor:5-ComparisonForKofRBSDE} we know that for each $n\geq 1$,
\begin{equation}
\label{eq:5-BoundOfYnKn}
\underline{Y}_\cdot\leq Y^{n+1}_\cdot\leq Y^n_\cdot\leq \bar Y_\cdot\ \ {\rm and}\ \ {\rm d}\bar K\leq {\rm d}K^n\leq {\rm d}K^{n+1}\leq {\rm d}\underline K.
\end{equation}

Furthermore, it follows from \cref{lem:2-gBelongstoH1} that $\underline {g}(\cdot,\underline{Y}_\cdot,0)\in \hcal^1$ and $\bar {g}(\cdot,\bar{Y}_\cdot,0)\in \hcal^1$, and then from \eqref{eq:5-DefintionOfUnderlineG} and \eqref{eq:5-DefintionOfBarG} that
$g^1(\cdot,\underline{Y}_\cdot,0)\in \hcal^1\ \  {\rm and}\ \ g^1(\cdot,\bar{Y}_\cdot,0)\in \hcal^1$. And, in view of \eqref{eq:5-BoundOfYnKn} together with assumptions \ref{A:(H1)} and \ref{A:(H2')} of $g^1$, it follows from \cref{lem:2-SublinearOfg} that for each $n\geq 1$,
\begin{equation}
\label{eq:5-g1nynznLeq}
|g^1(\cdot,Y^n_\cdot,Z^n_\cdot)|\leq |g^1(\cdot,\underline Y_\cdot,0)|+|g^1(\cdot,\bar Y_\cdot,0)|+(\mu+A)(|\underline Y_\cdot|+|\bar Y_\cdot|)+f_\cdot+A+\lambda|Z^n_\cdot|^\alpha.
\end{equation}
Then, by \eqref{eq:5-g1nynznLeq}, \eqref{eq:5-SublinearOfg2n} and \eqref{eq:5-BoundOfYnKn} we can conclude that
\eqref{eq:3-LinearGrowthofgnYnZn} holds true with
$$
\bar f_\cdot:=|g^1(\cdot,\underline Y_\cdot,0)|+|g^1(\cdot,\bar Y_\cdot,0)|+(\mu+A+\tilde\mu)(|\underline Y_\cdot|+|\bar Y_\cdot|)+f_\cdot+\tilde f_\cdot+A+\lambda+\tilde\lambda\ \in \hcal^1
$$
and $\bar\lambda:=\lambda+\tilde\lambda$. Thus, in view of \cref{rmk:3-rmk3.4}, we have checked all the conditions in \cref{pro:3-Approximation} with $U_\cdot=+\infty$ and $A^n_\cdot\equiv 0$, and it follows that $\underline {R}$BSDE $(\xi,g+{\rm d}V,L)$ admits an $L^1$ solution $(Y_\cdot,Z_\cdot,K_\cdot)$ such that for each $\beta\in (0,1)$,\vspace{-0.1cm}
\begin{equation}
\label{eq:5-ConvergenceOfYnZnKnInSp}
\lim\limits_{n\To\infty}(\|Y_\cdot^n
-Y_\cdot\|_{\s^\beta}+\|Z_\cdot^n
-Z_\cdot\|_{\M^\beta}+\|K_\cdot^n
-K_\cdot\|_{\s^1})=0.\vspace{0.1cm}
\end{equation}

Finally, we show that $(Y_\cdot,Z_\cdot,K_\cdot)$ is just the maximal $L^1$ solution of $\underline {R}$BSDE $(\xi,g+{\rm d}V,L)$. In fact, if $(Y'_\cdot,Z'_\cdot,K'_\cdot)$ is also an $L^1$ solution of $\underline {R}$BSDE $(\xi,g+{\rm d}V,L)$, then noticing that $g_n\geq g$ and $g_n$ satisfies \ref{A:(H1)} and \ref{A:(H2)} for each $n\geq 1$, it follows from \cref{cor:3-CorollaryOfComparisonTheorem} that
$Y_t^n\geq Y'_t$ for each $t\in\T$ and $n\geq 1$. Thus, by \eqref{eq:5-ConvergenceOfYnZnKnInSp} we know that for each $t\in \T$,
$$Y_t\geq Y'_t.$$
\cref{thm:5-ExistenceofRBSDEunderH2'Approximation} is then proved.
\end{proof}

By \cref{cor:3-CorollaryOfComparisonTheorem},
\cref{cor:5-ComparisonForKofRBSDE} and the proof of \cref{thm:5-ExistenceofRBSDEunderH2'Approximation},
it is not hard to verify the following comparison result for the minimal (resp. maximal) $L^1$ solutions of Reflected BSDEs.\vspace{-0.1cm}

\begin{cor}
\label{cor:5-ComparisonForYandKofRBSDEUnderH2'}
Assume that $\xi^1,\xi^2\in\LT$ with $\xi^1\leq \xi^2$, $V_\cdot^1, V_\cdot^2\in\vcal^{1}$ with ${\rm d}V^1\leq {\rm d}V^2$, $g^{1,1}$ and $g^{2,1}$ satisfy \ref{A:(H1)} and \ref{A:(H2')}, $g^{1,2}$ and $g^{2,2}$ satisfy \ref{A:(AA)}, $g^1:=g^{1,1}+g^{1,2}$ and $g^2:=g^{2,1}+g^{2,2}$ with\vspace{-0.2cm}
$$
\as,\ \RE\ (y,z)\in \R\times \R^d,\ \  g^{1,1}(t,y,z)\leq g^{2,1}(t,y,z)\ \ {\rm and}\ \ g^{1,2}(t,y,z)\leq g^{2,2}(t,y,z).\vspace{-0.2cm}
$$
We have
\begin{itemize}
\item [(i)] For $i=1,2$, let \ref{A:(H3L)} hold for $\xi^i$, $L_\cdot^i$ and $X_\cdot^i$ associated with $g^i$ (or $g^{i,1}$), and $(Y_\cdot^i,Z_\cdot^i,K_\cdot^i)$ be the minimal (resp. maximal) $L^1$ solution of $\underline {R}$BSDE $(\xi^i,g^i+{\rm d}V^i,L^i)$ (recall \cref{thm:5-ExistenceofRBSDEunderH2'Approximation}). If $L^1_\cdot\leq L^2_\cdot$, then $Y^1_t\leq Y^2_t$ for each $t\in\T$, and if $L^1_\cdot=L^2_\cdot$, then ${d}K^1\geq {\rm d}K^2$;
\item [(ii)] For $i=1,2$, let \ref{A:(H3U)} hold for $\xi^i$, $U_\cdot^i$ and $X_\cdot^i$ associated with $g^i$ (or $g^{i,1}$), and $(Y_\cdot^i,Z_\cdot^i,A_\cdot^i)$ be the minimal (resp. maximal) $L^1$ solution of $\bar{R}$BSDE $(\xi^i,g^i+{\rm d}V^i,U^i)$ (recall \cref{thm:5-ExistenceofRBSDEunderH2'Approximation}). If $U^1_\cdot\leq U^2_\cdot$, then $Y^1_t\leq Y^2_t$ for each $t\in\T$, and if $U^1_\cdot=U^2_\cdot$, then ${d}A^1\leq {\rm d}A^2$.\vspace{0.1cm}
\end{itemize}
\end{cor}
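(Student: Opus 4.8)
We prove only part (i), and within it we treat the maximal $L^1$ solution; the minimal case and part (ii) follow by the same reasoning after the obvious changes of extremality and sign. Thus assume that, for $i=1,2$, $(Y_\cdot^i,Z_\cdot^i,K_\cdot^i)$ is the maximal $L^1$ solution of $\underline R$BSDE $(\xi^i,g^i+{\rm d}V^i,L^i)$ furnished by \cref{thm:5-ExistenceofRBSDEunderH2'Approximation}. The plan is to compare the two approximating penalization sequences level by level and then pass to the limit, exactly along the route indicated before the statement.

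The first step, and the one requiring care, is to build the two approximations from a \emph{common} set of parameters. Since only four generators are involved, I would replace the constants $\mu,\lambda,\tilde\mu,\tilde\lambda$ and the exponents $\alpha,\tilde\alpha$ of \ref{A:(H2')}(ii) and \ref{A:(AA)} for $g^{1,1},g^{2,1},g^{1,2},g^{2,2}$ by their respective maxima, and $f_\cdot,\tilde f_\cdot$ by common majorants in $\hcal^1$ (raising an exponent $\alpha$ to a larger $\alpha'\in[\alpha,1)$ costs only an additive constant, since $|z|^\alpha\le 1+|z|^{\alpha'}$). With these shared parameters, define for $i=1,2$ and $n\ge 1$ the sup-convolutions $g^i_n:=g^{i,1}_n+g^{i,2}_n$ precisely as in the proof of \cref{thm:5-ExistenceofRBSDEunderH2'Approximation}. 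Each $g^i_n$ then satisfies \ref{A:(H1)} and \ref{A:(H2)}, decreases to $g^i$ locally uniformly, and \ref{A:(H3L)} holds for $L^i_\cdot$, $\xi^i$, $X^i_\cdot$ and $g^i_n$. Crucially, because the penalization strengths are now identical, the supremum over a common set preserves order, so the hypotheses $g^{1,1}\le g^{2,1}$ and $g^{1,2}\le g^{2,2}$ transfer to the regularizations:
\[
g^1_n(\cdot,y,z)\le g^2_n(\cdot,y,z),\qquad \RE\ (y,z)\in\R\times\R^d,\ \RE\ n\ge 1,\ \as
\]

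By \cref{thm:5-ExistenceanduniquenssofRBSDEunderH2}(i), each $\underline R$BSDE $(\xi^i,g^i_n+{\rm d}V^i,L^i)$ admits a unique $L^1$ solution $(Y^{i,n}_\cdot,Z^{i,n}_\cdot,K^{i,n}_\cdot)$. Since $\xi^1\le\xi^2$, ${\rm d}V^1\le{\rm d}V^2$, $L^1_\cdot\le L^2_\cdot$, each $g^i_n$ satisfies \ref{A:(H1)}(i) and \ref{A:(H2)}, and $g^1_n\le g^2_n$, \cref{cor:3-CorollaryOfComparisonTheorem} gives $Y^{1,n}_t\le Y^{2,n}_t$ for each $t\in\T$ and $n\ge 1$. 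If moreover $L^1_\cdot=L^2_\cdot$, the same data satisfy the hypotheses of \cref{cor:5-ComparisonForKofRBSDE}(i), whence ${\rm d}K^{1,n}\ge{\rm d}K^{2,n}$, i.e. $K^{1,n}_{t_2}-K^{1,n}_{t_1}\ge K^{2,n}_{t_2}-K^{2,n}_{t_1}$ for all $0\le t_1\le t_2\le T$ and all $n\ge 1$.

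It remains to pass to the limit. \cref{thm:5-ExistenceofRBSDEunderH2'Approximation} provides, for each $\beta\in(0,1)$, the convergences $\|Y^{i,n}_\cdot-Y^i_\cdot\|_{\s^\beta}\To 0$ and $\|K^{i,n}_\cdot-K^i_\cdot\|_{\s^1}\To 0$ as $n\To\infty$. Letting $n\To\infty$ in $Y^{1,n}_t\le Y^{2,n}_t$ yields $Y^1_t\le Y^2_t$ for each $t\in\T$, and letting $n\To\infty$ in the increment inequality for the $K^{i,n}$ yields $K^1_{t_2}-K^1_{t_1}\ge K^2_{t_2}-K^2_{t_1}$ for all $0\le t_1\le t_2\le T$, that is ${\rm d}K^1\ge{\rm d}K^2$. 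The main obstacle is therefore the synchronization of the constants in the two regularizations: it is what makes order-preservation survive the sup-convolution and what renders \cref{cor:5-ComparisonForKofRBSDE} (stated under \ref{A:(H1)} and \ref{A:(H2)}) applicable at each level $n$. Once this matching is secured, the comparison and the passage to the limit are routine.
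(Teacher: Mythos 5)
Your proposal is correct and follows exactly the route the paper indicates for this corollary (which is stated without a written-out proof, only a pointer to \cref{cor:3-CorollaryOfComparisonTheorem}, \cref{cor:5-ComparisonForKofRBSDE} and the proof of \cref{thm:5-ExistenceofRBSDEunderH2'Approximation}): approximate both generators by the sup-/inf-convolutions of that proof, compare level by level, and pass to the limit in $\s^\beta$ and $\s^1$. Your explicit remark that the penalization constants and exponents must be synchronized across $i=1,2$ so that the convolutions preserve the order $g^1_n\leq g^2_n$ is a detail the paper leaves implicit, and you handle it correctly.
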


The following corollary follows immediately from \cref{thm:5-ExistenceofRBSDEunderH2'Approximation}.
\vspace{-0.1cm}

\begin{cor}
\label{cor:5-ExistenceofRBSDEunderAA}
Let $V_\cdot\in\vcal^1$, \ref{A:(H3)}(i) hold true for $L_\cdot$, $U_\cdot$ and $\xi$, and the generator $g$ satisfy \ref{A:(AA)}.
\begin{itemize}
\item [(i)] If $L^+_\cdot\in \s^1$, then $\underline{R}$BSDE $(\xi,g+{\rm d}V,L)$ admits a minimal (resp. maximal) $L^1$ solution;
\item [(ii)] If $U^-_\cdot\in \s^1$, then $\bar {R}$BSDE $(\xi,g+{\rm d}V,U)$ admits a minimal (resp. maximal) $L^1$ solution.
\end{itemize}
\end{cor}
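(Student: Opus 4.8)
The plan is to deduce both parts directly from \cref{thm:5-ExistenceofRBSDEunderH2'Approximation} through the trivial splitting $g=g^1+g^2$ with $g^1\equiv 0$ and $g^2:=g$. One checks immediately that $g^1\equiv 0$ satisfies \ref{A:(H1)} and \ref{A:(H2')} (take $\rho\equiv 0$, $\psi_\cdot(r)\equiv 0$, $f_\cdot\equiv 0$ and any admissible $\mu,\lambda,\alpha$), while $g^2=g$ satisfies \ref{A:(AA)} by hypothesis. Moreover \ref{A:(H3)}(i) for $L_\cdot,U_\cdot,\xi$ yields \ref{A:(H3L)}(i) for $L_\cdot,\xi$ (since $L_T\leq\xi$) in case (i) and \ref{A:(H3U)}(i) for $U_\cdot,\xi$ (since $\xi\leq U_T$) in case (ii). Hence the only substantive task is to construct, for the lower barrier, a process $X_\cdot$ witnessing \ref{A:(H3L)}(ii) relative to $g^1$ (this is exactly the option ``(or $g^1$)'' allowed in \cref{thm:5-ExistenceofRBSDEunderH2'Approximation}), and symmetrically for the upper barrier.

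For (i) I would take
$$
X_t:=\E\left[\,\sup_{s\in\T}L_s^+\;\Big|\;\F_t\right],\qquad t\in\T.
$$
Since $L_\cdot\in\s$ is continuous, $\sup_{s\in\T}L_s^+$ is $\F_T$-measurable, and the hypothesis $L^+_\cdot\in\s^1$ gives $\E[\sup_{s\in\T}L_s^+]<+\infty$; thus $X_\cdot$ is a nonnegative, uniformly integrable martingale and so belongs to the class (D). In the Brownian filtration $X_\cdot$ admits a continuous version and, by the martingale representation theorem, $X_t=X_0+\int_0^t H_s\cdot{\rm d}B_s$ for some $H_\cdot$; choosing $C_\cdot\equiv 0\in\vcal^1$ puts $X_\cdot$ into the form demanded by \ref{A:(H3L)}(ii). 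The weak $(1,1)$ maximal inequality gives $X_\cdot\in\s^\beta$ for every $\beta\in(0,1)$ (an $L^1$-bounded martingale lies in $\s^\beta$), and then the Burkholder--Davis--Gundy inequality yields $H_\cdot\in\M^\beta$ for each $\beta\in(0,1)$. Finally $g^1(\cdot,X_\cdot,0)\equiv 0\in\hcal^1$ holds trivially, so we need no integrability of $X_\cdot$ passed through the generator; this is precisely why the choice $g^1\equiv 0$ is convenient.

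It remains to verify the domination $L_t\le X_t$ for all $t\in\T$. For each fixed $t$ we have $\sup_{s\in\T}L_s^+\ge L_t^+\ge L_t$ with $L_t$ being $\F_t$-measurable, so $X_t\ge\E[L_t\mid\F_t]=L_t$ almost surely; applying this on a countable dense set of times and using the continuity of both $X_\cdot$ and $L_\cdot$ upgrades it to $L_t\le X_t$ for all $t\in\T$ simultaneously, almost surely (at $t=T$, $X_T=\sup_{s\in\T}L_s^+\ge L_T$). Thus \ref{A:(H3L)} is satisfied and \cref{thm:5-ExistenceofRBSDEunderH2'Approximation}(i) produces the minimal (resp. maximal) $L^1$ solution of $\underline{R}$BSDE $(\xi,g+{\rm d}V,L)$. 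Part (ii) is entirely symmetric: take $X_t:=-\E[\sup_{s\in\T}U_s^-\mid\F_t]$, which satisfies $X_t\le U_t$ for all $t$ by the same continuity argument (using $U^-_\cdot\in\s^1$), and invoke \cref{thm:5-ExistenceofRBSDEunderH2'Approximation}(ii). The only genuinely delicate step is the passage from the fixed-time inequality $X_t\ge L_t$ to the pathwise domination valid for \emph{all} $t$ simultaneously, which is exactly what the continuity of the barriers secures; the remaining verifications are immediate.
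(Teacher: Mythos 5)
Your proposal is correct and follows exactly the route the paper intends (the paper offers no written proof, asserting only that the corollary ``follows immediately from \cref{thm:5-ExistenceofRBSDEunderH2'Approximation}''): the trivial splitting $g^1\equiv 0$, $g^2:=g$, together with the class (D) martingale $X_t=\E[\sup_{s\in\T}L_s^+\,|\,\F_t]$ (resp. $X_t=-\E[\sup_{s\in\T}U_s^-\,|\,\F_t]$) as the witness for \ref{A:(H3L)}(ii) (resp. \ref{A:(H3U)}(ii)), is precisely the ``immediate'' verification being invoked. The only blemish is the choice $\rho\equiv 0$, which is not an admissible Osgood function since \ref{A:(H1)}(i) requires $\rho(u)>0$ for $u>0$; take $\rho(u)=u$ instead, which of course still dominates the zero left-hand side.
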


\section{Existence, uniqueness and approximation for $L^1$ solutions of DRBSDEs}
\label{sec:6-ExistenceOfDRBSDEs}
\setcounter{equation}{0}

In this section, we will establish some existence, uniqueness and approximation results on $L^1$ solutions of RBSDEs with two continuous barriers under general assumptions.\vspace{-0.1cm}

\begin{thm}
\label{thm:6-ExistenceandUniquenessUnder(H2)}
Assume that $V_\cdot\in\vcal^1$, the generator $g$ satisfies assumptions \ref{A:(H1)} and \ref{A:(H2)}, and assumption \ref{A:(H3)}(i) holds true for $L_\cdot,U_\cdot$ and $\xi$. Then, DRBSDE $(\xi,g+{\rm d}V,L,U)$ admits an $L^1$ solution iff \ref{A:(H3)}(ii) is satisfied. And, if \ref{A:(H3)}(ii) holds also true, then DRBSDE $(\xi,g+{\rm d}V,L,U)$ admits a unique $L^1$ solution $(Y_\cdot,Z_\cdot,K_\cdot,A_\cdot)$. Moreover,
\begin{itemize}
\item [(i)] Let $(\underline Y^n_\cdot,\underline Z^n_\cdot,\underline A^n_\cdot)$ be the unique $L^1$ solution of $\bar{R}$BSDE $(\xi,\bar g_n+{\rm d}V,U)$ with $\bar g_n(t,y,z):=g(t,y,z)+n(y-L_t)^-$ for each $n\geq 1$, i.e.,
    \begin{equation}
\label{eq:6-PenalizationForRBSDEwithSuperBarrier}
\left\{
\begin{array}{l}
\Dis\underline{Y}^n_t=\xi+\int_t^T\bar g_n(s,\underline{Y}^n_s,
\underline{Z}^n_s)
{\rm d}s+\int_t^T{\rm d}V_s-\int_t^T{\rm d}\underline A^n_s-\int_t^T\underline{Z}^n_s \cdot {\rm d}B_s,\ \   t\in\T,\\
\Dis \underline{Y}^n_t\leq U_t,\ t\in\T\ \ {\rm and} \ \int_0^T (U_t-\underline Y^n_t){\rm d}\underline A^n_t=0,\\
\Dis \underline K^n_t:=n\int_0^t(
\underline{Y}^n_s-L_s)^-\ {\rm d}s,\ \ t\in\T
\end{array}
\right.
\end{equation}
(Recall \cref{thm:5-ExistenceanduniquenssofRBSDEunderH2}(ii)). Then, for each $\beta\in (0,1)$,
\begin{equation}
\label{eq:6-ConvergenceOfUnderlineYnZnKnAn}
\lim\limits_{n\To \infty}\left(\|\underline Y_\cdot^n- Y_\cdot\|_{\s^\beta}+\|\underline Z_\cdot^n- Z_\cdot\|_{\M^\beta}+\|\underline K_\cdot^n- K_\cdot \|_{\s^\beta}+\|\underline A_\cdot^n- A_\cdot \|_{\s^1}\right)=0.
\end{equation}

\item [(ii)] Let $(\bar Y^n_\cdot,\bar Z^n_\cdot,\bar K^n_\cdot)$ be the unique $L^1$ solution of $\underline{R}$BSDE $(\xi,\underline g_n+{\rm d}V,L)$ with $\underline g_n(t,y,z):=g(t,y,z)-n(y-U_t)^+$ for each $n\geq 1$, i.e., \begin{equation}
\label{eq:6-PenalizationForRBSDEwithLowerBarrier}
\left\{
\begin{array}{l}
\Dis\bar{Y}^n_t=\xi+\int_t^T\underline g_n(s,\bar{Y}^n_s,
\bar{Z}^n_s)
{\rm d}s+\int_t^T{\rm d}V_s+\int_t^T{\rm d}\bar K^n_s-\int_t^T\bar{Z}^n_s \cdot {\rm d}B_s,\ \   t\in\T,\\
\Dis L_t\leq \bar{Y}^n_t,\ t\in\T\ \ {\rm and} \ \int_0^T (\bar Y^n_t-L_t){\rm d}\bar K^n_t=0,\\
\Dis \bar A^n_t:=n\int_0^t(
\bar{Y}^n_s-U_s)^+\ {\rm d}s,\ \ t\in\T
\end{array}
\right.
\end{equation}
(Recall \cref{thm:5-ExistenceanduniquenssofRBSDEunderH2}(i)). Then, for each $\beta\in (0,1)$,
\begin{equation}
\label{eq:6-ConvergenceOfBarYnZnKnAn}
\lim\limits_{n\To \infty}\left(\|\bar Y_\cdot^n- Y_\cdot\|_{\s^\beta}+\|\bar Z_\cdot^n- Z_\cdot\|_{\M^\beta}+\|\bar K_\cdot^n- K_\cdot \|_{\s^1}+\|\bar A_\cdot^n- A_\cdot \|_{\s^\beta}\right)=0.
\end{equation}

\item [(iii)] Let $( Y^n_\cdot, Z^n_\cdot)$ be the unique $L^1$ solution of BSDE $(\xi,g_n+{\rm d}V)$ with $g_n(t,y,z):=g(t,y,z)+n(y-L_t)^- -n(y-U_t)^+$ for each $n\geq 1$, i.e.,
   \begin{equation}
\label{eq:6-PenalizationForBSDE}
\left\{
\begin{array}{l}
\Dis{Y}^n_t=\xi+\int_t^Tg_n(s,{Y}^n_s,
{Z}^n_s){\rm d}s+\int_t^T{\rm d}V_s-\int_t^T{Z}^n_s \cdot {\rm d}B_s,\ \   t\in\T,\vspace{0.2cm}\\
\Dis  K^n_t:=n\int_0^t(
{Y}^n_s-L_s)^-\ {\rm d}s\ \ {\rm and}\ \  A^n_t:=n\int_0^t(
{Y}^n_s-U_s)^+\ {\rm d}s,\ \ t\in\T
\end{array}
\right.
\end{equation}
(Recall \cref{thm:4-ExistenceanduniquenssofBSDEunderH2}). Then, for each $\beta\in (0,1)$,
\begin{equation}
\label{eq:6-ConvergenceOfYnZnKnAn}
\lim\limits_{n\To \infty}\left(\|Y_\cdot^n- Y_\cdot\|_{\s^\beta}+\|Z_\cdot^n- Z_\cdot\|_{\M^\beta}+\|(K_\cdot^n-A_\cdot^n)- (K_\cdot-A_\cdot)\|_{\s^\beta}\right)=0.
\end{equation}
\end{itemize}
\end{thm}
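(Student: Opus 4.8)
The plan is to dispose of uniqueness and the necessity of \ref{A:(H3)}(ii) first, and then to construct the solution together with all three convergence statements through the penalization scheme of \cref{pro:3-PenalizationOfRBSDE}. Uniqueness is immediate from \cref{thm:3-UniquenessOfSolutions}. For necessity, suppose $(Y_\cdot,Z_\cdot,K_\cdot,A_\cdot)$ is an $L^1$ solution and set $\bar V_\cdot:=V_\cdot+K_\cdot-A_\cdot\in\vcal^1$; then $(Y_\cdot,Z_\cdot)$ solves the non-reflected BSDE $(\xi,g+{\rm d}\bar V)$, so \cref{lem:2-gBelongstoH1} gives $g(\cdot,Y_\cdot,Z_\cdot),g(\cdot,Y_\cdot,0)\in\hcal^1$. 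Taking $X_\cdot:=Y_\cdot$, $H_\cdot:=Z_\cdot$ and $C_\cdot:=-\int_0^\cdot g(s,Y_s,Z_s){\rm d}s-V_\cdot-K_\cdot+A_\cdot\in\vcal^1$ then verifies \ref{A:(H3)}(ii), since $L_\cdot\le Y_\cdot\le U_\cdot$ and $Y_\cdot$ is of class (D).

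For existence I would work with claim (i); claim (ii) is symmetric and claim (iii) will follow by squeezing. The penalized generators $\bar g_n=g+n(\cdot-L)^-$ satisfy \ref{A:(H1)} and \ref{A:(H2)} (the added term is nonincreasing in $y$ and $z$-free, so $\rho,\phi$ are preserved), and because $X_\cdot\ge L_\cdot$ forces $\bar g_n(\cdot,X_\cdot,0)=g(\cdot,X_\cdot,0)\in\hcal^1$, assumption \ref{A:(H3U)} holds for $\bar g_n$; hence \cref{thm:5-ExistenceanduniquenssofRBSDEunderH2}(ii) produces the unique solutions $(\underline Y^n_\cdot,\underline Z^n_\cdot,\underline A^n_\cdot)$. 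To build the sandwich required by \cref{pro:3-PenalizationOfRBSDE}, I would introduce the one-barrier solutions $\bar Y_\cdot$ of $\bar R$BSDE $(\xi,g+{\rm d}V,U)$ and $\underline Y_\cdot$ of $\underline R$BSDE $(\xi,g+{\rm d}V,L)$, both available from \cref{thm:5-ExistenceanduniquenssofRBSDEunderH2} using the one-sided restrictions of \ref{A:(H3)}(ii). Since $g\le\bar g_n\le\bar g_{n+1}$ with common upper barrier $U$, \cref{cor:3-CorollaryOfComparisonTheorem} gives $\bar Y_\cdot\le\underline Y^n_\cdot\le\underline Y^{n+1}_\cdot$, while \cref{pro:3-ComparisonTheoremofDRBSDE} (barriers read as $(-\infty,U)$ against $(L,+\infty)$, using $\bar g_n(\cdot,\underline Y_\cdot,\cdot)-g(\cdot,\underline Y_\cdot,\cdot)=n(\underline Y_\cdot-L)^-=0$ because $\underline Y_\cdot\ge L_\cdot$) yields $\underline Y^n_\cdot\le\underline Y_\cdot$, so that
\[
\bar Y_\cdot\le \underline Y^n_\cdot\le \underline Y^{n+1}_\cdot\le \underline Y_\cdot ,
\]
and \cref{cor:5-ComparisonForKofRBSDE}(ii) gives ${\rm d}\underline A^n\le{\rm d}\underline A^{n+1}$. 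With $g(\cdot,\bar Y_\cdot,0),g(\cdot,\underline Y_\cdot,0)\in\hcal^1$ from \cref{lem:2-gBelongstoH1}, \cref{lem:2-SublinearOfg} then delivers \eqref{eq:3-SubLinearGrowthofgYnZn} with $\bar\lambda=\gamma$ and an $\hcal^1$ process $\bar f_\cdot$ built from $\bar Y_\cdot,\underline Y_\cdot$.

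The hard part will be the remaining hypothesis of \cref{pro:3-PenalizationOfRBSDE}(i): a uniform control of the penalization process $\underline K^n_\cdot=n\int_0^\cdot(\underline Y^n_s-L_s)^-{\rm d}s$, i.e. a dominator $\bar K^n_\cdot$ with $\sup_n\E[|\bar K^n_T|^\beta]<+\infty$, an $\LT$-convergent subsequence of $\bar K^n_T$, and the localized bound $\sup_n\E[|\bar K^n_\tau|^2]\le\E[|\tilde Y_\tau|^2]$. The natural comparisons only bound $\underline K^n_\cdot$ from below, so this cannot come from monotonicity; it is precisely where the generalized Mokobodzki condition \ref{A:(H3)}(ii) enters. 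My plan is to apply It\^o--Tanaka to $|\underline Y^n_\cdot-X_\cdot|$ with the test process $X_\cdot$: on the support $\{\underline Y^n<L\}$ of ${\rm d}\underline K^n$ one has $\underline Y^n<L\le X$, so the sign factor multiplying ${\rm d}\underline K^n$ is favourable and produces $\underline K^n$ on the controllable side, and the reflection term ${\rm d}\underline A^n$ (supported on $\{\underline Y^n=U\ge X\}$) is handled identically, while the generator contribution is absorbed via \ref{A:(H1)}(i) (yielding $\rho(|\underline Y^n-X|)$), \ref{A:(H2)} and $g(\cdot,X_\cdot,0)\in\hcal^1$. Localizing along the stopping times $\tau_k$ already used inside \cref{pro:3-PenalizationOfRBSDE}, invoking \cref{lem:2-EstimateOfZKAg} and the class (D) property of $X_\cdot$ together with the sandwich bounds, should give both the $\beta$-moment and localized $L^2$ estimates, so that one may take $\bar K^n_\cdot:=\underline K^n_\cdot$. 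Then \cref{pro:3-PenalizationOfRBSDE}(i) furnishes $(Y_\cdot,Z_\cdot,K_\cdot,A_\cdot)$ and the convergences \eqref{eq:6-ConvergenceOfUnderlineYnZnKnAn}.

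Finally, claim (ii) follows by the mirror argument through \cref{pro:3-PenalizationOfRBSDE}(ii), giving \eqref{eq:6-ConvergenceOfBarYnZnKnAn}, and claim (iii) by squeezing: reading $Y^n_\cdot$ (the doubly penalized BSDE, with $g_n=g+n(\cdot-L)^--n(\cdot-U)^+$) as a DRBSDE with barriers $(-\infty,+\infty)$ and comparing it with $\underline Y^n_\cdot$ and $\bar Y^n_\cdot$ via \cref{pro:3-ComparisonTheoremofDRBSDE}---the relevant generator differences vanishing along the solutions because $(\underline Y^n_\cdot-U)^+=0$ and $(\bar Y^n_\cdot-L)^-=0$---gives $\underline Y^n_\cdot\le Y^n_\cdot\le\bar Y^n_\cdot$. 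Since both outer sequences converge to $Y_\cdot$ in $\s^\beta$ by (i) and (ii), so does $Y^n_\cdot$; the convergence of $Z^n_\cdot$ in $\M^\beta$ then follows from \cref{lem:2-Lemma1}(i) applied to the differences, and that of $K^n_\cdot-A^n_\cdot=\int_0^\cdot\big(g_n-g\big)(s,Y^n_s,Z^n_s){\rm d}s$ from the equation, yielding \eqref{eq:6-ConvergenceOfYnZnKnAn}.
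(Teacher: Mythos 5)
Your overall architecture (necessity via \cref{lem:2-gBelongstoH1}, uniqueness via \cref{thm:3-UniquenessOfSolutions}, existence and (i)--(ii) via \cref{pro:3-PenalizationOfRBSDE}, and (iii) by squeezing $\underline Y^n_\cdot\leq Y^n_\cdot\leq\bar Y^n_\cdot$) matches the paper's, and your sandwich and monotonicity arguments for $\underline Y^n_\cdot$ and ${\rm d}\underline A^n$ are sound. The gap is exactly where you locate the ``hard part'': the uniform control of $\underline K^n_\cdot$. Your plan --- It\^o--Tanaka applied to $|\underline Y^n_\cdot-X_\cdot|$, with the generator contribution ``absorbed via \ref{A:(H1)}(i), \ref{A:(H2)} and $g(\cdot,X_\cdot,0)\in\hcal^1$'' and the estimates closed by \cref{lem:2-EstimateOfZKAg} --- runs into a circularity that this sketch does not resolve. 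After localizing and taking expectations, the bound on $\E[\underline K^n_{\tau_k}]$ involves $\E[\int_0^{\tau_k}|g(s,\underline Y^n_s,\underline Z^n_s)|{\rm d}s]$, hence (via \eqref{eq:3-SubLinearGrowthofgYnZn}) a moment of $\int_0^{\tau_k}|\underline Z^n_s|^2{\rm d}s$; but \cref{lem:2-EstimateOfZKAg} cannot be applied to the equation for $\underline Y^n_\cdot$ with a growth condition uniform in $n$ (the generator $\bar g_n$ contains $n(\cdot-L)^-$), and applying \cref{lem:2-Lemma1}(i) instead reintroduces $\int\underline Y^n_s\,{\rm d}\underline K^n_s$ and $\int\underline Y^n_s\,{\rm d}\underline A^n_s$ on the right-hand side --- i.e.\ the quantities you are trying to bound. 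Moreover, even granting the $\beta$-moment and localized $L^2$ bounds, taking $\bar K^n_\cdot:=\underline K^n_\cdot$ leaves unverified the hypothesis of \cref{pro:3-PenalizationOfRBSDE}(i) that $\bar K^{n_j}_T$ converges to an element of $\LT$ along a subsequence, which for $\underline K^n_T$ itself is a nontrivial convergence statement, not an estimate.

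The paper avoids both problems by a different device: it decomposes the Mokobodzki process as $X_t=X_T+\int_t^Tg(s,X_s,H_s){\rm d}s+\int_t^T{\rm d}V_s+\int_t^T{\rm d}\check K_s-\int_t^T{\rm d}\check A_s-\int_t^TH_s\cdot{\rm d}B_s$ and introduces the auxiliary penalized \emph{non-reflected} BSDE with solution $\dot Y^n_\cdot$ (terminal value $X_T\wedge\xi$, drift ${\rm d}V-{\rm d}\check A$, penalization $n(\cdot-L)^-$). For this one-sided penalization the required bounds $\sup_n\E[|\dot K^n_T|^\beta]<+\infty$, $\sup_n\E[|\dot K^n_\tau|^2]\leq\E[|\tilde Y_\tau|^2]$ and the $\LT$-convergent subsequence of $\dot K^{n_j}_T$ are already delivered by \cref{pro:3-PenalizationOfBSDE} together with \cref{thm:5-ExistenceanduniquenssofRBSDEunderH2}; then \cref{pro:3-ComparisonTheoremofDRBSDE} with \cref{rmk:3-RemarkOfComparisionTheorem} gives $\dot Y^n_\cdot\leq\underline Y^n_\cdot$, hence $\underline K^n_\cdot\leq\dot K^n_\cdot=:\bar K^n_\cdot$. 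You would need this (or an equally complete substitute) to make the argument go through. A second, smaller omission: \cref{pro:3-PenalizationOfRBSDE} only yields uniform convergence of $\underline K^{n_j}_\cdot$ along a subsequence, whereas \eqref{eq:6-ConvergenceOfUnderlineYnZnKnAn} asserts $\|\underline K^n_\cdot-K_\cdot\|_{\s^\beta}\To0$ for the full sequence; the paper obtains this from the convergence of $\int_0^\cdot g(s,\underline Y^n_s,\underline Z^n_s){\rm d}s$ in $\s^\beta$ (proved as in \cref{thm:5-ExistenceanduniquenssofRBSDEunderH2} using \ref{A:(H2)}(i)) and the equation, a step your proposal skips.
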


\begin{proof}
We assume that $V_\cdot\in\vcal^1$, the generator $g$ satisfies \ref{A:(H1)} and \ref{A:(H2)}, and \ref{A:(H3)}(i) holds true for $L_\cdot$, $U_\cdot$ and $\xi$. If DRBSDE $(\xi,g+{\rm d}V,L,U)$ admits an $L^1$ solution $(Y_\cdot,Z_\cdot,K_\cdot,A_\cdot)$, then from \cref{lem:2-gBelongstoH1} we know that $g(\cdot,Y_\cdot,Z_\cdot)\in \hcal^1$ and $g(\cdot,Y_\cdot,0)\in \hcal^1$. Thus, \ref{A:(H3)}(ii) is satisfied with $$(C_\cdot,H_\cdot):=(-\int_0^\cdot g(s,Y_s,Z_s){\rm d}s-V_\cdot-K_\cdot+A_\cdot,\  Z_\cdot)$$
and $X_\cdot:=Y_\cdot$. The necessity is proved.

We further assume that \ref{A:(H3)}(ii) holds. The uniqueness of the $L^1$ solution of DRBSDE $(\xi,g+{\rm d}V,L,U)$ follows from \cref{pro:3-ComparisonTheoremofDRBSDE}. In what follows, it follows from \ref{A:(H3)}(ii) that there exists two processes $(C_\cdot,H_\cdot)\in \vcal^1\times \M^\beta$ for each $\beta\in (0,1)$ such that
\begin{equation}
\label{eq:6-RepresentationOfXt}
X_t=X_T-\int_t^T{\rm d}C_s-\int_t^TH_s\cdot {\rm d}B_s,\ \ t\in\T\vspace{0.1cm}
\end{equation}
belongs to the class (D), $g(\cdot,X_\cdot,0)\in \hcal^1$ and $L_t\leq X_t\leq U_t$ for each $t\in \T$. And, by \ref{A:(H2)}(ii) together with H\"{o}lder's inequality we know that $\as$, $|g(\cdot,X_\cdot,H_\cdot)|\leq |g(\cdot,X_\cdot,0)|+\gamma
(f_\cdot+|X_\cdot|+H_\cdot)^\alpha\ \in \hcal^1$, and then
$$
\check{K}_\cdot:=\int_0^\cdot g^-(s,X_s,H_s){\rm d}s+\int_0^\cdot {\rm d}C^-_s+\int_0^\cdot {\rm d}V^-_s \in \vcal^{+,1}\ \
$$
and
$$
\check{A}_\cdot:=\int_0^\cdot g^+(s,X_s,H_s){\rm d}s+\int_0^\cdot {\rm d}C^+_s+\int_0^\cdot {\rm d}V^+_s\in \vcal^{+,1},\vspace{0.2cm}
$$
where $g^+:=g\vee 0$, $g^-:=(-g)\vee 0$, $V_\cdot-V_0=V^+_\cdot-V^-_\cdot$ and
$C_\cdot-C_0=C^+_\cdot-C^-_\cdot$ with $V^+_\cdot,V^-_\cdot,C^+_\cdot,C^-_\cdot\in \vcal^{+,1}$. Thus, the equation \eqref{eq:6-RepresentationOfXt} can be rewritten in the form
$$
X_t=\Dis X_T+\int_t^Tg(s,X_s,H_s){\rm ds}+\int_t^T{\rm d}V_s+
\int_t^T{\rm d}\check{K}_s- \int_t^T{\rm d}\check{A}_s-\int_t^TH_s\cdot {\rm d}B_s,\ \ t\in\T.
$$

Furthermore, in view of \ref{A:(H1)} and \ref{A:(H2)}, by \cref{thm:4-ExistenceanduniquenssofBSDEunderH2} we can let $(\underline{X}_\cdot,\underline{Z}_\cdot)$ be the unique $L^1$ solution of the following BSDE
$$
\underline{X}_t=\Dis X_T\wedge \xi +\int_t^Tg(s,\underline{X}_s,\underline{Z}_s){\rm ds}+\int_t^T{\rm d}V_s-\int_t^T{\rm d}\check{A}_s\Dis -\int_t^T\underline{Z}_s\cdot {\rm d}B_s,\ \ t\in\T
$$
and
$(\bar X_\cdot,\bar Z_\cdot)$ be the unique $L^1$ solution of the BSDE
$$
\bar X_t=\Dis X_T\vee \xi+\int_t^Tg(s,\bar X_s,\bar Z_s){\rm ds}+\int_t^T{\rm d}V_s\Dis +\int_t^T{\rm d}\check{K}_s-\int_t^T\bar Z_s\cdot {\rm d}B_s,\ \ t\in\T.
$$
It follows from \cref{cor:3-CorollaryOfComparisonTheorem} that $\underline X_\cdot\leq X_\cdot\leq \bar X_\cdot$. And, for each $n\geq 1$, by \cref{thm:4-ExistenceanduniquenssofBSDEunderH2} again we can let $(\dot Y^n_\cdot, \dot Z^n_\cdot)$ and $(\ddot Y^n_\cdot, \ddot Z^n_\cdot)$ be respectively the unique $L^1$ solution of the following BSDEs:
$$
\dot{Y}^n_t=\Dis X_T\wedge \xi +\int_t^Tg(s,\dot{Y}^n_s,\dot{Z}^n_s){\rm ds}+\int_t^T{\rm d}V_s+n\int_t^T(\dot Y^n_s-L_s)^-{\rm d}s-\int_t^T{\rm d}\check{A}_s\Dis -\int_t^T\dot{Z}^n_s\cdot {\rm d}B_s
$$
and
$$
\ddot Y^n_t=\Dis X_T\vee \xi+\int_t^Tg(s,\ddot Y^n_s,\ddot Z^n_s){\rm ds}+\int_t^T{\rm d}V_s+\int_t^T{\rm d}\check{K}_s-n\int_t^T(\ddot Y^n_s-U_s)^+{\rm d}s\Dis -\int_t^T\ddot Z^n_s\cdot {\rm d}B_s
$$
with
$$
\dot K^n_t:= n\int_0^t(\dot Y^n_s-L_s)^-{\rm d}s\ \ {\rm and}\ \ \ddot A^n_t:= n\int_0^t(\ddot Y^n_s-U_s)^+{\rm d}s,\ \ t\in\T.\vspace{0.2cm}
$$
In view of $L_\cdot\leq X_\cdot\leq U_\cdot$, it follows from \cref{cor:3-CorollaryOfComparisonTheorem} that for each $n\geq 1$,
\begin{equation}
\label{eq:6-DotYnLeqXLeqU}
\underline X_\cdot\leq \dot Y^n_\cdot\leq X_\cdot\leq U_\cdot\ \ {\rm and}\ \ L_\cdot\leq X_\cdot\leq \ddot Y^n_\cdot\leq \bar X_\cdot.
\end{equation}
Note that \ref{A:(H3L)} holds true for $L_\cdot$, $X_T\wedge\xi$ and $X_\cdot$, and \ref{A:(H3U)} holds true for $U_\cdot$, $X_T\vee\xi$ and $X_\cdot$. In view of \ref{A:(H1)} and \ref{A:(H2)}, it follows from \cref{thm:5-ExistenceanduniquenssofRBSDEunderH2}
together with
\cref{pro:3-PenalizationOfBSDE} that for each $\beta\in (0,1)$,
\begin{equation}
\label{eq:6-DotKnBetaInfinity}
\sup_{n\geq 1}(\E[|\dot K^n_T|^\beta]+\E[|\ddot A^n_T|^\beta])<+\infty,
\end{equation}
for a subsequence $\{n_j\}$ of $\{n\}$,
\begin{equation}
\lim\limits_{j\To\infty} \dot K^{n_j}_T=\dot K_T\in \LT\ \ {\rm and}\ \ \lim\limits_{j\To\infty} \ddot A^{n_j}_T=\dot A_T\in \LT,
\end{equation}
and for a process $\tilde Y_\cdot \in\s$ and each $(\F_t)$-stopping time $\tau$ valued in $\T$,
\begin{equation}
\label{eq:6-DotKn2Leq}
\sup_{n\geq 1}(\E[|\dot K^n_\tau|^2]+\E[|\ddot A^n_\tau|^2])\leq \E[|\tilde Y_{\tau}|^2].
\end{equation}

In the sequel, let $(\underline Y^n_\cdot,\underline Z^n_\cdot,\underline A^n_\cdot)$, $(\bar Y^n_\cdot,\bar Z^n_\cdot,\bar K^n_\cdot)$ and $(Y^n_\cdot, Z^n_\cdot)$ be respectively defined in (i), (ii) and (iii) of \cref{thm:6-ExistenceandUniquenessUnder(H2)} for each $n\geq 1$. Firstly, in view of \eqref{eq:6-DotYnLeqXLeqU}, it follows from \cref{cor:3-CorollaryOfComparisonTheorem} and \cref{cor:5-ComparisonForKofRBSDE} that for each $n\geq 1$,
\begin{equation}
\label{eq:6-UnderlineYnIncrease}
\underline Y^1_\cdot\leq \underline Y^n_\cdot\leq \underline Y^{n+1}_\cdot\leq \bar X_\cdot,\ \ {\rm d}\underline A^n\leq {\rm d}\underline A^{n+1}
\end{equation}
and
\begin{equation}\label{eq:6-BarYnDecrease}
\underline X_\cdot\leq \bar Y^{n+1}_\cdot\leq \bar Y^{n}_\cdot\leq \bar Y^1_\cdot,\ \ {\rm d}\bar K^n\leq {\rm d}\bar K^{n+1}.
\end{equation}
It then follows from \cref{lem:2-SublinearOfg} that for each $n\geq 1$, in view of \ref{A:(H1)} and \ref{A:(H2)},
\begin{equation}
|g(\cdot,\underline Y^n_\cdot,\underline Z^n_\cdot)|\leq
|g(\cdot,\underline Y^1_\cdot,0)|+|g(\cdot,\bar X_\cdot,0)|+(\gamma+A)(|\underline Y^1_\cdot|+|\bar X_\cdot|)+ \gamma (1+f_\cdot)+A+\gamma |\underline Z^n_\cdot|^\alpha
\end{equation}
and
\begin{equation}
|g(\cdot,\bar Y^n_\cdot,\bar Z^n_\cdot)|\leq
|g(\cdot,\underline X_\cdot,0)|+|g(\cdot,\bar Y^1_\cdot,0)|+(\gamma+A)(|\underline X_\cdot|+|\bar Y^1_\cdot|)+ \gamma (1+f_\cdot)+A+\gamma |\bar Z^n_\cdot|^\alpha
\end{equation}
with, by \cref{lem:2-gBelongstoH1},
\begin{equation}
\label{eq:6-gUnderlineX+BarXBelongstoH1}
g(\cdot,\underline Y^1_\cdot,0)\in\hcal^1,\ \ g(\cdot,\bar X_\cdot,0)\in\hcal^1,\ \  g(\cdot,\underline X_\cdot,0)\in\hcal^1\ \  {\rm and}\ \ g(\cdot,\bar Y^1_\cdot,0)\in\hcal^1.
\end{equation}
And, in view of \eqref{eq:6-DotYnLeqXLeqU}, by \cref{pro:3-ComparisonTheoremofDRBSDE}
with \cref{rmk:3-RemarkOfComparisionTheorem}
we deduce that for each $n\geq 1$,
$$
\dot Y^n_\cdot\leq \underline Y^n_\cdot\ \ {\rm and}\ \  \bar Y^n_\cdot\leq \ddot Y^n_\cdot,
$$
which means that
\begin{equation}
\label{eq:6-DotYnLeqUnderlinYn}
\underline K^n_\cdot=n\int_0^\cdot (\underline Y^n_s-L_s)^-{\rm d}s\leq n\int_0^\cdot (\dot Y^n_s-L_s)^-{\rm d}s=\dot K^n_\cdot
\end{equation}
and
\begin{equation}
\label{eq:6-DotYnLeqUnderlinYn}
\bar A^n_\cdot=n\int_0^\cdot (\bar Y^n_s-U_s)^+{\rm d}s\leq n\int_0^\cdot (\ddot Y^n_s-U_s)^+{\rm d}s=\ddot A^n_t.\vspace{0.2cm}
\end{equation}
Thus, in view of \eqref{eq:6-UnderlineYnIncrease}-
\eqref{eq:6-DotYnLeqUnderlinYn} together with
\eqref{eq:6-DotKnBetaInfinity}-
\eqref{eq:6-DotKn2Leq},
all conditions in \cref{pro:3-PenalizationOfRBSDE} are satisfied, and it follows that there exists an $L^1$ solution $(Y_\cdot,Z_\cdot,K_\cdot,A_\cdot)$, indeed a unique $L^1$ solution, of
DRBSDE $(\xi,g+{\rm d}V,L,U)$ such that,
for each $\beta\in (0,1)$,
$$
\lim\limits_{n\To \infty}\left(\|\underline Y_\cdot^n- Y_\cdot\|_{\s^\beta}+\|\underline Z_\cdot^n- Z_\cdot\|_{\M^\beta}+\|\underline A_\cdot^n- A_\cdot \|_{\s^1}+\|\bar Y_\cdot^n- Y_\cdot\|_{\s^\beta}+\|\bar Z_\cdot^n- Z_\cdot\|_{\M^\beta}+\|\bar K_\cdot^n- K_\cdot \|_{\s^1}\right)=0,
$$
and there exists a subsequence $\{\underline K_\cdot^{n_j}\}$ (resp. $\{\bar A_\cdot^{n_j}\}$ ) of $\{\underline K_\cdot^n\}$ (resp. $\{\bar A_\cdot^n\}$) such that
$$
\lim\limits_{j\To\infty}\sup\limits_{t\in\T}
\left(|\underline K_t^{n_j}- K_t|+|\bar A_t^{n_j}-A_t|\right)=0.
$$
Furthermore, in the same way as in the proof of \cref{thm:5-ExistenceanduniquenssofRBSDEunderH2} we can prove that for each $\beta\in (0,1)$,
\begin{equation}
\label{eq:6-LimitOfgnUnderline}
\lim\limits_{n\To\infty}\left(\left\| \int_0^\cdot g(s,\underline Y_s^n,\underline Z_s^n){\rm d}s-\int_0^\cdot g(s,Y_s,Z_s){\rm d}s\right\|_{\s^\beta}+\left\| \int_0^\cdot g(s,\bar Y_s^n,\bar Z_s^n){\rm d}s-\int_0^\cdot g(s,Y_s,Z_s){\rm d}s\right\|_{\s^\beta}\right)=0.
\end{equation}
Thus, \eqref{eq:6-ConvergenceOfUnderlineYnZnKnAn}
and \eqref{eq:6-ConvergenceOfBarYnZnKnAn} follow immediately.\vspace{0.1cm}

Finally, in view of the fact that $\underline Y^n_\cdot\leq U_\cdot$ and $L_\cdot\leq  \bar Y^n_\cdot$ for each $n\geq 1$, it follows from \cref{cor:3-CorollaryOfComparisonTheorem} that for each $n\geq 1$,
$$
\underline Y^n_\cdot\leq Y^n_\cdot\leq \bar Y^n_\cdot,
$$
which means that
$$
K^n_\cdot=n\int_0^\cdot (Y^n_s-L_s)^-{\rm d}s\leq n\int_0^\cdot (\underline Y^n_s-L_s)^-{\rm d}s=\underline K^n_\cdot
$$
and
$$
A^n_\cdot=n\int_0^\cdot (Y^n_s-U_s)^+{\rm d}s\leq n\int_0^\cdot (\bar Y^n_s-U_s)^+{\rm d}s=\bar A^n_\cdot.\vspace{0.2cm}
$$
Thus, by \eqref{eq:6-ConvergenceOfUnderlineYnZnKnAn}
and \eqref{eq:6-ConvergenceOfBarYnZnKnAn} we know that for each $\beta\in (0,1)$,
\begin{equation}
\label{eq:6-ConvergenceOfYnKnAn}
\lim\limits_{n\To \infty}\|Y_\cdot^n- Y_\cdot\|_{\s^\beta}=0.
\end{equation}
Now, we show the convergence of the sequence $\{Z_\cdot^n\}$. Indeed, for each $n\geq 1$, observe that
$$
\begin{array}{lll}
\Dis (\bar Y_\cdot,\bar Z_\cdot,\bar V_\cdot)&
:=&\Dis (Y_\cdot^n-Y_\cdot,Z_\cdot^n-Z_\cdot,\\
&&\Dis \ \ \int_0^\cdot \left(g(s,Y_s^n,Z_s^n)-g(s,Y_s,Z_s)\right){\rm d}s+\left(K_\cdot^n-K_\cdot\right)
-\left(A_\cdot^n-A_\cdot\right))
\end{array}
$$
satisfies equation \eqref{eq:2-BarY=BarV}. It follows from (i) of \cref{lem:2-Lemma1} with $t=0$ and $\tau=T$ that there exists a constant $C'>0$ such that for each $n\geq 1$ and $\beta\in (0,1)$,
$$
\begin{array}{lll}
\Dis \|Z_\cdot^n-Z_\cdot\|_{\M^\beta}
&\leq & \Dis C'\E\left[\sup\limits_{t\in [0,T]}|Y_t^n-Y_t|^\beta+\sup\limits_{t\in [0,T]}\left[\left(\int_t^T (Y_s^n-Y_s)\left({\rm d}K_s^n-{\rm d}K_s\right)\right)^+\right]^{\beta\over 2}\right] \\
&&\Dis +C'\E\left[\sup\limits_{t\in [0,T]}\left[\left(\int_t^T (Y_s^n-Y_s)\left({\rm d}A_s-{\rm d}A_s^n\right)\right)^+\right]^{\beta\over 2}\right]\\
&&\Dis + C'\E\left[\left(\int_{0}^T|Y^n_s-Y_s|
\left|g(s,Y_s^n,Z_s^n)-g(s,Y_s,Z_s)\right| {\rm d}s\right)^{\beta\over 2}\right].
\end{array}
$$
It then follows from the fact of $L_\cdot\leq Y_\cdot\leq U_\cdot$ and the definitions of $K^n_\cdot$ and $A^n_\cdot$ as well as H\"{o}lder's inequality that
\begin{equation}
\label{eq:6-convergenceOfZn}
\begin{array}{lll}
\Dis\|Z_\cdot^n-Z_\cdot\|_{\M^\beta}
&\leq & \Dis C'\|Y_\cdot^n-Y_\cdot\|_{\s^\beta}+ C'\|Y_\cdot^n-Y_\cdot\|_{\s^\beta}^{1\over 2}\cdot\left(\left(\E[|K_T|^\beta]\right)^{1\over 2}+ \left(\E[|A_T|^\beta]\right)^{1\over 2}\right)\\
&&\Dis +C'\|Y_\cdot^n-Y_\cdot\|_{\s^\beta}
^{1\over 2}\cdot \left(\E\left[\left(\int_{0}^T
\left(|g(t,Y_t^n,Z_t^n)|+|g(t,Y_t,Z_t)|\right) {\rm d}t\right)^{\beta}\right]\right)^{1\over 2}.
\end{array}
\end{equation}
Thus, in view of \ref{A:(H1)} and \ref{A:(H2)} of $g$, it follows from \eqref{eq:6-ConvergenceOfYnKnAn} and \eqref{eq:6-convergenceOfZn} together with \cref{lem:2-EstimateOfZandg} that for each $\beta\in (0,1)$,\vspace{-0.1cm}
\begin{equation}
\label{eq:6-ConvergenceOfZn}
\lim\limits_{n\To\infty}\|Z_\cdot^n-Z_\cdot
\|_{\M^\beta}=0.
\end{equation}
Furthermore, by \eqref{eq:6-ConvergenceOfYnKnAn} and \eqref{eq:6-ConvergenceOfZn}, a similar argument to \eqref{eq:6-LimitOfgnUnderline} yields that for each $\beta\in (0,1)$,\vspace{0.1cm}
\begin{equation}
\label{eq:6-LimitOfgn}
\lim\limits_{n\To\infty}\left\| \int_0^\cdot g(s,Y_s^n,Z_s^n){\rm d}s-\int_0^\cdot g(s,Y_s,Z_s){\rm d}s\right\|_{\s^\beta}=0.\vspace{0.1cm}
\end{equation}
Finally, \eqref{eq:6-ConvergenceOfYnZnKnAn} follows from \eqref{eq:6-ConvergenceOfYnKnAn}, \eqref{eq:6-ConvergenceOfZn} and \eqref{eq:6-LimitOfgn}. The proof of \cref{thm:6-ExistenceandUniquenessUnder(H2)} is then complete.
\end{proof}

By virtue of (i) of \cref{thm:6-ExistenceandUniquenessUnder(H2)}, \cref{cor:3-CorollaryOfComparisonTheorem} and (ii) of \cref{cor:5-ComparisonForKofRBSDE}, a similar argument to that in \cref{cor:5-ComparisonForKofRBSDE} yields the following corollary.\vspace{-0.1cm}

\begin{cor}
\label{cor:6-ComparisonForKofDRBSDE}
Assume that $\xi^1,\xi^2\in\LT$ with $\xi^1\leq \xi^2$, $V_\cdot^1, V_\cdot^2\in\vcal^{1}$ with ${\rm d}V^1\leq {\rm d}V^2$, and both generators $g^1$ and $g^2$ satisfy \ref{A:(H1)} and \ref{A:(H2)} with
$$
\as ,\ \RE\ (y,z)\in \R\times \R^d,\ \ g^1(t,y,z)\leq g^2(t,y,z).
$$
For $i=1,2$, let \ref{A:(H3)} hold for $\xi^i$, $L_\cdot^i$, $U_\cdot^i$ and $X_\cdot^i$ associated with $g^i$, and $(Y_\cdot^i,Z_\cdot^i,K_\cdot^i,A_\cdot^i)$ be the unique $L^1$ solution of DRBSDE $(\xi^i,g^i+{\rm d}V^i,L^i,U^i)$ (recall \cref{thm:6-ExistenceandUniquenessUnder(H2)}). If $L^1_\cdot=L^2_\cdot$ and $U^1_\cdot=U^2_\cdot$, then
$${d}K^1\geq {\rm d}K^2\ \ {\rm and}\ \  {d}A^1\leq {\rm d}A^2.$$
\end{cor}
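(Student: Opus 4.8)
The plan is to run the penalization scheme of \cref{thm:6-ExistenceandUniquenessUnder(H2)}(i) simultaneously for $i=1,2$ and then extract both increment comparisons by passing to the limit; a single penalization (of the lower barrier only) suffices for both claims. Concretely, for $i=1,2$ and $n\geq 1$, I would let $(\underline Y^{i,n}_\cdot,\underline Z^{i,n}_\cdot,\underline A^{i,n}_\cdot)$ be the unique $L^1$ solution of $\bar R$BSDE $(\xi^i,\bar g^i_n+{\rm d}V^i,U^i)$ with penalized generator $\bar g^i_n(t,y,z):=g^i(t,y,z)+n(y-L^i_t)^-$ and set $\underline K^{i,n}_t:=n\int_0^t(\underline Y^{i,n}_s-L^i_s)^-{\rm d}s$, exactly as in \eqref{eq:6-PenalizationForRBSDEwithSuperBarrier}. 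As in the proof of \cref{thm:6-ExistenceandUniquenessUnder(H2)}, each $\bar g^i_n$ again satisfies \ref{A:(H1)}, \ref{A:(H2)} and \ref{A:(H3U)}: the penalty vanishes at $X^i$ since $L^i_\cdot\leq X^i_\cdot$ gives $(X^i_\cdot-L^i_\cdot)^-\equiv 0$, so $\bar g^i_n(\cdot,X^i_\cdot,0)=g^i(\cdot,X^i_\cdot,0)\in\hcal^1$, while $(L^i)^+_\cdot\leq (X^i)^+_\cdot\in\hcal^1$ takes care of \ref{A:(H1)}(ii). Then \cref{thm:6-ExistenceandUniquenessUnder(H2)}(i) supplies, for each $\beta\in(0,1)$, the convergences $\underline K^{i,n}_\cdot\to K^i_\cdot$ in $\s^\beta$ and $\underline A^{i,n}_\cdot\to A^i_\cdot$ in $\s^1$.

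For the claim ${\rm d}K^1\geq{\rm d}K^2$, I would first compare the penalized $Y$-components. Since $g^1\leq g^2$ and $L^1_\cdot=L^2_\cdot$ the lower-barrier penalties coincide, so $\bar g^1_n\leq \bar g^2_n$; together with $\xi^1\leq\xi^2$, ${\rm d}V^1\leq{\rm d}V^2$ and $U^1_\cdot=U^2_\cdot$, \cref{cor:3-CorollaryOfComparisonTheorem} (applied with lower barrier $-\infty$) yields $\underline Y^{1,n}_\cdot\leq \underline Y^{2,n}_\cdot$ for every $n$. Because $r\mapsto(r-L)^-$ is nonincreasing and $L^1_\cdot=L^2_\cdot$, this gives, for all $0\leq t_1\leq t_2\leq T$,
\[
\underline K^{1,n}_{t_2}-\underline K^{1,n}_{t_1}
=n\int_{t_1}^{t_2}(\underline Y^{1,n}_s-L^1_s)^-\,{\rm d}s
\geq n\int_{t_1}^{t_2}(\underline Y^{2,n}_s-L^2_s)^-\,{\rm d}s
=\underline K^{2,n}_{t_2}-\underline K^{2,n}_{t_1}.
\]
Letting $n\To\infty$ and using $\underline K^{i,n}_\cdot\to K^i_\cdot$ in $\s^\beta$ (hence $\sup_t|\underline K^{i,n}_t-K^i_t|\to 0$ in probability), the inequality survives in the limit, so $K^1_{t_2}-K^1_{t_1}\geq K^2_{t_2}-K^2_{t_1}$, i.e. ${\rm d}K^1\geq{\rm d}K^2$.

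For the claim ${\rm d}A^1\leq{\rm d}A^2$, I would apply the single-barrier increment comparison directly to the same upper-reflected penalized problems. For each fixed $n$, $(\underline Y^{i,n}_\cdot,\underline Z^{i,n}_\cdot,\underline A^{i,n}_\cdot)$ is the unique $L^1$ solution of $\bar R$BSDE $(\xi^i,\bar g^i_n+{\rm d}V^i,U^i)$, the generators $\bar g^i_n$ satisfy \ref{A:(H1)} and \ref{A:(H2)} with $\bar g^1_n\leq \bar g^2_n$, and $\xi^1\leq\xi^2$, ${\rm d}V^1\leq{\rm d}V^2$, $U^1_\cdot=U^2_\cdot$; hence \cref{cor:5-ComparisonForKofRBSDE}(ii) gives ${\rm d}\underline A^{1,n}\leq{\rm d}\underline A^{2,n}$ for every $n$. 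Passing to the limit exactly as before, now through $\underline A^{i,n}_\cdot\to A^i_\cdot$ in $\s^1$, yields $A^1_{t_2}-A^1_{t_1}\leq A^2_{t_2}-A^2_{t_1}$ for all $t_1\leq t_2$, that is ${\rm d}A^1\leq{\rm d}A^2$.

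The substantive analytic work is already contained in the cited results, so the remaining obstacle is essentially bookkeeping: the two structural facts that make the objects comparable at finite $n$, namely that $L^1_\cdot=L^2_\cdot$ forces the lower-barrier penalties to coincide (so $g^1\leq g^2$ propagates to $\bar g^1_n\leq \bar g^2_n$) and that $U^1_\cdot=U^2_\cdot$ is precisely the hypothesis needed to invoke \cref{cor:5-ComparisonForKofRBSDE}(ii); together with the verification that $\bar g^i_n$ inherits the standing assumptions uniformly in $n$. Once these are in place, the limit passage is routine given the $\s^\beta$ and $\s^1$ convergences of $\underline K^{i,n}_\cdot$ and $\underline A^{i,n}_\cdot$ from \cref{thm:6-ExistenceandUniquenessUnder(H2)}(i), and the corollary follows.
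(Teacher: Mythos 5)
Your proposal is correct and follows essentially the same route the paper indicates: penalize the lower barrier as in \cref{thm:6-ExistenceandUniquenessUnder(H2)}(i), get $\underline Y^{1,n}_\cdot\leq\underline Y^{2,n}_\cdot$ from \cref{cor:3-CorollaryOfComparisonTheorem} to compare the $\underline K^{i,n}$ increments, invoke \cref{cor:5-ComparisonForKofRBSDE}(ii) for the $\underline A^{i,n}$ increments, and pass to the limit via the $\s^\beta$ and $\s^1$ convergences. The paper gives only this one-line sketch, and your write-up fills in the same details (including the verification that $\bar g^i_n$ inherits \ref{A:(H1)}, \ref{A:(H2)} and \ref{A:(H3U)}) correctly.
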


\begin{thm}
\label{thm:6-ExistenceofDRBSDEunderH2'Penalization}
Let $V_\cdot\in\vcal^1$, $g^1$ satisfy assumptions \ref{A:(H1)} and \ref{A:(H2')}, $g^2$ satisfy assumption \ref{A:(AA)}, the generator $g:=g^1+g^2$, and assumption \ref{A:(H3)}(i) hold true for $L_\cdot$, $U_\cdot$ and $\xi$. Then, DRBSDE $(\xi,g+{\rm d}V,L,U)$ admits an $L^1$ solution iff \ref{A:(H3)}(ii) is satisfied for $X_\cdot$, $L_\cdot$, $U_\cdot$ and $g$ (or $g^1$). Moreover, we assume that \ref{A:(H3)}(ii) holds also true for $X_\cdot$, $L_\cdot$, $U_\cdot$ and $g$ (or $g^1$).
\begin{itemize}
\item [(i)] For each $n\geq 1$, let $(\underline Y^n_\cdot,\underline Z^n_\cdot,\underline A^n_\cdot)$ be the minimal (resp. maximal ) $L^1$ solution of $\bar{R}$BSDE $(\xi,\bar g_n+{\rm d}V,U)$ with $\bar g_n(t,y,z):=g(t,y,z)+n(y-L_t)^-$ and $\underline K^n_\cdot$, i.e., \eqref{eq:6-PenalizationForRBSDEwithSuperBarrier},
    (recall \cref{thm:5-ExistenceofRBSDEunderH2'Approximation}(ii)).
    Then, DRBSDE $(\xi,g+{\rm d}V,L,U)$ admits a minimal $L^1$ solution (resp. an $L^1$ solution) $(\underline Y_\cdot,\underline Z_\cdot,\underline K_\cdot,\underline A_\cdot)$ such that for each $\beta\in (0,1)$,
$$
\lim\limits_{n\To \infty}\left(\|\underline Y_\cdot^n-\underline Y_\cdot\|_{\s^\beta}+\|\underline Z_\cdot^n-\underline  Z_\cdot\|_{\M^\beta}+\|\underline A_\cdot^n-\underline A_\cdot \|_{\s^1}\right)=0,
$$
and there exists a subsequence $\{ \underline K_\cdot^{n_j}\}$ of $\{ \underline K_\cdot^n\}$ such that
    $$
    \lim\limits_{j\To\infty}\sup\limits_{t\in\T}
    |\underline K_t^{n_j}-\underline K_t|=0;
    $$

\item [(ii)] For each $n\geq 1$, let $(\bar Y^n_\cdot,\bar Z^n_\cdot,\bar K^n_\cdot)$ be be the maximal (resp. minimal) $L^1$ solution of $\underline{R}$BSDE $(\xi,\underline g_n+{\rm d}V,L)$ with $\underline g_n(t,y,z):=g(t,y,z)-n(y-U_t)^+$ and $\bar A^n_\cdot$, i.e., \eqref{eq:6-PenalizationForRBSDEwithLowerBarrier},
    (recall \cref{thm:5-ExistenceofRBSDEunderH2'Approximation}(i)).
Then, DRBSDE $(\xi,g+{\rm d}V,L,U)$ admits a maximal $L^1$ solution (resp. an $L^1$ solution) $(\bar Y_\cdot,\bar Z_\cdot,\bar K_\cdot,\bar A_\cdot)$ such that for each $\beta\in (0,1)$,
$$
\lim\limits_{n\To \infty}\left(\|\bar Y_\cdot^n- \bar Y_\cdot\|_{\s^\beta}+\|\bar Z_\cdot^n-\bar  Z_\cdot\|_{\M^\beta}+\|\bar K_\cdot^n-\bar K_\cdot \|_{\s^1}\right)=0,
$$
and there exists a subsequence $\{ \bar A_\cdot^{n_j}\}$ of $\{\bar A_\cdot^n\}$ such that
    $$
    \lim\limits_{j\To\infty}\sup\limits_{t\in\T}
    |\bar A_t^{n_j}-\bar A_t|=0.
    $$
\end{itemize}
\end{thm}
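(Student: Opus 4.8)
The plan is to mirror the architecture of the proof of \cref{thm:6-ExistenceandUniquenessUnder(H2)}, replacing each ingredient that there relied on unique solvability under \ref{A:(H1)} and \ref{A:(H2)} by its minimal/maximal counterpart under \ref{A:(H2')} and \ref{A:(AA)}: \cref{thm:5-ExistenceofRBSDEunderH2'Penalization} in place of \cref{thm:5-ExistenceanduniquenssofRBSDEunderH2}, \cref{cor:5-ComparisonForYandKofRBSDEUnderH2'} in place of \cref{cor:5-ComparisonForKofRBSDE}, and \cref{thm:4-ExistenceofBSDEunderHH} together with \cref{cor:4-ComparisonOfMinSolutionUnderHH} in place of \cref{thm:4-ExistenceanduniquenssofBSDEunderH2}. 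I would prove only claim (i) in the minimal case; claim (ii) and the two maximal cases are symmetric (penalize the opposite barrier and reverse every comparison). A structural observation used throughout is that since $g^2$ satisfies \ref{A:(AA)} with $\tilde\alpha\in(0,1)$, for any pair $(Y_\cdot,Z_\cdot)$ with $Y_\cdot$ in the class (D) and $Z_\cdot\in\cap_{\beta\in(0,1)}\M^\beta$ one has $g^2(\cdot,Y_\cdot,Z_\cdot)\in\hcal^1$ (indeed $Y_\cdot\in\hcal^1$ by the class (D) property and $\E[\int_0^T|Z_s|^{\tilde\alpha}{\rm d}s]<+\infty$ by H\"older since $Z_\cdot\in\M^{\tilde\alpha}$); in particular the two formulations of \ref{A:(H3)}(ii), with $g$ or with $g^1$, coincide.

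For necessity, suppose $(Y_\cdot,Z_\cdot,K_\cdot,A_\cdot)$ is an $L^1$ solution of DRBSDE $(\xi,g+{\rm d}V,L,U)$. By the observation above $g^2(\cdot,Y_\cdot,Z_\cdot)\in\hcal^1$, so $(Y_\cdot,Z_\cdot)$ is an $L^1$ solution of the non-reflected BSDE $(\xi,g^1+{\rm d}\hat V)$ with $\hat V_\cdot:=V_\cdot+K_\cdot-A_\cdot+\int_0^\cdot g^2(s,Y_s,Z_s){\rm d}s\in\vcal^1$. Since $g^1$ satisfies \ref{A:(H1)}(i)(ii) and \ref{A:(H2')}(ii), \cref{lem:2-gBelongstoH1} yields $g^1(\cdot,Y_\cdot,0)\in\hcal^1$, whence $g(\cdot,Y_\cdot,0)\in\hcal^1$; thus \ref{A:(H3)}(ii) holds with $X_\cdot:=Y_\cdot$ and $(C_\cdot,H_\cdot):=(-\int_0^\cdot g(s,Y_s,Z_s){\rm d}s-V_\cdot-K_\cdot+A_\cdot,\,Z_\cdot)$.

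For sufficiency assume \ref{A:(H3)}(ii). I would first reproduce the envelope construction of \cref{thm:6-ExistenceandUniquenessUnder(H2)}: from the process $X_\cdot$ supplied by \ref{A:(H3)}(ii) extract the nonnegative finite-variation parts $\check K_\cdot,\check A_\cdot\in\vcal^{+,1}$, and solve two auxiliary BSDEs---now via \cref{thm:4-ExistenceofBSDEunderHH} and compared through \cref{cor:4-ComparisonOfMinSolutionUnderHH}---to obtain $\underline X_\cdot\le X_\cdot\le\bar X_\cdot$ in $\cap_{\beta\in(0,1)}\s^\beta$ of the class (D). For each $n\geq1$ let $(\underline Y^n_\cdot,\underline Z^n_\cdot,\underline A^n_\cdot)$ be the minimal $L^1$ solution of the penalized $\bar R$BSDE \eqref{eq:6-PenalizationForRBSDEwithSuperBarrier}, which exists by \cref{thm:5-ExistenceofRBSDEunderH2'Penalization}(ii). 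The hypotheses of \cref{pro:3-PenalizationOfRBSDE}(i) would then be checked exactly as in \cref{thm:6-ExistenceandUniquenessUnder(H2)}: \cref{cor:5-ComparisonForYandKofRBSDEUnderH2'} gives the monotonicity $\underline Y^n_\cdot\le\underline Y^{n+1}_\cdot\le\bar X_\cdot$ and ${\rm d}\underline A^n\le{\rm d}\underline A^{n+1}$; comparison (via \cref{pro:3-ComparisonTheoremofDRBSDE} with \cref{rmk:3-RemarkOfComparisionTheorem}) against auxiliary one-barrier penalized solutions $(\dot Y^n_\cdot,\dot K^n_\cdot)$, whose moments are controlled through \cref{thm:5-ExistenceofRBSDEunderH2'Penalization} and \cref{pro:3-PenalizationOfBSDE}, dominates $\underline K^n_\cdot\le\dot K^n_\cdot$ and supplies the required $\sup_n\E[|\dot K^n_T|^\beta]<+\infty$, $\sup_n\E[|\dot K^n_\tau|^2]\le\E[|\tilde Y_\tau|^2]$ and a subsequential $\LT$-limit of $\dot K^{n_j}_T$; and \cref{lem:2-SublinearOfg} together with \cref{lem:2-gBelongstoH1} applied to $g^1$ (adding the \ref{A:(AA)}-contribution of $g^2$) yields \eqref{eq:3-SubLinearGrowthofgYnZn} with $\bar\lambda:=\lambda+\tilde\lambda$ and an $\hcal^1$ dominator built from $|g^1(\cdot,\underline X_\cdot,0)|$, $|g^1(\cdot,\bar X_\cdot,0)|$, $|\underline X_\cdot|$, $|\bar X_\cdot|$, $f_\cdot$ and $\tilde f_\cdot$. \cref{pro:3-PenalizationOfRBSDE}(i) then produces an $L^1$ solution $(\underline Y_\cdot,\underline Z_\cdot,\underline K_\cdot,\underline A_\cdot)$ of the DRBSDE with the announced convergences.

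Finally, for minimality I would argue as in \cref{thm:5-ExistenceofRBSDEunderH2'Penalization}: given any $L^1$ solution $(Y'_\cdot,Z'_\cdot,K'_\cdot,A'_\cdot)$ of DRBSDE $(\xi,g+{\rm d}V,L,U)$, the constraint $L_\cdot\le Y'_\cdot$ forces $(Y'_\cdot-L_\cdot)^-\equiv0$, so $(Y'_\cdot,Z'_\cdot,A'_\cdot)$ is an $L^1$ solution of the $\bar R$BSDE $(\xi,\bar g_n+{\rm d}(V+K'),U)$; since ${\rm d}V\le{\rm d}(V+K')$ and $(\underline Y^n_\cdot,\underline Z^n_\cdot,\underline A^n_\cdot)$ is the minimal solution of $(\xi,\bar g_n+{\rm d}V,U)$, \cref{cor:5-ComparisonForYandKofRBSDEUnderH2'}(ii) gives $\underline Y^n_\cdot\le Y'_\cdot$, and letting $n\To\infty$ yields $\underline Y_\cdot\le Y'_\cdot$. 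I expect the main obstacle to lie in the sufficiency step, namely in securing \emph{simultaneously} the monotonicity of $\{\underline Y^n_\cdot\}$ and $\{\underline A^n_\cdot\}$ and the two moment bounds on the dominating penalization terms demanded by \cref{pro:3-PenalizationOfRBSDE}: because only minimal (rather than unique) solutions are at hand, every comparison must be applied to the correct minimal representative, and the auxiliary dominating processes $\dot K^n_\cdot,\ddot A^n_\cdot$ must be chosen so that both $\underline K^n_\cdot\le\dot K^n_\cdot$ and the uniform $L^2$-control hold at once.
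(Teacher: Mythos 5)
Your proposal follows essentially the same route as the paper: the published proof consists precisely of rerunning the argument of \cref{thm:6-ExistenceandUniquenessUnder(H2)} with \cref{thm:5-ExistenceofRBSDEunderH2'Approximation}, \cref{cor:5-ComparisonForYandKofRBSDEUnderH2'}, \cref{lem:2-SublinearOfg}, \cref{lem:2-gBelongstoH1} and \cref{pro:3-PenalizationOfRBSDE} substituted for their \ref{A:(H2)}-counterparts, and then establishing minimality by observing that any $L^1$ solution $(Y'_\cdot,Z'_\cdot,K'_\cdot,A'_\cdot)$ of the DRBSDE is an $L^1$ solution of $\bar{R}$BSDE $(\xi,\bar g_n+{\rm d}(V+K'),U)$ and invoking \cref{cor:5-ComparisonForYandKofRBSDEUnderH2'} --- exactly your final paragraph. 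Your explicit handling of the necessity direction (absorbing $g^2(\cdot,Y_\cdot,Z_\cdot)\in\hcal^1$ into the finite-variation part so that \cref{lem:2-gBelongstoH1} can be applied to $g^1$, which, unlike $g$, is guaranteed to satisfy \ref{A:(H1)}(i)) is a correct filling-in of a detail the paper leaves implicit.
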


\begin{proof}
We only prove (i), and (ii) can be proved in the same way. In view of \cref{thm:5-ExistenceofRBSDEunderH2'Approximation}, \cref{cor:5-ComparisonForYandKofRBSDEUnderH2'},
\cref{lem:2-SublinearOfg}, \cref{lem:2-gBelongstoH1} and \cref{pro:3-PenalizationOfRBSDE}, by a similar argument to that in the proof of \cref{thm:6-ExistenceandUniquenessUnder(H2)} we can prove that all conclusions in (i) of \cref{thm:6-ExistenceofDRBSDEunderH2'Penalization} hold true except for the minimal property of the $L^1$ solution $(\underline Y_\cdot,\underline Z_\cdot,\underline K_\cdot,\underline A_\cdot)$ of DRBSDE $(\xi,g+{\rm d}V,L,U)$ when $(\underline Y^n_\cdot,\underline Z^n_\cdot,\underline A^n_\cdot)$ is the minimal $L^1$ solution of ${\bar R}$BSDE $(\xi,\bar {g}_n+{\rm d}V,U)$ for each $n\geq 1$. Now, we will show this property.

Indeed, for any $L^1$ solution $(Y_\cdot,Z_\cdot,K_\cdot,A_\cdot)$ of
DRBSDE $(\xi,g+{\rm d}V,L,U)$, it is not hard to check that $(Y_\cdot,Z_\cdot,A_\cdot)$ is an $L^1$ solution of $\bar R$BSDE $(\xi,\bar {g}_n+{\rm d}\bar V,U)$ with $\bar V_\cdot:=V_\cdot+K_\cdot$ for each $n\geq 1$. Thus, in view of the assumption that $(\underline Y^n_\cdot,\underline Z^n_\cdot,\underline A^n_\cdot)$ is the minimal $L^1$ solution of $\bar R$BSDE $(\xi,\bar {g}_n+{\rm d}V,U)$ for each $n\geq 1$, \cref{cor:5-ComparisonForYandKofRBSDEUnderH2'} yields that for each $n\geq 1$,
$$\underline Y^n_t\leq Y_t,\ \ t\in \T.$$
Furthermore, since $\Lim \|\underline Y^n_\cdot-\underline Y_\cdot\|_{\s^\beta}=0$ for each $\beta\in (0,1)$, we know that
$$\underline Y_t\leq Y_t,\ \ t\in \T,$$
which is the desired result.
\end{proof}

In view of \cref{thm:6-ExistenceandUniquenessUnder(H2)}, \cref{cor:6-ComparisonForKofDRBSDE} and \cref{pro:3-Approximation}, a similar argument to that in \cref{thm:5-ExistenceofRBSDEunderH2'Approximation}
yields the following convergence result, whose proof is omitted.\vspace{-0.1cm}

\begin{thm}
\label{thm:6-ExistenceofDRBSDEunderH2'Approximation}
Let $V_\cdot\in\vcal^1$, $g^1$ satisfy assumptions \ref{A:(H1)} and \ref{A:(H2')}, $g^2$ satisfy assumption \ref{A:(AA)}, the generator $g:=g^1+g^2$, and \ref{A:(H3)} holds true for $L_\cdot$, $U_\cdot$, $\xi$, $X_\cdot$ and $g$ (or $g^1$). Then DRBSDE $(\xi,g+{\rm d}V,L,U)$ admits a minimal (resp. maximal) $L^1$ solution $(Y_\cdot,Z_\cdot,K_\cdot,A_\cdot)$ such that for each $\beta\in (0,1)$,
$$
\lim\limits_{n\To \infty}\left(\|Y_\cdot^n-Y_\cdot\|_{\s^\beta}+
\|Z_\cdot^n-Z_\cdot\|_{\M^\beta}
+\|K_\cdot^n-K_\cdot\|_{\s^1}
+\|A_\cdot^n-A_\cdot\|_{\s^1}\right)=0,
$$
where for each $n\geq 1$, $(Y^n_\cdot,Z^n_\cdot,K^n_\cdot,A^n_\cdot)$ is the unique $L^1$ solution of
DRBSDE $(\xi,g_n+{\rm d}V,L,U)$ with a generator $g_n$ satisfying \ref{A:(H1)},  \ref{A:(H2)} and \ref{A:(H3)} (recall \cref{thm:6-ExistenceandUniquenessUnder(H2)}).
\end{thm}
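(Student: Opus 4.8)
The plan is to replicate the proof of \cref{thm:5-ExistenceofRBSDEunderH2'Approximation}, substituting its single-barrier ingredients by their two-barrier analogues. I will carry out the argument only for the minimal $L^1$ solution; the maximal case is symmetric, obtained by replacing the inf-convolution below with the sup-convolution and reversing all monotonicities. First I would construct, exactly as in the proof of \cref{thm:4-ExistenceofBSDEunderHH}, an inf-convolution sequence $g_n:=g^1_n+g^2_n$ such that, $\as$, $g_n$ increases in $n$ to $g$ locally uniformly in $(y,z)$, $g_n\leq g$, and each $g_n$ satisfies \ref{A:(H1)} and \ref{A:(H2)} with controlled parameters. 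The sub-linear bounds produced by the two convolutions then give $g_n(\cdot,X_\cdot,0)\in\hcal^1$, so \ref{A:(H3)} holds for every $g_n$ with the common triple $(X_\cdot,L_\cdot,U_\cdot)$; by \cref{thm:6-ExistenceandUniquenessUnder(H2)}, DRBSDE $(\xi,g_n+{\rm d}V,L,U)$ admits a unique $L^1$ solution $(Y^n_\cdot,Z^n_\cdot,K^n_\cdot,A^n_\cdot)$ for each $n\geq 1$.

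Next I would introduce two sandwiching generators $\underline g\leq g_n\leq \bar g$ built from $g^1(\cdot,y,0)$ plus the explicit linear-in-$y$ and sub-linear-in-$z$ bounds, in the spirit of \eqref{eq:5-DefintionOfUnderlineG}--\eqref{eq:5-DefintionOfBarG}. Both satisfy \ref{A:(H1)}, \ref{A:(H2)} and, since $\underline g(\cdot,X_\cdot,0),\bar g(\cdot,X_\cdot,0)\in\hcal^1$, also \ref{A:(H3)}; hence \cref{thm:6-ExistenceandUniquenessUnder(H2)} yields unique $L^1$ solutions $(\underline Y_\cdot,\underline Z_\cdot,\underline K_\cdot,\underline A_\cdot)$ and $(\bar Y_\cdot,\bar Z_\cdot,\bar K_\cdot,\bar A_\cdot)$ of the corresponding DRBSDEs. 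Applying the comparison result \cref{cor:3-CorollaryOfComparisonTheorem} to the $Y$-components and \cref{cor:6-ComparisonForKofDRBSDE} to the $K$- and $A$-components (all sharing the barriers $L_\cdot,U_\cdot$), the ordering $g_n\leq g_{n+1}$ and the sandwich $\underline g\leq g_n\leq\bar g$ transfer to
$$\underline Y_\cdot\leq Y^n_\cdot\leq Y^{n+1}_\cdot\leq \bar Y_\cdot,\qquad {\rm d}A^n\leq {\rm d}A^{n+1}\leq {\rm d}\bar A,\qquad {\rm d}K^{n+1}\leq {\rm d}K^n\leq {\rm d}K^1,$$
with $\bar Y_\cdot\in\cap_{\beta\in(0,1)}\s^\beta$ of class (D) and $\bar A_\cdot\in\vcal^{+,1}$. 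This is precisely the monotone configuration required by the first case of \cref{pro:3-Approximation}.

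It then remains to verify the uniform growth hypothesis of \cref{pro:3-Approximation}. Viewing $(\underline Y,\underline Z,\underline K,\underline A)$ and $(\bar Y,\bar Z,\bar K,\bar A)$ as $L^1$ solutions of non-reflected BSDEs with drivers $V_\cdot+\underline K_\cdot-\underline A_\cdot$, respectively $V_\cdot+\bar K_\cdot-\bar A_\cdot$, both of which lie in $\vcal^1$, \cref{lem:2-gBelongstoH1} gives $\underline g(\cdot,\underline Y_\cdot,0),\bar g(\cdot,\bar Y_\cdot,0)\in\hcal^1$, whence by the definitions of $\underline g,\bar g$ also $g^1(\cdot,\underline Y_\cdot,0),g^1(\cdot,\bar Y_\cdot,0)\in\hcal^1$. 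Feeding these together with $\underline Y_\cdot\leq Y^n_\cdot\leq\bar Y_\cdot$ into \cref{lem:2-SublinearOfg}, and combining with the \ref{A:(AA)}-growth of $g^2$, produces a dominating estimate of the form \eqref{eq:3-LinearGrowthofgnYnZn} with some $\bar f_\cdot\in\hcal^1$ and $\bar\lambda:=\lambda+\tilde\lambda$. By \cref{rmk:3-rmk3.4} this yields both \eqref{eq:3-SemiLinearGrowthCondition} and \eqref{eq:3-LinearGrowthofgnYnZn--k}, so \cref{pro:3-Approximation} applies and delivers an $L^1$ solution $(Y_\cdot,Z_\cdot,K_\cdot,A_\cdot)$ of DRBSDE $(\xi,g+{\rm d}V,L,U)$ with convergence in $\s^\beta\times\M^\beta\times\s^1\times\s^1$. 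Finally, since $g_n\leq g$ and each $g_n$ obeys \ref{A:(H1)} and \ref{A:(H2)}, \cref{cor:3-CorollaryOfComparisonTheorem} shows $Y^n_\cdot\leq Y'_\cdot$ for any competing $L^1$ solution $(Y'_\cdot,Z'_\cdot,K'_\cdot,A'_\cdot)$, and passing to the limit gives $Y_\cdot\leq Y'_\cdot$, i.e. minimality.

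The main obstacle I anticipate is the bookkeeping of the correct monotonicity directions of ${\rm d}K^n$ and ${\rm d}A^n$ across the two comparison statements so that they fit the exact sandwich hypotheses of \cref{pro:3-Approximation}, together with confirming that the dominating increasing process $\bar A_\cdot$ (and the upper envelope $K^1_\cdot$) genuinely lie in $\vcal^{+,1}$. The analytic core, namely the $z$-linear growth estimate, is routine once \cref{lem:2-gBelongstoH1,lem:2-SublinearOfg} are in place.
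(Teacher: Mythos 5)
Your proposal is correct and follows essentially the same route the paper intends: the paper omits this proof, stating that it is obtained by repeating the argument of \cref{thm:5-ExistenceofRBSDEunderH2'Approximation} with \cref{thm:6-ExistenceandUniquenessUnder(H2)}, \cref{cor:6-ComparisonForKofDRBSDE} and \cref{pro:3-Approximation} in place of their one-barrier counterparts, which is precisely what you do (convolution approximants satisfying \ref{A:(H1)}--\ref{A:(H2)}, sandwich generators $\underline g\leq g_n\leq\bar g$, the comparison corollaries to get the monotone configuration of \cref{pro:3-Approximation}, and \cref{lem:2-gBelongstoH1} plus \cref{lem:2-SublinearOfg} for the uniform growth bound \eqref{eq:3-LinearGrowthofgnYnZn}). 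The only cosmetic difference is that you write out the minimal case via inf-convolutions while the template proof in \cref{thm:5-ExistenceofRBSDEunderH2'Approximation} treats the maximal case via sup-convolutions; your monotonicity bookkeeping for ${\rm d}K^n$ and ${\rm d}A^n$ matches the first alternative in the hypotheses of \cref{pro:3-Approximation}.
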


By \cref{cor:3-CorollaryOfComparisonTheorem}, \cref{cor:6-ComparisonForKofDRBSDE} and the proof of \cref{thm:6-ExistenceofDRBSDEunderH2'Approximation}, it is not hard to verify the following comparison result for the minimal (resp. maximal) $L^1$ solutions of DRBSDEs.\vspace{-0.1cm}

\begin{cor}
\label{cor:6-ComparisonOfYKAUnder(H2')}
Assume that $\xi^1,\xi^2\in\LT$ with $\xi^1\leq \xi^2$, $V_\cdot^1, V_\cdot^2\in\vcal^{1}$ with ${\rm d}V^1\leq {\rm d}V^2$, $g^{1,1}$ and $g^{2,1}$ satisfy \ref{A:(H1)} and \ref{A:(H2')}, $g^{1,2}$ and $g^{2,2}$ satisfy \ref{A:(AA)}, $g^1:=g^{1,1}+g^{1,2}$ and $g^2:=g^{2,1}+g^{2,2}$ with\vspace{-0.1cm}
$$
\as,\ \RE\ (y,z)\in \R\times \R^d,\ \  g^{1,1}(t,y,z)\leq g^{2,1}(t,y,z)\ \ {\rm and}\ \ g^{1,2}(t,y,z)\leq g^{2,2}(t,y,z).
\vspace{-0.1cm}
$$
For $i=1,2$, let \ref{A:(H3)} hold for $L_\cdot^i$, $U_\cdot^i$, $\xi^i$ and $X_\cdot^i$ associated with $g^i$ (or $g^{i,1}$), and $(Y_\cdot^i,Z_\cdot^i,K_\cdot^i,A_\cdot^i)$ be the minimal (resp. maximal) $L^1$ solution of DRBSDE $(\xi^i,g^i+{\rm d}V^i,L^i,U^i)$ (recall \cref{thm:6-ExistenceofDRBSDEunderH2'Approximation}). If $L^1_\cdot\leq L^2_\cdot$ and $U^1_\cdot\leq U^2_\cdot$, then $Y^1_t\leq Y^2_t$ for each $t\in\T$, and if $L^1_\cdot=L^2_\cdot$ and $U^1_\cdot=U^2_\cdot$, then
$$
{d}K^1\geq {\rm d}K^2\ \ {\rm and}\ \ {d}A^1\leq {\rm d}A^2.
$$
\end{cor}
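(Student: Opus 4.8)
The plan is to run the approximation scheme of \cref{thm:6-ExistenceofDRBSDEunderH2'Approximation} simultaneously for $j=1,2$ and to compare the two schemes level by level before passing to the limit. Recall that in that theorem the minimal (resp. maximal) $L^1$ solution of DRBSDE $(\xi^j,g^j+{\rm d}V^j,L^j,U^j)$ is obtained as the limit of the unique $L^1$ solutions $(Y^{j,n}_\cdot,Z^{j,n}_\cdot,K^{j,n}_\cdot,A^{j,n}_\cdot)$ of DRBSDE $(\xi^j,g^j_n+{\rm d}V^j,L^j,U^j)$, where $g^j_n:=g^{j,1}_n+g^{j,2}_n$ is built by inf-convolution (for the minimal solution) or sup-convolution (for the maximal solution) of $g^{j,1}$ and $g^{j,2}$, exactly as in the proof of \cref{thm:5-ExistenceofRBSDEunderH2'Approximation}; each $g^j_n$ satisfies \ref{A:(H1)}, \ref{A:(H2)} and \ref{A:(H3)}, and $g^j_n$ increases (resp. decreases) to $g^j$ as $n\To\infty$. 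Since $g^{j,1}$ satisfies \ref{A:(H2')} and $g^{j,2}$ satisfies \ref{A:(AA)} for both $j=1,2$, I would first fix \emph{one} common set of convolution parameters valid for $j=1,2$; this is possible by enlarging the constants and using $|z|^\alpha\leq 1+|z|^{\alpha'}$ for $\alpha\leq\alpha'$, and since minimality (resp. maximality) is an intrinsic property of the solution, any such admissible choice still produces the same limiting solution.

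With common penalties in place, the key observation is that inf-convolution and sup-convolution are order-preserving operations on the generator. From the pointwise inequalities $g^{1,1}\leq g^{2,1}$ and $g^{1,2}\leq g^{2,2}$ I would deduce, $\as$, for every $n\geq 1$ and $(y,z)\in\R\times\R^d$,
\begin{equation*}
g^{1,1}_n(\cdot,y,z)\leq g^{2,1}_n(\cdot,y,z),\quad g^{1,2}_n(\cdot,y,z)\leq g^{2,2}_n(\cdot,y,z),\quad\text{hence}\quad g^{1}_n(\cdot,y,z)\leq g^{2}_n(\cdot,y,z).
\end{equation*}
For the inf-convolution this follows by taking the infimum over the same variable of the two ordered integrands, and the sup-convolution case is identical with suprema.

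Next I would compare the two approximating schemes at each fixed level $n$. Since $g^1_n$ and $g^2_n$ satisfy \ref{A:(H1)} and \ref{A:(H2)}, $g^1_n\leq g^2_n$ pointwise, $\xi^1\leq\xi^2$, ${\rm d}V^1\leq{\rm d}V^2$, $L^1_\cdot\leq L^2_\cdot$ and $U^1_\cdot\leq U^2_\cdot$, \cref{cor:3-CorollaryOfComparisonTheorem} applied to DRBSDE $(\xi^j,g^j_n+{\rm d}V^j,L^j,U^j)$ yields $Y^{1,n}_t\leq Y^{2,n}_t$ for each $t\in\T$ and $n\geq 1$. Letting $n\To\infty$ and using the $\s^\beta$-convergence $Y^{j,n}_\cdot\To Y^j_\cdot$ from \cref{thm:6-ExistenceofDRBSDEunderH2'Approximation} then gives $Y^1_t\leq Y^2_t$ for each $t\in\T$, which is the first assertion.

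For the second assertion, assume in addition $L^1_\cdot=L^2_\cdot$ and $U^1_\cdot=U^2_\cdot$, so that at each level $n$ the two DRBSDEs share the same barriers; then \cref{cor:6-ComparisonForKofDRBSDE} applies and gives ${\rm d}K^{1,n}\geq{\rm d}K^{2,n}$ and ${\rm d}A^{1,n}\leq{\rm d}A^{2,n}$, i.e. $K^{1,n}_t-K^{1,n}_s\geq K^{2,n}_t-K^{2,n}_s$ and $A^{1,n}_t-A^{1,n}_s\leq A^{2,n}_t-A^{2,n}_s$ for all $0\leq s\leq t\leq T$. Passing to the limit through the $\s^1$-convergence $K^{j,n}_\cdot\To K^j_\cdot$ and $A^{j,n}_\cdot\To A^j_\cdot$ supplied by \cref{thm:6-ExistenceofDRBSDEunderH2'Approximation} preserves these increment inequalities, whence ${\rm d}K^1\geq{\rm d}K^2$ and ${\rm d}A^1\leq{\rm d}A^2$. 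The only delicate point in the whole argument is the reduction in the first paragraph: checking that the two convolution-based approximations can be driven by a single common penalty while still converging to the respective minimal (resp. maximal) solutions, so that the clean order-preservation above is legitimate; everything after that is a level-by-level invocation of the already established comparison results followed by a routine stability passage to the limit.
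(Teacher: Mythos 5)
Your proposal is correct and follows essentially the same route the paper intends: the paper dispatches this corollary by citing \cref{cor:3-CorollaryOfComparisonTheorem}, \cref{cor:6-ComparisonForKofDRBSDE} and the approximation scheme from the proof of \cref{thm:6-ExistenceofDRBSDEunderH2'Approximation}, which is exactly your level-by-level comparison followed by passage to the limit in $\s^\beta$ and $\s^1$. Your explicit treatment of the one genuinely delicate point --- fixing common convolution parameters for $j=1,2$ so that the inf/sup-convolutions are order-preserving, justified by the uniqueness of the minimal (resp.\ maximal) solution --- is a detail the paper leaves implicit, and it is handled correctly.
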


\section{Examples and remarks}
\label{sec:7-ExamplesandRemarks}
\setcounter{equation}{0}

We first introduce several examples which the results of this paper can be applied to. Note that to the best of our knowledge, all conclusions of these examples can not be obtained by any existing results.\vspace{-0.1cm}

\begin{ex}
\label{ex:7-7.1}
Let the generator $g$ be defined as follows:
$$
g(\omega,t,y,z)=h(|y|)+e^{-y|B_t(\omega)|^2}+(e^{-y}\wedge 1)\cdot (\sqrt{|z|}+\sqrt[3]{|z|})+{1\over \sqrt{t}}1_{t>0}
$$
where, with $\delta>0$ small enough,
$$h(x)=\left\{
\begin{array}{lll}
-x\ln x& ,&0<x\leq \delta;\\
h'(\delta-)(x-\delta)+h(\delta)& ,&x> \delta;\\
0& ,&{\rm other\ cases}.
\end{array}\right.\vspace{0.1cm}$$
It is not very hard to verify that this $g$ satisfies assumption \ref{A:(H1)} with $\rho(x)=h(x)$, $g(t,0,0)={1\over \sqrt{t}}1_{t>0}+1$, and $\psi_t(\omega,r)=h(\delta)+h'(\delta-)r
+e^{r|B_t(\omega)|^2}+1$, and assumption \ref{A:(H2)} with $\phi(x)=\sqrt{|x|}+\sqrt[3]{|x|}$, $\gamma=2$, $f_t(\omega)\equiv 1$ and $\alpha=1/2$. We have
\begin{itemize}
\item [1)] It follows from  \cref{thm:4-ExistenceanduniquenssofBSDEunderH2} that for each $\xi\in\LT$ and $V_\cdot\in\vcal^1$, BSDE $(\xi,g+{\rm d}V)$ admits a unique $L^1$ solution;

\item [2)] It follows from \cref{thm:5-ExistenceanduniquenssofRBSDEunderH2} that if \ref{A:(H3L)} (resp. \ref{A:(H3U)}) is satisfied and $V_\cdot\in \vcal^1$, then $\underline R$BSDE $(\xi,g+{\rm d}V,L)$ (resp. $\bar R$BSDE $(\xi,g+{\rm d}V,U)$) admits a unique $L^1$ solution;

\item [3)] It follows from \cref{thm:6-ExistenceandUniquenessUnder(H2)} that if \ref{A:(H3)} is satisfied and $V_\cdot\in \vcal^1$, then
DRBSDE $(\xi,g+{\rm d}V,L,U)$ admits a unique $L^1$ solution.
\end{itemize}
\end{ex}

\begin{ex}
\label{ex:7-7.2}
Let the generator $g:=g^1+g^2$ with
$$
g^1(\omega,t,y,z)=h(|y|)-y^3e^{|B_t(\omega)|^4}-e^y\cdot \sin^2 |z|+\sqrt{|z|}\cos |z|+{1\over \sqrt[3]{t}}1_{t>0}
$$
and
$$
g^2(\omega, t,y,z)=\sqrt[3]{|y|} +y\cos y+\sqrt[4]{|y|\cdot |z|}+|B_t(\omega)|,\vspace{0.1cm}
$$
where $h(\cdot)$ is defined in \cref{ex:7-7.1}.
It is not very hard to verify that $g^1$ satisfies \ref{A:(H1)}(i) with $\rho(x)=h(x)$ and \ref{A:(HH)} with $f_t(\omega)=1+{1\over \sqrt[3]{t}}1_{t>0}+h(\delta)$, $\varphi_t(\omega,r)=h'(\delta-)r
+r^3e^{|B_t(\omega)|^4}+e^r-1$, $\lambda=1$ and $\alpha=1/2$, and that $g^2$ satisfies \ref{A:(AA)} with $\tilde f_t(\omega)=|B_t(\omega)|+2$, $\tilde\mu=3$, $\tilde\lambda=1$ and $\tilde\alpha=1/2$. We have
\begin{itemize}
\item [1)] It follows from  \cref{thm:4-ExistenceofBSDEunderHH} that for each $\xi\in\LT$ and $V_\cdot\in\vcal^1$, BSDE $(\xi,g+{\rm d}V)$ admits a maximal and a minimal $L^1$ solution;

\item [2)]It follows from  \cref{cor:5-ExistenceofRBSDEunderAA} that for each $V_\cdot\in\vcal^1$, $\xi\in \LT$, and $L^+_\cdot\in \s^1$ with $\xi\geq L_T$ (resp. $U^-_\cdot\in \s^1$ with $\xi\leq U_T$), $\underline R$BSDE $(\xi,g^2+{\rm d}V,L)$ (resp. $\bar R$BSDE $(\xi,g^2+{\rm d}V,U)$) admits a maximal and a minimal $L^1$ solution;

\item [3)]It follows from  \cref{thm:6-ExistenceofDRBSDEunderH2'Approximation} that if \ref{A:(H3)} is satisfied for $g^2$, and $V_\cdot\in \vcal^1$, then DRBSDE $(\xi,g^2+{\rm d}V,L,U)$ admits a maximal and a minimal $L^1$ solution.
\end{itemize}
We also note that this $g^1$ satisfies neither assumption \ref{A:(H2)} nor assumption \ref{A:(H2')}.
\end{ex}

\begin{ex}
\label{ex:7-7.3}
Let the generator $g:=g^1+g^2$ with
$$
g^1(\omega,t,y,z)=\bar h(|y|)-e^{y|B_t(\omega)|^3 }+(e^{-y}\wedge 1)\cdot \sqrt{|z|}\cos |z|+{1\over \sqrt[4]{t}}1_{t>0}
$$
and
$$
g^2(\omega, t,y,z)=y\cos |z|+ \sqrt[3]{|z|}\sin y +\sqrt{1+|y|+|z|}+|B_t(\omega)|^2,
$$
where, with $\delta>0$ small enough,
$$\bar h(x)=\left\{
\begin{array}{lll}
x|\ln x|\ln|\ln x|& ,&0<x\leq \delta;\\
\bar h'(\delta-)(x-\delta)+\bar h(\delta)& ,&x> \delta;\\
0& ,&{\rm other\ cases}.
\end{array}\right.\vspace{0.1cm}$$
It is not very hard to verify that $g^1$ satisfies assumption \ref{A:(H1)} with $\rho(x)=\bar h(x)$, $g(t,0,0)={1\over \sqrt[4]{t}}1_{t>0}-1$, and $\psi_t(\omega,r)=\bar h(\delta)+\bar h'(\delta-)r
+e^{r|B_t(\omega)|^3}+1$,  and assumption \ref{A:(H2')} with $f_t(\omega)\equiv 0$, $\mu=0$, $\lambda=1$ and $\alpha=1/2$, and that $g^2$ satisfies \ref{A:(AA)} with $\tilde f_t(\omega)=|B_t(\omega)|^2+2$, $\tilde\mu=2$, $\tilde\lambda=2$ and $\tilde\alpha=1/2$. We have
\begin{itemize}
\item [1)]It follows from  \cref{thm:4-ExistenceofBSDEunderHH} that for each $\xi\in\LT$ and $V_\cdot\in\vcal^1$, BSDE $(\xi,g+{\rm d}V)$ admits a maximal and a minimal $L^1$ solution;

\item [2)]It follows from \cref{thm:5-ExistenceofRBSDEunderH2'Approximation} that if \ref{A:(H3L)} (resp. \ref{A:(H3U)}) is satisfied and $V_\cdot\in \vcal^1$, then $\underline R$BSDE $(\xi,g+{\rm d}V,L)$ (resp. $\bar R$BSDE $(\xi,g+{\rm d}V,U)$) admits a maximal and a minimal $L^1$ solution;

\item [3)]It follows from \cref{thm:6-ExistenceofDRBSDEunderH2'Approximation} that if \ref{A:(H3)} is satisfied and $V_\cdot\in \vcal^1$, then
DRBSDE $(\xi,g+{\rm d}V,L,U)$ admits a maximal and a minimal $L^1$ solution;

\item [4)] It follows from  \cref{cor:5-ExistenceofRBSDEunderAA} that for each $V_\cdot\in\vcal^1$, $\xi\in \LT$, and $L^+_\cdot\in \s^1$ with $\xi\geq L_T$ (resp. $U^-_\cdot\in \s^1$ with $\xi\leq U_T$), $\underline R$BSDE $(\xi,g^2+{\rm d}V,L)$ (resp. $\bar R$BSDE $(\xi,g^2+{\rm d}V,U)$) admits a maximal and a minimal $L^1$ solution.
\end{itemize}
We also note that this $g^1$ does not satisfy assumption \ref{A:(H2)}.
\end{ex}

Finally, we give the following remark to end this paper.\vspace{-0.1cm}

\begin{rmk}
\label{rmk:7-7.1}
With respect to the work of this paper, we would like to mention the following things.
\begin{itemize}
\item [1)] The basic assumptions \ref{A:(H1)} and \ref{A:(H2)} of the generator $g$ in this paper are strictly weaker than the corresponding assumptions used in \citet{BriandDelyonHu2003SPA}, \citet{Klimsiak2012EJP}, \citet{Klimsiak2013BSM}, \citet{RozkoszSlominski2012SPA} and
    \citet{BayraktarYao2015SPA} for the $L^1$ solutions, where $\rho(x)=kx$ and $\phi(x)=kx$ for some constant $k\geq 0$. Furthermore, assumption \ref{A:(H2')} is weaker than assumption \ref{A:(H2)}, and assumption \ref{A:(HH)} is weaker than \ref{A:(H2')} and \ref{A:(H1)}(ii)(iii);

\item [2)] All of conditions \eqref{eq:2-SemiLinearGrowthCondition},
    \eqref{eq:2-LinearGrowthConditionOfg}, \eqref{eq:3-SubLinearGrowthofgYnZn},
    \eqref{eq:3-SemiLinearGrowthCondition},
    \eqref{eq:3-LinearGrowthofgnYnZn--k}
 and \eqref{eq:3-LinearGrowthofgnYnZn}
used respectively in \cref{lem:2-EstimateOfZandg},
\cref{lem:2-EstimateOfZKAg}, \cref{pro:3-PenalizationOfRBSDE}, \cref{pro:3-PenalizationOfBSDE}, \cref{pro:3-Approximation} and \cref{rmk:3-rmk3.4} are very general, which is strictly weaker than the usual linear/sub-linear growth condition of $g$ in $(y,z)$. Indeed, when these conditions are satisfied, the generator $g$ can still have a general growth in $(y,z)$, as can be seen in the proof of our main results in \cref{sec:4-ExistenceBSDEs}, \cref{sec:5-ExistenceRBSDEs} and \cref{sec:6-ExistenceOfDRBSDEs};

\item [3)] The way by which the comparison theorem (\cref{pro:3-ComparisonTheoremofDRBSDE}) is used in \cref{thm:4-ExistenceanduniquenssofBSDEunderH2} and \cref{thm:4-ExistenceofBSDEunderHH} is interesting for me;

\item [4)] It is uncertain that the generator $g$ used in \cref{thm:4-ExistenceofBSDEunderHH}, \cref{thm:5-ExistenceofRBSDEunderH2'Penalization},
    \cref{thm:5-ExistenceofRBSDEunderH2'Approximation},
    \cref{thm:6-ExistenceofDRBSDEunderH2'Penalization}
    and \cref{thm:6-ExistenceofDRBSDEunderH2'Approximation}
    satisfies assumption \ref{A:(H1)}(i), as can be seen in \cref{ex:7-7.2} and \cref{ex:7-7.3};

\item [5)] Generally speaking, under the assumptions of \cref{thm:5-ExistenceofRBSDEunderH2'Penalization}, we do not know whether the maximal $L^1$ solution of $\underline R$BSDE $(\xi,g+{\rm d}V,L)$ (resp. the minimal $L^1$ solution of $\bar R$BSDE $(\xi,g+{\rm d}V,U)$) can be approximated by a sequence of $L^1$ solutions of BSDEs;

\item [6)] Generally speaking, under the assumptions of \cref{thm:6-ExistenceofDRBSDEunderH2'Penalization}, we do not know whether the maximal (resp. minimal) $L^1$ solution of DRBSDE $(\xi,g+{\rm d}V,L,U)$ can be approximated by a sequence of $L^1$ solutions of $\bar R$BSDEs with upper barrier $U_\cdot$ (resp. $\underline R$BSDEs with lower barrier $L_\cdot$). In particular, under the same assumptions we also do not know whether an $L^1$ solution of DRBSDE $(\xi,g+{\rm d}V,L,U)$ can be approximated by a sequence of $L^1$ solutions of BSDEs in general;

\item [7)] The continuity condition of $g^2$ (resp. $g$) in $(y,z)$ used in \cref{thm:4-ExistenceofBSDEunderHH}, \cref{thm:5-ExistenceofRBSDEunderH2'Penalization},
    \cref{thm:5-ExistenceofRBSDEunderH2'Approximation},
    \cref{thm:6-ExistenceofDRBSDEunderH2'Penalization}
    and \cref{thm:6-ExistenceofDRBSDEunderH2'Approximation} (resp. \cref{cor:5-ExistenceofRBSDEunderAA}) can be relaxed to the left-continuity and lower semi-continuity condition in case of the minimal $L^1$ solution and the right-continuity and upper semi-continuity condition in case of the maximal $L^1$ solution, with a similar argument as in \citet{FanJiang2012SPL},  \citet{Fan2016SPL} and \citet{Fan2017arXivLpSolutionofDRBSDEs}. The same is in \cref{cor:4-ComparisonOfMinSolutionUnderHH},
    \cref{cor:5-ComparisonForYandKofRBSDEUnderH2'}
    and \cref{cor:6-ComparisonOfYKAUnder(H2')};

\item [8)] Since the associated assumptions are more general, the results of this paper strengthen some known corresponding works on the $L^1$ solutions obtained, for example, in \citet{ElKarouiPengQuenez1997MF}, \citet{BriandDelyonHu2003SPA}, \citet{BriandHu2006PTRF}, \citet{FanLiu2010SPL}, \citet{Fan2017arXivL1SolutionofBSDEsBulletin}, \citet{Klimsiak2012EJP}, \citet{RozkoszSlominski2012SPA}
    and \citet{BayraktarYao2015SPA};

\item [9)] Under assumptions \ref{A:(H1)}(i), \ref{A:(HH)} and \ref{A:(H3L)} (resp. \ref{A:(H3U)}), the existence of an $L^1$ solution for $\underline R$BSDE $(\xi,g+{\rm d}V,L)$ (resp. $\bar R$BSDE $(\xi,g+{\rm d}V,U)$) is still open. And, Under assumptions \ref{A:(H1)}(i), \ref{A:(HH)} and \ref{A:(H3)}, the existence of an $L^1$ solutions for DRBSDE $(\xi,g+{\rm d}V,L,U)$ is also open.\vspace{0.2cm}
\end{itemize}
\end{rmk}


\setlength{\bibsep}{2pt}


\def\cprime{$'$} \def\cfgrv#1{\ifmmode\setbox7\hbox{$\accent"5E#1$}\else
  \setbox7\hbox{\accent"5E#1}\penalty 10000\relax\fi\raise 1\ht7
  \hbox{\lower1.05ex\hbox to 1\wd7{\hss\accent"12\hss}}\penalty 10000
  \hskip-1\wd7\penalty 10000\box7}

\end{document}